\documentclass[10pt]{article}%
\usepackage{amssymb,amsmath,amsfonts,amsthm,array,bm,bbm,color}%
\usepackage{graphicx}
\usepackage{hyperref}
\usepackage{mathrsfs}
\usepackage{mathabx}
\usepackage{dsfont}
\usepackage{accents}
\hypersetup{colorlinks=true, linkcolor=red, anchorcolor=blue, citecolor=blue}
\usepackage{tikz}

\providecommand{\U}[1]{\protect\rule{.1in}{.1in}}

\allowdisplaybreaks

\numberwithin{equation}{section}

\newtheorem{theorem}{Theorem}[section]
\newtheorem{lemma}[theorem]{Lemma}
\newtheorem{corollary}[theorem]{Corollary}
\newtheorem{proposition}[theorem]{Proposition}
\newtheorem{remark}[theorem]{Remark}

\newtheorem{definition}[theorem]{Definition}

\def\<{\langle}
\def\>{\rangle}
\def\d{\,{\rm d}}
\def\&{\,&}

\def\N{\mathbb{N}}

\def\M{\mathfrak{M}}
\def\ls{\lesssim}
\def\R{\mathbb{R}}
\def\SS{\mathbb{S}}

\def\Z{\mathbb{Z}}

\def\p{\partial}
\def\t{\tilde}

\def\eps{\varepsilon}
\def\vphi{\varphi}
\def\mP{\mathbf{P}}
\def\a{\mathfrak{a}}

\def\mF{\mathbf{F}}

\def\l{\mathfrak{l}}

\def\cB{{\mathcal B}}

\def\cD{{\mathcal D}}

\def\cF{{\mathfrak F}}

\def\cH{{\mathcal H}}

\def\cP{{\mathcal P}}
\def\cQ{{\mathcal Q}}

\def\cS{{\mathcal S}}

\def\cU{{\mathcal U}}

\def\gs{\gtrsim}
\def\ls{\lesssim}

\def\tP{{\tilde{\mathcal{P}} }}
\def\tF{{\tilde{\mathfrak F} }}
\def\F{{\mathfrak F }}
\def\U{{\mathfrak U }}
\def\fD{{\mathfrak D }}

\let\f=\frac
\def\mU{{\mathbf U }}
\def\mS{{\mathbf S }}
\def\mB{{\mathbf B }}
\def\mC{{\mathbf C }}
\def\mY{{\mathbf Y }}
\def\mX{{\mathbf X }}

\usepackage{relsize} 

\newcommand{\ccap}{\mathbin{\text{\smaller$\bigcap$}}}

\definecolor{darkgreen}{rgb}{0.0, 0.42, 0.0}
\newcommand{\red}{\textcolor{red}}

\newcommand{\ben}{\begin{eqnarray}}
\newcommand{\een}{\end{eqnarray}}
\newcommand{\beno}{\begin{eqnarray*}}
\newcommand{\eeno}{\end{eqnarray*}}

\begin{document}

\title{Uniqueness for the spatially homogeneous Boltzmann equation in critical Sobolev spaces}

\author{  Tianhao Dong\footnote{Email: dth23@mails.tsinghua.edu.cn. Department of Mathematical Sciences, Tsinghua University, Beijing 100084, China} \, Shuchen Guo\footnote{Email: guo@maths.ox.ac.uk. Mathematical Institute, University of Oxford, Oxford, OX2 6GG, UK.}  \,   Jie Ji\footnote{Email: jij\_24@nuaa.edu.cn. School of Mathematics, Nanjing University of Aeronautics and Astronautics, Nanjing 211106, China}}

\date{}
\maketitle
\setcounter{tocdepth}{1}


\begin{abstract}
We study the spatially homogeneous Boltzmann equation without angular cutoff for very soft potentials satisfying the inverse power law relation $\gamma+4s=1$. Our main results establish the existence, uniqueness, stability and regularization estimates of solutions in the critical Sobolev space \( H^{-(\gamma + 2s + \frac{3}{2})} \) with a logarithmic correction. Combined with the recently established monotonicity of the Fisher information, the solutions extend globally in time. Our primary tools are energy estimates based on a simultaneous dyadic localization in the phase and frequency variables, together with sharp commutator estimates between the collision operator and the localization operators.
\end{abstract}

\medskip
\noindent\textbf{Keywords:} non-cutoff Boltzmann equation; very soft potentials; critical Sobolev spaces; uniqueness; regularization.

\smallskip
\noindent\textbf{MSC (2020):}  76P05; 35A05; 35B65.

\tableofcontents


\section{Introduction}

We consider the spatially homogeneous Boltzmann equation with very soft potentials in the non-cutoff regime. The equation reads
\begin{equation}\label{Boltzmann}
    \p_t f=Q(f,f),
\end{equation}
where $f(t,v)\ge 0$ denotes the velocity distribution at time $t>0$. The collision operator $Q$ acts only on the velocity variable and is defined as
\begin{equation*}
    Q(f,f)=\int_{\R^3}\int_{\SS^2}B(v-v_*,\sigma)\,\big(f' f'_* - f f_*\big)\d \sigma\d v_*,
\end{equation*}
where we use the standard shorthand $f=f(v)$, $f_*=f(v_*)$, $f'=f(v')$, and $f'_*=f(v'_*)$ and the post-collisional velocities satisfy
\begin{equation*}
    v'=\frac{v+v_*}{2}+\frac{|v-v_*|}{2}\sigma,\qquad 
    v'_*=\frac{v+v_*}{2}-\frac{|v-v_*|}{2}\sigma,\qquad \sigma\in \SS^2,
\end{equation*}
with the elastic collision identities
\begin{equation*}
    v+v_*=v'+v'_*,\qquad |v|^2+|v_*|^2=|v'|^2+|v'_*|^2.
\end{equation*}
We assume the collision kernel $B$ has the standard factorised form
\[
B(v-v_*,\sigma)=|v-v_*|^\gamma\,b(\cos\theta),\qquad 
\cos\theta:=\frac{v-v_*}{|v-v_*|}\cdot\sigma,
\]
it depends only on the relative speed $|v-v_*|$ and the deviation angle $\theta$. We work in the non-cutoff setting, where grazing collisions produce an angular singularity: there exist $K_b>0$ and $s\in(0,1)$ such that, for $0\le \theta\le \pi/2$,
\begin{equation*}
K_b\,\theta^{-1-2s}\le\beta(\theta):= \sin\theta\, b(\cos\theta)\le K_b^{-1}\,\theta^{-1-2s},
\end{equation*}
so that $\beta$ is not locally integrable near $\theta=0$. Without loss of generality, we may assume $b$ is supported on $\{0\le \theta\le \pi/2\}$ (equivalently $\cos\theta\ge 0$), since otherwise one can replace $b$ by its standard symmetrisation.

Throughout the paper we consider very soft potentials $-1<\gamma+2s<0$ under the inverse power law relation $\gamma+4s=1$. In particular, the parameters $\gamma$ and $s$ satisfy
\begin{equation}\label{assumption}
\gamma\in(-3,-1],\qquad s\in(1/2,1),\qquad \gamma+4s=1\quad \mbox{and}\quad-1<\gamma+2s<0. \tag{A}
\end{equation}
Under these assumptions, the collision operator exhibits a strong angular singularity with a very soft potential. The aim of this work is to establish the existence and uniqueness within a suitably weighted Sobolev space.

\subsection{Preliminaries and main results} 
We first introduce the essential notation used throughout this work.
\begin{itemize}
\item [$(i)$] We use the notations $a\ls b$ $(a\gs b)$ and $a\ls_C b$ $(a\gs_C b)$ to indicate that there is a constant $C$ which is uniform or depends on parameter $c$ and may be different on different lines, such that $a\leq Cb$ $(a\geq Cb)$. We use the notation $a\sim b$ ($a\sim_c b$) whenever $a\ls b$ and also $b\ls a$ ($a\ls_C b$ and also $a\gs_Cb$).

\item [$(ii)$] We denote $C_{a_1,a_2,\cdots,a_n}$ by a constant depending on some parameters $a_1,a_2,\cdots,a_n$. Moreover, the parameter $\eps$ is used to represent different positive numbers much smaller than $1$ and is determined in different cases.


\item [$(iii)$] $\mathbf{1}_\Omega$ is the characteristic function of the set $\Omega$. We use $(f,g)_{v}$ to denote the inner product of $f, g$ in space $L^2(\R^3_v)$.

\item [$(iv)$] We denote the Fourier transform of a function $g$ with respect to the variable $v$ by $\widehat g$, and its inverse Fourier transform by $\widecheck g$, namely,
$$
\widehat g(\xi)=\int_{\R^3} g(v)e^{-2\pi i v\cdot \xi}\d v,
\qquad
\widecheck g(\xi)=\int_{\R^3} g(v)e^{2\pi i v\cdot \xi}\d v.
$$

\item [$(v)$] Suppose $A$ and $B$ are two operators, then the commutator $[A,B]$ between $A$ and $B$ is defined by $[A,B]=AB-BA$. 
\end{itemize}

\smallskip

\noindent We  give several definitions of spaces involving different variables.

\begin{itemize}
\item [$(i)$]\textit{Function spaces in $v$ variable.} Let $f=f(v)$ and $\<v\>:=(1+|v|^2)^{1/2}$. For $m,l\in\R $, we define the weighted space $L^p_l$, $H_l^m$ and $H_l^{m,\a}$ as follows:
\begin{equation}\label{hms}
\begin{aligned}
&\vspace{0.5cm} L^p_l:=\Big\{f(v)\,\Big|\,\|f\|_{ L^p_l}=\|\<\cdot\>^lf\|_{L^p}<+\infty\Big\},\quad\\
   & H^m_l:=\Big\{f(v)\,\Big|\,\|f\|_{ H^m_l}=\|\<D\>^m\<\cdot\>^lf\|_{L^2}<+\infty\Big\}\quad\mbox{and}\\
&H^{m,\a}_l:=\Big\{f(v)\,\Big|\,\|f\|_{ H^{m,\a}_l}=\|\<D\>^m\log^\a(2+\<D\>)\<\cdot\>^lf\|_{L^2}<+\infty\Big\},
\end{aligned}
\end{equation}
where $a(D)$ is a pseudo-differential operator with the symbol $a(\xi)$, i.e., 
\begin{equation}\label{def pseudo}
    (a(D)f)(v):= \int_{\R^3}\int_{\R^3}e^{2\pi i(v-u)\xi}a(\xi)f(u)\d u\d\xi.
\end{equation}
For a non-negative function $f$, we say that $f$ has finite entropy, 
denoted by $f \in L\log L$, if 
\begin{equation*}
\cH(f) := \int_{\R^3} f \log f \, \d v < +\infty.
\end{equation*}


\item [$(ii)$]\textit{Function spaces in $t, v$ variables.} Let $f=f(t, v)$ and $X$ be a function space in $v$ variables. Then $L^p([0,T],X)$ and $ L^\infty([0,T],X)$ are defined by
 \[ L^p([0,T],X):=\Big\{f(t,x,v)\bigg|\, \|f\|^p_{ L^p([0,T],X)}=\int_0^T\|f(t)\|^p_{X}\d t <+\infty\Big\},\quad 1\leq p<\infty,\]
  \[L^\infty([0,T],X):=\Big\{f(t,x,v)\,\bigg|\,\|f\|_{ L^\infty([0,T],X)}=\mathrm{esssup}_{t\in [0,T]}\|f(t)\|_{X}<+\infty\Big\}. \]
  We also define space $L^p_{loc}([0,\infty),X):=\bigcap _{T<\infty}L^p([0,T],X)$.
\end{itemize}

Let us recall some standard facts for the spatially homogeneous Boltzmann equation. Formally, if $f$ solves \eqref{Boltzmann} with some initial datum $f_0$, then mass, momentum, and energy are conserved:
\begin{equation*}
    \frac{\d}{\d t}\int_{\R^3} f(t,v)\varphi(v)\d v=0,
\qquad \varphi(v)=1,\ v,\ |v|^2 .
\end{equation*}
For convenience, we normalize the initial data to satisfy
\begin{equation}\label{f0}
\int_{\R^3} f_0(v)\d v=1,\qquad 
\int_{\R^3} v\,f_0(v)\d v=0,\qquad
\int_{\R^3} |v|^2 f_0(v)\d v=3, \quad \text{for } f_0\geq 0.
\end{equation}
The entropy $\cH(f)$ defined above is decreasing along smooth solutions:
\begin{equation*}
    \frac{\d }{\d t}\cH(f(t))=-\cD(f(t))\le 0,
\end{equation*}
where the entropy dissipation is given by
\begin{equation*}
    \begin{aligned}
\cD(f):=\frac14\int_{\R^3}\int_{\R^3}\int_{\SS^2}
B(v-v_*,\sigma)\,(f_*'f'-f_*f)\,
\log\frac{f_*'f'}{f_*f}\d\sigma\d v_*\d v \geq 0 .
\end{aligned}
\end{equation*}

Throughout this work, we consider weak solutions in the following sense.

\begin{definition}\label{def:weak-solution}
Let $f_0$ satisfies \eqref{f0}. A nonnegative measurable function
\begin{equation*}
    f(t,v)\in L^{\infty}_{loc}\left([0,\infty), L^1_2\ccap L\log L(\R^3)\right)
\end{equation*}
is a weak solution of \eqref{Boltzmann} if the following hold:

\smallskip
\noindent{\rm (i)} $f$ conserves mass, momentum and energy, and satisfies the entropy inequality
\begin{equation*}
    \cH(f(t))+\int_{0}^{t}\cD(f(s))\d s\le \cH(f_0),\qquad t\ge 0.
\end{equation*}

\smallskip
\noindent{\rm (ii)} For any $\psi\in C_c^2(\R^3)$, the map
\begin{equation*}
    t\mapsto \int_{\R^3}\psi(v)\, f(t,v)\d v
\end{equation*}
is absolutely continuous on $[0,\infty)$ and, for a.e. $T\in[0,\infty)$, the following identity holds:
\begin{equation}\label{weakform}
\begin{aligned}
&\int_{\R^3} f(T,v)\psi(v)\d v-\int_{\R^3} f_0(v)\psi(v)\d v
\\
=&\lim_{\varepsilon\to 0}\frac{1}{2}\int_0^T\int_{\R^6}f(v)f(v_*)
\int_{\SS^2}B^{\varepsilon}(v-v_*,\sigma)\,
\big[\psi(v')+\psi(v'_*)-\psi(v)-\psi(v_*)\big]\,
\d\sigma\d v\d v_*\d t,
\end{aligned}
\end{equation}
where
\begin{equation*}
    B^{\varepsilon}(v-v_*,\sigma)=B(v-v_*,\sigma)\mathbf{1}_{\frac{v-v_*}{|v-v_*|}\cdot\sigma\le \cos\varepsilon}.
\end{equation*}
\end{definition}

\begin{remark}
 The collision operator $Q$ on the right hand side of \eqref{weakform} is defined  in the sense of principal value through the deviation angle $\theta$. This is crucial in particular for the high singularity case $s\ge 1/2$ (see \cite{HJZ} for more details).
\end{remark}
\begin{remark} The existence of weak solutions was proved in \cite{villani1998new}. We also remark that \eqref{weakform} can be replaced by the following formula, that is,  for $\psi(t,v)\in C^1([0,\infty);C_c^2(\R^3))$ and $T\ge0$,
\beno 
&& \int_{\R^3} f(T,v)\psi(T,v)\d v -\int_{\R^3} f_0\psi(0,v)\d v -\int_0^T\int_{\R^3} f(t,v)\partial_t\psi(t,v)\d v \d t \nonumber\\
=&&\lim_{\varepsilon\to 0}\frac{1}{2}\int_0^T\int_{\R^6}f(v)f(v_*)
\int_{\SS^2}B^{\varepsilon}(v-v_*,\sigma)\,
\big[\psi(t,v')+\psi(t,v'_*)-\psi(t,v)-\psi(t,v_*)\big]\,
\d\sigma\d v\d v_*\d t.\eeno
\end{remark}

Now we state our main results.
Recall the assumption \eqref{assumption} on the parameters, where $s=\frac{1-\gamma}{4}$, and we separate the well-posedness result into cases:
\begin{flalign}\label{VS}
 &\quad\text{{\bf $\bullet$ very soft potential} where} -1<\gamma+2s<-\frac12, \text{ namely } -3<\gamma<-2;& \tag{VS}
\end{flalign}
\vspace{-0.6cm}
\begin{flalign}\label{MS}
 &\quad\text{{\bf$\bullet$ moderately soft potential} where} -\frac12\leq\gamma+2s<0, \text{ namely } -2\leq\gamma<-1.& \tag{MS}
\end{flalign}
 For the very soft potential case, we obtain the following results.
\begin{theorem}[Existence, regularization, and uniqueness]\label{maintheorem1}
Assume that $\gamma$ and $s$ satisfy \eqref{VS} in the very soft potential
regime, and let the initial datum $f_0$ satisfy \eqref{f0}. Suppose, in
addition, that
\begin{equation}\label{L1}
    f_0\in
    L^1_{l+\gamma+s}(\R^3)
    \ccap
    H^{-\zeta,\a}_{l}(\R^3),
    \qquad
    \a>0,
    \qquad
    l\geq-\gamma,
\end{equation}
where
\[
    \zeta
    :=
    \gamma+2s+\frac32
    \;\big(=2+\tfrac{\gamma}{2} \text{ under \eqref{assumption}}\big).
\]
Then the following statements hold.

\begin{enumerate}

\item[(i)] 
There exists $T>0$ and a local-in-time weak solution $f$ satisfying
\[
    f\in
    L^\infty\big([0,T],H^{-\zeta,\a}_{l}(\R^3)\big)
    \ccap
    L^2\big([0,T],H^{s-\zeta,\a}_{l+\frac{\gamma}{2}}(\R^3)\big).
\]
If, in addition,
\[
    \a>\frac12
    \qquad\text{and}\qquad
    l\geq-\gamma+2s,
\]
then this local solution is unique.

\item[(ii)]
Let $n\geq0$. If, in addition,
\[
    l
    \geq
    -\gamma-\frac{\gamma(n+\zeta)}{2s}
    =
    -\frac{(5+2n)\gamma}{1-\gamma},
\]
then, for every $t\in(0,T]$,
\begin{equation}\label{regularisation-estimate}
\begin{aligned}
    \|f(t)\|_
    {H^{n,\a}_{\,l+\frac{\gamma(n+\zeta)}{2s}}}
    \ls
    t^{-\frac{n+\zeta}{2s}}
    =
    t^{-\frac{2n+\gamma+4}{1-\gamma}}.
\end{aligned}
\end{equation}
In particular, the solution becomes instantaneously Sobolev regular for
every positive time, provided that the corresponding weight condition
above is satisfied.

\item[(iii)]
Suppose furthermore that $\a>\frac12$ and that the initial datum has the
higher moment
\[
    f_0\in
    L^1_{\frac{4l}{5+\gamma}+1}(\R^3)
    \ccap
    H^{-\zeta,\a}_{l}(\R^3),\quad \text{with}\quad l\geq-\frac{9\gamma}{1-\gamma}.
\]
Then the local solution obtained in {\rm (i)} extends to a global-in-time
solution. Moreover, this global solution is unique.

\end{enumerate}
\end{theorem}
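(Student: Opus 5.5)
The plan is to run a priori energy estimates in $H^{-\zeta,\a}_l$ for the equation, realized through a simultaneous dyadic decomposition in phase and frequency: $f=\sum_{k,j}\mP_k\mF_j f$, with $\mP_k$ localizing $|v|\sim 2^k$ and $\mF_j$ localizing $|\xi|\sim 2^j$. We write
\[
\|f\|^2_{H^{-\zeta,\a}_l}\sim\sum_{k,j}2^{2kl}2^{-2j\zeta}j^{2\a}\|\mF_j\mP_k f\|_{L^2}^2 .
\]
We then test the equation against $2^{2kl}2^{-2j\zeta}j^{2\a}\mF_j\mP_k f$ and sum over $k,j$.

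\textbf{Coercivity and commutators.} The main term $(Q(f,\mF_j\mP_k f),\mF_j\mP_k f)$ gives coercivity
\[
-\|\mF_j\mP_k f\|^2_{H^s_{\gamma/2}}+\text{lower order}.
\]
This uses the lower bounds on $f*|v|^\gamma$ that follow from mass, energy and entropy bounds. After summation it produces the dissipation norm $\|f\|^2_{H^{s-\zeta,\a}_{l+\gamma/2}}$. The remaining terms are the commutators $[\mF_j\mP_k,Q(f,\cdot)]f$ and the terms in which the localization falls on the first argument. We bound them by a trilinear estimate of the form
\[
|(Q(g,h),\phi)|\ls \|g\|_{X}\,\|h\|_{H^{s-\zeta,\a}_{l+\gamma/2}}\,\|\phi\|_{H^{s+\zeta-?}},
\]
with the weight shifted appropriately. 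Here $\zeta=\gamma+2s+\tfrac32$ is exactly the exponent that closes the estimate by scaling: the identity $\gamma+4s=1$ makes the $H^{-\zeta}$ norm scale-invariant for the equation.

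\textbf{Smallness from the logarithm.} At this critical level the bilinear bound carries no spare derivative. The smallness therefore has to come from the logarithmic correction $\log^\a$. The high frequencies of $f$ are small in $L^2_tH^{s-\zeta,\a}$ over short times, and the low frequencies are controlled by the $L^1_{l+\gamma+s}$ moments. This dependence on the profile of $f_0$, rather than only on its size, is why the existence time $T$ depends on $f_0$ itself.

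\textbf{Existence.} The a priori bound closes on $[0,T]$. We then construct solutions by regularized (cutoff) approximation and pass to the limit using weak compactness together with the weak formulation of Definition~\ref{def:weak-solution}.

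\textbf{Uniqueness.} Take two solutions and set $g=f_1-f_2$. We estimate $g$ in a slightly weaker space, $H^{-\zeta,\a'}_{l'}$ with $l'$ reduced by $2s$. The term $Q(g,f_2)$ requires the extra weight $l\ge-\gamma+2s$ and the summability condition $\sum_j j^{-2\a}<\infty$, which is exactly $\a>\tfrac12$, to bound $g$ in an $L^\infty$-type space of the Fourier transform. Gronwall's lemma then gives $g\equiv0$.

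\textbf{Regularization, part (ii).} We use time-weighted energies. We multiply the frequency-$2^j$ piece by $t^{(n+\zeta)/s}$, or equivalently induct in steps of $s$ derivatives. Each gain of $s$ derivatives costs a weight $\gamma/2$ and a factor $t^{-1/2}$, and this accounting produces \eqref{regularisation-estimate} together with the stated weight conditions.

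\textbf{Global extension, part (iii).} We combine the regularization of (ii) with propagation of the $L^1$ moments, which is what the $L^1_{4l/(5+\gamma)+1}$ assumption is for. We then invoke the monotonicity of the Fisher information. By Sobolev embedding and interpolation with moments, it gives a uniform-in-time bound on $H^{-\zeta,\a}_l$ and, crucially, a uniform lower bound on the local existence time, so that a continuation argument applies. Uniqueness of the global solution follows from the local uniqueness.

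\textbf{Main obstacle.} I expect the hardest step to be the sharp commutator estimate between $Q$ and $\mF_j\mP_k$ at the critical exponent. It requires a careful decomposition into regions where $|v-v_*|$ is comparable to or much smaller than $2^k$, together with cancellation arguments for $s>1/2$. If this commutator loses even a logarithm, the argument fails.
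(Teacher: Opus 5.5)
The overall architecture matches the paper: simultaneous dyadic localization in $v$ and $\xi$, coercivity plus the $[Q,\mathfrak F_j]$ and $[Q,\mathcal P_k]$ commutators, and Fisher information for continuation. But the uniqueness step has a genuine gap. You never explain why the Gronwall coefficient is integrable in time, and at the critical level that is the whole difficulty.

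For $h=f-g$ one has $\partial_t h=Q(f,h)+Q(h,g)$. The worst contribution is the $\mathcal P_k$-commutator part of $Q(h,g)$. There $h$ sits in the first slot at velocity $|v_*|\sim 2^m$ with $m>k$, while $g$ and the test function $\mathcal P_k\mathfrak F_j^2\mathcal P_k h$ live at frequency $2^j$; this phase-space nonlocality has no Landau analogue. The dissipation of $h$ carries only the weight $-\gamma/2$, whereas this term comes with $2^{-2\gamma k}$, and $\|\mathfrak F_j\mathcal U_m g\|_{L^2}$ gives no decay in $k$. So the paper estimates both copies of $h$ in the energy norm, and all $2s-\tau$ derivatives land on $g$:
\[
\lesssim \|g(t)\|_{H^{2s-\tau-\zeta,\a}}\,\|h\|^2_{H^{-\zeta,\a}_{-\gamma}},\qquad \tau=\min\{1,2s-\zeta\}.
\]
The a priori class $L^2_tH^{s-\zeta,\a}_{l+\gamma/2}$ does not put this norm in $L^1_t$. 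For example, for $-7/3<\gamma<-2$ one has $2s-\tau-\zeta=0>s-\zeta$.

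The paper closes Gronwall by first proving the smoothing estimate (ii) and applying it to $g$ with $n=2s-\tau-\zeta$. This gives $\|g(t)\|_{H^{2s-\tau-\zeta,\a}}\ls t^{-(2s-\tau)/(2s)}\in L^1(0,T)$, precisely because $\tau>0$. The hypothesis $l\ge-\gamma+2s$ is what licenses this use of (ii). It also gives $g\in L^2_tH^{s-\zeta,\a}_{-\gamma/2+2s}$, which handles the phase-localized part of $Q(h,g)$. In your plan, uniqueness comes before (ii), and closure is attributed to $l\ge-\gamma+2s$ and $\a>\frac12$ alone. These conditions do enter, but they do not make the coefficient integrable.

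You also misidentify what the logarithm does, and this matters for (iii). In the paper, after distributing the weights $j^{2\a}$, every critical trilinear term carries a factor like $p^{-\a}$ on the frequency of the energy-norm factor. Splitting at a threshold $M$ gives $M^{-\a}X^{1/2}Y$ from high frequencies and $C_MX^{3/2}$ from the finitely many low ones. Hence $\frac{d}{dt}X+(\kappa-\varepsilon-M^{-\a}X^{1/2})Y\le C_M(1+X^{3/2})$, with $X\sim\|f\|^2_{H^{-\zeta,\a}_l}$ and $Y\sim\|f\|^2_{H^{s-\zeta,\a}_{l+\gamma/2}}$. Choosing $M$ according to $\|f_0\|_{H^{-\zeta,\a}_l}$ makes $T$ depend only on this norm and the $L^1_{l+\gamma+s}$ bound, not on the profile of $f_0$.

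What you describe instead is the mechanism one would need without the logarithm: high frequencies small in $L^2_t$ over short times, with a profile-dependent $T$. With that mechanism your continuation in (iii) does not follow. A uniform-in-time bound on the critical norm yields a uniform lower bound on the existence time only when $T$ depends on the size of the data alone, and that is exactly what the $M^{-\a}$ splitting provides.
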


We also have the following stability estimate.
\begin{corollary}
Suppose \( \gamma + 2s \in (-1, -\frac{1}{2}) \),  \(f\) and \(g\) are two solutions constructed above with initial data \(f_0\) and \(g_0\), respectively, then, for $\a>\frac{1}{2}$ and $\zeta$ defined above, it holds
\begin{equation}\label{estimateofunique}
    \|f(t)-g(t)\|_{H^{-\zeta,\a}_{-\gamma}}
    \ls C_{\a,f_0,g_0}e^{\int_0^tC(\omega)d\omega}
    \|f_0-g_0\|_{H^{-\zeta,\a}_{-\gamma}},
    \qquad \forall\, t\in [0,T],
\end{equation}
where
\[
    C(\omega)
    =
    1+\|f(\omega)\|^2_{H^{s-\zeta,\a}_{-\frac{\gamma}{2}}}
    +\|g(\omega)\|^2_{H^{s-\zeta,\a}_{-\frac{\gamma}{2}+2s}}
    +\omega^{-\frac{2s-\tau}{2s}}
    \in L^1[0,T]
\]
with \(\tau=\min\{1,2s-\zeta\}>0\). For the case \( \gamma + 2s = -\frac{1}{2} \), the results are similar to \eqref{estimateofunique}, except that \( \a \) is replaced with \( \a + 1 \).
\end{corollary}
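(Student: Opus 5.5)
We run an energy estimate for the difference $h:=f-g$ in the norm $H^{-\zeta,\a}_{-\gamma}$. Since $f,g$ both solve \eqref{Boltzmann}, the difference satisfies
\[
\p_t h = Q(f,h)+Q(h,g).
\]
We apply the weighted negative-order operator $W:=\<D\>^{-\zeta}\log^{\a}(2+\<D\>)\<\cdot\>^{-\gamma}$. We then pair with $Wh$, using the simultaneous dyadic localization in $v$ (annuli $|v|\sim 2^k$) and in frequency (Littlewood--Paley blocks $|\xi|\sim 2^j$) that underlies the uniqueness part of Theorem~\ref{maintheorem1}(i). The time-regularity of $h$ is justified through the weak formulation \eqref{weakform} with test functions $\psi=W^*W h$ mollified. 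The regularity needed to pass to the limit is available because $f,g\in L^2_tH^{s-\zeta,\a}$.

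\textbf{The $Q(f,h)$ term.} We write
\[
(WQ(f,h),Wh)_v=(Q(f,Wh),Wh)_v+([W,Q(f,\cdot)]h,Wh)_v .
\]
The first piece gives the coercive contribution
\[
-c\|h\|^2_{H^{s-\zeta,\a}_{-\gamma+\gamma/2}}+C\|h\|^2_{H^{-\zeta,\a}_{-\gamma}},
\]
with the lower-order remainder controlled by the mass, energy and entropy bounds on $f$. The commutator $[W,Q(f,\cdot)]$ is handled with the sharp commutator estimates between $Q$ and the phase/frequency localizations. This produces bounds of the form
\[
\|f\|_{H^{s-\zeta,\a}_{-\gamma/2}}\|h\|_{H^{-\zeta,\a}_{-\gamma}}\|h\|_{H^{s-\zeta,\a}_{\gamma/2-\gamma}},
\]
and Young's inequality turns them into $\eps\,(\text{dissipation})+C\|f\|^2\|h\|^2$.

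\textbf{The $Q(h,g)$ term.} Here $h$ sits in the first slot. We use the trilinear upper bound for $Q$ in the critical scaling: because of $\gamma+4s=1$, the exponent $\zeta=\gamma+2s+\tfrac32$ is exactly where the $L^2$-based product estimate is borderline. A Bernstein/Sobolev embedding loses precisely $3/2$ derivatives, and the logarithmic factor $\log^{\a}$ with $\a>\tfrac12$ makes the dyadic sum $\sum_j j^{-2\a}$ converge. This yields
\[
|(WQ(h,g),Wh)|\ls \|h\|_{H^{-\zeta,\a}_{-\gamma}}\|g\|_{H^{s-\zeta,\a}_{-\gamma/2+2s}}\|h\|_{H^{s-\zeta,\a}_{\gamma/2-\gamma}}+\text{l.o.t.}
\]
The extra weight $2s$ on $g$ compensates for the moment loss of the kinetic factor $|v-v_*|^{\gamma}$ combined with the angular singularity in the weighted commutator. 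A few low-frequency and short-time terms are estimated by the regularization estimate \eqref{regularisation-estimate} with $n$ small. This is how the time-singular factor $\omega^{-\frac{2s-\tau}{2s}}$, with $\tau=\min\{1,2s-\zeta\}$, enters. It is integrable since $\tau>0$, which holds because $\zeta<2s$ in the \eqref{VS} regime.

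\textbf{Closing the estimate.} Absorbing the dissipation gives
\[
\frac{\d}{\d t}\|h\|^2_{H^{-\zeta,\a}_{-\gamma}}\le C(t)\|h\|^2_{H^{-\zeta,\a}_{-\gamma}},
\]
and Gronwall yields \eqref{estimateofunique}. The prefactor $C_{\a,f_0,g_0}$ comes from the uniform bounds of Theorem~\ref{maintheorem1}(i), which also guarantee $C\in L^1[0,T]$.

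\textbf{The endpoint and the main obstacle.} At the endpoint $\gamma+2s=-\tfrac12$ we have $\tau=2s-\zeta=0$, and the $L^2\to L^\infty$ embedding step becomes logarithmically divergent. Sacrificing one extra power of $\log$ repairs this, which explains why $\a$ is replaced by $\a+1$. The main obstacle is the $Q(h,g)$ bound at exactly the critical regularity, namely getting the $\log$-summability without any derivative loss. I would first try a paraproduct decomposition in frequency combined with the dyadic phase localization. Each block would be estimated by Cauchy--Schwarz with the $j^{-\a}$ weights, and the diagonal interactions would be treated separately using cancellation in $\sigma$.
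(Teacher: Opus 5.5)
Your overall architecture matches the paper's: an energy estimate for $h=f-g$ in $H^{-\zeta,\a}_{-\gamma}$ with simultaneous phase/frequency localization, coercivity plus commutators for $Q(f,h)$, an upper bound for $Q(h,g)$, then Gronwall. There is a genuine gap, however. The step the corollary hinges on, the $Q(h,g)$ term, is asserted rather than proved, and the mechanism you describe is not the one the paper uses.

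The paper splits $(Q(h,g),\cP_k\F_j^2\cP_kh)_v$ into two parts. The first is the localized part $(Q(h,\cP_k^{1/2}g),\cP_k^{1/2}\F_j^2\cP_kh)_v$, which is indeed controlled by $\mY_{-\zeta,\a,-\gamma+2s}(g)$, roughly as you suggest. The second is the phase commutator $(\cP_k^{1/2}Q(h,g)-Q(h,\cP_k^{1/2}g),\cP_k^{1/2}\F_j^2\cP_kh)_v$. This commutator contains a genuinely non-local interaction, absent for Landau: $h$ sits at $|v|\sim 2^m$ with $m\ge k-2N_0$, away from the test function, while $g$ enters only through $\mF_j\mU_mg$. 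This is the $\Delta^{1,(2)}_{3,1}$ part of $\mC_3(h,g,h)$ from Proposition \ref{QPk}, bounded by $2^{(\gamma+\frac32)m}2^{\tau(m-k)}2^{(2s-\tau)(j-a)}\|\mF_a\mP_mh\|_{L^2}\|\mF_j\mU_mg\|_{L^2}\|\mF_j\mP_kh\|_{L^2}$.

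The paper closes the sum over $m\ge k$ as follows. Both copies of $h$ receive their full energy weights, $2^{-\gamma m}2^{-\zeta a}a^{\a}$ and $2^{-\gamma k}2^{-\zeta j}j^{\a}$. The phase sum then converges because $\gamma+\tau<0$ and $\gamma+\frac32<0$; no weight is placed on $\mU_mg$. What remains puts $g$ in $H^{2s-\tau-\zeta,\a}$, see \eqref{100a}. This is why the smoothing estimate \eqref{regularisation-estimate} is applied with $n=2s-\tau-\zeta$. It is admissible since $-\gamma-\frac{\gamma(2s-\tau)}{2s}<-\gamma+2s\le l$, and it gives $\|g(t)\|_{H^{2s-\tau-\zeta,\a}}\ls t^{-\frac{2s-\tau}{2s}}$.

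So the factor $\omega^{-\frac{2s-\tau}{2s}}$ comes from this high-frequency, phase-nonlocal term, not from ``a few low-frequency and short-time terms''. Your bound of the whole pairing by $\|g\|_{H^{s-\zeta,\a}_{-\gamma/2+2s}}$ alone is unsupported. Its justification is a Bernstein/Sobolev heuristic that never sees the non-locality in $v$, and you leave exactly this estimate open as the ``main obstacle''.

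Two further claims are incorrect.

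First, the coercive part. The remainder from the cancellation lemma is $\int_{\R^6}\Phi^\gamma_{-1}(|v-v_*|)f(v_*)|\F_j\cP_kh(v)|^2\d v_*\d v$. For $\gamma+2s<0$ this is of the same order as the dissipation, and it is \emph{not} controlled by mass, energy and entropy. The paper bounds it via Lemma \ref{mathcalR} by
$M^{-\a}\|f\|_{H^{-\zeta,\a}_{-\gamma}}\|\F_j\cP_kh\|^2_{H^s_{\gamma/2}}+C_{M,\a}\|f\|_{H^{-\zeta,\a}_{-\gamma}}\|\F_j\cP_kh\|^2_{L^2_{\gamma/2}}$.
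Only the smallness $M^{-\a}$, supplied by the logarithmic weight, makes this absorbable into the dissipation.

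Second, the endpoint. At $\gamma+2s=-\frac12$ one has $s=\frac34$, $\gamma=-2$ and $\zeta=1$, so $\tau=\min\{1,\frac12\}=\frac12>0$. If $\tau$ were $0$, the factor $\omega^{-1}$ in $C(\omega)$ would not lie in $L^1[0,T]$, contradicting the statement. The shift $\a\to\a+1$ comes instead from the borderline logarithmic loss $2^{-\tilde\zeta p}=p\,2^{-p}$, with $\tilde\zeta=1-\frac{\log_2p}{p}$. This loss appears in the bounds for the singular part $Q_{-1}$ (Lemmas \ref{M14}, \ref{M23}, \ref{fD-1} and Remark \ref{rem:tzeta}), and one extra power of $j$ in the weight absorbs it.
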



For the moderately soft potential case, we obtain the following results.

\begin{theorem}[Moderately soft potentials]\label{maintheorem2}
Assume that $\gamma$ and $s$ satisfy \eqref{MS} in the moderately soft
potential regime, and let the initial datum $f_0$ satisfy \eqref{f0}.
Suppose, in addition, that
\begin{equation}\label{L1moderate}
f_0\in
\begin{cases}
L^1_{l+\gamma+s}(\R^3)
\ccap
H^{-1,\a}_l(\R^3),
&
-\dfrac12<\gamma+2s<0,
\\[2mm]
L^1_{l+\gamma+s}(\R^3)
\ccap
H^{-1,\a+1}_l(\R^3),
&
\gamma+2s=-\dfrac12,
\end{cases}
\end{equation}
where
\[
\a>0,
\qquad
l\geq-\gamma.
\]
Then the following statements hold.

\begin{enumerate}

\item[(i)] 
There exists $T>0$ and a local-in-time weak solution $f$ satisfying
\[
f\in
\begin{cases}
L^\infty\big([0,T],H^{-1,\a}_l\big)
\ccap
L^2\big([0,T],H^{s-1,\a}_{l+\frac{\gamma}{2}}\big),
&
-\dfrac12<\gamma+2s<0,
\\[2mm]
L^\infty\big([0,T],H^{-1,\a+1}_l\big)
\ccap
L^2\big([0,T],H^{s-1,\a+1}_{l+\frac{\gamma}{2}}\big),
&
\gamma+2s=-\dfrac12.
\end{cases}
\]
If, in addition,
\[
\a>\frac12
\qquad\text{and}\qquad
l\geq-2\gamma,
\]
then the local solution is unique.

\item[(ii)] 
Let $n\geq0$. If
\begin{equation}\label{moderate-weight-n}
l
\geq
-\gamma-\frac{\gamma(n+1)}{2s},
\end{equation}
then, for every $t\in(0,T]$,
\begin{equation}\label{moderate-regularisation}
\begin{cases}
\|f(t)\|_
{H^{n,\a}_{\,l+\frac{\gamma(n+1)}{2s}}}
\ls
t^{-\frac{n+1}{2s}},& -\f12<\gamma+2s<0,\\
\|f(t)\|_
{H^{n,{\a+1}}_{\,l+\frac{\gamma(n+1)}{2s}}}
\ls
t^{-\frac{n+1}{2s}},& \gamma+2s=-\f12.
\end{cases}
\end{equation}

\item[(iii)] 
Assume further that $\a>\frac12$ and that
\begin{equation}\label{moderate-global-moment}
f_0\in
\begin{cases}
L^1_{\frac{4l}{3}+1}(\R^3)
\ccap
H^{-1,\a}_l(\R^3),
&
-\dfrac12<\gamma+2s<0,
\\[2mm]
L^1_{\frac{4l}{3}+1}(\R^3)
\ccap
H^{-1,\a+1}_l(\R^3),
&
\gamma+2s=-\dfrac12.
\end{cases}
\end{equation}
Suppose moreover that $
l\geq
2-\f{6\gamma}{1-\gamma}$,
then the local solution obtained in {\rm (i)} extends globally in time
and is unique.
\end{enumerate}
\end{theorem}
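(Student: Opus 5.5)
The proof of Theorem \ref{maintheorem2} will follow the same architecture as Theorem \ref{maintheorem1}, with the critical index $\zeta=\gamma+2s+\frac32$ replaced by $1$. The reason is that in the regime \eqref{MS} we have $\zeta\ge 1$ only when $\gamma\ge-2$, and the natural threshold for closing the trilinear estimate saturates at $-1$. At the endpoint $\gamma+2s=-\frac12$ we have $\zeta=1$ exactly, and this is why one extra logarithm ($\a\mapsto\a+1$) appears there.

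\textbf{A priori estimate.} The core step is an a priori estimate for $\|f\|_{H^{-1,\a}_l}$. We apply the simultaneous dyadic decomposition in phase ($\<v\>\sim 2^k$) and frequency ($\<\xi\>\sim 2^j$) and test the equation against $\<D\>^{-2}\log^{2\a}(2+\<D\>)\<v\>^{2l}f$, localized blockwise.

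We then split $(Q(f,f),\cdot)$ into three parts:
\begin{itemize}
\item a coercive part, which by the standard lower bound for very soft potentials yields the gain $\|f\|^2_{H^{s-1,\a}_{l+\gamma/2}}$, using mass, energy and entropy bounds;
\item commutator terms between $Q$ and the localization operators, controlled by the sharp commutator estimates;
\item the trilinear upper bound $|(Q(g,h),F)|$, with $g$ measured in $L^1_{l+\gamma+s}$ plus the $H^{-1,\a}$ norm.
\end{itemize}
For $-\frac12<\gamma+2s<0$ the dyadic sums converge geometrically. At $\gamma+2s=-\frac12$ one sum becomes $\sum_j 1$ over a logarithmic range, and the extra power of $\log$ absorbs it. Together with a propagated $L^1_{l+\gamma+s}$ moment, we close with $\frac{d}{dt}X+cY\le C(X)$ for $X=\|f\|^2_{H^{-1,\a}_l}$ and $Y=\|f\|^2_{H^{s-1,\a}_{l+\gamma/2}}$, which gives a uniform $T$.

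\textbf{Existence, uniqueness and regularization.} Existence in (i) follows by regularizing the kernel and data, using the uniform bounds above, and passing to the limit in \eqref{weakform}.

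For uniqueness, we write $h=f-g$ and estimate $\|h\|_{H^{-1,\a}_{-2\gamma}}$ from $\partial_t h=Q(h,f)+Q(g,h)$. The term $Q(h,f)$ is the obstacle, since $h$ carries no $L^1$ control. We bound it by the $H^{-1,\a}$ norm of $h$ against $\|f\|_{H^{s-1,\a}}$. Here $\a>\frac12$ is exactly what makes $\sum_j j^{-2\a}<\infty$ in the Cauchy--Schwarz step across frequency blocks, in the same way that $H^{3/2,\a}\subset L^\infty$ requires $\a>\frac12$. The weight $l\ge-2\gamma$ pays for the $\<v\>^{\gamma}$ loss twice. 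Gronwall, with the time-integrable coefficient from $Y\in L^1_t$, then gives uniqueness.

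For (ii) we run the same energy method on $t^{\frac{N+1}{s}}\|f\|^2_{H^{N,\a}_{l+\gamma(N+1)/(2s)}}$ and interpolate in steps of size $s$. Each gain of $s$ derivatives costs $t^{-1/2}$ and a weight $\<v\>^{\gamma/2}$, which is where the condition \eqref{moderate-weight-n} comes from.

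\textbf{Global extension.} For (iii) we use the recently established monotonicity of the Fisher information, which gives a uniform $H^1$-type bound $\|\nabla\sqrt f\|_{L^2}\le C$. By Sobolev embedding and interpolation with the moment $L^1_{4l/3+1}$ (propagated globally for soft potentials with at most linear growth), this yields a uniform-in-time bound on $\|f(t)\|_{H^{-1,\a}_l}$. The local time $T$ then depends only on quantities that stay bounded, so we can iterate. The exponent condition $l\ge 2-\frac{6\gamma}{1-\gamma}$ should come from the interpolation between the Fisher bound and the moment.

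\textbf{Main obstacle.} I expect the hardest step to be the critical endpoint $\gamma+2s=-\frac12$ in the trilinear and commutator estimates, where the log-divergent sum must be matched precisely by the extra log weight.
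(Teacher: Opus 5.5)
Your a priori estimate and existence scheme match the paper, and your time-weighted bootstrap for (ii) is a workable alternative to the paper's route. The paper interpolates between $H^{-1,\mathfrak a}_l$ and the dissipation norm to obtain $\frac{d}{dt}X+cX^{1+\frac{s}{n+1}}\le C$, then argues by comparison. There are, however, two genuine gaps.

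\textbf{Uniqueness.} A Gronwall estimate for $h=f-g$ with coefficient $Y(f)\in L^1_t$ does not close on the open range $-\frac12<\gamma+2s<0$.

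The bad term is the commutator of $Q(h,\cdot)$ with the phase cutoff $\mathcal{P}_k$. Since $h$ has no $L^1$ bound, each piece $Q_m$ (relative velocity $\sim 2^m$) must be estimated with $h$ in $L^2$. This costs $\|\Phi^\gamma_m\|_{L^2}\sim 2^{(\gamma+\frac32)m}$, because $|v|^\gamma$ is not square integrable at infinity once $\gamma\ge-\frac32$. Moreover, since $v'\neq v$, the localization $2^k$ of the test function cannot be passed to the second argument, which only enters through the low-phase part $\mathcal{U}_m f$. You are left with
\[
\sum_{m\geq k}\sum_{a\le j}2^{(\gamma+\frac32)m}2^{\tau(m-k)}2^{(2s-\tau)(j-a)}\|\mathfrak{F}_a\mathcal{P}_m h\|_{L^2}\|\mathfrak{F}_j\mathcal{U}_m f\|_{L^2}\|\mathfrak{F}_j\mathcal{P}_k h\|_{L^2},\qquad \tau\in[0,1].
\]
This fails for two reasons:
\begin{enumerate}
\item With any weight $\langle v\rangle^{l}$ on $h$, the phase factor becomes $2^{(\gamma+\frac32)k}2^{(\gamma+\frac32+\tau-l)(m-k)}$. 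The growth $2^{(\gamma+\frac32)k}$ cannot be absorbed, since the dissipation carries the smaller weight $l+\frac\gamma2$. So this term needs $\gamma<-\frac32$, which is false for $\gamma\in[-\frac32,-1)$, a subrange of (MS).
\item Even where the phase sum converges, the frequency sum with $\zeta=1$ requires $f$ at nonnegative regularity: $\|f\|_{H^{2s-\tau-1,\mathfrak a}}$ with $\tau\le 2s-1$. This is strictly above the dissipation level $H^{s-1,\mathfrak a}$, so you would need (ii).
\end{enumerate}

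The paper avoids this. For $-\frac12<\gamma+2s<0$ one has $\frac{3}{3+\gamma}<\frac{6}{5-4s}$, so pick $p\ge2$ in between and set $n=\frac32-\frac3p$. Then $\frac{n+1}{2s}<1$, so $l\ge-2\gamma$ implies \eqref{moderate-weight-n} with resulting weight at least $-\gamma$. Now (ii) plus Sobolev embedding gives $\|f(t)\|_{L^p}\lesssim 1+t^{-\frac{n+1}{2s}}\in L^1(0,T)$, and Theorem~\ref{conditionaluniqueness} applies. This, not a double $\langle v\rangle^\gamma$ loss, is where $l\ge-2\gamma$ comes from.

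The energy method is used only at the endpoint $\gamma+2s=-\frac12$. There the $p$-interval is empty, but $\gamma=-2<-\frac32$, and the argument runs with $\mathfrak a+1$ and the integrable coefficient $t^{-\frac{2s-\tau}{2s}}$ supplied by (ii).

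\textbf{Global extension.} Monotonicity of the Fisher information gives nothing unless $I(f)$ is finite at some time, and $f_0\in H^{-1,\mathfrak a}_l$ provides no such bound.

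The missing step is to apply (ii) with $n=2$ at a fixed $\tau>0$, so that $f(\tau)\in H^{2}_{l+3\gamma/(2s)}$. Since $\frac{3\gamma}{2s}=\frac{6\gamma}{1-\gamma}$ and $-\gamma<2$, the hypothesis $l\ge 2-\frac{6\gamma}{1-\gamma}$ gives both:
\begin{enumerate}
\item the weight condition $l\ge-\gamma-\frac{3\gamma}{2s}$;
\item $l+\frac{3\gamma}{2s}\ge 2>\frac32$.
\end{enumerate}
Hence $f(\tau)\in H^2_{3/2+\epsilon}$, which together with $f(\tau)\ge0$ gives $I(f(\tau))<\infty$. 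Then $I(f(t))\le I(f(\tau))$ for $t\ge\tau$, while $[0,\tau]$ is covered by the local theory.

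The interpolation $\|f\|_{H^{-1,\mathfrak a}_l}\lesssim\|f\|_{L^{p_\delta}_l}\lesssim\|f\|_{L^1_{4l/3+1}}^{\theta_\delta}\|f\|_{L^3}^{1-\theta_\delta}$, with $\|f\|_{L^3}\lesssim I(f)$ and $p_\delta$ slightly above $\frac65$, only fixes the moment order $\frac{4l}{3}+1$ (from $\theta_0=\frac34$). It is not the source of the condition on $l$. Finally, that moment grows linearly in $t$. You therefore get bounds on every finite interval, which suffices for continuation, but not uniform-in-time bounds.
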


 \begin{remark}
The non-cutoff Boltzmann equation is comparable to the semi-linear heat-type equation
\begin{equation}\label{toymodel}
    \partial_t f = \langle v \rangle^\gamma (-\Delta_v)^s f + (|\cdot|^\gamma * f)f,
\end{equation}
since the collision operator behaves like a fractional Laplace operator. Specifically, we have 
$-Q(f, \cdot) \sim C_f \langle v \rangle^\gamma (-\Delta_v)^s + \text{L.O.T.}$ It is straightforward to see that the critical space for the solution is \( L^p \) with \( p = \frac{3}{3 + \gamma + 2s}=\frac{6}{7+\gamma} \), which corresponds to the Sobolev space \( H^{-(\gamma + 2s + \frac{3}{2})} \) (see Subsection \ref{critical}).
 \end{remark}

\begin{remark}
 For local existence, we can choose \(l=-\gamma\), so it suffices that \(f_0\in L^1_2\) in \eqref{L1} since $l+\gamma+s=s<1$. For local uniqueness, we treat the cases \(\gamma+2s\in(-1,-1/2]\) and \(\gamma+2s\in(-1/2,0)\) by two distinct methods. The former is proved directly by the energy method. Choosing \(l=-\gamma+2s\) gives \(l+\gamma+s=3s<3\), so it suffices that \(f_0\in L^1_3\). The latter relies on the conditional uniqueness result (see Proposition \ref{conditionaluniqueness}) together with regularity estimates. The extension to global well-posedness relies on the Fisher information estimate obtained in  \cite{imbert2026monotonicity}.
\end{remark}

\begin{remark}
When \(\gamma = -3\) and \(s = 1\), this reduces to the Landau–Coulomb equation, for which the parallel well-posedness theory has been established in \cite{HeJiLuo2024}. However, the non-locality of the Boltzmann collision operator is stronger than that of the Landau collision operator. See Subsection \ref{strategy} for a simple explanation or refer to Subsection \ref{proveunique} for details.
\end{remark}

\subsection{Previous related work}
Since the Boltzmann equation without angular cutoff has been widely investigated, we will only mention those works that are closely related to ours.

Many results concerning existence are available; see \cite{DesvillettesMouhot,DW,villani1998new} and the references therein. Prior to \cite{imbert2026monotonicity}, global-in-time existence had only been proved when \(\gamma \ge -2\). The work of \cite{imbert2026monotonicity} established the monotonicity of the Fisher information for the Boltzmann equation, which leads to the global existence of solutions under very soft potentials.

Regarding regularity, the non-cutoff Boltzmann equation, including the inhomogeneous case, has been extensively studied. Notable results include those in Sobolev spaces \cite{AMU1, CH1, CH2, DW}, as well as \( C^\infty \) estimates established using De Giorgi iterations and the Schauder argument \cite{IS1, IS2, IS4}. We note that the latter methods can also be utilized to prove well-posedness (see \cite{henderson2025classical, HST2, HW}).

Uniqueness is more delicate and is often obtained in a conditional form, requiring additional integrability or regularity.  For instance, Desvillettes--Mouhot \cite{desvillettes2009stability} proved stability estimates for the homogeneous equation including hard and moderately soft potentials, which can be combined with propagation of moments or regularity to conclude uniqueness. A complementary probabilistic approach uses Wasserstein distances to control the stability of solutions. Fournier \cite{fournier2008uniqueness} proved inequalities for Wasserstein distances leading to uniqueness and well-posedness results for classes of non-cutoff kernels, including soft potentials. More precisely, the authors obtained
\begin{theorem}[Conditional uniqueness, \cite{fournier2008uniqueness}]
\label{conditionaluniqueness}
Under our assumptions \eqref{assumption}, the weak solution \(f(t,v)\) is unique if it satisfies \(f\in L^{\infty}([0,T],L^1_2(\mathbb{R}^3))\ccap  L^1_{\rm{loc}}([0,T], L^p(\mathbb{R}^3))\) for some \(p\in\left(\frac{3}{3+\gamma},\infty\right)\).
\end{theorem}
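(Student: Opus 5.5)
This theorem is quoted from \cite{fournier2008uniqueness}, so the plan below reconstructs Fournier's coupling argument in the Wasserstein framework rather than a new energy estimate. Let $f,g$ be two weak solutions with the same initial datum $f_0$. Both are assumed to lie in $L^\infty([0,T],L^1_2)$, and one of them also lies in $L^1_{\rm loc}([0,T],L^p)$ with $p>\frac{3}{3+\gamma}$.

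\textbf{Step 1: probabilistic representation.} Represent each solution as the time marginals of a nonlinear jump process driven by a Poisson random measure. To handle the non-integrable angular singularity, write the collision as $v\mapsto v+a(v,v_*,\theta,\phi)$ with $|a|\lesssim \theta|v-v_*|$. Then parametrize the deviation angle by $z\in(0,\infty)$ through $\theta=G(z/|v-v_*|^\gamma)$, so that the rate $|v-v_*|^\gamma\beta(\theta)\,d\theta$ becomes $dz$. This puts the kinetic factor into the jump size and makes the rate independent of the velocities, which is what allows a coupling.

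\textbf{Step 2: coupling and distance.} Couple the two processes through the same Poisson measure, and take $v_*,w_*$ as an optimal coupling of $f_t,g_t$. Use the \emph{Tanaka trick}: choose the azimuthal angle $\phi$ so that the two collision frames are aligned, which removes the singular first-order discrepancy. Compute $\frac{d}{dt}\mathbb{E}|V_t-W_t|^2$, which bounds the quadratic Wasserstein distance $W_2(f_t,g_t)^2$. By symmetry of the coupling, the increment splits into two pieces. The first is the jump-size difference integrated in $z$. The second comes from $|v-v_*|$ versus $|w-w_*|$ entering through $G$.

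\textbf{Step 3: key estimate.} Show that
$$\int_0^\infty\big|a(v,v_*,G(z/|v-v_*|^\gamma),\phi)-a(w,w_*,G(z/|w-w_*|^\gamma),\phi+\phi_0)\big|^2dz\lesssim (|v-v_*|^\gamma+|w-w_*|^\gamma)\big(|v-w|^2+|v_*-w_*|^2\big).$$
This uses $G(x)\sim x^{-1/(2s)}$, the non-cutoff bound, and the alignment from Step 2. The factor $|v-v_*|^\gamma$ is singular because $\gamma<0$, and it is integrated against the solution. The $L^p$ hypothesis controls it via Hölder:
$$\sup_v\int |v-v_*|^\gamma f_t(v_*)\,dv_*\lesssim \|f_t\|_{L^1}+\|f_t\|_{L^p},$$
which is finite precisely when $\gamma p'>-3$, i.e.\ $p>\frac{3}{3+\gamma}$.

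\textbf{Step 4: closing the argument.} The singular weight sits at the unknown coupled point $V_t$, not at a fixed point, so Step 3 does not directly give $\frac{d}{dt}W_2^2\le C(\|f_t\|_{L^p}+1)\,W_2^2$. I expect this to be the main obstacle. The fix I would try first is Fournier's: replace $|x|^2$ by a $\gamma$-adapted cost and localize to large and small relative velocities. On the small-velocity region, the singular weight is absorbed into $L^1_t L^p$ after averaging in $v_*$ against $f_t$. On the large-velocity region, moments in $L^\infty_tL^1_2$ control the tail. This yields the bound $\frac{d}{dt}W_2^2\le C(\|f_t\|_{L^p}+1)\,W_2^2$, and Gronwall with $W_2(f_0,f_0)=0$ gives $f=g$.
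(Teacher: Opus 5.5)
The paper gives no proof of this statement; it is quoted from \cite{fournier2008uniqueness}. Your Steps 1--3 are the skeleton of that argument: the Poisson representation with $\theta=G(z/|v-v_*|^\gamma)$, coupling both processes through one Poisson measure with $(\alpha_t(u),\beta_t(u))$ an optimal coupling of $(f_t,g_t)$ under $du$, Tanaka's alignment, and the bound with weight $|v-v_*|^\gamma+|w-w_*|^\gamma$. You should also treat the cross term $2(v-w)\cdot\int(c-\tilde c)$ explicitly. It is handled by the averaged jump $-b\,(v-v_*)|v-v_*|^\gamma$ together with $\big|X|X|^\gamma-Y|Y|^\gamma\big|\lesssim|X-Y|(|X|^\gamma+|Y|^\gamma)$.

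The gap is Step 4. The obstacle you describe is not there, and your remedy (a $\gamma$-adapted cost plus a split into small and large relative velocities) is neither carried out nor needed; Fournier--Gu\'erin use the plain quadratic cost. The missing observation is about how the singular weight is averaged. In the compensator, the partner velocity $\alpha_t(u)$ is integrated in $du$ for each fixed value of $V_t$, and $\alpha_t$ has law $f_t$ under $du$. Conversely, for fixed $u$ the point $\alpha_t(u)$ is deterministic while $V_t$ has law $f_t$. Set $K(f):=\sup_x\int f(y)|x-y|^\gamma\,dy\lesssim 1+\|f\|_{L^p}$, which is your H\"older computation. Then
\[
\mathbb{E}\Big[|V_t-W_t|^2\int_0^1|V_t-\alpha_t(u)|^\gamma du\Big]\le K(f_t)\,\mathbb{E}|V_t-W_t|^2,
\]
\[
\int_0^1|\alpha_t(u)-\beta_t(u)|^2\,\mathbb{E}|V_t-\alpha_t(u)|^\gamma\,du\le K(f_t)\,W_2^2(f_t,g_t),
\]
and the same bounds hold with $(W_t,\beta_t,g_t)$ in place of $(V_t,\alpha_t,f_t)$. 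Since $(V_t,W_t)$ is a coupling of $(f_t,g_t)$, we have $W_2^2(f_t,g_t)\le\mathbb{E}|V_t-W_t|^2$. Hence
\[
\frac{d}{dt}\mathbb{E}|V_t-W_t|^2\le C\big(1+\|f_t\|_{L^p}+\|g_t\|_{L^p}\big)\,\mathbb{E}|V_t-W_t|^2,
\]
and Gronwall from $V_0=W_0$ finishes the proof. No localization is required, and the $L^\infty_tL^1_2$ bound is only used to define the processes and $W_2$.

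This also exposes a second defect. The closing inequality needs $\|g_t\|_{L^p}$, which comes from the weight $|w-w_*|^\gamma$ in your own Step 3. That weight cannot be dropped from the pointwise bounds: for $\gamma<-1$ the drift difference contains $Y|Y|^\gamma$ with $Y=w-w_*$. This is unbounded as $|Y|\to0$, while $|v-v_*|^\gamma$ stays bounded. So your opening reduction (only one solution in $L^1_{\rm loc}L^p$) and your final bound with $\|f_t\|_{L^p}$ alone are not justified. The statement is uniqueness within the class where both solutions satisfy the integrability hypothesis, and that is what the argument proves.
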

More recent works continue to refine both the existence theory and the conditional uniqueness criteria in soft and very soft regimes, often by combining entropy dissipation bounds with weighted $L^p$ or Sobolev estimates; see, for example, the recent entropy dissipation lower bounds and their applications in \cite{jamil2023entropy}.  

We also mention some corresponding conclusions related to the Landau equation, which is closely related to the non-cutoff Boltzmann equation through grazing-collision limits, but with an anisotropic diffusion structure in $v$. In the Coulomb case, the conditional uniqueness is due to Fournier in \cite{FournierLandau}. He proved the uniqueness of solutions in the class $L^\infty_{loc}([0,\infty); L^1_2(\R^3)) \ccap  L^1_{loc}([0,\infty);L^\infty(\R^3))$. This result implies local well-posedness, assuming further that the initial data lie in $L^p (\R^3)$ with $p>3/2$. For more details, we refer readers to \cite{golding2024local}. It is worth mentioning that the authors of \cite{GSUN} recently applied the method of the \(\mathcal{M}\)-operator to obtain uniqueness of solutions in the critical space \(L^{3/2}\), where the existence in this space was obtained in \cite{golding2025global}. 
In fact, solutions exist globally because of the breakthrough by Guillen and Silvestre in \cite{guillen2025landau}, where they proved that the Fisher information is monotonically decreasing in time, ensuring that classical solutions to the equation will never blow up. Note that the space \(H^{-1/2}\) is precisely the \(H^s\)-space into which \(L^{3/2}\) embeds. Correspondingly, a well-posedness result is established for initial data in a logarithmically modified Sobolev space \(H^{-1/2,\a}\) with \(\a>\frac{1}{2}\) in \cite{HeJiLuo2024}.

In this paper, we aim to obtain well-posedness for the equation \eqref{Boltzmann} in the critical negative Sobolev space (hereafter,``negative" refers to Sobolev spaces with negative regularity indices). 
 As can be seen from \eqref{fracLap}, Theorem \ref{conditionaluniqueness} does not apply here, similar to the Landau-Coulomb case. Therefore, we need to combine moment propagation with a stability mechanism. We will utilize the following result on the propagation of \( L^1 \) moments:
\begin{proposition}[\cite{Lu2009}]\label{propagationofL1moment}
Under our assumptions \eqref{assumption}, the weak solutions satisfy that if \( f_0 \in L^1_l \) with \( l > 2 \), then 
\begin{equation}
    \|f(t)\|_{L^1_l} \leq C_l(1+t),\quad \forall t \in \mathbb{R}^+,
\end{equation}
where \( C_l \) depends only on \( \gamma, l, K_b\) and \(\|f_0\|_{L^1_2} \).
\end{proposition}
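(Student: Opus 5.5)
The plan is to test the weak formulation against the truncated weight $\psi_R(v)=\<v\>^l\chi(v/R)$, where $\chi\in C_c^\infty$, $0\le\chi\le1$, and $\chi\equiv1$ near the origin. Then let $R\to\infty$ using monotone convergence. Since $\psi_R\in C_c^2$, identity \eqref{weakform} applies to $\psi_R$. Everything rests on a Povzner-type inequality for the symmetrized angular integral
\[
K_\psi(v,v_*):=\int_{\SS^2} b(\cos\theta)\big[\psi(v')+\psi(v'_*)-\psi(v)-\psi(v_*)\big]\d\sigma .
\]
Only part of $b$ is integrable, so we first subtract the linear part. Because $b$ is symmetric in $\sigma$ about the axis $k=(v-v_*)/|v-v_*|$, the first-order Taylor term $\nabla\psi(v)\cdot(v'-v)$ averages to a multiple of $(1-\cos\theta)$. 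The remainder is $O(\sin^2\theta\,|v-v_*|^2\|\nabla^2\psi\|)$. This bound is uniform in $R$ because the bounds on $\nabla^2\psi_R$ are uniform, and it makes $K_\psi$ absolutely convergent. The principal value limit in \eqref{weakform} is therefore harmless.

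Next we prove the Povzner estimate for $l>2$. Write $\<v'\>^2=\frac{\<v\>^2+\<v_*\>^2}{2}+\frac{|v|^2-|v_*|^2}{2}\cos\theta+|v+v_*||v-v_*|(\text{transverse part})\sin\theta$, and use the standard decomposition as in Lu/Wennberg. The goal is
\[
|v-v_*|^\gamma K_{\<\cdot\>^l}(v,v_*)\le C_l\,|v-v_*|^{\gamma}\big(\<v\>^{l-1}\<v_*\>+\<v\>\<v_*\>^{l-1}\big)\,\theta\text{-integrated weight}\;-\;c_l|v-v_*|^\gamma\<v\>^l\mathbf 1_{|v|\ge 2|v_*|}.
\]
The constant comes from $\int b\,\sin^2\theta\,\d\sigma<\infty$, which holds because $s<1$. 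The cross terms carry mixed powers $\<v\>^{l-2}\<v_*\>^2$ up to $\<v\>^2\<v_*\>^{l-2}$. For soft potentials we simply drop the negative term. The positive terms then involve $|v-v_*|^{\gamma+2}$, and after expansion they are bounded by $\<v\>^{l-2}\<v_*\>^{2}|v-v_*|^{\gamma+2}$ plus its symmetric counterpart. Here $\gamma+2\le1$, so $|v-v_*|^{\gamma+2}\ls \<v\>\<v_*\>+|v-v_*|^{\gamma+2}\mathbf 1_{|v-v_*|\le1}$.

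We then integrate against $f f_*$. The contribution from the region $|v-v_*|\le1$ carries the singular factor $|v-v_*|^{\gamma+2}$ with $\gamma+2>-1$. That factor is locally integrable, but we still need $\int\!\!\int_{|v-v_*|\le1} ff_*|v-v_*|^{\gamma+2}\<v\>^{l}$. We avoid it by keeping the full factor $|v-v_*|^{2}$ from the Taylor bound. The kernel there is $|v-v_*|^{\gamma+2}\sin^2\theta\,b$, so it is bounded whenever $\gamma+2\ge0$. When $\gamma+2<0$, that is $\gamma\in(-3,-2)$, we use the actual vanishing of the difference $\psi(v')-\psi(v)$. Since $|\nabla^2\<v\>^l|\ls\<v\>^{l-2}$, this gives $\ls|v-v_*|^{\gamma+2}\<v\>^{l-2}\<v_*\>^{?}$ on $|v-v_*|\le1$, where $\<v\>\sim\<v_*\>$. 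The term $\int\!\!\int_{|v-v_*|\le1}|v-v_*|^{\gamma+2}ff_*\<v\>^{l-2}$ is the main obstacle, because we have no $L^p$ bound on $f$. We handle it with the entropy dissipation. Its coercivity in the sense of Alexandre--Desvillettes--Villani--Wennberg, together with $f\in L\log L$, controls $\int_0^t\|\<v\>^{\gamma/2}f\|_{H^s}^2$ locally, and that suffices to bound the singular convolution in $L^1_t$. Alternatively, following Lu, we lower the singularity by writing $\<v'\>^l-\<v\>^l$ through the exact energy identity, which avoids the Hessian on the singular region. We will try the latter first.

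Collecting the estimates gives
\[
\frac{\d}{\d t}\|f\|_{L^1_l}\le C_l\big(\|f\|_{L^1_{l-1}}\|f\|_{L^1_2}+\cdots\big)\le C_l\|f\|_{L^1_2}^2+\tfrac12\text{(absorbable)},
\]
where the dots stand for terms of total weight at most $l-2+\gamma+2\le l-1$ in the larger variable. Induction on $l$ in unit steps starts from the conserved $L^1_2$ norm. For $l\in(2,3]$ the right-hand side is bounded by the constant $C\|f_0\|_{L^1_2}^2$, which yields linear growth $C_l(1+t)$. At each subsequent step the lower moments are $\ls(1+t)$, but the interpolation $\|f\|_{L^1_{l-1}}\le\|f\|_{L^1_2}^{\theta}\|f\|_{L^1_l}^{1-\theta}$ combined with Young's inequality keeps the growth linear. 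Finally we pass $R\to\infty$ by monotone convergence; the right-hand side of the bounds is uniform in $R$.
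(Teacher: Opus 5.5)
The paper gives no proof of this proposition (it is quoted from \cite{Lu2009}), so your argument has to stand on its own. Two steps of it do not.

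\emph{The time growth.} You drop the negative part of the Povzner inequality and then claim that interpolation plus Young keeps the growth linear. Without a dissipative term this is false. Write $X_l=\|f\|_{L^1_l}$ and take $l>3$. The inequality $\frac{\d}{\d t}X_l\le CX_{l-1}$ together with $X_{l-1}\le X_2^{\vartheta}X_l^{1-\vartheta}$, $\vartheta=\frac1{l-2}$, only gives $\frac{\d}{\d t}X_l\le CX_l^{1-\vartheta}$, hence $X_l(t)\lesssim(1+t)^{l-2}$. The term $\frac12X_l$ that Young's inequality puts on the right-hand side has nothing to be absorbed into; it can only be handled by Gronwall, which gives $e^{t/2}$. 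Likewise, the unit-step induction gives $(1+t)^k$ after $k$ steps.

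The bookkeeping is also off. Bounding $|v-v_*|^{\gamma+2}\lesssim\langle v\rangle\langle v_*\rangle$ inside $\langle v\rangle^{l-2}\langle v_*\rangle^{2}|v-v_*|^{\gamma+2}$ produces $\langle v\rangle^{l-1}\langle v_*\rangle^{3}$, a third moment. So even the case $l\in(2,3]$ does not close with $\|f\|_{L^1_2}$. In any case the constant must involve $\|f_0\|_{L^1_l}$.

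What is needed instead:
\begin{itemize}
\item Keep the negative term. By conservation of mass and energy, $\int_{|v_*|\le|v|/2}f_*\,\d v_*\ge\frac12$ for $|v|$ large, so this term is at most $-c\|f\|_{L^1_{l+\gamma}}+C$.
\item Keep the factor $|v-v_*|^{\gamma}$ rather than discarding it. On $\{|v|\ge2|v_*|\}$ it is $\sim\langle v\rangle^{\gamma}$.
\item On $\{|v_*|\sim|v|\}$, use the plain second-order bound $|v-v_*|^{\gamma}K_{\langle\cdot\rangle^l}\lesssim|v-v_*|^{\gamma+2}\langle v\rangle^{l-2}$ (the symmetrized first-order term is nonpositive by convexity), together with $\int_{|v_*|\sim|v|}f_*\,\d v_*\lesssim\langle v\rangle^{-2}$.
\end{itemize}
These put every positive contribution with $|v-v_*|\ge1$ at moment order strictly below $l+\gamma$; for $\gamma\ge-2$ this also covers $|v-v_*|<1$. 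Interpolating between $L^1_2$ and $L^1_{l+\gamma}$ and absorbing then gives $\frac{\d}{\d t}\|f\|_{L^1_l}\le C$. That is where linear growth comes from.

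\emph{The near-diagonal term for $\gamma\in(-3,-2)$.} You correctly identify this as the main obstacle, but you leave it open, and your preferred remedy cannot work. Set $V=\frac{v+v_*}2$ and $k=\frac{v-v_*}{|v-v_*|}$. Expanding $\psi(V\pm w)$ about $V$ gives, as $v_*\to v$,
\[
K_{\langle\cdot\rangle^l}(v,v_*)=\frac{|v-v_*|^2}{4}\Big(\int_{\mathbb{S}^2}b(\cos\theta)\sin^2\theta\,\d\sigma\Big)\,l(l-2)\langle V\rangle^{l-4}\Big(\tfrac12|V|^2-\tfrac32(V\cdot k)^2\Big)+O\big(|v-v_*|^4\big).
\]
For $l\ne2$ the bracket is nonzero for generic $k$. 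So the singularity $|v-v_*|^{\gamma+2}$ belongs to the angular-integrated kernel itself, and no rewriting through the energy identity can lower it.

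Control therefore has to come from the entropy dissipation, and that route is only sketched and misstated. The dissipation controls an $H^s$ norm of $\sqrt f$, not of $f$, and only with the decaying weight $\langle v\rangle^{\gamma/2}$ (or only locally). The singular term, by contrast, carries the growing weight $\langle v\rangle^{l-2}$. Matching the weights forces you to interpolate against moments of order close to $l$. That produces a Gronwall-type inequality whose coefficient is only integrable in time. You would still have to show this is compatible with a linear rate, and the constants would then also depend on $\mathcal{H}(f_0)$, not only on $\|f_0\|_{L^1_2}$.
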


\subsection{Explanation of the critical space}\label{critical}

Before presenting our proof, we briefly explain why the negative Sobolev space with logarithmic modification
\[
H^{-(\gamma+2s+\frac32),\a}
\]
is the natural functional setting. The discussion below is heuristic and is based on the toy model \eqref{toymodel}.

From the viewpoint of local existence, the standard $L^p$ estimate takes the form
\begin{equation}\label{Lp-estimate}
\frac{\d}{\d t}\|f\|_{L^p_l}^p
\lesssim
-\|f^{p/2}\|_{H^s_{\gamma/p+l}}^2
+
\int_{\R^3}(|\cdot|^\gamma*f)\,f^p\<v\>^{pl}\d v.
\end{equation}
The dissipation controls the higher integrability
\[
\|f^{p/2}\|_{H^s}
\gtrsim
\|f\|_{L^{\lambda p}},
\qquad
\lambda=\frac3{3-2s},
\]
while the nonlinear term is estimated by the Hardy--Littlewood--Sobolev inequality. Balancing these two terms leads to the condition
\[
\frac3{\gamma+2s+3}
<
p
<
\frac3{3+\gamma}.
\]
Therefore,
\[
p=\frac3{\gamma+2s+3}
\]
is the critical integrability exponent for the local existence theory.

The corresponding Sobolev space is obtained from the embedding
\[
L^{p}(\R^3)
\hookrightarrow
H^{-(\gamma+2s+\frac32)}(\R^3),
\]
which explains the appearance of the negative-order Sobolev exponent in our main theorem.

The same space also naturally arises from the uniqueness problem. Indeed, for the linear fractional heat equation,
\begin{equation}\label{fracLap}
 \partial_t f+(-\Delta)^sf=0,   
\end{equation}
the semigroup satisfies
\[
\|e^{-t(-\Delta)^s}f_0\|_{L^{\frac3{3+\gamma}}}
\lesssim
t^{-1}|\log t|^{-\a}
\|f_0\|_{H^{-(\gamma+2s+\frac32),\a}}.
\]
The exponent $L^{\frac3{3+\gamma}}$ coincides with the critical space appearing in Fournier's conditional uniqueness result \cite{fournier2008uniqueness}. However, the estimate above is only borderline integrable in time, so the conditional uniqueness argument cannot be applied directly. This motivates the logarithmic modification and the negative Sobolev framework adopted in the present paper.

\color{black}

\subsection{Strategy of the Proof}\label{strategy}
The main idea is to combine the energy method in negative Sobolev spaces
with a simultaneous localization in the phase and frequency variables, in order to obtain sharper estimates. Specifically, we localize the equation \eqref{Boltzmann} to obtain
\begin{equation*}
    \partial_t \cP_k\F_jf=\cP_k\F_jQ(f,f),
\end{equation*}
where we refer to Subsection \ref{subsec:decomposition} for the definition of the dyadic decomposition, 
and we further get the energy estimate 
\begin{equation*}
\begin{aligned}
    \f12\f{\d}{\d t }\|\cP_k\F_jf\|^2_{L^2}=\&(\cP_k\F_jQ(f,f),\cP_k\F_j f)_{v}\\
    = \&(Q(f,\cF_j\cP_kf),\cF_j\cP_kf)_v+(\cF_jQ(f,\cP_kf)-Q(f,\cF_j\cP_kf),\cF_j\cP_kf)_v\\\&+(\cP_kQ(f,f)-Q(f,\cP_k f),\cF_j^2\cP_kf)_v.
\end{aligned}
\end{equation*}
The first term is the principal coercive contribution.  The second term
measures the failure of the Fourier localization $\F_j$ to commute with
the collision operator, while the third term measures the corresponding
failure of the physical localization $\cP_k$ to commute with $Q$.
Accordingly, the proof is reduced to a coercivity estimate together with
sharp bounds for these two types of commutators.  These estimates are developed in Sections~\ref{coercivityandupperbound}--\ref{commutatorestimatesQPk}.
At the heuristic level, the coercivity estimate yields
\begin{equation}\label{eq:schematic-local-dissipation}
    -\big(Q(f,\F_j\cP_kf),\F_j\cP_kf\big)_v
    \gtrsim
    2^{\gamma k}2^{2sj}
    \|\cP_k\F_jf\|_{L^2}^2
    +
    \text{remaining terms}.
\end{equation}

At this point, we can utilize the characterization of Sobolev spaces through dyadic localization (see Lemma \ref{le1.4}). By multiplying by appropriate coefficients and summing, we obtain energy estimates in any weighted Sobolev space. For example, by multiplying \(2^{2lk}2^{2nj}j^{2\a}\), with \(l, n \in \mathbb{R},\a>0\), and summing over \(j, k \geq -1\), we have 
\begin{equation*}
    \frac{1}{2} \frac{\d}{\d t } \sum_{j,k=-1}^\infty 2^{2lk}2^{2nj}j^{2\a} \|\cP_k \F_j f\|^2_{L^2} \leq -c 2^{(\gamma+2l) k}2^{2(s+n)j}j^{2\a}\|\cP_k \F_j f\|^2_{L^2} + \text{remaining terms},
\end{equation*}
where the remaining terms here are not the same as the previous ones. More precisely, it holds
\begin{equation*}
    \frac{1}{2} \frac{\d}{\d t }\|f\|^2_{H^{n,\a}_l}+\kappa\|f\|^2_{H^{n+s,\a}_{l+\gamma/2}}\leq\text{remaining terms},
\end{equation*}
where the existence and regularity estimates of solutions can be obtained by choosing suitable values of \(n,l\) and $\a$.

For uniqueness, we directly use the method of differences. Consider the case \(\gamma + 2s < -\frac{1}{2}\). Let \(f\) and \(g\) be two solutions with the same initial value \(f_0\); then \(h = f - g\) satisfies the equation, at least formally,
\[\partial_t h = Q(f, h) + Q(h, g),\quad h(0)=0.
\]
Similarly, we have
\begin{equation}\label{strah}
  \begin{aligned}
&\frac{1}{2}\frac{\d}{\d t}\sum_{j,k=-1}^\infty2^{-2\gamma k}2^{-2\zeta j}j^{2\a}\|\cF_j\cP_k h\|_{L^2}^2=2^{-2\gamma k}2^{-2\zeta j}j^{2\a}\Big((Q(f,h),\cP_k\cF_j^2\cP_kh)_v+(Q(h,g),\cP_k\cF_j^2\cP_kh)_v\Big).
\end{aligned}  
\end{equation}
We aim to obtain
$$
\frac{\d}{\d t } \|h(t)\|^2_{H^{-\zeta,\a}_{-\gamma}}+c\|h(t)\|^2_{H^{s-\zeta,\a}_{-\gamma/2}} \le C(t) (1+\|h(t)\|^3_{H^{-\zeta,\a}_{-\gamma}}),
$$
with $\zeta$ defined in Theorems \ref{maintheorem1} and \ref{maintheorem2} while \(C(t)\) is an integrable function of time \(t\). Thus, Grönwall's inequality yields uniqueness. 

To this end, in addition to handling the first term on the right-hand side of equation \eqref{strah} using the same method as before, we also need to address the second term. We point out that, when considering the Landau-Coulomb collision operator
\begin{equation*}
Q_L(g,h)=\nabla\cdot\left([a*g]\nabla h-[a*\nabla g]h\right),\quad \mbox{with}\quad a(z)=|z|^{-1}\Big(\mathrm{Id}-\f{z\otimes z}{|z|^2}\Big),
\end{equation*}
it always holds that 
\[(Q_L(h,g), \mathcal{P}_k \F_j^2 \mathcal{P}_k h)_v \sim  (Q_L(h, \mathcal{P}_k g), \mathcal{P}_k \F_j^2 \mathcal{P}_k h)_v,\] 
because the variables of the last two functions are consistent in the phase space. However, the Boltzmann operator is different, which is one of the reasons we refer to the Boltzmann collision operator as more non-local. 

Combining \eqref{ubdecom} and \eqref{eq m1234}, we can decompose it into several terms, among which the most challenging term to handle is given by
\[
\sum_{a>j}\sum_{m>k} 2^{-2\gamma k} 2^{-2\zeta j} j^{2\a}\left(Q(\F_a\cP_m h,\F_j\cU_{m}g),\cP_k\cF_jh\right)_v.
\]
We can obtain the following bound for this term:
\[
\sum_{a>j}\sum_{m>k} 2^{-2\gamma k} 2^{-2\zeta j} j^{2\a} 2^{(\gamma+\frac{3}{2})m} 2^{\theta(m-k)} 2^{(2s-\theta)(j-a)} \|\F_a\cP_m h\|_{L^2} \|\F_j\cU_{m}g\|_{L^2} \|\cP_k\cF_jh\|_{L^2}, \quad \forall \theta \in [0, 1].
\]
Since \(\gamma < -2s-\f12<-\frac{3}{2}\) and \(\zeta \leq1 < 2s\), we select \(\theta \in \left(0, \frac{1}{2}\min\{2s - \zeta, -\gamma\}\right)\) and use Lemma \ref{le1.4} and Cauchy-Schwarz inequality to get the bound:
\[
\begin{aligned}
    &\sum_{a>j}\sum_{m>k} 2^{(\gamma + \theta)(m-k)} 2^{(\gamma + \frac{3}{2})m} 2^{(\zeta - 2s + \theta)a}a^{-\a} (2^{-\zeta a} a^\a 2^{-\gamma m} \|\F_a\cP_m h\|_{L^2})(2^{(2s - \theta - \zeta)j} j^\a \|\F_j \cU_m g\|_{L^2}) \\
    &\times \left(2^{-\zeta j} j^\a 2^{-\gamma k} \|\cP_k \cF_j h\|_{L^2}\right) \ls \|g\|_{H^{2s-\theta-\zeta,\a}} \|h\|^2_{H^{-\zeta,\a}_{-\gamma}} \ls t^{-\frac{2s-\theta}{2s}} \|h\|^2_{H^{-\zeta,\a}_{-\gamma}},
\end{aligned}
\]
where the coefficient is integrable with respect to \(t \in [0, T]\), as derived from Theorem \ref{maintheorem1} \eqref{regularisation-estimate}. Thus, the uniqueness of the solution holds.

\subsection{Organization of the paper}
The rest of this paper is organized as follows.  In Section \ref{Auxiliarytools}, we present the basic framework of the dyadic decomposition and its applications to weighted Sobolev spaces and the collision operator. In Section \ref{coercivityandupperbound}, we establish the coercivity estimate and upper bounds for the collision operator. In Section \ref{commutatorestimatesQFj} and Section \ref{commutatorestimatesQPk}, we derive commutator estimates for the collision operator with the localization operators \(\mathcal{P}_k\) and \(\F_j\). With these preparations in place, we prove the main theorem in Section \ref{proofofmaintheorem} and Section \ref{sec:global}. Technical tools and lemmas are collected in the appendix.

\bigskip

\section{Auxiliary tools}\label{Auxiliarytools}
In this section, we list some basic facts on dyadic decomposition and then apply it to the localization in Sobolev spaces and to the collision operator.

\subsection{Dyadic decomposition}\label{subsec:decomposition}
Let $B_{\frac{4}{3}}:=\{\xi\in\R^3\mid|\xi|\leq\frac{4}{3}\}$ and $C:=\{\xi\in\R^3\mid \frac{3}{4}\leq|\xi|\leq\frac{8}{3}\}$. We introduce two radial non-negative functions $\psi\in C_0^\infty(B_{\frac{4}{3}})$ and $\vphi\in C_0^\infty(C)$ such that 
$$
\psi(\xi)+\sum_{j\geq0}\vphi(2^{-j}\xi)=1,~\xi\in\R^3,
$$
moreover, their supports satisfy the following separation properties:
\begin{equation*}
    \begin{aligned}
 &     \operatorname{supp}\vphi(2^{-j}\cdot)\ccap 
\operatorname{supp}\vphi(2^{-k}\cdot)=\emptyset
\quad \text{when}\quad |j-k|\geq 2,\\
&\mbox{and}\quad\operatorname{supp}\psi(\cdot)\ccap 
\operatorname{supp}\vphi(2^{-j}\cdot)=\emptyset
\quad \text{when}\quad j\geq 1.  
    \end{aligned}
\end{equation*}
We denote the inverse Fourier transform of $\vphi$ and $\psi$ by $\widecheck\vphi$ and $\widecheck\psi$, and we also let $\widecheck\vphi_j(\cdot)=2^{3j}\widecheck\vphi(2^j\cdot)$. 
 
\medskip

\noindent{\bf Dyadic decomposition in  phase space}: The dyadic operator $\cP_l$ in  phase space can be defined as follows:
\begin{equation*}
    \cP_{-1}f(x):=\psi(x)f(x),\qquad\cP_lf(x):=\vphi(2^{-l}x)f(x),\,\,l\geq0. 
\end{equation*}
By definition, there exists a integer $N_0\geq2$ such that  $\cP_l\cP_m=0$ if $|l-m|\geq N_0$. For any smooth function $f$, we can verify that  \[f=\cP_{-1}f+\sum\limits_{l\geq0}\cP_lf.\]
For later use, we also define an enlarged phase cutoff
$\tP_lf(x):=\sum\limits_{|k-l|\leq N_0}\cP_kf(x)$ and an low-phase cutoff $\cU_lf(x):=\sum\limits_{k\leq l}\cP_kf(x)$.

\medskip

\noindent {\bf Dyadic decomposition in frequency space}:
The dyadic operator $\F_j$  in frequency space can be defined as follows:
\begin{equation*}
    \F_{-1}f(x):=\int_{\R^3}\widecheck\psi(x-y)f(y)dy,\qquad\F_jf(x):=\int_{\R^3}\widecheck\vphi_j(x-y)f(y)dy,\,\,j\geq0. 
\end{equation*}
Then for any tempered distribution $f$, it holds that
 \[f:=\F_{-1}f+\sum_{j\geq0}\F_jf.\]
For later use, we also define  an enlarged frequency cutoff $\tF_jf(x):=\sum\limits_{|k-j|\leq 3N_0}\F_kf(x)$ and an low-frequency  cutoff  $\cS_jf(x):=\sum\limits_{k\leq j}\F_kf(x)$. 

\subsection{Applications in Sobolev spaces}
\begin{definition}\label{Fj} Let  $\alpha=(\alpha_1,\alpha_2,\alpha_3)\in \N^3,|\alpha|:=\alpha_1+\alpha_2+\alpha_3$ and $$\vphi_\alpha:=(\frac{1}{i}\partial_{x_1})^{\alpha_1}(\frac{1}{i}\partial_{x_2})^{\alpha_2}(\frac{1}{i}\partial_{x_3})^{\alpha_3}\vphi.$$ To simplify the presentation of the estimates for the commutator $[\cP_k, \F_j]$, we introduce  $\cP_{j,\alpha}, \F_{j,\alpha}$ and $\hat{\F}_{j,\alpha}$ defined by
\begin{equation*}
    \begin{aligned}
&\mathcal P_{-1,\alpha} f:=\psi_{\alpha} f, 
\qquad \qquad \qquad 
\mathcal P_{l,\alpha} f:=\varphi_\alpha(2^{-l}\cdot) f,
\qquad l\geq 0,\\
&\F_{-1,\alpha} f:=\psi_{\alpha}(D) f,
\qquad \qquad \,
\F_{j,\alpha} f:=\varphi_\alpha(2^{-j}D) f,
\qquad j\geq 0,\\
&\widehat{\F}_{-1,\alpha} f:=(\psi^2)_{\alpha}(D) f,
\qquad \quad
\widehat{\F}_{j,\alpha} f:=(\varphi^2)_\alpha(2^{-j}D) f,
\qquad j\geq 0,
\end{aligned}
\end{equation*}
where $\hat{\F}$ will be used in commutator estimates.
To unify  notations $\F_j, \F_{j,\alpha}$, $\hat{\F}_{j,\alpha}$ and $\cP_l, \cP_{l,\alpha}$, we introduce  localized operators $\mF_j$   and $\mP_j$:
\smallskip

  \noindent {\rm (i)} The support of the Fourier transform of $\mF_jf$ and the support of $\mP_lf$  will be localized in the annulus $\{|\cdot|\sim 2^j\}$ and $\{|\cdot|\sim 2^l\}$, respectively.\smallskip

 \noindent{\rm (ii)} It holds that for   fixed $N\in\N$, $\|\F_j f\|_{L^2}+\sum\limits_{|\alpha|\le N} (\|\F_{j,\alpha}f\|_{L^2}+\|\hat{\cF}_{j,\alpha}f\|_{L^2})\le C_N\|\mF_jf\|_{L^2}$ and $\|\tP_l f\|_{L^2}+\sum\limits_{|\alpha|\le N} \|\tP_{l,\alpha}f\|_{L^2}\le C_N\|\mP_lf\|_{L^2}$.
 
Finally, we also introduce $\mU_l:=\sum\limits_{k\leq l}\mP_k$ and $\mS_j:=\sum\limits_{k\leq j}\mF_k$.
\end{definition}

 We now present some basic properties of the localized operators and their commutators.
\begin{lemma}\label{PFcommutator}    
(see Lemma 4.14 in \cite{CHJ}) $\bullet$ For any $N\in\N$, there exists a constant $C_N$ such that
        \begin{equation}\label{pkfj}
            \begin{aligned}
            &\|[\cP_k,\cF_j]f\|_{L^2}=\|(\cP_k\cF_j-\cF_j\cP_k)f\|_{L^2}\leq C_N(2^{-j}2^{-k}\sum_{|\alpha|=1}^{2N}\|\cP_{k,\alpha}\cF_{j,\alpha}f\|_{L^2}+2^{-jN}2^{-kN}\|f\|_{H^{-N}_{-N}}),\\
            &\|[\cU_k,\cF_j]f\|_{L^2}=\|(\cU_k\cF_j-\cF_j\cU_k)f\|_{L^2}\leq C_N(2^{-j}\sum_{|\alpha|=1}^{2N}\|\cU_{k,\alpha}\cF_{j,\alpha}f\|_{L^2}+2^{-jN}\|f\|_{H^{-N}_{-N}}).
                    \end{aligned}
        \end{equation}
        
        $\bullet$ For $|m-p|> N_0$ and $\forall N\in \N$, there exists a constant $C_N$ such that
         \begin{equation}\label{m-p>N0}
        \begin{aligned}
            \|\cF_m\cP_k\cF_pg\|_{L^2}\leq C_N 2^{-(p+m+k)N}\|\cF_pg\|_{L^2_{-N}},~~\|\cF_m\cU_k\cF_pg\|_{L^2}\leq C_N 2^{-(p+m)N}\|\cF_pg\|_{L^2_{-N}},
        \end{aligned}
        \end{equation}

    $\bullet$ For any $a,\omega,\l\in\R$ and $j\geq-1$, we have
    \begin{equation}\label{a001}
       \begin{aligned}
        \|\cU_{k}h\|_{H^a}\leq C_{a,\omega}2^{k(-\omega)^+}\|h\|_{H^a_{\omega}},~\|\cS_{j}h\|_{L^2_l}\leq C_l\|h\|_{L^2_l},~\|\cS_{j}f\|_{L^1}+\|\F_j f\|_{L^1}\leq C_l \|f\|_{L^1}.
    \end{aligned} 
    \end{equation}
\end{lemma}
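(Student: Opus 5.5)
The approach is to write every commutator exactly as an integral of the symbols and then Taylor expand one factor, as in pseudo-differential calculus. I will take $\cP_k$ as multiplication by $\vphi(2^{-k}x)$ and $\cF_j$ as convolution with $\widecheck\vphi_j$. This gives
\[
[\cP_k,\cF_j]f(x)=\int_{\R^3}\widecheck\vphi_j(x-y)\big(\vphi(2^{-k}x)-\vphi(2^{-k}y)\big)f(y)\d y.
\]
Next I expand $\vphi(2^{-k}x)-\vphi(2^{-k}y)$ in a Taylor series around $y$ up to order $2N$. A term of order $|\alpha|=n$ is
\[
\frac{2^{-kn}}{\alpha!}\,\partial^\alpha\vphi(2^{-k}y)\,(x-y)^\alpha .
\]
The factor $(x-y)^\alpha\widecheck\vphi_j(x-y)$ is, up to constants, $2^{-jn}$ times the kernel of $\vphi_\alpha(2^{-j}D)$, because multiplication by $(x-y)^\alpha$ corresponds to differentiation of the symbol. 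Each Taylor term is therefore a constant times $2^{-jn}2^{-kn}\F_{j,\alpha}\cP_{k,\alpha}f$. Since $n\ge1$, this is bounded by $2^{-j}2^{-k}$ times that norm. The operators come out in the order $\F\cP$, while the statement has $\cP\F$. I will reorder them by iterating the same expansion, which produces higher-order $\alpha$'s; this is why the sum runs up to $2N$. Alternatively the expansion can be done around $x$ from the start, which directly gives $\cP_{k,\alpha}\F_{j,\alpha}$.

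The Taylor remainder of order $2N$ involves $2^{-2kN}|x-y|^{2N}$ times a bounded derivative of $\vphi$. To control it, I pass to the Fourier side and use that $\vphi$ is compactly supported and smooth. Any number of derivatives can be placed on the symbol $\vphi(2^{-j}\cdot)$, and the $\<x\>$-weights can be absorbed using the support size $2^k$. Schur's test then bounds the remainder by $2^{-jN}2^{-kN}\|f\|_{H^{-N}_{-N}}$. The $\cU_k$ version is identical, except that $\cU_k$ is a smooth cutoff at scale $2^k$ with no annular gain. In that case I only keep $2^{-j}$: the derivative $\partial^\alpha$ of the cutoff contributes $2^{-k|\alpha|}\le 1$.

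For \eqref{m-p>N0}, I write the composition on the Fourier side as
\[
\widehat{\cF_m\cP_k\cF_p g}(\xi)=\vphi(2^{-m}\xi)\int 2^{3k}\widehat\vphi(2^k(\xi-\eta))\,\vphi(2^{-p}\eta)\,\widehat g(\eta)\d\eta .
\]
When $|m-p|>N_0$, we have $|\xi-\eta|\gtrsim 2^{\max(m,p)}$ on the support. The Schwartz decay of $\widehat\vphi$ then gives a factor $(2^k2^{\max(m,p)})^{-M}$ for any $M$, which is enough for $2^{-(p+m+k)N}$. The weight $\|\cF_pg\|_{L^2_{-N}}$ comes from writing $g$-side terms as $\<v\>^{N}\<v\>^{-N}\cF_pg$ and putting $\<v\>^N$ on the cutoff side. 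This is the step I expect to be the main obstacle: tracking this weight carefully through the kernel decay without losing powers of $2^k$. Derivatives of $\<v\>^N$ cost nothing in frequency, so the $\<v\>^N$ factor on the cutoff side is bounded by $2^{kN}$, and that is then absorbed by the excess decay in $2^k$.

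The bounds in \eqref{a001} are routine. The estimate $\|\cU_kh\|_{H^a}\ls 2^{k(-\omega)^+}\|h\|_{H^a_\omega}$ follows because $\<v\>^{-\omega}\mathbf 1_{|v|\ls2^k}$ is a smooth multiplier with $C^\infty$ norms $\ls 2^{k(-\omega)^+}$. Multiplication by such a function is bounded on $H^a$, by interpolation and duality from integer $a$. The $L^2_l$ and $L^1$ bounds for $\cS_j$ and $\F_j$ follow from Young's inequality: their kernels are uniformly in $L^1$, and the weighted version uses Peetre's inequality $\<x\>^l\ls\<y\>^l\<x-y\>^{|l|}$ together with the fact that $\<\cdot\>^{|l|}\widecheck\psi_{2^j}$ is uniformly integrable.
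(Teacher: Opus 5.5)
The paper gives no proof of this lemma and defers to \cite[Lemma 4.14]{CHJ}. The tool behind it is the symbolic expansion of Lemma \ref{pseudo operator}, with its remainder bound $\|r_{2N+1}(v,D_v)\<D\>^N\<v\>^Nf\|_{L^2}\le C\|f\|_{L^2}$. Your Taylor expansion of the kernel of $[\cP_k,\cF_j]$ is a hands-on version of it, and your arguments for \eqref{a001} are fine.

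One bookkeeping point in \eqref{pkfj}: the order $2N$ is forced by the remainder, not by the reordering. The order-$(2N+1)$ remainder is $O(2^{-(2N+1)(j+k)})$ on $L^2$. Measuring $f$ in $H^{-N}_{-N}$ costs $2^{kN}$ from the weight, via $\<y\>\ls 2^k\<x-y\>$ on the support of the Taylor bump. It also costs $2^{jN}$ from $\<D\>^N$, since each $y$-derivative of the remainder kernel costs $2^{j}$ or $2^{-k}$; you only account for the weight loss. Together these leave $2^{-(N+1)(j+k)}$.

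The genuine gap is the weight in \eqref{m-p>N0}, exactly where you anticipated trouble. Suppose you write $\cF_pg=\<v\>^N(\<v\>^{-N}\cF_pg)$ and merge $\<v\>^N$ into the cutoff. The new input $w=\<v\>^{-N}\cF_pg$ is no longer frequency localized: $\widehat w=\widehat{\<\cdot\>^{-N}}*\widehat{\cF_pg}$ is smeared at scale $1$, so the separation $|\xi-\eta|\gs 2^{\max(m,p)}$ is lost in your Fourier integral. The only decay left is that of $\widehat{\<\cdot\>^{-N}}$, which lives at scale $1$ and carries no factor $2^{-k}$. You then get roughly $2^{kN}2^{-M\max(m,p)}$, which grows in $k$ and cannot give $2^{-(p+m+k)N}$. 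You need the spatial localization of $\cP_k$ (to trade the weight for $2^{kN}$) and the frequency localization at $2^p$ (to get decay) at the same time. So the weight has to be commuted across a frequency projection, not simply multiplied in.

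Here is a repair within your framework. Let $\cF^{\sharp}_p:=\sum_{|q-p|\le 1}\cF_q$; the paper's $\tF_p$, of width $3N_0$, is too wide here. Then $\cF_p=\cF^\sharp_p\cF_p$, and $\cF_m\cP_k\cF_pg=\cF_m\cP_k\cF^\sharp_p\<v\>^{2N}\big(\<v\>^{-2N}\cF_pg\big)$. The kernel of $\cF_m\cP_k\cF^\sharp_p\<v\>^{2N}$ is $\int\widecheck\vphi_m(x-z)\vphi(2^{-k}z)K^\sharp_p(z-y)\<y\>^{2N}\d z$, where $K^\sharp_p$ is the convolution kernel of $\cF^\sharp_p$. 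Expand the polynomial $\<y\>^{2N}$ using $y=z-(z-y)$:
a factor $(z-y)^\delta$ turns $K^\sharp_p$ into the kernel of a multiplier proportional to $\p^\delta$ of the symbol of $\cF^\sharp_p$, with the same support and size $\ls 2^{-p|\delta|}$;
a factor $z^\beta$ goes onto the cutoff and gives $2^{k|\beta|}$ times a bump at scale $2^k$.

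Each resulting term is exactly of the type your Fourier-side argument handles. Since $N_0\ge2$, the condition $|m-p|>N_0$ gives $|m-p|\ge3$, which is enough to separate $\operatorname{supp}\vphi(2^{-m}\cdot)$ from the support of the symbol of $\cF^\sharp_p$. This yields $\|\cF_m\cP_k\cF_pg\|_{L^2}\ls 2^{2kN}\big(2^{k+\max(m,p)}\big)^{-M}\|\cF_pg\|_{L^2_{-2N}}$, and taking $M=3N$ gives the claim. The same argument gives the $\cU_k$ version.
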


We can also characterize Sobolev spaces by means of the dyadic localization.
\begin{lemma}[{\cite[Lemma 4.15]{CHJ}}]\label{le1.4}  Let $m,l\in \R$. for $f\in H_l^m$, it holds that
\begin{equation}\label{sim1}
    \begin{aligned}
&\sum_{k=-1}^\infty2^{2kl}\|\tP_kf\|^2_{H^m}\sim_{m,l}\sum_{k=-1}^\infty2^{2kl}\|\cP_kf\|^2_{H^m}\sim_{m,l}\|f\|^2_{H^m_l},\\
&\sum_{j=-1}^\infty2^{2jm}\|\tF_jf\|^2_{L^2_l}\sim_{m,l}\sum_{j=-1}^\infty2^{2jm}\|\F_jf\|^2_{L^2_l}\sim_{m,l}\|f\|^2_{H^m_l},\\
&\sum_{k=-1}^\infty2^{2kl}\|\mP_{k}f\|^2_{H^m}+
  \sum_{j=-1}^\infty2^{2jm}\|\mF_{j}f\|^2_{L^2_l}\ls_{m,l}\|f\|^2_{H^m_l}.
    \end{aligned}
\end{equation}
As a corollary, we have
\begin{equation}\label{sim2}
    \begin{aligned}
&\sum_{j,k=-1}^\infty2^{2kl}2^{2mj}\|\cP_k\F_jf\|^2_{L^2}\sim_{m,l}\sum_{j,k=-1}^\infty2^{2kl}2^{2mj}\|\F_j\cP_kf\|^2_{L^2}\sim_{m,l}\|f\|^2_{H^m_l},\\&\sum_{j,k=-1}^\infty2^{2kl}2^{2mj}\|\mP_k\mF_jf\|^2_{L^2}+\sum_{j,k=-1}^\infty2^{2kl}2^{2mj}\|\mF_j\mP_kf\|^2_{L^2}\ls_{m,l}\|f\|^2_{H^m_l}.
    \end{aligned}
\end{equation}
\end{lemma}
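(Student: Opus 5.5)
We plan to prove Lemma \ref{le1.4} by treating the phase and frequency localizations separately and then combining them through the commutator bounds of Lemma \ref{PFcommutator}.

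\emph{Phase localization.} The first equivalence in \eqref{sim1} amounts to showing $\sum_k 2^{2kl}\|\cP_kf\|_{H^m}^2\sim\|\<\cdot\>^lf\|_{H^m}^2$. For $m=0$ this is immediate from $\sum_k\vphi(2^{-k}\cdot)^2\sim1$ together with $\<v\>\sim2^k$ on $\operatorname{supp}\cP_k$. For $m\in\N$ we use Leibniz's rule: each derivative falling on $\vphi(2^{-k}\cdot)$ gains a factor $2^{-k}\ls1$, and the resulting term is again a localized piece of the same type with $\vphi$ replaced by $\vphi_\alpha$. The case of general real $m$ then follows by interpolation, with negative $m$ handled by duality. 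This duality works because the map $f\mapsto(\cP_kf)_k$ and the reconstruction $(g_k)\mapsto\sum_k\tP_k\cP_k g_k$ (using $\tP_k\cP_k=\cP_k$) are bounded between $H^m_l$ and the weighted $\ell^2(H^m)$ space. The $\tP_k$ version follows because $\tP_k$ is a finite sum of $\cP_{k'}$ with $|k-k'|\le N_0$.

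\emph{Frequency localization.} By Plancherel and almost-orthogonality of the supports of $\vphi(2^{-j}\cdot)$, we have $\sum_j2^{2jm}\|\F_jf\|^2_{L^2}\sim\|f\|^2_{H^m}$ when $l=0$. To insert the weight we write $\<v\>^l\F_jf=\F_j(\<v\>^lf)+[\<v\>^l,\F_j]f$. The commutator is a pseudo-differential operator of order $-1$ in $\xi$ and order $l-1$ in $v$, with symbol frequency-localized near $2^j$ up to Schwartz tails. Hence $\sum_j2^{2jm}\|[\<v\>^l,\F_j]f\|^2\ls\|f\|^2_{H^{m-1}_{l-1}}$. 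We then either iterate this finitely many times or absorb the error by an interpolation argument: the lower-order norm is bounded by $\eps\|f\|_{H^m_l}+C_\eps\|f\|_{H^{-N}_{-N}}$, and the last norm is again controlled by both sides. For the bold operators $\mP_k,\mF_j$ only the upper bound is claimed. It follows from the same argument applied to each of the finitely many symbols $\vphi_\alpha,(\vphi^2)_\alpha$, together with property (ii) of Definition \ref{Fj}.

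\emph{Combining.} For \eqref{sim2} we apply the frequency equivalence to each $\cP_kf$ and then the phase equivalence:
\[
\sum_{j,k}2^{2kl}2^{2mj}\|\F_j\cP_kf\|^2_{L^2}\sim\sum_k2^{2kl}\|\cP_kf\|^2_{H^m}\sim\|f\|^2_{H^m_l}.
\]
To pass to $\cP_k\F_j$, we use \eqref{pkfj}. The difference $[\cP_k,\F_j]f$ is bounded by $2^{-j-k}\sum_\alpha\|\cP_{k,\alpha}\F_{j,\alpha}f\|$ plus a term $2^{-(j+k)N}\|f\|_{H^{-N}_{-N}}$. After weighted summation, the second term is harmless once $N$ exceeds $|m|+|l|$. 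The first term is one order lower in both indices. It is controlled via the upper bound already established for the bold operators, yielding $\ls\|f\|^2_{H^{m-1}_{l-1}}\ls\|f\|^2_{H^m_l}$, which is enough for the $\ls$ direction. For the $\gtrsim$ direction we use $\|\F_j\cP_kf\|\le\|\cP_k\F_jf\|+\|[\cP_k,\F_j]f\|$ and absorb the lower-order error as above.

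The main obstacle is this absorption step when $m$ and $l$ are negative. There the lower-order norms are not trivially dominated, so we rely on the genuine gain $2^{-j-k}$ and on the $\eps$-interpolation to close the estimate.
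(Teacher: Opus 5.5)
The paper gives no proof of this lemma; it is quoted from \cite[Lemma 4.15]{CHJ}. Your outline is a workable self-contained argument. The phase part (Leibniz for $m\in\N$, interpolation, and duality through the analysis map $f\mapsto(\cP_kf)_k$ and a synthesis map) is sound, provided the synthesis map is $(g_k)\mapsto\sum_k\tP_kg_k$. As you wrote it, $\sum_k\tP_k\cP_kg_k=\sum_k\cP_kg_k$, which on $g_k=\cP_kf$ returns $\sum_k\cP_k^2f\neq f$.

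The combining step is a detour, because \eqref{sim2} needs no commutator: it follows from \eqref{sim1} once you choose the order of summation. For $\F_j\cP_k$ you already sum over $j$ first. For $\cP_k\F_j$, sum over $k$ first. At Sobolev index $0$ the phase equivalence is pointwise, $\sum_k2^{2kl}\|\cP_kg\|_{L^2}^2\sim\|g\|_{L^2_l}^2$. Hence $\sum_{j,k}2^{2kl}2^{2mj}\|\cP_k\F_jf\|^2_{L^2}\sim\sum_j2^{2mj}\|\F_jf\|^2_{L^2_l}$, and the weighted frequency equivalence finishes. The bold double sums are handled the same way.

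This is also what your own ``$\ls$'' step silently needs. The error $\sum_{j,k}2^{2k(l-1)}2^{2j(m-1)}\|\cP_{k,\alpha}\F_{j,\alpha}f\|^2_{L^2}$ is a double sum, and the single-localization bounds of \eqref{sim1} control it only after you sum in $k$ first. The obstacle you name is not real: $\|f\|_{H^{m-1}_{l-1}}\ls\|f\|_{H^m_l}$ holds for all real $m,l$. What the absorption actually requires is control of $\|f\|_{H^{-N}_{-N}}$ by the localized sum. You assert this without proof, and the direct route avoids it.

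In the frequency part, your bound $\sum_j2^{2jm}\|[\<v\>^l,\F_j]f\|^2_{L^2}\ls\|f\|^2_{H^{m-1}_{l-1}}$ becomes circular if the expansion puts the derivatives of the weight on the left, as in \eqref{7.3}. The error terms $\partial^\alpha\<v\>^l\,2^{-j|\alpha|}\F_{j,\alpha}f$ are then weighted frequency pieces of exactly the kind being estimated, which is why you need to iterate. Write the expansion as in \eqref{7.5} instead. Each term is then $2^{-j|\alpha|}\F_{j,\alpha}(\partial^\alpha\<v\>^lf)$, and after Plancherel its square sum is bounded by $\|\partial^\alpha\<v\>^lf\|^2_{H^{m-|\alpha|}}\ls\|f\|^2_{H^{m-|\alpha|}_{l-|\alpha|}}$. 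Only the remainder needs symbol estimates, and these are uniform in $j$ because $\vphi(2^{-j}\cdot)$ has $S^r_{1,0}$ seminorms $\ls2^{-jr}$ for every $r\in\R$.

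Once the upper bound holds for both $\F_j$ and $\tF_j$ at all $(m,l)$, the lower bound follows by the same duality you used in phase space. Since $\tF_j\F_j=\F_j$, you have $(f,g)=\sum_j(\F_jf,\tF_jg)$; pairing against $g\in H^{-m}_{-l}$ replaces the $\eps$-interpolation.

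Finally, property (ii) of Definition \ref{Fj} bounds $\|\F_{j,\alpha}f\|$ by $\|\mF_jf\|$. That is the wrong direction for an upper bound on $\sum_j2^{2jm}\|\mF_jf\|^2_{L^2_l}$. What you actually use is that $\mF_j$ and $\mP_k$ are concrete annular multipliers and cutoffs (property (i)), to which the same argument applies.
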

\begin{remark}
	We emphasize that the above results also hold if we replace norm $H^m_l$ by $H^{m,\a}_l,\a\geq0$ and the proof is analogous.
\end{remark}

\subsection{Applications to the collision operator}\label{localcollision}
We use the dyadic decompositions to decompose the collision operator.

\subsubsection{Dyadic decomposition of the operator in the  phase space} We first localize it to the annulus in phase space. We set 
 \begin{equation}\label{DefPhi} 
 \Phi_k^\gamma(v):=
\left\{\begin{aligned} & |v|^\gamma \varphi(2^{-k}|v|), \quad\mbox{if}\quad k\ge0;\\
& |v|^\gamma \psi( |v|),\quad\mbox{if}\quad k=-1.\end{aligned}\right.
 \end{equation}
where for $k\geq0$, the function $\Phi_k^\gamma$ is smooth and supported away from the singularity, and for $k=-1$, the singularity remains. Then we derive that 
 \begin{equation}
     (Q(g, h), f )_{v}=\sum_{k=-1}^\infty ( Q_k(g, h), f )_v=\sum_{k=-1}^\infty\sum_{j=-1}^\infty ( Q_k(\mathcal{P}_jg, h), f )_v, 
 \end{equation}
where
\begin{equation*}
    Q_{k}(g, h):=\iint_{v_*\in \R^3,\sigma\in \SS^2} \Phi_k^\gamma(|v-v_*|)b(\cos\theta) (g'_*h'-g_*h)d\sigma \d v _*.
\end{equation*}
We have decomposition of the collision operator:
\begin{equation*}
    (Q_k(g,h),f)_v=\sum_{p=-1}^\infty\sum_{l=-1}^\infty (Q_k(\F_pg,\F_lh),f)_v.
\end{equation*}
It is not difficult to check that there exists an integer $N_0\in \N$ such that (see also  $(2.1)$ in \cite{he2018sharp})
\begin{equation}\label{ubdecom}
\begin{aligned}
( Q(g,h), f)_v &=\sum_{k\ge N_0-1}( Q_k(\mathcal{U}_{k-N_0} g, \tilde{\mathcal{P}}_kh), \tilde{\mathcal{P}}_kf )_v +
\sum_{j\ge k+N_0}( Q_k(\mathcal{P}_{j} g, \tilde{\mathcal{P}}_jh), \tilde{\mathcal{P}}_jf )_v\\
&\quad+\sum_{|j-k|\le N_0}( Q_k( \mathcal{P}_{j} g, \mathcal{U}_{k+N_0}h), \mathcal{U}_{k+N_0}f )_v.  
\end{aligned}
\end{equation}

\subsubsection{Dyadic decomposition of the operator in the  frequency space} Next, we decompose the collision operator in frequency space. Recall the Bobylev's formula
\begin{equation}\label{bobylev}
 \begin{aligned}
& (\widehat{ Q_k}(g, h), \widehat{f} \,)_\xi\\
&=\iint_{\sigma\in \SS^2, \eta,\xi\in \R^3} b\left(\frac{\xi}{|\xi|}\cdot \sigma\right)\big[ \widehat{ \Phi_k^\gamma } (\eta-\xi^{-})-\widehat{ \Phi_k^\gamma}(\eta)\big]\widehat{g}(\eta)\widehat{h}(\xi-\eta)\overline{\widehat{f}}(\xi)\d\sigma \d\eta \d\xi, 
 \end{aligned}
 \end{equation}
where $\xi^{\pm}:=\frac{\xi\pm|\xi|\sigma}{2}$. Then we have the further decomposition of the operator (see \cite{he2018sharp}):
\begin{equation*}
\begin{aligned}
&(Q_k(g,h),f)_v=\sum_{p=-1}^\infty\sum_{l=-1}^\infty (Q_k(\F_pg,\F_lh),f)_v\\
=&\sum_{l\leq p-N_0}\M_{k,p,l}^1+\sum_{l\geq p+N_0}\M_{k,p,l}^2+\sum_{|l-p|<N_0}\Big(\sum_{|m-p|\leq 2N_0}\M_{k,p,l,m}^3+\sum_{m<p- 2N_0}\M_{k,p,l,m}^4\Big),
\end{aligned} 
\end{equation*}
where 
\begin{equation*}
    \M_{k,p,l}^1(g,h,f):=\iint_{\R^6\times\SS^2}(\t \F_p\Phi^\gamma_k)(|v-v_*|)b(\cos\theta)\left( \F_pg\right)_*\left( \F_lh\right)\big[(\t \F_pf)'-(\t \F_pf)\big]\d \sigma \d v_*\d v,
\end{equation*}
\begin{equation*}
    \M_{k,p,l}^2(g,h,f):=\iint_{\R^6\times\SS^2}\Phi^\gamma_k(|v-v_*|)b(\cos\theta)\left( \F_pg\right)_*\left( \F_lh\right)\big[(\t \F_lf)'-(\t \F_lf)\big]\d \sigma \d v_*\d v,
\end{equation*}
\begin{equation*}
    \M_{k,p,l,m}^3(g,h,f):=\iint_{\R^6\times\SS^2}\Phi^\gamma_k(|v-v_*|)b(\cos\theta)\left( \F_pg\right)_*\left( \F_lh\right)\left[\left( \F_mf\right)'-\left( \F_mf\right)\right]\d \sigma \d v_*\d v,
\end{equation*}
\begin{equation*}
    \M_{k,p,l,m}^4(g,h,f):=\iint_{\R^6\times\SS^2}(\t \F_p\Phi^\gamma_k)(|v-v_*|)b(\cos\theta)\left( \F_pg\right)_*\left( \F_lh\right)\left[\left( \F_mf\right)'-\left( \F_mf\right)\right]\d \sigma \d v_*\d v.
\end{equation*}
By simple computation, we arrive at
\begin{equation}\label{eq m1234}
\begin{aligned}
(Q_k(g,h),f)_v=\&\sum_{l\leq p-N_0}\M_{k,p,l}^1+\sum_{l\geq p+N_0}\M_{k,p,l}^2+\sum_{p\geq -1}\M_{k,p}^3+\sum_{m< p-N_0}\M_{k,p,m}^4,
\end{aligned}    
\end{equation}
where 
\begin{equation*}
    \M_{k,p}^3:=\iint_{\R^6\times\SS^2}\Phi^\gamma_k(|v-v_*|)b(\cos\theta)( \F_pg)_*( \t\F_ph)\big[( \t\F_pf)'-( \t\F_pf)\big]\d \sigma \d v_*\d v,
\end{equation*}
\begin{equation*}
    \M_{k,p,m}^4:=\iint_{\R^6\times\SS^2}(\t \F_p\Phi^\gamma_k)(|v-v_*|)b(\cos\theta)\left( \F_pg\right)_*( \t\F_ph)\left[\left( \F_mf\right)'-\left( \F_mf\right)\right]\d \sigma \d v_*\d v.
\end{equation*}
Compared to the definition of \(\M_{k,l}^2\) in \cite{he2018sharp}, we do not sum over \(p \leq l - N_0\) here.

Using the same idea, for any fixed $j\geq-1$, one may derive that
\begin{equation}\label{2.2}
    \begin{aligned}
         &(\F_jQ_k(g,h),\F_jf)_v
=\sum_{|p-j|<2N_0}\sum_{p'\leq p+3N_0}(\F_jQ_k(\F_{p'}g,\F_ph),\F_jf)_v\\
 &
 +\sum_{p>j+2N_0}\sum_{|p-p'|\leq N_0}(\F_jQ_k(\F_{p'}g,\F_ph),\F_jf)_v
+\sum_{p<j-2N_0}\sum_{|m-j|\leq 2N_0}(\F_jQ_k(\F_mg,\F_ph),\F_jf)_v.
    \end{aligned}
\end{equation}

\section{Coercivity estimates and upper bound}\label{coercivityandupperbound}
 In this section, we give the coercivity estimate and upper bounds for the collision operator, and these results are of independent interest. 

\begin{proposition}[Coercivity]\label{prop:coercivity}
Let $\gamma$ and $s$ satisfy assumption \eqref{assumption}, $\delta,\lambda>0$, and suppose $g$ is a non-negative and smooth function verifying that
\begin{equation*}
    \|g\|_{L^1}>\delta,\quad \mbox{and}\quad \|g\|_{L^1_2}+\|g\|_{L\log L}<\lambda,
\end{equation*} 
Define 
\begin{equation}
    \mathcal{E}^\gamma_g(f):=\int_{\R^6\times\SS^2}B(|v-v_*|,\sigma)g_*(f'-f)^2d\sigma \d v _*\d v , 
\end{equation}
 and let $\mathbf{A}=0,1$. Then for sufficiently small $\eta>0$, there exist constants $C_1(\delta,\lambda,\eta^{-1}), C_2(\delta,\lambda), \\C_3(\delta,\lambda,\eta^{-1})$, $C_4(\delta,\lambda)$ and $C_5(\delta,\lambda)$ such that
 \begin{equation}\label{1E}
    \begin{aligned}
       \mathcal{E}^\gamma_g(f)\gs&~\mathbf{A}\Big[
C_1(\delta,\lambda,\eta^{-1})
\Bigl(\bigl\|(-\Delta_{\SS^2})^{s/2}f\bigr\|_{L^2_{\gamma/2}}^{2}
      +\|f\|_{H^s_{\gamma/2}}^{2}\Bigr)
-\eta\,C_2(\delta,\lambda)\,\|f\|_{L^2_{\gamma/2+s}}^{2}
\\
&-C_3(\delta,\lambda,\eta^{-1})
\|f\|_{L^2_{\gamma/2}}^{2}
\Big]+C_4(\lambda,\delta)\|f\|^2_{H^s_{\gamma/2}}-C_5(\lambda,\delta)\|f\|^2_{\gamma/2}.
    \end{aligned}
\end{equation}
\end{proposition}
\begin{proof}
The desired result can be obtained by \cite[Lemma 3.2 and Lemma 3.4]{he2018sharp}.

\end{proof}

 Next, we present the upper bound estimates for the collision operator. Compared with   \cite[Theorem 1.1 and Theorem 1.4]{he2018sharp}, the \(L^2\) norm is improved to \(H^{-\zeta}\) since we work in negative Sobolev spaces for equation \eqref{Boltzmann}.
\begin{proposition}[Upper bound]\label{prop upper}
Let $\gamma$ and $s$ satisfy assumption \eqref{assumption},  and $\zeta$ defined as
\begin{equation}\label{zeta1}
\zeta=\left\{\begin{array}{ll}
         &1,\quad -\frac{1}{2}<\gamma+2s< 0,\\ 
         \\
&\gamma+2s+\frac{3}{2}\big(=2+\frac{\gamma}{2}\big),\quad -1<\gamma+2s< -\frac{1}{2}.
    \end{array}\right. 
\end{equation}
The corresponding function space $H^{-\zeta,\a}_{\omega}$ is defined as
\begin{equation*}
H_{\omega}^{-\zeta,\a}=\left\{\begin{array}{ll}
         &H_{\omega}^{-1,\a},\quad -\frac{1}{2}<\gamma+2s< 0,\\   \\&H_{\omega}^{-1,1+\a},\quad \gamma+2s=-\frac{1}{2}, \\ \\
&H_{\omega}^{-(\gamma+2s+\f32),\a},\quad -1<\gamma+2s< -\frac{1}{2}.
    \end{array}\right.
\end{equation*}

\noindent $\bullet$ Let $w_1,w_2\in \R$, $a,b\in[0,2s]$ with $w_1+w_2=\gamma+2s$ and $a+b=2s$, $\a>\f12$. For smooth functions $g,h$ and $f$, we have 
\begin{equation}\label{upperbound1}
\begin{aligned}
  |(Q(g,h),f)_{v}|&\ls \big(\|g\|_{L^1_{\t w}}+\|g\|_{ H^{-\zeta,\a}_{-(\gamma+2s)}}\big)\|h\|_{H^{a}_{w_1}}\|f\|_{H^{b}_{w_2}}+\|g\|_{H_{w_1}^{a}}\|h\|_{H^{-\zeta,\a}_{-(\gamma+2s)}}\|f\|_{H^{b}_{w_2}}\\
    +& C_N(\|g\|_{H^{-\zeta,\a}}\|h\|_{H^a_{-N}}\|f\|_{H^b_{-N}}+\|g\|_{H^a}\|h\|_{H^{-\zeta,\a}_{-N}}\|f\|_{H^b_{-N}})
\end{aligned}
\end{equation}
for any $N\in\N$ and $\t w=\max\{-(\gamma+2s),\gamma+2s+(-\omega_1)^++(-\omega_2)^+\}$.

\medskip

\noindent $\bullet$ Let $w_1,w_2\in\R$,$a_1,b_1\in [0,s]$, $a,b\in[0,2s]$ with $w_1+w_2=\gamma+s$, $a+b=2s$ and $a_1+b_1=s,\a>\f12$. For smooth functions $g,h$ and $f$, we have 
\begin{equation}\label{upperbound2}
\begin{aligned}
   &|(Q(g,h),f)_{v}|\ls \|g\|_{H_{\gamma/2}^a}\|h\|_{H^{-\zeta,\a}_{-\gamma}}\|f\|_{H^b_{\gamma/2}}+\big(\|g\|_{L^1_{-\gamma+2s}}+\|g\|_{L^1_{\gamma+s+(-\omega_1)^{+}+(-\omega_2)^{+}}}+\|g\|_{ H^{-\zeta,\a}_{-\gamma}}\big)\\
\times&\Big[\big(\|(-\Delta_{\mathbb{S}^2})^{\frac a 2}h\|_{L^2_{\gamma/2}}+\|h\|_{H^a_{\gamma/2}}\big)\big((-\Delta_{\mathbb{S}^2})^{\frac b2}f\|_{L^2_{\gamma/2}}+\|f\|_{H^b_{\gamma/2}}\big)+\|h\|_{H^{a_1}_{w_1}}\|f\|_{H^{b_1}_{w_2}}\Big]\\
&+C_N(\|g\|_{H^{-\zeta,\a}}\|h\|_{H^a_{-N}}\|f\|_{H^b_{-N}}+\|g\|_{H^a}\|h\|_{H^{-\zeta,\a}_{-N}}\|f\|_{H^b_{-N}}),\quad \forall N\in\N.
\end{aligned}
\end{equation}
\end{proposition}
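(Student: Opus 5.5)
We prove Proposition~\ref{prop upper} by following the scheme of \cite[Theorem 1.1 and Theorem 1.4]{he2018sharp}, changing only the steps where an $L^2$ norm of $g$ (or $h$) was used. In those steps we substitute the negative norm $H^{-\zeta,\a}$. First we apply the phase decomposition \eqref{ubdecom}, which splits $(Q(g,h),f)_v$ into three blocks: $Q_k(\cU_{k-N_0}g,\tP_kh)$, $Q_k(\cP_jg,\tP_jh)$ with $j\ge k+N_0$, and the diagonal block $|j-k|\le N_0$. Within each $Q_k$ we then apply the frequency decomposition \eqref{eq m1234} into $\M^1,\M^2,\M^3,\M^4$.

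For $k\ge0$ the kernel $\Phi_k^\gamma$ is smooth, with $\|\Phi_k^\gamma\|_{L^\infty}\sim 2^{\gamma k}$. The terms $\M^2$ and $\M^3$, where $g$ sits at low or comparable frequency, are handled exactly as in \cite{he2018sharp}. There, $g$ enters through $\|g\|_{L^1}$-type quantities, and the angular cancellation $(\t\F_lf)'-\t\F_lf$ produces the factor $2^{2sl}$, which we split as $2^{al}2^{bl}$. The weights $2^{w_1k},2^{w_2k}$ with $w_1+w_2=\gamma+2s$ come from $|v-v_*|^{\gamma+2s}\lesssim \<v\>^{\gamma+2s}\<v_*\>^{|\gamma+2s|}$ on the relevant blocks. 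This produces the $\|g\|_{L^1_{\t w}}$ term.

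The genuinely new terms are those where $g$ carries high frequency ($\M^1$ and $\M^4$) and the singular piece $k=-1$. In these terms we previously used $\|\F_pg\|_{L^2}$ together with $\|\t\F_p\Phi^\gamma_k\|$ bounds. We now pair $\F_p g$ against the localized kernel: Bernstein gives $\|\F_pg\|_{L^\infty}\lesssim 2^{3p/2}\|\F_pg\|_{L^2}$, and we write this as $2^{(\zeta+\frac32-\zeta)p}$. The decay of $\t\F_p\Phi^\gamma_k$, which behaves like $2^{-(\gamma+3)p}$ in $L^1$ at frequency $2^p$ for the homogeneous kernel $|v|^\gamma$, then converts the loss into a factor $2^{(\zeta-(\gamma+2s+\frac32))p}$ times the angular gain $2^{2sp}$ split between $h$ and $f$.

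Exactly at $\zeta=\gamma+2s+\frac32$ the factor is $p$-independent. We then sum in $p$ by Cauchy--Schwarz against $p^{-\a}\cdot p^{\a}2^{-\zeta p}\|\F_pg\|$, which converges because $\a>\frac12$. This is precisely where the logarithmic correction is needed. For $-\frac12<\gamma+2s<0$ we take $\zeta=1$; then $\zeta>\gamma+2s+\frac32$, so a geometric gain remains, and summability is automatic. The borderline $\gamma+2s=-\frac12$ costs one extra logarithm, hence $\a+1$. The weight $-(\gamma+2s)$ on $g$ comes from transferring $\<v\>^{\gamma+2s}$ to $v_*$ in the off-diagonal phase blocks. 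The remainder terms with $C_N\|\cdot\|_{H^b_{-N}}$ come from the rapidly decaying tails in \eqref{pkfj} and \eqref{m-p>N0}, for example when $g$ and $h$ are localized at far-separated phase blocks. The symmetric term $\|g\|_{H^a_{w_1}}\|h\|_{H^{-\zeta,\a}_{-(\gamma+2s)}}$ arises in the same way from the $\M^1$ configurations in which $h$ is the low-regularity factor.

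For \eqref{upperbound2} we repeat the argument using the sharper anisotropic bounds of \cite[Theorem 1.4]{he2018sharp}. The part where $g$ is regular produces $(-\Delta_{\SS^2})^{a/2}$ terms, and the weights $\gamma/2$ arise from the dissipation structure. We again replace each $L^2$ norm of the rough argument by $H^{-\zeta,\a}_{-\gamma}$ via the summation above.

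The main obstacle is the high-frequency-of-$g$ block at the singular piece $k=-1$ when $\zeta$ is critical. There the dyadic sum over $p$ is only logarithmically convergent, so we must carefully verify that no hidden $2^{\epsilon p}$ loss arises from the angular cancellation. We would first try using the exact homogeneity of $\widehat{|\cdot|^\gamma}\sim|\xi|^{-\gamma-3}$ in the Bobylev formula \eqref{bobylev} to get sharp constants.
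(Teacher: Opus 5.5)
Your architecture is the paper's: the phase decomposition \eqref{ubdecom}, the splitting \eqref{eq m1234}, $L^1$ bounds for every $k\ge0$, and $H^{-\zeta,\a}$ entering only through the singular piece $k=-1$. However, the estimate that is actually new is missing, and the device you propose cannot supply it.

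For $k\ge0$ nothing changes: $\M^1_{k,p,l}$ and $\M^4_{k,p,m}$ carry $C_N2^{-Nk-Np}$, and everything is controlled by $\|g\|_{L^1}$. At $k=-1$, the bounds for $\M^1_{-1},\M^4_{-1}$ are simply quoted from He. The new work is Lemma \ref{M23} at $k=-1$, namely $|\M^2_{-1,p,l}|\ls2^{-\t\zeta p}2^{2sl}\|\F_pg\|_{L^2}\|\F_lh\|_{L^2}\|\t\F_lf\|_{L^2}$ for $p\le l-N_0$, together with the analogous bound for $\M^3_{-1,p}$.

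You assign $\M^2,\M^3$ to He's $L^1$ argument, which is valid only for $k\ge0$. At $k=-1$ with $\gamma+2s<0$ the effective kernel $|v-v_*|^{\gamma+2s}$ is unbounded, so no bound through $\|g\|_{L^1}$ exists; He uses $\|g\|_{L^2}$ there. Your device for the singular piece pairs $\|\F_pg\|_{L^\infty}\ls2^{3p/2}\|\F_pg\|_{L^2}$ with $\|\t\F_p\Phi^\gamma_{-1}\|_{L^1}\sim2^{-(\gamma+3)p}$. This requires the kernel to be localized at the frequency of $g$, which is the structure of $\M^1,\M^4$ but not of $\M^2$. In Bobylev's formula the gain term carries $\widehat{\Phi^\gamma_{-1}}(\eta_*-\xi^-)$ with $|\eta_*|\sim2^p\ll|\xi|\sim2^l$. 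On the region $|\eta_*-\xi^-|<\frac12|\eta_*|$ the kernel sits at low frequency and gives no decay in $p$.

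The paper treats this region (the term $D_{2,1,2}$) by the change of variables $\eta_*-\xi^-\to\t\eta_*$, $\xi\to\xi^+$ and the splitting $b=b^{\kappa}b^{2-\kappa}$, with a case distinction at $\gamma=-\frac32$. The remaining pieces use a Taylor expansion of $\widehat{\Phi^\gamma_{-1}}$ when $|\xi^-|<\frac12|\eta_*|$, and the angular restriction $\theta\gs2^{p-l}$ otherwise. Your closing remark about Bobylev's formula points the right way, but it does not isolate this region.

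It is also in $\M^2_{-1}$, where $g$ has the low frequency and $\sum_{p\le l}$ runs independently of $h$ and $f$, that the Cauchy--Schwarz step $\sum_p p^{-\a}\big(p^{\a}2^{-\zeta p}\|\F_pg\|_{L^2}\big)$ with $\a>\frac12$ is needed. When $g$ carries the top frequency, the $p$-sum is absorbed by Cauchy--Schwarz on $h$ and $f$, and only a mildly weighted supremum of $2^{-\zeta p}\|\F_pg\|_{L^2}$ enters. So your ``main obstacle'' is misplaced.

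Your account of the thresholds is also incorrect. For $-\frac12<\gamma+2s<0$ one has $\zeta=1<\gamma+2s+\frac32$, not $>$. If the only loss were your factor $2^{(\zeta-(\gamma+2s+\frac32))p}$, you could take $\zeta=\gamma+2s+\frac32$ in every case. The borderline $\gamma+2s=-\frac12$ would then need only $\a>\frac12$ rather than $\a+1$. In the paper, the cap $\t\zeta\le1$ and the extra logarithm come from competing terms in the singular estimates. These are He's bounds for $\M^1_{-1},\M^4_{-1}$, where $\t\zeta=1-\frac{\log_2p}{p}$ at $\gamma+2s=-\frac12$, and the term $D_{2,1,2}$ for $\gamma\ge-\frac32$, which yields only $2^{-p}2^{2sl}$.

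A smaller point: the $C_N$ terms come from the block $\sum_{j\le N_0+1}$ in \eqref{ubdecom}. There $\cU_{N_0}h$ and $\cU_{N_0}f$ have bounded support, so any weight $\<v\>^{-N}$ can be inserted. They do not come from commutator tails between far-separated blocks.
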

\begin{remark}
For the case \(\gamma + 2s = -\frac{1}{2}\), remember that the corresponding space is \(H^{-1, \a + 1}_\omega\). For simplicity, we will continue to use the notation \(H^{-\zeta, \a}_\omega\) throughout this paper. And \eqref{upperbound1} and \eqref{upperbound2} reflect the anisotropic coercive structure of the non-cutoff Boltzmann operator: the collision operator regularizes both in the radial Fourier variable and along angular directions.
\end{remark}

We will give the proofs in several steps and begin with \eqref{upperbound1}. Recall the decomposition \eqref{eq m1234}, for \(\M_{k,p,l}^1\) and \(\M_{k,p,l}^4,k\geq-1\), we have
\begin{lemma}[Estimates of $\M_{k,p,l}^1$ and $\M_{k,p,l}^4$]\label{M14}
 For any $k\geq 0$ and $N\in \mathbb{N}$, it holds that
\begin{equation}\label{ineq kgeq0 M1}
\begin{split}
    |\M_{k,p,l}^1|&\ls C_N2^{-Nk}2^{-Np}\|\F_p g\|_{L^2}\|\F_l h\|_{L^2}\|\t\F_pf\|_{L^2},
\end{split}
\end{equation}
\begin{equation}\label{ineq kgeq0 M4}
\begin{split}
    |\M_{k,p,m}^4|&\ls C_N2^{-Nk}2^{-Np}\|\F_p g\|_{L^2}\|\t \F_p h\|_{L^2}\|\t\F_mf\|_{L^2}.
\end{split}
\end{equation}
For $k=-1$, it holds that
\begin{equation}\label{ineq k=-1 M1}
|\M_{-1,p,l}^1|\ls (2^{-\tilde{\zeta} p}2^{2sl}+{2^{\frac 32 l}2^{-(\gamma+3)p}})\|\F_p g\|_{L^2}\|\t\F_l h\|_{L^2}\|\t\F_pf\|_{L^2},
\end{equation}
\begin{equation}\label{ineq k=-1 M4}
|\M_{-1,p,m}^4|\ls 2^{-\tilde{\zeta}  p}2^{2sm}\|\F_p g\|_{L^2}\|\t \F_p h\|_{L^2}\|\t\F_mf\|_{L^2},
\end{equation}
where
\begin{equation}\label{tzeta}
    \tilde{\zeta}=\left\{\begin{array}{ll}
         &1, \quad -\frac{1}{2}<\gamma+2s<0,\\
&1-\frac{\log_2p}{p}, \quad \gamma+2s=-\frac{1}{2},\\
&\gamma+2s+\frac{3}{2}, \quad -1<\gamma+2s<-\frac{1}{2}.
    \end{array}\right.
\end{equation}
\end{lemma}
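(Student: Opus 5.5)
We work through Bobylev's formula \eqref{bobylev} and treat the two regimes $k\ge0$ and $k=-1$ separately. The distinction between them is whether the Fourier transform of $\Phi^\gamma_k$ is Schwartz-like or has only power decay.

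\emph{The case $k\ge0$.} Here $\Phi^\gamma_k$ is smooth and supported in $|v|\sim 2^k$, so $\widehat{\Phi^\gamma_k}(\eta)=2^{(\gamma+3)k}\widehat{\Phi^\gamma_0}(2^k\eta)$, and $\widehat{\Phi^\gamma_0}$ is a Schwartz function. In $\M^1_{k,p,l}$ and $\M^4_{k,p,m}$ the kernel enters only through $\tilde\F_p\Phi^\gamma_k$, i.e. frequencies $|\eta|\sim2^p$, where $|\widehat{\Phi^\gamma_k}(\eta)|\ls C_N 2^{(\gamma+3)k}(2^k2^p)^{-2N}$.

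We write $\M^1$ in the form \eqref{bobylev} with $\widehat{g}$ replaced by $\widehat{\F_pg}$. We bound the difference $\widehat{\Phi}(\eta-\xi^-)-\widehat{\Phi}(\eta)$ in two ways:
\begin{itemize}
\item[$\bullet$] by a Taylor expansion to second order in $\xi^-$, using $|\xi^-|\sim|\xi|\theta$, symmetry in $\sigma$ to kill the first-order term, and $\int b\,\theta^2\d\sigma<\infty$;
\item[$\bullet$] by the trivial bound when $|\xi|\theta\gtrsim 2^p$.
\end{itemize}
Either way the angular integral is finite up to a factor $|\xi|^{2s}\lesssim 2^{2sp}$, which is absorbed by the rapid decay. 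Cauchy--Schwarz in $(\eta,\xi)$, using that $\xi-\eta$ ranges over an annulus of size $2^l$ and $\xi$ over one of size $2^p$, then gives \eqref{ineq kgeq0 M1}. The bound \eqref{ineq kgeq0 M4} follows in the same way.

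\emph{The case $k=-1$.} Now $\widehat{\Phi^\gamma_{-1}}(\eta)\sim |\eta|^{-(\gamma+3)}$ for large $|\eta|$ (a homogeneous symbol plus a Schwartz error), so $|\widehat{\tilde\F_p\Phi^\gamma_{-1}}|\ls 2^{-(\gamma+3)p}$ on its support. For $\M^1_{-1,p,l}$ we split the $\sigma$-integral at $\theta\sim 2^{-l}$.

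\emph{Small angles.} We use the second-order cancellation on the factor $(\tilde\F_pf)'-\tilde\F_pf$. In physical variables this costs $2^{2sl}$ times a kernel that is effectively $|\F_pg|*|\Phi|$. The contribution is then $\int |\F_pg|\,|\Phi_{-1}|(v-v_*)\,|\F_lh|\,|\tilde\F_pf|$. We bound it by placing $\F_pg$ in $L^2$ and using Hardy--Littlewood--Sobolev / Young with $|v|^\gamma\mathbf 1_{|v|\le1}$. The frequency localization then allows trading a derivative: $\||\cdot|^\gamma\mathbf 1 * \F_pg\|_{L^\infty}\ls 2^{-(\gamma+3/2)p}\|\F_pg\|_{L^2}$. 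This is the source of the factor $2^{-\tilde\zeta p}$ with $\tilde\zeta=\gamma+2s+3/2$ in (VS) after accounting for $2^{2sl}$ versus $2^{2sp}$.

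\emph{Large angles.} The term without cancellation is estimated directly in Fourier variables. It gives $2^{-(\gamma+3)p}$ times the volume factor $2^{\frac32 l}$ coming from Cauchy--Schwarz over $|\xi-\eta|\sim2^l$.

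For $\M^4_{-1,p,m}$ the same argument applies with the cancellation on $\F_mf$, giving $2^{2sm}$ and the same bound on the $g$-factor.

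The main obstacle is obtaining the sharp exponent $\tilde\zeta$ in \eqref{tzeta} uniformly across the three regimes. When $\gamma+2s>-1/2$ the $L^\infty$ bound saturates, since one can only gain up to one derivative from the singularity after using $\theta^2$-cancellation, so $\tilde\zeta=1$. At the endpoint $\gamma+2s=-1/2$, summing the dyadic pieces of $|\eta|^{-(\gamma+3)}$ over the range between $2^l$ and $2^p$ produces a factor $p$, hence the $1-\log_2 p/p$. I would handle this by splitting $\widehat{\Phi_{-1}}$ dyadically in $|\eta-\xi^-|$ and summing a geometric series whose ratio is $2^{(\gamma+2s+1/2)}$, which becomes $1$ at the endpoint.
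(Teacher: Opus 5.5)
Your $k\ge0$ argument is fine. It is the Fourier-side version of the paper's proof: the paper Taylor-expands $(\tF_pf)'-\tF_pf$ in $v$, handles the linear term with \eqref{symmetric}, and uses \eqref{tFpphigammak}. One small correction: symmetry in $\sigma$ does not kill the linear term. It only reduces it to size $|\xi|\int b\sin^2\frac\theta2\,\d\sigma$, which is harmless here.

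The $k=-1$ part, which the paper simply imports from \cite[Lemmas 2.1--2.2]{he2018sharp}, does not work as written.

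\textbf{Cancellation cost.} In $\M^1_{-1,p,l}$ the difference falls on $\tF_pf$, which lives at frequency $2^p$. So the second-order remainder costs $2^{2p}|v-v_*|^2\theta^2$. The factor $2^{2sl}$ you charge is what cancellation on a function at frequency $2^l$ (i.e.\ on $h$) would cost. Even granting your steps, the small-angle bound $2^{2sl}2^{-(\gamma+\frac32)p}$ exceeds both terms on the right of \eqref{ineq k=-1 M1} by a factor growing in $p$; in (VS) it exceeds the first by $2^{2sp}$. The remark about ``accounting for $2^{2sl}$ versus $2^{2sp}$'' does not supply that factor.

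\textbf{Large angles.} With your split at $\theta\sim2^{-l}$, the large-angle piece carries $\int_{\theta\gtrsim2^{-l}}b\,\d\sigma\sim2^{2sl}$, which you dropped. Keeping it gives $2^{(2s+\frac32)l}2^{-(\gamma+3)p}$, which is not bounded by the right-hand side of \eqref{ineq k=-1 M1} for $l$ close to $p$.

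\textbf{Endpoint mechanism.} The mechanism you propose for the endpoint does not occur here. In $\M^1$ one has $\eta-\xi^-=\xi^+-(\xi-\eta)$ with $|\xi^+|\ge|\xi|/\sqrt2\sim2^p\gg|\xi-\eta|$. In $\M^4$ one has $|\xi^-|\le|\xi|\sim2^m\ll2^p$. Hence $|\eta|\sim|\eta-\xi^-|\sim2^p$ throughout, and there is no dyadic range of $|\eta-\xi^-|$ to sum.

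\textbf{Repair.} Run your own $k\ge0$ argument at $k=-1$.
By \eqref{parPhi-1}, $\widehat{\tF_p\Phi^\gamma_{-1}}$ is supported in $\{|\zeta|\sim2^p\}$ with $\|\nabla^i\widehat{\tF_p\Phi^\gamma_{-1}}\|_{L^\infty}\ls2^{-(\gamma+3+i)p}$ for $i\le2$. So the second-order expansion in $\xi^-$ holds for every $\sigma$, with no angular splitting. The $\sigma$-integral of $b\,[\widehat{\tF_p\Phi^\gamma_{-1}}(\eta-\xi^-)-\widehat{\tF_p\Phi^\gamma_{-1}}(\eta)]$ is then $\ls2^{-(\gamma+4)p}|\xi|+2^{-(\gamma+5)p}|\xi|^2$.

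For $\M^1$ we have $|\xi|\sim2^p$, so this is $\ls2^{-(\gamma+3)p}$. Young's inequality with $\|\widehat{\F_lh}\|_{L^1}\ls2^{\frac32l}\|\F_lh\|_{L^2}$ then gives the second term of \eqref{ineq k=-1 M1}, which alone implies the stated sum.

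For $\M^4$ we have $|\xi|\sim2^m$ with $m<p$. Young's inequality with $\|\widehat{\F_mf}\|_{L^1}\ls2^{\frac32m}\|\F_mf\|_{L^2}$ gives $2^{-(\gamma+4)p}2^{\frac52m}=2^{-(\gamma+2s+\frac32)p}2^{2sm}2^{(\frac52-2s)(m-p)}\le2^{-\tilde\zeta p}2^{2sm}$. This uses $2s<\frac52$ and $\tilde\zeta\le\gamma+2s+\frac32$, so no logarithmic loss is needed for these two terms.
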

With the convention that in the borderline case $\gamma+2s=-\f12$ we set $\t\zeta=1$ for $p\le1$; the values at finitely many indices only affect the constants.
\begin{proof}
For the case $k=-1$, i.e., \eqref{ineq k=-1 M1} and \eqref{ineq k=-1 M4}, we use the estimates in \cite[Lemma 2.1, Lemma 2.2]{he2018sharp}, where the parameters \(\eta_2\) and \(\tilde\zeta\) here satisfy \(\eta_2+\f12=\tilde\zeta\).

For the case $k\geq0$, we first give the proof of \eqref{ineq kgeq0 M1}. Recall the definition that
\begin{equation*}
    \begin{aligned}
      \M_{k,p,l}^1=&\iint_{\R^6\times\SS^2}(\t \F_p\Phi^\gamma_k)(|v-v_*|)b(\cos\theta)\left( \F_pg\right)_*\left( \F_lh\right)\big[(\t \F_pf)'-(\t \F_pf)\big]\d \sigma \d v_*\d v,\quad l\leq p-N_0.
    \end{aligned}
\end{equation*}
Using Taylor expansion
\begin{equation*}
    \begin{aligned}
        (\tF_pf)'-\tF_p f=(v'-v)\cdot(\nabla\tF_p f)(v)+\f12\int_0^1(1-\kappa)(v'-v)\otimes(v'-v):(\nabla^2\tF_p f)(\kappa(v))d\kappa,
    \end{aligned}
\end{equation*}
where $\kappa(v)=v+\kappa(v'-v)$, we have 
    \begin{equation*}
        \begin{aligned}
        &\M_{k,p,l}^1=\iint_{\R^6\times\SS^2}(\t \F_p\Phi^\gamma_k)(|v-v_*|)b(\cos\theta)\left( \F_pg\right)_*\left( \F_lh\right)\nabla\tF_pf(v)\cdot(v'-v)\d \sigma \d v_*\d v\\
        +&\int_0^1\iint_{\R^6\times\SS^2}(\t \F_p\Phi^\gamma_k)(|v-v_*|)b(\cos\theta)\left( \F_pg\right)_*\left( \F_lh\right)\nabla^2\tF_pf(\kappa(v)):(v'-v)\otimes(v'-v)\d \sigma \d v_*\d v\d \kappa\\
        &~~~~~~~~=:~Z_1+Z_2.
    \end{aligned}
    \end{equation*}
   
    For $Z_1$, noticing that $\cos\theta=\f{v-v_*}{|v-v_*|}\cdot\sigma$ and 
        \begin{equation}\label{symmetric}
        \int_{\SS^2}b\Big(\f{v-v_*}{|v-v_*|}\cdot\sigma\Big)(v-v')\chi(|v-v'|)d\sigma=\int_{\SS^2}b\Big(\f{v-v_*}{|v-v_*|}\cdot\sigma\Big)\f{1-(\f{v-v_*}{|v-v_*|}\cdot\sigma)}2\chi(|v-v'|)d\sigma \cdot(v-v_*)
    \end{equation}
    for any function $\chi$, then using the fact $b(\cos\theta)\sim \theta^{-2-2s}$, we have
    \begin{equation*}
        \Big|\int_{\SS^2}b(\cos\theta)(v'-v)d\sigma\Big|\ls \int_0^{\f\pi 2}\theta^{-2-2s}\sin^2\f\theta 2\sin\theta d\theta|v-v_*|\ls |v-v_*|.
    \end{equation*}
Thus by Cauchy-Schwarz inequality, we have
    \begin{equation*}
        \begin{aligned}
        |Z_1|\ls &~\big(\int_{\R^6}|v-v_*|^2|\tF_p(\Phi_k^{\gamma})(|v-v_*|)|^2|\nabla\tF_pf(v)|^2\d v\d v_*\big)^{\frac{1}{2}}\big(\int_{\R^6}|( \F_pg)(v_*)|^2| \F_lh(v)|^2\d v\d v_*\big)^{\frac{1}{2}}
        \\
        \ls&~\||\cdot|\tF_p(\Phi_k^\gamma)\|_{L^2}\|\F_p g\|_{L^2}\| \F_l h\|_{L^2}\|\nabla\tF_pf(v)\|_{L^2}.
    \end{aligned}
    \end{equation*}
By direct calculation, we have 
\begin{equation*}
    \widehat{\Phi^\gamma_k}(\xi)=2^{(\gamma+3)k}\widehat{\Phi^\gamma_0}(2^k\xi),
\end{equation*}
which yields 
\begin{equation}\label{tFpphigammak}
    \|\tF_p\Phi^\gamma_k\|_{L^2_2}\ls \|\widehat{\F_p\Phi^\gamma_k}\|_{H^2}\ls C_N2^{-Nk-(N+1)p}\|\vphi\|_{L^\infty}\|\Phi^\gamma_0\|_{L^{2}_{2N+2}}.
\end{equation}
From this and Lemma \ref{Bern}, we obtain 
\begin{equation}\label{Z1}
    |Z_1|\ls C_N 2^{-Nk}2^{-Np}\|\F_p g\|_{L^2}\| \F_l h\|_{L^2}2^{-p}\|\nabla\tF_pf(v)\|_{L^2}\ls C_N 2^{-Nk}2^{-Np}\|\F_p g\|_{L^2}\| \F_l h\|_{L^2}\|\tF_pf\|_{L^2}.
\end{equation}

    For $Z_2$, note that $|v-v'|=|v-v_*|\sin\f\theta 2$, by Cauchy-Schwarz inequality, we have
    \begin{equation*}
        \begin{aligned}
        |Z_2|\ls &~\Big(\int_0^1\iint_{\R^6\times\SS^2}|v-v_*|^4|\tF_p(\Phi_k^{\gamma})(|v-v_*|)|^2b(\cos\theta)\sin^2\f\theta 2| \F_lh(v)|^2d\sigma\d v\d v_*d\kappa\Big)^{\frac{1}{2}}\\
        &\times~\Big(\int_0^1\iint_{\R^6\times\SS^2}b(\cos\theta)\sin^2\f\theta 2 | \F_pg(v_*)|^2|\nabla^2\tF_pf(\kappa(v))|^2\d v\d v_*\d\sigma d\kappa\Big)^{\frac{1}{2}}.
    \end{aligned}
    \end{equation*}
For the second part, we follow the change of variables $(v_*,v)\rightarrow(v_*,u_1=\kappa(v))$. Thanks to the fact
\begin{equation}\label{Jacobi}
    \Big|\f{\partial \kappa(v)}{\partial v}\Big|=(1-\f \kappa 2)^2\{(1-\f\kappa 2)+\f\kappa 2\f{v-v_*}{|v-v_*|}\cdot\sigma\}>\f18,
\end{equation}
we derive that
\begin{equation*}
\begin{aligned}
   &\Big|\iint_{\R^6\times\SS^2}b(\cos\theta)\sin^2\f\theta 2 | \F_pg(v_*)|^2|\nabla^2\tF_pf(\kappa(v))|^2\d v\d v_*\d\sigma\Big|\\
   \ls~&\Big|\iint_{\R^6\times\SS^2}b(\cos\t\theta)\sin^2\f{\t\theta} 2 | \F_pg(v_*)|^2|\nabla^2\tF_pf(u_1)|^2\d u_1\d v_*\d\sigma\Big|,
\end{aligned}
\end{equation*}
where $\t\theta$ verifies $\cos\t\theta=\f{u_1-v_*}{|u_1-v_*|}\cdot\sigma$ satisfies $\theta/2\leq \t\theta\leq \theta$. Thus we can obtain
    \begin{equation}\label{Z2}
        \begin{aligned}
        |Z_2|\ls\||\cdot|^2\tF_p(\Phi_k^\gamma)\|_{L^2}\|\F_p g\|_{L^2}\| \F_l h\|_{L^2}\|\nabla^2\tF_pf\|_{L^2}\ls C_N 2^{-Nk}2^{-Np}\|\F_p g\|_{L^2}\| \F_l h\|_{L^2}\|\tF_pf(v)\|_{L^2},
    \end{aligned}
    \end{equation}
    where we use \eqref{tFpphigammak} in the last step. Combining \eqref{Z1} and \eqref{Z2}, we obtain \eqref{ineq kgeq0 M1}.
    
\medskip

 The proof of \(\mathcal{M}_{k,p,m}^4\) is similar to that of \(\mathcal{M}_{k,p,l}^1\) and  we do not repeat the argument. We finally remark that the \(\|\F_p g\|_{L^2}\) in \eqref{ineq kgeq0 M1} and \eqref{ineq kgeq0 M4} can be replaced by \(\|\F_p g\|_{L^1}\) since $\|\F_pg\|_{L^2}\ls 2^{\f32 p}\|\F_pg\|_{L^1}$ due to Lemma \ref{Bern}.
\end{proof}

For $\M_{k,p,l}^2$ and $\M_{k,p}^3,k\geq-1$, we have the following lemma. 
\begin{lemma}[Estimates of $\M_{k,p,l}^2$ and $\M_{k,p}^3$]\label{M23}
For any $k\geq0$ and $N\in\N$, it holds that
\begin{equation}\label{ineq kgeq0 M2}
\begin{aligned}
   \big |\sum_{p\leq l-N_0}\M_{k,p,l}^2\big|\ls &\min\Big\{2^{(\gamma+2s)k}2^{2sl}\|\cS_{l-N_0} g\|_{L^1},
    2^{(\gamma+\frac{3}{2})k}\sum_{p\leq l-N_0}(2^{2s(l-p)}\|\F_pg\|_{L^2})\Big\}\|\F_l h\|_{L^2}\|\t\F_lf\|_{L^2},
\end{aligned}   
\end{equation}
\begin{equation}\label{ineq kgeq0 M3}
\begin{split}
    |\M_{k,p}^3|\ls &\min\Big\{2^{(\gamma+2s)k}2^{2sp}\|\F_p g\|_{L^1},2^{(\gamma+\frac{3}{2})k}\|\F_pg\|_{L^2}\Big\}\|\t \F_p h\|_{L^2}\|\t\F_pf\|_{L^2}.
\end{split}
\end{equation}
For $k=-1$, it holds that
\begin{equation}\label{ineq k=-1 M2}
|\M_{-1,p,l}^2|\ls 2^{-\tilde{\zeta} p}2^{2sl}\|\F_p g\|_{L^2}\|\F_l h\|_{L^2}\|\t\F_lf\|_{L^2},
\end{equation}
\begin{equation}\label{ineq k=-1 M3}
|\M_{-1,p}^3|\ls 2^{-\tilde{\zeta}  p}2^{2sp}\|\F_p g\|_{L^2}\|\t \F_p h\|_{L^2}\|\t\F_pf\|_{L^2},
\end{equation}
where $\tilde{\zeta}$ is defined in \eqref{tzeta}.
\end{lemma}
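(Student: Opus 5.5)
The case $k=-1$ carries the singularity of $|v|^\gamma$ at the origin. For \eqref{ineq k=-1 M2} and \eqref{ineq k=-1 M3} I will follow the proof of Lemma \ref{M14} and quote the corresponding estimates in \cite[Lemma 2.1, Lemma 2.2]{he2018sharp}, with $\eta_2+\frac12=\tilde\zeta$. The factor $2^{-\tilde\zeta p}$ comes from bounding $\widehat{\Phi^\gamma_{-1}}(\eta)\sim|\eta|^{-\gamma-3}$ on the frequency support $|\eta|\sim 2^p$ of $\F_pg$, combined with the angular cancellation giving $2^{2sl}$. The borderline $\gamma+2s=-\frac12$ produces the logarithmic loss encoded in $\tilde\zeta=1-\frac{\log_2 p}{p}$. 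The real work is therefore the case $k\ge0$, where $\Phi^\gamma_k$ is smooth and supported on $|v-v_*|\sim 2^k$.

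\textbf{Case $k\geq0$: the $L^1$-branch of \eqref{ineq kgeq0 M3}.} I will use the standard argument from \cite{he2018sharp}. Fix $v_*$ and regard $\iint \Phi_k^\gamma b\,(\F_p g)_*\,\tilde\F_ph\,[(\tilde\F_pf)'-\tilde\F_pf]$ as $\int |\F_pg(v_*)|\,\mathcal{I}(v_*)\,dv_*$. Here $\mathcal{I}$ is the bilinear form of a translated operator acting on $(\tilde\F_p h,\tilde\F_p f)$.
\begin{enumerate}
\item Split the angular integral at $\theta\sim 2^{-p}2^{-k}$.
\item For small angles, Taylor expand to second order as in the $Z_1,Z_2$ computation. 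The first-order term is handled by the symmetry \eqref{symmetric}, and the Jacobian bound \eqref{Jacobi} controls the change of variables. This gives $2^{\gamma k}\,2^{2k}2^{2p}\,\theta_0^{2-2s}$.
\item For large angles, use $|(\tilde\F_pf)'|+|\tilde\F_pf|$ with the pre-post change of variables. This gives $2^{\gamma k}\theta_0^{-2s}$.
\end{enumerate}
Optimizing over $\theta_0$ yields $2^{(\gamma+2s)k}2^{2sp}\|\F_pg\|_{L^1}\|\tilde\F_ph\|_{L^2}\|\tilde\F_pf\|_{L^2}$.

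\textbf{Case $k\geq0$: the $L^2$-branch of \eqref{ineq kgeq0 M3}.} Here I do not exploit cancellation. I take absolute values with $\int b\,d\sigma$ cut at $\theta\gtrsim 1$, using that after the localizations the angular singularity is only relevant at scale $2^{-p}$. I then apply Cauchy--Schwarz in $(v,v_*)$ with $\|\Phi_k^\gamma\|_{L^2}\sim 2^{(\gamma+\frac32)k}$, i.e. Young's inequality $\|\Phi^\gamma_k*|\F_pg|\|_{L^\infty}\le \|\Phi_k^\gamma\|_{L^2}\|\F_pg\|_{L^2}$. This gives the bound $2^{(\gamma+\frac32)k}\|\F_pg\|_{L^2}$. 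The singular angular part is treated by the same Taylor argument, now with $\theta$ against the $|\xi|$-scale $2^p$ of $h$ and $f$; the $2^{2sp}$ produced there must be absorbed. This branch is the main obstacle: if the naive bound fails, I will fall back on Bobylev's formula \eqref{bobylev}. There, $\widehat{\Phi_k^\gamma}(\eta-\xi^-)-\widehat{\Phi_k^\gamma}(\eta)$ is integrated against $\widehat g(\eta)$ with $|\eta|\sim 2^p$, and Cauchy--Schwarz in $\eta$ uses $\|\widehat{\Phi^\gamma_k}\|_{L^2}=2^{(\gamma+\frac32)k}\|\Phi_0^\gamma\|_{L^2}$.

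\textbf{The sum over $p\le l-N_0$ in \eqref{ineq kgeq0 M2}.} For the first branch I sum in $p$ before estimating, so that $\sum_{p\le l-N_0}\F_pg=\cS_{l-N_0}g$. The same $L^1$ argument, now with $h$ and $f$ at frequency $2^l$, gives $2^{(\gamma+2s)k}2^{2sl}\|\cS_{l-N_0}g\|_{L^1}$. For the second branch I estimate each $p$ separately, again through Bobylev's formula. The difference $\widehat{\Phi_k^\gamma}(\eta-\xi^-)-\widehat{\Phi_k^\gamma}(\eta)$ with $|\eta|\sim2^p\ll|\xi|\sim2^l$ is controlled by splitting $\theta$ at $2^{p-l}$: below it I use a Taylor expansion, above it the trivial bound. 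This gives $2^{2s(l-p)}2^{(\gamma+\frac32)k}\|\F_pg\|_{L^2}$, and summing over $p$ completes \eqref{ineq kgeq0 M2}.
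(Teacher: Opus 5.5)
Your $L^1$ branches for $k\ge0$ follow the standard argument; the paper simply cites \cite[Lemma 2.3]{he2018sharp} for them. The $L^2$ branches of \eqref{ineq kgeq0 M2}--\eqref{ineq kgeq0 M3}, however, are left open exactly at the hard step.

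\textbf{The missing idea for $k\ge0$.} A Taylor expansion with $\Phi^\gamma_k$ itself costs $\||\cdot|^2\Phi^\gamma_k\|_{L^2}\sim2^{(\gamma+\frac72)k}$ plus two derivatives on $h,f$. After optimizing the angular cut, this is a loss of order $2^{2s(k+p)}$ over the target $2^{(\gamma+\frac32)k}$, and it cannot be absorbed. Your fallback, Cauchy--Schwarz in $\eta$ with $\|\widehat{\Phi^\gamma_k}\|_{L^2}$, does nothing about the non-integrable $\int b\,\d\sigma$ near $\theta=0$.

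The paper's mechanism is this. In Bobylev's formula, on $\theta\le\delta_0$ (for $\M^3$), resp.\ $\theta\le\delta_02^{p-l}$ (for $\M^2$), one has $|\eta-t\xi^-|\sim|\eta|\sim2^p$ for all $t\in[0,1]$. Hence $\Phi^\gamma_k$ can be replaced by $\t\F_p\Phi^\gamma_k$, whose weighted norms decay like $2^{-N(k+p)}$ by \eqref{tFpphigammak}. The Taylor argument of Lemma \ref{M14} then makes the whole singular part negligible. Only the non-singular part contributes, where $\int b\,\d\sigma\ls1$, resp.\ $\ls2^{2s(l-p)}$, and it is bounded by Cauchy--Schwarz and $\|\Phi^\gamma_k\|_{L^2}\ls2^{(\gamma+\frac32)k}$.

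Your split of $\M^2$ at $\theta\sim2^{p-l}$ is the right one. The Taylor step below the threshold works only because the derivatives of $\widehat{\Phi^\gamma_k}$ are evaluated where they decay rapidly; with global bounds it loses $2^{2(k+p)}$.

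\textbf{The case $p=-1$.} This mechanism needs $|\eta|\gs1$, i.e.\ $p\ge0$. At $p=-1$ no argument can close the $L^2$ branch when $s>\frac34$. Take $g\ge0$ a fixed bump with $\operatorname{supp}\widehat g\subset\{|\eta|<\frac34\}$, and $h=f$ a fixed nonzero profile with $\operatorname{supp}\widehat f\subset\{|\xi|<\frac34\}$ translated to $|v|\sim2^k$. Then $\M^3_{k,-1}=-\frac12\int\Phi^\gamma_k b\,g_*(f'-f)^2+O(2^{\gamma k})$. The angles $\theta\sim2^{-k}$ alone give $|\M^3_{k,-1}|\gs2^{(\gamma+2s)k}$, which exceeds $2^{(\gamma+\frac32)k}$; the same happens for $\M^2_{k,-1,l}$.

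\textbf{The case $k=-1$.} Quoting \cite[Lemma 2.1, Lemma 2.2]{he2018sharp} is not enough. The paper uses those lemmas only for $\M^1,\M^4$. Since $\M^2_{-1,p,l}$ is not summed over $p$ (He's $\M^2_{k,l}$ is), the per-$p$ bound with $2^{-\t\zeta p}\|\F_pg\|_{L^2}$ is proved from scratch. Your heuristic, ``$|\widehat{\Phi^\gamma_{-1}}(\eta)|\sim|\eta|^{-\gamma-3}$ on $|\eta|\sim2^p$'', skips the two places where work is needed.
\begin{itemize}
\item For $\M^3_{-1,p}$ at angles $\theta>\delta_0$: $\Phi^\gamma_{-1}\notin L^2$ once $\gamma\le-\frac32$. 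One must therefore use $\sum_{l\le p+N_0}\|\F_l\Phi^\gamma_{-1}\|_{L^2}$ and then verify $\Lambda\ge\t\zeta$, which relies on $\gamma+4s=1$.
\item For $\M^2_{-1,p,l}$ in the region $|\eta_*-\xi^-|<\frac12|\eta_*|$: here $\widehat{\Phi^\gamma_{-1}}(\eta_*-\xi^-)$ is not small at all. The paper needs the change of variables $\t\eta_*=\eta_*-\xi^-$ and the splitting $b=b^\kappa b^{2-\kappa}$, with $\kappa$ chosen differently for $\gamma<-\frac32$ and $\gamma\ge-\frac32$, to reach $2^{2sl}2^{-\t\zeta p}$.
\end{itemize}
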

\begin{remark}
    We remark that the \(L^2\) norms of function $g$ appearing in the upper bounds \eqref{ineq kgeq0 M2} and \eqref{ineq kgeq0 M3} are used only in the proof of uniqueness, one may see Section \ref{proveunique} for the details.
\end{remark}

\begin{proof}
{\bf Estimate of $\M_{-1,p}^3$:} We first focus on the case $k=-1$ and begin with the proof of \eqref{ineq k=-1 M3}. 









Let $\cos{\theta}=\frac{\xi}{|\xi|}\cdot\sigma$, then $|\xi^-|=|\xi|\sin\frac{{\theta}}{2}$. If $|\eta_*|\sim 2^p$ and $|\xi|\sim 2^p$, there exists a constant $\delta_0$, such that  $|\eta_*-\xi^-|\sim 2^p$ when $\theta\leq \delta_0$. We cut the kernel $b$ into two parts $b=b_{ \theta\leq \delta_0}+b_{ \theta> \delta_0}:=b_1+b_2$. Then from Bobylev's formula \eqref{bobylev}, we divide $\M^3_{-1,p}$ into three parts
\begin{equation*}
  \begin{aligned}
|\M_{-1,p}^3|\leq\& \Big|\iint_{\R^6\times\SS^2}\t \F_p(\Phi^\gamma_{-1})(|v-v_*|)b_1(\cos\theta)( \F_pg)_*( \t\F_ph)
\big[( \t\F_pf)'-( \t\F_pf)\big]\d \sigma \d v_*\d v\Big|\\\&+\sum_{l\leq p+N_0}\Big|\iint_{ \R^6\times\SS^2} \F_{l}(\Phi^\gamma_{-1})(|v-v_*|)b_2(\cos \theta)\left( \F_pg\right)_*( \t\F_ph)( \t\F_pf)'\d \sigma \d v_*\d v\Big|\\\&+\sum_{l\leq p+N_0}\Big|\iint_{ \R^6\times\SS^2} \F_l(\Phi^\gamma_{-1})(|v-v_*|)b_2(\cos\theta)( \F_pg)_*( \t\F_ph)( \t\F_pf)\d \sigma \d v_*\d v\Big|\\=:\&~I_1+I_2+I_3.   
\end{aligned}
\end{equation*}     
The estimate of $I_1$ is the same with that in $\M_{-1,p,l}^1$, which is , we have
\begin{equation}\label{I_1}
  |I_1|\ls (2^{-\tilde{\zeta} p}2^{2sp}+2^{-(\gamma+\frac 32)p})\|\cF_p g\|_{L^2}\|\t\cF_p h\|_{L^2}\|\t \cF_p f\|_{L^2}
\ls  2^{-\tilde\zeta p}2^{2sp}\|\cF_p g\|_{L^2}\|\t\cF_p h\|_{L^2}\|\t \cF_p f\|_{L^2},  
\end{equation}
where we use the fact that $\tilde{\zeta}\leq \gamma+2s+\frac 32$.

We next turn to the estimate of $I_2$. Noticing that $\theta\geq \delta_0$, by Cauchy-Schwarz inequality, it holds
\begin{equation*}
    \begin{aligned}
|I_2|\ls\& \sum_{l\leq p+N_0}\Big(\iint_{\R^6\times\SS^2}|\F_l(\Phi^\gamma_{-1})(|v-v_*|)|^2|\t\F_ph(v)|^2b_2(\cos\theta)\d\sigma\d v\d v_*\Big)^\frac{1}{2}\\\&
\times\Big(\iint_{\R^6\times\SS^2}|\F_pg(v_*)|^2|\t\F_pf(v')|^2b_2(\cos\theta)\d\sigma\d v\d v_*\Big)^\frac{1}{2}.
\end{aligned}
\end{equation*}
For the first part, using $\int_{\SS^2}b_2(\cos\theta)\d\sigma\ls 1$, we have
\begin{equation*}
    \begin{aligned}
    \Big(\iint_{\R^6\times\SS^2}|\F_l(\Phi^\gamma_{-1})(|v-v_*|)|^2|\t\F_ph(v)|^2b_2(\cos\theta)\d\sigma\d v\d v_*\Big)^\frac{1}{2}\ls\|\F_l(\Phi^\gamma_{-1})\|_{L^2}\|\t\F_ph\|_{L^2}.
\end{aligned}
\end{equation*}
For the second part, using change of variables $(v,v_*)\mapsto (u=v',v_*)$, $|\frac{\p v'}{\p v}|=\f18(1+\frac{v-v_*}{|v-v_*|}\cdot \sigma)>0$ and $\cos\f\theta 2=\frac{v'-v_*}{|v'-v_*|}\cdot \sigma$, we have 
\begin{equation*}
    \begin{aligned}
\Big(\iint_{\R^6\times\SS^2}|\F_pg(v_*)|^2|\t\F_pf(v')|^2b_2(\cos\theta)\d\sigma\d v\d v_*\Big)^\frac{1}{2}\ls\|\F_pg\|_{L^2}\|\t\F_pf\|_{L^2}.
\end{aligned}
\end{equation*}
Then we get
\begin{equation}\label{I_2a}
    |I_2|\ls\sum_{l\leq p+N_0}\|\F_l(\Phi^\gamma_{-1})\|_{L^2}\|\F_pg\|_{L^2}\|\t\F_ph\|_{L^2}\|\t\F_pf\|_{L^2}.
\end{equation}
Note that
\begin{equation*}
    \sum_{l\leq p+N_0}\|\F_l(\Phi^\gamma_{-1})\|_{L^2}=\sum_{l\leq p+N_0}\|\vphi_l(\widehat{\Phi^\gamma_{-1}})\|_{L^2}\ls \left\{\begin{array}{ll}
&2^{-(\gamma+\frac{3}{2})p},\quad \gamma+\frac{3}{2}<0,\\
&p,\quad \gamma+\frac{3}{2}=0,\\ \vspace{-0.3cm}\\
&1, \quad \gamma+\frac{3}{2}>0.
\end{array}\right.
\end{equation*}
Therefore
\begin{equation}\label{I_2}
 |I_2|\ls 2^{-\Lambda p}2^{2sp}\|\F_pg\|_{L^2}\|\t\F_ph\|_{L^2}\|\t\F_pf\|_{L^2},   
\end{equation}
with
\begin{equation*}
    \Lambda=\left\{\begin{array}{ll}
  &\gamma+\frac{3}{2}+2s,\quad \gamma+\frac{3}{2}<0,
  \\ &2s-\frac{\log_2 p}{p},\quad \gamma+\frac{3}{2}=0,\\ &2s, \quad 
  \gamma+\frac{3}{2}>0.
\end{array}\right.
\end{equation*}

For $I_3$, by the Cauchy-Schwarz inequality, we have 
\begin{equation*}
    \begin{aligned}
|I_3|\ls\& \sum_{l\leq p+N_0}\Big(\iint_{\R^6\times\SS^2}|\F_l(\Phi^\gamma_{-1})(|v-v_*|)|^2|\t\F_ph(v)|^2b_2(\cos\theta)\d\sigma\d v\d v_*\Big)^\frac{1}{2}\\\&
\times\Big(\iint_{\R^6\times\SS^2}|\F_pg(v_*)|^2|\t\F_pf(v)|^2b_2(\cos\theta)\d\sigma\d v\d v_*\Big)^\frac{1}{2}.
\end{aligned}
\end{equation*}
Then we can use the same argument as $I_2$ to get that
\begin{equation}\label{I_3}
  |I_3|\ls 2^{-\Lambda p}2^{2sp}\|\F_pg\|_{L^2}\|\t\F_ph\|_{L^2}\|\t\F_pf\|_{L^2}.   
\end{equation}

Combining the estimates \eqref{I_1},\eqref{I_2} and \eqref{I_3} and one may check that $\Lambda\geq\tilde{\zeta}$ when $\gamma+2s\leq 0$ and $\gamma+4s=1$.
Indeed, when $\gamma+\frac{3}{2}<0$, it is easy to see $\Lambda=\gamma+2s+\frac{3}{2}\geq\tilde{\zeta}$. When $\gamma+\frac{3}{2}\geq 0$, from $\gamma+4s=1$ and $\gamma+2s\leq 0$, we have $\frac{1}{2}<s\leq \frac{5}{8}$ which implies $\gamma+2s=1-2s\geq-\frac{1}{4}$, so that $\Lambda\geq2s-\frac{\log_2 p}{p}>1=\tilde{\zeta}$ for any $p\in\N$.  Then we obtain \eqref{ineq k=-1 M3}.

\medskip
{\bf Estimate of $\M_{-1,p,l}^2$:}
Next we give the proof of \eqref{ineq k=-1 M2}. Recall that
\begin{equation*}
    \M_{-1,p,l}^2=\iint_{\R^6\times\SS^2}\Phi^\gamma_{-1}(|v-v_*|)b(\cos\theta)\left( \F_pg\right)_*\left( \F_lh\right)\big[(\t \F_lf)'-(\t \F_lf)\big]\d \sigma \d v_*\d v.
\end{equation*}
By Bobylev's formula, we first split it into two parts:
\begin{equation*}
  \begin{aligned}
&\M_{-1,p,l}^2=\iint_{\R^6\times\SS^2}\Phi^\gamma_{-1}(|v-v_*|)b(\cos\theta)\left( \F_pg\right)_*\left( \F_lh\right)\big[(\t \F_lf)'-(\t \F_lf)\big]\d \sigma \d v_*\d v\\=&\iint_{\R^6\times\SS^2}b(\frac{\xi}{|\xi|}\cdot\sigma)\big[\widehat{\Phi_{-1}^\gamma}(\eta_*-\xi^{-})-\widehat{\Phi_{-1}^\gamma}(\eta_*)]\widehat{\F_pg}(\eta_*)\widehat{\F_lh}(\xi-\eta_*)\overline{\widehat{\t\F_lf}}(\xi)\d \sigma\d \eta_* \d \xi
\\
=&\int_{\sigma\in\SS^2}\iint_{|\xi^-|<\frac{1}{2}|\eta_*|}b(\frac{\xi}{|\xi|}\cdot\sigma)\big[\widehat{\Phi_{-1}^\gamma}(\eta_*-\xi^{-})-\widehat{\Phi_{-1}^\gamma}(\eta_*)]\widehat{\F_pg}(\eta_*)\widehat{\F_lh}(\xi-\eta_*)\overline{\widehat{\t\F_lf}}(\xi)\d \sigma\d \eta_* \d \xi
\\
&+\int_{\sigma\in\SS^2}\iint_{|\xi^-|\geq\frac{1}{2}|\eta_*|}b(\frac{\xi}{|\xi|}\cdot\sigma)\big[\widehat{\Phi_{-1}^\gamma}(\eta_*-\xi^{-})-\widehat{\Phi_{-1}^\gamma}(\eta_*)]\widehat{\F_pg}(\eta_*)\widehat{\F_lh}(\xi-\eta_*)\overline{\widehat{\t\F_lf}}(\xi)\d \sigma\d \eta_* \d \xi
\\
=:&~D_1+D_2.
\end{aligned}  
\end{equation*}

{\bf Estimate of $D_1$:} For $D_1$, using Taylor expansion
\begin{equation*}
    \begin{aligned}
    \widehat{\Phi_{-1}^\gamma}(\eta_*-\xi^{-})-\widehat{\Phi_{-1}^\gamma}(\eta_*)=\nabla\widehat{\Phi_{-1}^\gamma}(\eta_*) \cdot (-\xi^{-})
    +\int_0^1t\nabla^2\widehat{\Phi_{-1}^\gamma}(\eta_*-t\xi^{-}):\xi^{-}\otimes \xi^{-}\d t,
\end{aligned}
\end{equation*}
we have
\begin{equation*}
    \begin{aligned}
    |D_1&|\leq \Big|\int_{\sigma\in\SS^2}\iint_{|\xi^-|<\frac{1}{2}|\eta_*|}b(\frac{\xi}{|\xi|}\cdot\sigma)\big[
    \nabla\widehat{\Phi_{-1}^\gamma}(\eta_*) \cdot \xi^{-}
    ]\widehat{\F_pg}(\eta_*)\widehat{\F_lh}(\xi-\eta_*)\overline{\widehat{\t\F_lf}}(\xi)\d \sigma\d \eta_* \d \xi\Big|
    \\
    &+\Big|\int_0^1\int_{\sigma\in\SS^2}\iint_{|\xi^-|<\frac{1}{2}|\eta_*|}b(\frac{\xi}{|\xi|}\cdot\sigma)\big[
    t\nabla^2\widehat{\Phi_{-1}^\gamma}(\eta_*-t\xi^{-}):\xi^{-}\otimes \xi^{-}
    ]\times\widehat{\F_pg}(\eta_*)\widehat{\F_lh}(\xi-\eta_*)\overline{\widehat{\t\F_lf}}(\xi)\d \sigma\d \eta_* \d \xi\Big|
    \\
    &=:~D_{1,1}+D_{1,2}.
\end{aligned}
\end{equation*}
For $D_{1,1}$, on one hand, noticing that  in this integral region, we have $|\eta_*|\sim 2^p$ and $|\xi|\sim 2^l$, then $|\xi^-|<\frac{1}{2}|\eta_*|$ implies that $ \theta \ls 2^{p-l}$ since $|\xi^-|=|\xi|\sin \frac { \theta }2$. On the other hand, by symmetry, we have 
\begin{equation}\label{bcdotxi_-}
 \int_{\mathbb{S}^2}b(\frac{\xi}{|\xi|}\cdot\sigma)\xi^-\d\sigma=\int_{\mathbb{S}^2}b(\frac{\xi}{|\xi|}\cdot\sigma)(\xi^-\cdot\frac{\xi}{|\xi|})\frac{\xi}{|\xi|}\d\sigma.   
\end{equation}
Recall that $\xi^-=\frac{\xi-|\xi|\sigma}2$ which implies that $|\xi^-\cdot\frac{\xi}{|\xi|}|=|\xi|\sin^2\frac{ \theta}2$. Therefore, using Cauchy-Schwarz inequality, the fact $b(\cos\theta)\sin\theta\sim \theta^{-1-2s}$ and $|\partial_{\alpha}\widehat{\Phi^\gamma_{-1}}(\eta_*)|\ls\<\eta_*\>^{-(\gamma+3+|\alpha|)}$(see \eqref{parPhi-1}) for $|\alpha|\leq 2$, we have
\begin{equation*}
    \begin{aligned}
    D_{1,1}\ls&\Big(\int_0^{\frac{\pi}{2}}\mathbf{1}_{\{\theta\ls2^{p-l}\}}b(\cos{\hat\theta})\sin\hat\theta \sin^2{\frac{\hat\theta}{2}}\mathbb\d\hat\theta\Big)2^{-(\gamma+4)p}2^l\Big(\iint_{|\eta_*|\sim2^p}|\widehat{\t\F_lf}(\xi)|^2\d\xi\d\eta_*\Big)^{\frac{1}{2}}\\
    &\times\Big(\iint|\widehat{\F_pg}|(\eta_*)|^2|\widehat{\F_lh}(\xi-\eta_*)|^2\d\xi\d\eta_*\Big)^{\frac{1}{2}}\\
    \ls&~2^{(2s-2)(l-p)}2^{-(\gamma+\frac{5}{2})p}2^l\|\F_pg\|_{L^2}\|\F_lh\|_{L^2}\|\t\F_lf\|_{L^2}
    =2^{(2s-1)l}2^{-(\gamma+2s+\frac{1}{2})p}\|\F_pg\|_{L^2}\|\F_lh\|_{L^2}\|\t\F_lf\|_{L^2}.
\end{aligned}
\end{equation*}
For $D_{1,2}$, noticing that $|\eta_*-t\xi^-|\gs |\eta_*|$ for any $t\in(0,1)$ and $|\xi^-\otimes\xi^-|\ls |\xi|^2\sin^2\theta\ls2^{2l}\sin^2\theta$, we can derive that
\begin{equation*}
    \begin{aligned}
    D_{1,2}\ls&\Big(\int_0^{\frac{\pi}{2}}\mathbf{1}_{\{\theta\ls2^{p-l}\}}b(\cos{\theta})\sin\theta \sin^2{\frac{\theta}{2}}\mathbb\d\theta\Big)2^{-(\gamma+5)p}2^{2l}\Big(\iint_{|\eta_*|\sim2^p}|
    \widehat{\cF}_lf(\xi)|^2\d\xi\d\eta_*\Big)^{\frac{1}{2}}\\&\times\Big(\iint|\widehat{\F}_pg|(\eta_*)|^2|\widehat{\F_lh}(\xi-\eta_*)|^2\d\xi\d\eta_*\Big)^{\frac{1}{2}}\\
    \ls&~2^{(2s-2)(l-p)}2^{-(\gamma+\frac 72)p}2^{2l}\|\F_pg\|_{L^2}\|\F_lh\|_{L^2}\|\t\F_lf\|_{L^2}
    =2^{2sl}2^{-(\gamma+2s+\frac{3}{2})p}\|\F_pg\|_{L^2}\|\F_lh\|_{L^2}\|\t\F_lf\|_{L^2}.
\end{aligned}
\end{equation*}
Combining the estimates of $D_{1,1}$ and $D_{1,2}$ and note that $p\leq l$, we obtain 
\begin{equation}\label{D_1}
     |D_1|\ls 2^{2sl}2^{-(\gamma+2s+\frac{3}{2})p}\|\F_pg\|_{L^2}\|\F_lh\|_{L^2}\|\t\F_lf\|_{L^2}.   
\end{equation}

{\bf Estimate of $D_2$:} For $D_2$, we first notice that
\begin{equation*}
    \begin{aligned}
    |\xi^-|\geq\frac{1}{2}|\eta_*|\Longrightarrow \theta\gs\frac{|\eta_*|}{|\xi|}\quad\mbox{and}\quad\theta\gs\frac{|\eta_*-\xi^-|}{|\xi|}.
\end{aligned}
\end{equation*}
Then we have
\begin{equation*}
    \begin{aligned}
    D_2=&\int_{\sigma\in\SS^2}\iint_{|\xi^-|\geq\frac{1}{2}|\eta_*|}b(\frac{\xi}{|\xi|}\cdot\sigma)\big[\widehat{\Phi^\gamma_{-1}}(\eta_*-\xi^{-})-\widehat{\Phi^\gamma_{-1}}(\eta_*)]\widehat{\F_pg}(\eta_*)\widehat{\F_lh}(\xi-\eta_*)\overline{\widehat{\t\F_lf}}(\xi)\d \sigma\d \eta_* \d \xi\\
    \leq&\Big|\int_{\sigma\in\SS^2}\iint_{\R^6}\mathbf{1}_{\{\theta\gs\frac{|\eta_*-\xi^-|}{|\xi|}\}}b(\frac{\xi}{|\xi|}\cdot\sigma)\widehat{\Phi^\gamma_{-1}}(\eta_*-\xi^{-})\widehat{\F_pg}(\eta_*)\widehat{\F_lh}(\xi-\eta_*)\overline{\widehat{\t\F_lf}}(\xi)\d \sigma\d \eta_* \d \xi\Big|\\
    &+\Big|\int_{\sigma\in\SS^2}\iint_{\R^6}\mathbf{1}_{\{\theta\gs\frac{|\eta_*|}{|\xi|}\}}b(\frac{\xi}{|\xi|}\cdot\sigma)\widehat{\Phi^\gamma_{-1}}(\eta_*)\widehat{\F_pg}(\eta_*)\widehat{\F_lh}(\xi-\eta_*)\overline{\widehat{\t\F_lf}}(\xi)\d \sigma\d \eta_* \d \xi\Big|
    =:D_{2,1}+D_{2,2}.
\end{aligned}
\end{equation*}

For $D_{2,2}$, since $\theta\gs \frac{|\eta_*|}{|\xi|}\gs 2^{p-l}$, then by the fact $b(\cos\theta)\sin\theta\sim \theta^{-1-2s}$ and $|\widehat{\Phi^\gamma_{-1}}(\eta_*)|\ls\<\eta_*\>^{-(\gamma+3)}$, it is easy to get that
\begin{equation}\label{D_22}
  \begin{aligned}
    D_{2,2}\leq&\Big(\int_0^{\frac{\pi}{2}}\mathbf{1}_{\{ \theta\gs 2^{p-l}\}}b(\cos{\theta})\sin\theta\mathbb\d\theta\Big)2^{-(\gamma+3)p}\Big(\iint_{|\eta_*|\sim2^p}|\widehat{\t \F_lf}(\xi)|^2\d\xi\d\eta_*\Big)^{\frac{1}{2}}\\
    &\times\Big(\iint|\widehat{\F}_pg(\eta_*)|^2|\widehat{\F_lh}(\xi-\eta_*)|^2\d\xi\d\eta_*\Big)^{\frac{1}{2}}\\
    \ls&~2^{2sl}2^{-(\gamma+2s+\frac{3}{2})p}\|\F_pg\|_{L^2}\|\F_lh\|_{L^2}\|\t\F_lf\|_{L^2}.
\end{aligned}  
\end{equation}

For $D_{2,1}$, we separate it again to get that
\begin{equation*}
    \begin{aligned}
    D_{2,1}
    \leq&\Big|\int_{\sigma\in\SS^2}\iint_{|\eta_*-\xi^-|\geq\frac{1}{2}|\eta_*|}\mathbf{1}_{\{\theta\gs\frac{|\eta_*-\xi^-|}{|\xi|}\}}b(\frac{\xi}{|\xi|}\cdot\sigma)\widehat{\Phi^\gamma_{-1}}(\eta_*-\xi^{-})\widehat{\F_pg}(\eta_*)\widehat{\F_lh}(\xi-\eta_*)\overline{\widehat{\t\F_lf}}(\xi)\d \sigma\d \eta_* \d \xi\Big|\\
    +&\Big|\int_{\sigma\in\SS^2}\iint_{|\eta_*-\xi^-|<\frac{1}{2}|\eta_*|}\mathbf{1}_{\{\theta\gs\frac{|\eta_*-\xi^-|}{|\xi|}\}}b(\frac{\xi}{|\xi|}\cdot\sigma)\widehat{\Phi^\gamma_{-1}}(\eta_*-\xi^{-})\widehat{\F_pg}(\eta_*)\widehat{\F_lh}(\xi-\eta_*)\overline{\widehat{\t\F_lf}}(\xi)\d \sigma\d \eta_* \d \xi\Big|\\
    =:&~D_{2,1,1}+D_{2,1,2}.
\end{aligned}
\end{equation*}
For $D_{2,1,1}$, noticing that $|\eta_*-\xi^-|\geq \frac12|\eta_*|\gs 2^p$. Then similar to the estimate of $D_{2,2}$, we can deduce that
\begin{equation}\label{D_211}
  \begin{aligned}
    D_{2,1,1}\leq2^{2sl}2^{-(\gamma+2s+\frac{3}{2})p}\|\F_pg\|_{L^2}\|\F_lh\|_{L^2}\|\t\F_lf\|_{L^2}.
\end{aligned}  
\end{equation}
For $D_{2,1,2}$, on one hand, we have $\f12|\eta_*|>|\eta_*-\xi^-|>|\eta_*|-|\xi^-|$, which implies that  $\theta\gs \f{|\xi^-|}{|\xi|}\gs \f{|\eta_*|}{|\xi|}\sim 2^{p-l}$. On the other hand, since $\f{\xi}{|\xi|}\cdot\sigma=\cos\theta$ and $\cos\f{\theta}2=\f{\xi^+}{|\xi|}\cdot\sigma$, we have $b(\f{\xi}{|\xi|}\cdot\sigma)\sim b(\f{\xi^+}{|\xi^+|}\cdot\sigma)$. Then by Cauchy-Schwarz inequality and the change of variables $\eta_*-\xi^-\rightarrow \t \eta_*$(thus $\xi-\eta_*\rightarrow\xi^+-\t\eta_*$ and $|\t \eta_*|\ls 2^p$), we have
\begin{equation*}
    \begin{aligned}
    D_{2,1,2}\ls&\Big(\int_{\sigma\in\SS^2}\iint \mathbf{1}_{\theta\gs 2^{p-l}} b^\kappa(\frac{\xi}{|\xi|}\cdot\sigma)|\widehat{\F_pg}(\eta_*)|^2|\overline{\widehat{\t\F_lf}}(\xi)|^2\d \sigma\d \eta_* \d \xi\Big)^{\frac{1}{2}}\\&\times
    \Big(\int_{\sigma\in\SS^2}\iint_{|\xi^+|\sim 2^l}\mathbf{1}_{\theta\gs \f{|\t \eta_*|}{|\xi^+|}} b^{2-\kappa}(\frac{\xi^+}{|\xi^+|}\cdot\sigma)|\widehat{\Phi^\gamma_{-1}}(\t \eta_*)|^2|\widehat{\F_lh}(\xi^+-\t \eta_*)|^2\d \sigma\d \t\eta_* \d \xi^+\Big)^{\frac{1}{2}}=:I_1\times I_2.
\end{aligned}
\end{equation*}
Here $\kappa\in(0,2)$ and we also use the fact that $|\frac{\partial(\eta_*,\xi)}{\partial(\t \eta_*,\xi^+)}|\sim |\f{\partial \xi}{\partial \xi^+}|=\frac{8}{(1+\cos\theta)}\sim 1$ and $|\xi^+|\sim |\xi|\sim 2^l$. 

If $\gamma< -\f32$, let $\delta=-(\gamma+\f32)/2>0$ and we choose $\kappa=\f{2+4s-\delta}{2+2s}$. Then for the first integral, we have
\begin{equation*}
\begin{aligned}
      (I_1)^2\ls&\int_{\theta \gs 2^{p-l}} \theta^{\delta-2-4s}\sin\theta \d\theta\iint  |\widehat{\F_pg}(\eta_*)|^2|\overline{\widehat{\t\F_lf}}(\xi)|^2\d \eta_* \d \xi\ls 2^{(4s-\delta)(l-p)}\|\F_pg\|^2_{L^2}\|\t\F_l f\|^2_{L^2}.
\end{aligned}
\end{equation*}
For the second integral, we have
\begin{equation*}
\begin{aligned}
       (I_2)^2&\ls \iint_{|\t\eta_*|\ls 2^p,|\xi^+|\sim2^l}(\int_{\theta\gs \f{|\t\eta_*|}{|\xi^+|}}\theta^{-2-\delta}\sin\theta \d\theta)  \<\t\eta_*\>^{-2(3+\gamma)}\widehat{\F_lh}(\xi^+-\t \eta_*)|^2\d \t\eta_* \d \xi^+\\
    &\ls \iint_{|\t\eta_*|\ls 2^p,|\xi^+|\sim2^l} \<\t\eta_*\>^{-2(3+\gamma)}|\t\eta_*|^{-\delta}|\xi^+|^{\delta}\widehat{\F_lh}(\xi^+-\t \eta_*)|^2\d \t\eta_* \d \xi^+\ls 2^{-(2\gamma+3+\delta) p}2^{\delta l}\|\F_lh\|^2_{L^2},
\end{aligned}
\end{equation*}
where we use the fact that $-2(3+\gamma)-\delta=-3-\f 32(\f32+\gamma)>-3$. Thus, we can get
\begin{equation*}
\begin{aligned}
      D_{2,1,2}\ls I_1\times I_2\ls 2^{2s l}2^{-(\gamma+2s+\f32)p}\|\F_p g\|_{L^2}\|\F_lh\|_{L^2}\|\t\F_l f\|_{L^2}.
\end{aligned}
\end{equation*}

If $\gamma\geq -\f32$, then we have $2s=\frac{1-\gamma}{2}>1$, we choose $\kappa=\f 4{2+2s}$. Then for the first integral, we have
\begin{equation*}
\begin{aligned}
      (I_1)^2\ls&\int_{\theta \gs 2^{p-l}} \theta^{-4}\sin\theta\d\theta \iint  |\widehat{\F_pg}(\eta_*)|^2|\overline{\widehat{\t\F_lf}}(\xi)|^2\d \eta_* \d \xi\ls 2^{2(l-p)}\|\F_pg\|^2_{L^2}\|\t\F_l f\|^2_{L^2}.
\end{aligned}
\end{equation*}
For the second integral , we have
\begin{equation*}
\begin{aligned}
       (I_2)^2&\ls (\int_{\theta\gs \f{|\t\eta_*|}{|\xi^+|}}\theta^{-4s}\sin\theta \d\theta) \iint_{|\t\eta_*|\ls 2^p,|\xi^+|\sim2^l} \<\t\eta_*\>^{-2(3+\gamma)}\widehat{\F_lh}(\xi^+-\t \eta_*)|^2\d \t\eta_* \d \xi^+\\
    &\ls \iint_{|\t\eta_*|\ls 2^p,|\xi^+|\sim2^l} \<\t\eta_*\>^{-2(3+\gamma)}|\t\eta_*|^{-4s+2}|\xi^+|^{4s-2}\widehat{\F_lh}(\xi^+-\t \eta_*)|^2\d \t\eta_* \d \xi^+\ls 2^{(4s-2) l}\|\F_lh\|^2_{L^2},
\end{aligned}
\end{equation*}
where we use the fact that $-2(3+\gamma)-4s+2=-3-(\gamma+2)<-3$, so that 
\begin{equation*}
    \int_{|\tilde\eta_*|\ls2^p}\<\tilde{\eta}_*\>^{-2(3+\gamma)}|\tilde{\eta}_*|^{-4s+2}\d\tilde{\eta}_*\ls\int_{0}^{\infty}\frac{r^{4-4s}}{(1+r^2)^{\gamma+3}}\d r\ls1.
\end{equation*}
Noticing that when $\gamma\geq-\f32$, we have $\gamma+2s\geq -\f14$, thus $\tilde{\zeta} \leq 1$. Then we can get
\begin{equation}\label{D212}
\begin{aligned}
      D_{2,1,2}\ls I_1\times I_2\ls 2^{-p}2^{2s l}\|\F_p g\|_{L^2}\|\F_lh\|_{L^2}\|\t\F_l f\|_{L^2}\leq 2^{-\tilde{\zeta} p}2^{2s l}\|\F_p g\|_{L^2}\|\F_lh\|_{L^2}\|\t\F_l f\|_{L^2}.
\end{aligned}
\end{equation}

Combining \eqref{D_211}, \eqref{D212} and \eqref{D_22} and recall that $\tilde{\zeta}\leq \gamma+2s+\f32$, we get that
\begin{equation*}
\begin{aligned}
      D_{2}\ls 2^{-\t \zeta p}2^{2s l}\|\F_p g\|_{L^2}\|\F_lh\|_{L^2}\|\t\F_l f\|_{L^2}.
\end{aligned}
\end{equation*}
Combining the above with  \eqref{D_1} yields the desired result \eqref{ineq k=-1 M2}.

\medskip

{\bf Estimate of $\M_{k,p,l}^{2}$ and $\M_{k,p}^3,k\geq 0$:}
For \(k\ge 0\), the first minimum parts of \eqref{ineq kgeq0 M2} and \eqref{ineq kgeq0 M3} are given by \cite[Lemma 2.3]{he2018sharp}.  We remark that the term \(\mathcal{M}_{k,l}^2\) in \cite{he2018sharp} corresponds to \(\sum_{p\le l-N_0}\mathcal{M}_{k,p,l}^2\) here. 

Now we focus on the $L^2$ norm estimate , which is the second minimum parts of \eqref{ineq kgeq0 M2} and \eqref{ineq kgeq0 M3}. For $\M_{k,p}^3$, we have
\begin{equation*}
    \begin{aligned}
        |\M_{k,p}^3|\leq\& \Big|\iint_{\R^6\times\SS^2}\t \F_p(\Phi^\gamma_{k})(|v-v_*|)b_1(\cos\theta)( \F_pg)_*( \t\F_ph)
\big[( \t\F_pf)'-( \t\F_pf)\big]\d \sigma \d v_*\d v\Big|\\\&+\Big|\iint_{ \R^6\times\SS^2} \Phi^\gamma_{k}(|v-v_*|)b_2(\cos \theta)\left( \F_pg\right)_*( \t\F_ph)( \t\F_pf)'\d \sigma \d v_*\d v\Big|\\\&+\Big|\iint_{ \R^6\times\SS^2} \Phi^\gamma_{k}(|v-v_*|)b_2(\cos\theta)( \F_pg)_*( \t\F_ph)( \t\F_pf)\d \sigma \d v_*\d v\Big|=:\t I_1+\t I_2+\t I_3.
    \end{aligned}
\end{equation*}
For $\t I_1$, we can follow the same steps in Lemma \eqref{M14} for $k\geq 0$, then we have
\begin{equation*}
    \begin{aligned}
        \t I_1\ls C_N2^{-Nk}2^{-Np}\|\F_p g\|_{L^2}\|\t\F_p h\|_{L^2}\|\t\F_pf\|_{L^2}.
    \end{aligned}
\end{equation*}
For $\t I_2$ and $\t I_3$, we can follow the same steps in the proof of \eqref{I_2a}, and change $\sum_{l\leq p+N_0}\F_{l}(\Phi_{-1}^{\gamma})$ into $\Phi_{k}^{\gamma}$, then we have
\begin{equation*}
    \t I_2+\t I_3\ls\|\Phi_{k}^{\gamma}\|_{L^2}\|\F_pg\|_{L^2}\|\t\F_ph\|_{L^2}\|\t\F_pf\|_{L^2}.
\end{equation*}
Combining $\t I_1$, $\t I_2$, $\t I_3$ and using $\|\Phi_k^{\gamma}\|_{L^2}\ls2^{(\gamma+\frac{3}{2})k}$, we can get 
\begin{equation*}
    |\M_{k,p}^3|\ls2^{(\gamma+\frac{3}{2})k}\|\F_pg\|_{L^2}\|\t\F_ph\|_{L^2}\|\t\F_pf\|_{L^2},
\end{equation*}
which completes \eqref{ineq kgeq0 M3}.

For $\M_{k,p,l}^{2}$, 
we use the separation $b=b_{\theta\leq \delta_0 2^{p-l}}+b_{\theta> \delta_0 2^{p-l}}=\t b_1+\t b_2$ with
 suitably small $\delta_0$. When $\theta\leq \delta_0 2^{p-l}$, $\eta_*\sim2^p$ and $\xi\sim 2^p$ means we have \(|\eta_*-\xi^-|\sim |\eta_*|\sim 2^p\), so that 
 \begin{equation*}
     \begin{aligned}
         |\M_{k,p,l}^2|\leq\& \Big|\iint_{\R^6\times\SS^2}\t \F_p(\Phi^\gamma_{k})(|v-v_*|)\t b_1(\cos\theta)( \F_pg)_*( \F_lh)
\big[( \t\F_lf)'-( \t\F_lf)\big]\d \sigma \d v_*\d v\Big|\\\&+\Big|\iint_{ \R^6\times\SS^2} \Phi^\gamma_{k}(|v-v_*|)\t b_2(\cos \theta)\left( \F_pg\right)_*( \t\F_ph)( \t\F_pf)'\d \sigma \d v_*\d v\Big|\\\&+\Big|\iint_{ \R^6\times\SS^2} \Phi^\gamma_{k}(|v-v_*|)\t b_2(\cos\theta)( \F_pg)_*( \t\F_ph)( \t\F_pf)\d \sigma \d v_*\d v\Big|=:\hat I_1+\hat I_2+\hat I_3.
     \end{aligned}
 \end{equation*}
For $\hat I_1$, noticing that $\int_{\SS^2} b_{{\theta}\leq \delta_0 2^{p-l}} \sin^2\f{\theta}2 d\sigma \ls 2^{(2-2s)(p-l)},$ we can use the same method in \eqref{M14} and get the bound
\begin{equation*}
    \hat I_1\ls C_N2^{-Nk}2^{-Np}2^{(2s-2)(l-p)}\|\F_p g\|_{L^2}\|\F_lh\|_{L^2}\|\tF_l f\|_{L^2}.
\end{equation*}
For $\hat I_2$ and $\hat I_3$, using 
    $\int_{\SS^2} b_{\hat{\theta}> \delta_0 2^{p-l}}d\sigma \ls 2^{2s(l-p)},$
and the same argument as the estimate of $\t I_2$, we can obtain the bound
\begin{equation*}
\hat I_2 +\hat I_3\ls 2^{(\gamma+\f32)k}2^{2s(l-p)}\|\F_p g\|_{L^2}\|\F_lh\|_{L^2}\|\tF_l f\|_{L^2}.
\end{equation*}
Therefore, we obtain \eqref{ineq kgeq0 M2}.

We complete the proof of this lemma.
\end{proof}

Now we are ready to prove Proposition \ref{prop upper} and we begin with \eqref{upperbound1}.
\begin{proof}[Proof of Proposition \ref{prop upper} \eqref{upperbound1}]

For the case $k\geq0$, let $a,b\in\R_{\geq 0}$ and $a+b=2s$. Then, due to \eqref{ineq kgeq0 M1} in Lemma \ref{M14} and note that $\|\F_pg\|_{L^2}\ls 2^{\f32p}\|\F_p g\|_{L^1}$, we have 
\begin{equation*}
    \begin{aligned}
        &\sum_{l\leq p-N_0}|\M_{k,l,p}^1|
        \ls\sum_{l\leq p-N_0} C_N2^{-Nk-Np}\|\F_p g\|_{L^1}\|\F_l h\|_{L^2}\|\t\F_pf\|_{L^2} 
        \ls C_N2^{(\gamma+2s)k}\sum_{l\leq p-N_0}\|\F_pg\|_{L^1}\\
        &\times2^{-l}2^{al}\|\F_lh\|_{L^2}2^{-p}2^{bp}\|\t\F_pf\|_{L^2}
        \ls C_N2^{(\gamma+2s)k}\|g\|_{L^1}(\sum_{l=-1}^\infty 2^{2al}\|\F_l h\|^2_{L^2})^{\frac12}(\sum_{p=-1}^\infty 2^{2bl}\|\t\F_p f\|^2_{L^2})^{\frac12}\\
        &\ls C_N2^{(\gamma+2s)k}\|g\|_{L^1}\|h\|_{H^{a}}\|f\|_{H^{b}},
    \end{aligned}
\end{equation*}
where we choose suitably large $N$ and use the fact that $l\leq p$, $\|\F_p g\|_{L^1}\ls \|g\|_{L^1}$(see \eqref{a001}) for any $p\geq -1$ and Lemma \ref{le1.4}. 

Similarly, for $\M^4_{k,p,m}$, it follows from \eqref{ineq kgeq0 M4} that
\begin{equation*}
    \begin{aligned}
        &\sum_{m\leq p-N_0}|\M_{k,p,m}^4|
        \ls\sum_{m\leq p-N_0}C_N2^{-Nk-Np}\|\F_p g\|_{L^1}\|\t \F_p h\|_{L^2}\|\t\F_mf\|_{L^2}\ls C_N2^{(\gamma+2s)k}\sum_{m\leq p-N_0}\|\F_p g\|_{L^1}\\
        &\times 2^{-p}2^{ap}\|\t \F_p h\|_{L^2}2^{-m}2^{bm}\|\t\F_mf\|_{L^2}
    \ls C_N2^{(\gamma+2s)k}\|g\|_{L^1}(\sum_{l=-1}^\infty 2^{2ap}\|\t\F_ph\|^2_{L^2})^{\frac12}(\sum_{m=-1}^\infty 2^{2bm}\|\t\F_m f\|^2_{L^2})^{\frac12}\\
    &\ls C_N2^{(\gamma+2s)k}\|g\|_{L^1}\|h\|_{H^{a}}\|f\|_{H^{b}}.
    \end{aligned}
\end{equation*}

For $\M_{k,p,l}^2$ and $\M_{k,p}^3$ , thanks to  \eqref{ineq kgeq0 M2} and \eqref{ineq kgeq0 M3} in Lemma \ref{M23}, we have
\begin{equation*}
    \begin{aligned}
        &\sum_{l\geq -1}\big|\sum_{p\leq l-N_0}\M_{k,p,l}^2\big|+\sum_{p\geq -1}|\M_{k,p}^3|
        \ls\sum_{l\geq-1}2^{(\gamma+2s)k}2^{2sl}\|\cS_{l-N_0} g\|_{L^1}\|\F_l h\|_{L^2}\|\t\F_lf\|_{L^2}\\
        &+\sum_{p\geq -1}2^{(\gamma+2s)k}2^{2sp}\|\F_p g\|_{L^1}\|\t \F_p h\|_{L^2}\|\t\F_pf\|_{L^2}
        \ls~ 2^{(\gamma+2s)k}\|g\|_{L^1}\Big(\big(\sum_{l\geq-1}2^{2al}\|\F_l h\|^2_{L^2}\big)^{\frac12}\\
        &\times\big(\sum_{l\geq-1}2^{2bl}\|\t\F_l f\|^2_{L^2}\big)^{\frac12}+\big(\sum_{p\geq-1}2^{2ap}\|\F_p h\|^2_{L^2}\big)^{\frac12}\big(\sum_{p\geq-1}2^{2bp}\|\t\F_p f\|^2_{L^2}\big )^{\frac12}\Big)
        \ls2^{(\gamma+2s)k}\|g\|_{L^1}\|h\|_{H^{a}}\|f\|_{H^{b}},
    \end{aligned}
\end{equation*}
where we also use the fact $\|\cS_{l-N_0} g\|_{L^1}\ls \|g\|_{L^1}$ for any $l-N_0\geq -1$(see \eqref{a001}).

Therefore, plug the above upper bounds into \eqref{eq m1234}, we obtain 
\begin{equation}\label{Qk1}
|(Q_k(g,h),f)_v|\ls 2^{(\gamma+2s)k}\|g\|_{L^1}\|h\|_{H^{a}}\|f\|_{H^{b}}, \quad k\geq 0,
\end{equation}
with $a,b\in\R_{\geq 0}$ and $a+b=2s$.

\medskip

For the case $k=-1$, let $a,b\in\R_{\geq 0}$ with $a+b=2s$, by \eqref{ineq k=-1 M1} and \eqref{ineq k=-1 M4} in Lemma \ref{M14}, we have 
\begin{equation}\label{M14k=-1}
\begin{aligned}
\sum_{l\leq p-N_0}|\M_{-1,l,p}^1|+\sum_{m<p-N_0}|\M^4_{-1,p,m}|
\ls \sum_{l\leq p-N_0}(2^{-\tilde{\zeta} p}2^{2sl}+{2^{\frac 32 l}2^{-(\gamma+3)p}})\|\F_p g\|_{L^2}\|\t\F_l h\|_{L^2}\|\t\F_pf\|_{L^2}\\
+\sum_{m<p-N_0}2^{-\tilde{\zeta}  p}2^{2sm}\|\F_p g\|_{L^2}\|\t \F_p h\|_{L^2}\|\t\F_mf\|_{L^2}.
\end{aligned}  
\end{equation}
For the first term on the right hand side of \eqref{M14k=-1}, let $\a>\f12$, we can bounded it by
\begin{equation*}
\begin{aligned}
&\sum_{ p\geq-1}2^{-\tilde{\zeta} p}\|\F_p g\|_{L^2}2^{bp}\|\tF_p f\|_{L^2}\Big(\sum_{l\leq p-N_0}2^{al}\|\t\F_l h\|_{L^2}2^{b(l-p)}\Big)\\
\ls&\sum_{ p\geq-1}2^{-\tilde{\zeta} p}\|\F_p g\|_{L^2}2^{bp}\|\tF_p f\|_{L^2}\big(\sum_{l\leq p-N_0}2^{2al}\|\t\F_l h\|^2_{L^2}\big)^{\frac12}\big(\sum_{l\leq p-N_0} 2^{2b(l-p)}\big)^{\frac12}\\
\ls& \big(\sum_{ p\geq-1}p^{2(\a+\mathbf{1}_{\gamma+2s=-\f12})}2^{-2\zeta p}\|\F_p g\|^2_{L^2}\big)^{\f12}\big(\sum_{ p\geq-1}2^{2bp}\|\tF_p f\|^2_{L^2}\big)^{\f12}\|h\|_{H^a}
\ls  \|g\|_{H^{-\zeta,\a}}\|h\|_{H^a}\|f\|_{H^b},
\end{aligned}  
\end{equation*}
where we use $\sum_{l\leq p-N_0} 2^{2b(l-p)}\ls1$ when $b>0$; $\sum_{l\leq p-N_0} 2^{2b(l-p)}\ls p$ when $b=0$.

For the third term on the right hand side of \eqref{M14k=-1}, we can handle it by the same argument as above to get the same upper bound. While for the second term, let $\a>\f12$, for $\lambda_1,\lambda_2,\mu$ satisfying $\lambda_1+\lambda_2\geq -(\gamma+3),\mu\leq \frac{3}{2}$ and $\lambda_1+\lambda_2+\mu= -(\gamma+\frac{3}{2})$, we can bound it by
\begin{equation*}
\begin{aligned}
\sum_{l\leq p-N_0}2^{\frac{3}{2}l}2^{-(\gamma+3)p}\|\F_p g\|_{L^2}\|\t\F_l h\|_{L^2}\|\t\F_pf\|_{L^2}
&\ls \sum_{p\geq-1}2^{\lambda_1p}\|\F_pg\|_{L^2}2^{\lambda_2p}\|\t\F_p f\|_{L^2}\Big(\sum_{l\leq p-N_0}l^{2\a}2^{2\mu l}\|\t\F_l h\|_{L^2}\Big)^{\f12}\\&\times\Big(\sum_{l\leq p-N_0}l^{-2\a}\Big)^{\f12}2^{(\lambda_1+\lambda_2+\gamma+3)(l-p)}
\ls \|g\|_{H^{\lambda_1}}\|h\|_{H^{\mu,\a}}\|f\|_{H^{\lambda_2}}.
\end{aligned}  
\end{equation*}

Therefore, the above estimates imply that
\begin{equation*}
    \sum_{l\leq p-N_0}|\M_{-1,l,p}^1|+\sum_{m<p-N_0}|\M^4_{-1,p,m}|\ls\|g\|_{H^{-\zeta,\a}}\|h\|_{H^a}\|f\|_{H^b}+\|g\|_{H^{\lambda_1}}\|h\|_{H^{\mu,\a}}\|f\|_{H^{\lambda_2}},
\end{equation*}
with $a,b\in\R_{\geq 0}$, $a+b=2s$, $\a>\f12$ and $\lambda_1,\lambda_2,\mu$ satisfying $\lambda_1+\lambda_2\geq -(\gamma+3)$ , $\mu\leq \frac{3}{2}$ and $\lambda_1+\lambda_2+\mu= -(\gamma+\frac{3}{2})$. 

\medskip

For $\M_{-1,p,l}^2$ and $\M_{-1,p}^3$, thanks to  \eqref{ineq k=-1 M2} and \eqref{ineq k=-1 M3} in Lemma \ref{M23}, we have
\begin{equation}\label{M23k=-1}
\begin{aligned}
&\sum_{l\leq p-N_0}|\M_{-1,p,l}^2|+\sum_{p\geq-1}|\M^3_{-1,p}|\\
\ls& \sum_{l\leq p-N_0}2^{-\tilde{\zeta} p}2^{2sl}\|\F_p g\|_{L^2}\|\F_l h\|_{L^2}\|\t\F_lf\|_{L^2}+\sum_{p\geq-1}2^{-\tilde{\zeta}  p}2^{2sp}\|\F_p g\|_{L^2}\|\t \F_p h\|_{L^2}\|\t\F_pf\|_{L^2}.
\end{aligned}  
\end{equation}
Noticing that $2^{-\tilde{\zeta} p}\|\F_p g\|_{L^2}\ls \|g\|_{H^{-\zeta,\a}}$ for any $p\geq -1,\a>\f12$, 
then it is easy to see that the second term can be bounded by
\begin{equation*}
\begin{aligned}
\sum_{p\geq -1}2^{-\tilde{\zeta} p}\|\F_p g\|_{L^2}2^{ap}\|\t\F_p h\|_{L^2}2^{bp}\|\t\F_p f\|_{L^2}\ls \|g\|_{H^{-\zeta,\a}}\|h\|_{H^a}\|f\|_{H^b},\quad \mbox{with}\quad a,b\in\R_{\geq0},~a+b=2s,.
\end{aligned}  
\end{equation*}
For the first term, let $\a>\f12,a,b\in\R_{\geq0},a+b=2s$, it has the bound
\begin{equation*}
\begin{aligned}
&(\sum_{l\geq-1}2^{2al}\|\F_l h\|^2_{L^2})^{\f12}(\sum_{l\geq-1}2^{2bl}\|\t\F_lf\|^2_{L^2})^{\f12} (\sum_{p\geq-1}p^{2\a}2^{-2\zeta p}\|\F_p g\|^2_{L^2})^{\f12}(\sum_{p\geq-1}p^{-2\a})^{\f12}
\ls \|g\|_{H^{-\zeta,\a}}\|h\|_{H^a}\|f\|_{H^b}.
\end{aligned}  
\end{equation*}
Therefore, we obtain 
\begin{equation*}
\begin{aligned}
\sum_{l\leq p-N_0}|\M_{-1,p,l}^2|+\sum_{p\geq-1}|\M^3_{-1,p}|
\ls\|g\|_{H^{-\zeta,\a}}\|h\|_{H^a}\|f\|_{H^b},
\end{aligned}  
\end{equation*}
with $a,b\in\R_{\geq 0}$, $a+b=2s$ and $\a>\f12$.  Thus we conclude that
\begin{equation}\label{Q_-1a}
\begin{aligned}
|(Q_{-1}(g,h),f)_v|
\ls\|g\|_{H^{-\zeta,\a}}\|h\|_{H^{a}}\|f\|_{H^b}+\|g\|_{H^{\lambda_1}}\|h\|_{H^{\mu,\a}}\|f\|_{H^{\lambda_2}},
\end{aligned}  
\end{equation}
with $a,b\in\R_{\geq 0}$, $a+b=2s$, $\a>\f12$, $\lambda_1,\lambda_2,\mu$ satisfying $\lambda_1+\lambda_2\geq -(\gamma+3)$, $\mu\leq \frac{3}{2}$ and $\lambda_1+\lambda_2+\mu=-(\gamma+\frac{3}{2})$.

Take $\lambda_1=a$, $\lambda_2=b$, $\mu=-(\gamma+2s+\f32)$ and using the fact $\zeta\leq\gamma+2s+\f32$, we have
\begin{equation}\label{Q_-1}
\begin{aligned}
|(Q_{-1}(g,h),f)_v|
\ls(\|g\|_{H^{-\zeta,\a}}\|h\|_{H^a}+\|g\|_{H^a}\|h\|_{H^{-\zeta,\a}})\|f\|_{H^b},
\end{aligned}  
\end{equation}
with $a,b\in\R_{\geq 0}$, $a+b=2s$, $\a>\f12$ and $\zeta$ is defined in \eqref{zeta1}.

\bigskip

Now let $w_1,w_2\in\R$ and $w_1+w_2=\gamma+2s$. Combining  \eqref{Q_-1}, \eqref{Qk1} and using the decomposition in phase space(see \eqref{ubdecom}), we have 
\begin{equation*}
    \begin{aligned}
|(Q(g,h),f)_v|\ls \&\sum_{k\geq N_0-1}2^{(\gamma+2s)k}\|\cU_{k-N_0}g\|_{L^1}\|\t\cP_kh\|_{H^a}\|\t\cP_kf\|_{H^b} \\   
\&+\sum_{j\geq k+N_0,k\geq0}2^{(\gamma+2s)k}\|\cP_{j}g\|_{L^1}\|\t\cP_jh\|_{H^a}\|\t\cP_jf\|_{H^b} \\   
 \&+\sum_{|j-k|\leq N_0,k\geq0}2^{(\gamma+2s)k}\|\cP_{j}g\|_{L^1}\|\cU_{k+N_0}h\|_{H^a}\|\cU_{k+N_0}f\|_{H^b} \\
 &+\sum_{j\geq N_0-1}\big(\|\cP_jg\|_{H^{-\zeta,\a}}\|\t\cP_jh\|_{H^a}\|\t\cP_jf\|_{H^b}+\|\cP_jg\|_{H^a}\|\t\cP_jh\|_{H^{-\zeta,\a}}\|\t\cP_jf\|_{H^b}\big)\\   
 \&+\sum_{j\leq N_0+1}\Big(\|\cP_jg\|_{H^{-\zeta,\a}}\|\cU_{N_0}h\|_{H^a}\|\cU_{N_0}f\|_{H^b}+\|\cP_jg\|_{H^a}\|\cU_{N_0}h\|_{H^{-\zeta,\a}}\|\cU_{N_0}f\|_{H^b}\Big)\\
=:\& u_1+u_2+u_3+u_4+u_5.
\end{aligned}
\end{equation*}

For $u_1$, noticing that $\|\cU_{k-N_0}g\|_{L^1}\ls \|g\|_{L^1}$ for any $k\geq-1$, thus we have 
\begin{equation}\label{u_1}
\begin{aligned}
u_1\ls \|g\|_{L^1}(\sum_{k\geq N_0-1}2^{2\omega_1 k}\|\t\cP_k h\|^2_{H^a})^{\f12}(\sum_{k\geq N_0-1}2^{2\omega_2 k}\|\t\cP_k f\|^2_{H^b})^{\f12}\ls \|g\|_{L^1}\|h\|_{H^a_{\omega_1}}\|f\|_{H^b_{\omega_2}},
\end{aligned}  
\end{equation}
where we use Lemma \ref{le1.4} in the last step.

For $u_2$, noticing that $\gamma+2s< 0$, which implies $\sum_{k\leq j-N_0}2^{(\gamma+2s)k}\ls 1$, then we have 
\begin{equation}\label{u_2}
\begin{aligned}
u_2\ls2^{-(\gamma+2s)j}\|\cP_jg\|_{L^1}(\sum_{j\geq -1}2^{2\omega_1 j}\|\t\cP_j h\|^2_{H^a})^{\f12}(\sum_{j\geq -1}2^{2\omega_2 j}\|\t\cP_j f\|^2_{H^b})^{\f12}\ls \|g\|_{L^1_{-(\gamma+2s)}}\|h\|_{H^a_{\omega_1}}\|f\|_{H^b_{\omega_2}}.
\end{aligned}  
\end{equation}

For $u_3$, since $\|\cU_{k+N_0} h\|_{H^a}\ls 2^{k(-\omega_1)^+}\|h\|_{H^a_{\omega_1}}$(see \eqref{a001}) and $|j-k|\leq N_0$, we have
\begin{equation}\label{u_3}
\begin{aligned}
u_3&\ls \sum_{k\geq 0}2^{(\gamma+2s+(-\omega_1)^++(-\omega_2)^+)k}\|\t \cP_kg\|_{L^1}\|h\|_{H^a_{\omega_1}}\|f\|_{H^b_{\omega_2}}\ls\|g\|_{L^1_{\gamma+2s+(-\omega_1)^++(-\omega_2)^+}}\|h\|_{H^a_{\omega_1}}\|f\|_{H^b_{\omega_2}}.
\end{aligned}  
\end{equation}

Similarly, for $u_4$ and $u_5$, we can deduce that
\begin{equation}\label{u_4}
\begin{aligned}
u_4\ls \|g\|_{H^{-\zeta,\a}_{-(\gamma+2s)}}\|h\|_{H^a_{\omega_1}}\|f\|_{H^b_{\omega_2}}+\|g\|_{H^a_{\omega_1}}\|h\|_{H^{-\zeta,\a}_{-(\gamma+2s)}}\|f\|_{H^b_{\omega_2}},
\end{aligned}  
\end{equation}
and
\begin{equation}\label{u_5}
\begin{aligned}
u_5\ls C_N(\|g\|_{H^{-\zeta,\a}}\|h\|_{H^a_{-N}}\|f\|_{H^b_{-N}}+\|g\|_{H^a}\|h\|_{H^{-\zeta,\a}_{-N}}\|f\|_{H^b_{-N}}),\quad \forall N\in\N.
\end{aligned}  
\end{equation}

Combining the estimates \eqref{u_1}, \eqref{u_2}, \eqref{u_3}, \eqref{u_4} and \eqref{u_5}, we conclude that
\begin{equation}\label{u_45b}
\begin{aligned}
|(Q(g,h),f)_v|&\lesssim (\|g\|_{L^1_{\t w}}+\|g\|_{ H^{-\zeta,\a}_{-(\gamma+2s)}})\|h\|_{H^a_{w_1}}\|f\|_{H^b_{w_2}}+\|g\|_{H^a_{w_1}}\|h\|_{ H^{-\zeta,\a}_{-(\gamma+2s)}}\|f\|_{H^b_{w_2}}\\
+& C_N(\|g\|_{H^{-\zeta,\a}}\|h\|_{H^a_{-N}}\|f\|_{H^b_{-N}}+\|g\|_{H^a}\|h\|_{H^{-\zeta,\a}_{-N}}\|f\|_{H^b_{-N}})
\end{aligned}  
\end{equation}
for any $N\in\N$ and $\t w=\max\{-(\gamma+2s),\gamma+2s+(-\omega_1)^++(-\omega_2)^+\}$. This completes the proof of \eqref{upperbound1}.
\end{proof}




Next, we give the proof of \eqref{upperbound2}.
\begin{proof}[Proof of Proposition \ref{prop upper} \eqref{upperbound2}]
    On one hand, it follows from (3.21) in \cite{he2018sharp} that for $k\geq 0$,
    \begin{equation*}
         \begin{aligned}
|(Q_k(g,h),f)_v|&\ls2^{(\gamma+s)k}\|g\|_{L^1}\|h\|_{H^{a_1}}\|f\|_{H^{b_1}}\\
+&~2^{\gamma k}\|g\|_{L^1_{2s}}(\|(-\Delta_{\SS^2})^{\frac{a}{2}}h\|_{L^2}+\|h\|_{H^a})(\|(-\Delta_{\SS^2})^{\frac{b}{2}}f\|_{L^2}+\|f\|_{H^b}),
    \end{aligned}  
    \end{equation*}
    where $a+b=2s,a,b\geq 0$ and $a_1+b_1=s$. On the other hand, recall \eqref{Q_-1} that 
\begin{equation*}
\begin{aligned}
|(Q_{-1}(g,h),f)_v|
\ls\|g\|_{H^{-\zeta,\a}}\|h\|_{H^a}\|f\|_{H^b}+\|g\|_{H^a}\|h\|_{H^{-\zeta,\a}}\|f\|_{H^b}.
\end{aligned}  
\end{equation*}
The proof of \eqref{upperbound2} follows by exactly the same method as above  and  \cite[Theorem 1.4]{he2018sharp}.
\end{proof}

\section{Commutator estimates $[Q,\F_j]$}\label{commutatorestimatesQFj}
In this section, we will give estimates for the commutator $[Q,\F_j]$. We split it into two parts:
\begin{equation}\label{splitDkD-1}
\begin{aligned}
 &(\cF_jQ(g,h)-Q(g,\cF_jh),\cF_jf)_v=\sum_{k\geq0}(\cF_jQ_k(g,h)-Q_k(g,\cF_jh),\cF_jf)_v\\
 &+(\cF_jQ_{-1}(g,h)-Q_{-1}(g,\cF_jh),\cF_jf)_v=:\sum_{k\geq0}\fD_{k}+\fD_{-1}.
\end{aligned}
\end{equation}
Here $Q_k$ ($k\geq0$) is the part of the collision operator in which the relative velocity is localized at $|v-v_*|\sim 2^k$, away from the kinetic singularity, while $Q_{-1}$ carries the singular region $|v-v_*|\ls 1$; see \eqref{DefPhi}.

\smallskip

We first give the upper bound for $\fD_k$.
\begin{lemma}\label{fDk}
 Recall that the localized operator $\mF_j$ is defined in Definition \ref{Fj}.  It holds that
\begin{equation*}
\begin{aligned}
 \sum_{k\geq0}\fD_k\ls C_{N,\delta}\|g\|_{L^1_{(-\omega_1)^++(-\omega_2)^++\delta}}(\|\mF_jh\|_{H^a_{\omega_1}}+2^{-jN}\|h\|_{H^{-N}_{-N}})(\|\mF_jf\|_{H^b_{\omega_2}}+2^{-jN}\|f\|_{H^{-N}_{-N}}),
\end{aligned}
\end{equation*}
where $\omega_1,\omega_2\in \R,a,b\in[0,2s-1]$ satisfying $a+b=2s-1,\omega_1+\omega_2=\gamma+2s-1$. The constants $\delta>0$  and $N\in\N$ can be sufficiently small and large, respectively. 
\end{lemma}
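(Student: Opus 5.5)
The plan is to follow the commutator strategy of \cite{he2018sharp} for the regular part $k\ge0$, where $\Phi^\gamma_k$ is smooth and supported on $|v-v_*|\sim2^k$. First we insert the frequency decomposition \eqref{2.2} for $(\F_jQ_k(g,h),\F_jf)_v$ and the analogous one for $(Q_k(g,\F_jh),\F_jf)_v$. Only finitely many $p\sim j$ (in the $h$-slot) contribute to the main part. When $p>j+2N_0$ or $p<j-2N_0$, the localizations are separated, so Lemma \ref{PFcommutator} \eqref{m-p>N0} gives $C_N2^{-jN}$-type remainders bounded by $\|h\|_{H^{-N}_{-N}}$ and $\|f\|_{H^{-N}_{-N}}$. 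This leaves the diagonal piece, which we write via Bobylev's formula \eqref{bobylev}. In Fourier variables the commutator replaces $\vphi_j(\xi)\widehat h(\xi-\eta)$ by
\begin{equation*}
\big(\vphi_j(\xi)-\vphi_j(\xi-\eta)\big)\widehat h(\xi-\eta),
\end{equation*}
where $\eta$ is the frequency carried by $g$ (convolved with $\widehat{\Phi^\gamma_k}$).

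The key step is a first-order Taylor expansion of $\vphi(2^{-j}\cdot)$. It gives the factor $2^{-j}|\eta|\,\|\nabla\vphi\|_\infty$ and an explicit $\nabla\vphi$-localized operator, which we absorb into $\mF_j$ via Definition \ref{Fj}(ii). This gain of $2^{-j}$ is exactly why the orders drop from $a+b=2s$ to $a+b=2s-1$. On the physical side, the factor $\eta$ corresponds to a derivative falling on the kernel $\Phi^\gamma_k(|v-v_*|)$ (equivalently, to $(v-v_*)$ paired against the $\eta$-derivative), which costs $2^{-k}$. So for each $k\ge0$ we aim for
\begin{equation*}
|\fD_k|\ls 2^{(\gamma+2s-1)k}\|g\|_{L^1}\|\mF_jh\|_{H^a}\|\mF_jf\|_{H^b}+\text{remainders},
\end{equation*}
proved by repeating the $\M^1$--$\M^4$ analysis of Lemmas \ref{M14}--\ref{M23} for $k\ge0$ with $\Phi^\gamma_k$ replaced by $\nabla\Phi^\gamma_k\sim2^{-k}\Phi^{\gamma}_k$-type kernels. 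For $k\ge0$ these only require $\|g\|_{L^1}$ bounds, since $\|\F_pg\|_{L^1}\ls\|g\|_{L^1}$ by \eqref{a001}. The second-order Taylor term gains $2^{-2j}|\eta|^2$ and is handled the same way, since it is even better.

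Next we sum over $k\ge0$ using the phase decomposition \eqref{ubdecom}, exactly as for $u_1,u_2,u_3$ in the proof of \eqref{upperbound1}, with $\gamma+2s$ replaced by $\gamma+2s-1<0$. This produces the weights $\omega_1+\omega_2=\gamma+2s-1$. The weight on $g$ comes from the $u_3$-type term, giving $L^1_{\gamma+2s-1+(-\omega_1)^++(-\omega_2)^+}$, and from the $u_2$-type term, giving $L^1_{-(\gamma+2s-1)}$. Both are dominated by $L^1_{(-\omega_1)^++(-\omega_2)^++\delta}$ after using $\gamma+2s-1<0$; the $\delta$ absorbs the logarithmic loss in the geometric sums at the endpoint. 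Finally we commute $\cP_k$ with $\mF_j$ using Lemma \ref{PFcommutator} \eqref{pkfj}, which only generates further $2^{-jN}\|\cdot\|_{H^{-N}_{-N}}$ errors and $\alpha$-derivative variants absorbed into $\mF_j$.

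The main obstacle is the $u_2$-type region, where $g$ is located far out ($|v_*|\sim2^{m}\gg2^k$). There the Taylor gain must be paired with the correct decay in $k$ without losing the sum in $m$. We would first try to use the $\min$-structure of \eqref{ineq kgeq0 M2} with the $L^1$ alternative, and also to check that the $\M^1/\M^4$ pieces (where $\F_p$ acts on $\Phi_k^\gamma$) still give $C_N2^{-Nk-Np}$ after commutation, so that they do not spoil the $2^{(\gamma+2s-1)k}$ rate.
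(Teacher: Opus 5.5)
Your architecture matches the paper's. There, \eqref{ubdecom} is applied to both $(Q_k(g,h),\F_j^2f)_v$ and $(Q_k(g,\F_jh),\F_jf)_v$. This produces phase-localized frequency commutators ($A_1$) plus terms containing $[\tP_k,\F_j]$, $[\tP_l,\F_j]$, $[\cU_{k+N_0},\F_j]$ ($A_2$), and all estimates are then quoted from \cite[Lemmas 4.20 and 4.21]{CHJ}.

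What you add is a proof of the per-$k$ bound $2^{(\gamma+2s-1)k}2^{(2s-1)j}\|g\|_{L^1}\|\mF_jh\|_{H^a}\|\mF_jf\|_{H^b}$, and the mechanism you give for it has a real gap. In \eqref{bobylev}, the commutator symbol $\vphi(2^{-j}\xi)-\vphi(2^{-j}(\xi-\eta))$ multiplies $b\,[\widehat{\Phi^\gamma_k}(\eta-\xi^-)-\widehat{\Phi^\gamma_k}(\eta)]\,\widehat g(\eta)\widehat h(\xi-\eta)$, and $\eta$ is the frequency of $g$. It coincides with the kernel frequency only in the loss term. In the gain term the kernel sits at $\eta-\xi^-$, so after your first-order expansion $\eta=(\eta-\xi^-)+\xi^-$, and only the first piece is a derivative on $\Phi^\gamma_k$ costing $2^{-k}$. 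The leftover $2^{-j}\,\xi^-\cdot\nabla\vphi\;\widehat{\Phi^\gamma_k}(\eta-\xi^-)$ has the following properties:
\begin{itemize}
\item it has size $\theta$, since $|\xi^-|=|\xi|\sin\frac\theta2$ and $|\xi|\sim2^j$;
\item it has no loss counterpart to pair with;
\item $\int_{\SS^2}b\,\theta\,\d\sigma\sim\int_0\theta^{-2s}\d\theta=\infty$ because $s>\frac12$.
\end{itemize}
So ``repeat the $\M^1$--$\M^4$ analysis with $\Phi^\gamma_k$ replaced by $\nabla\Phi^\gamma_k$'' drops a term that the tools you list cannot bound. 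For the same reason the second-order Taylor term is not ``even better'': on the gain side it is of size $\theta^2$, not $2^{-2(j+k)}$.

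The missing ingredient is an angular splitting at the scale $\theta\sim2^{-(j+k)}$. This is the analogue of the $\hat b_1,\hat b_2$ split the paper uses for $\fD_{-1}^{q<p,j}$ in Lemma \ref{fD-1}.
\begin{itemize}
\item For $\theta\le\delta_02^{-(j+k)}$: here $|\xi^-|\ls\delta_02^{-k}$, so up to the Schwartz tails of $\widehat{\Phi^\gamma_k}(\zeta)=2^{(\gamma+3)k}\widehat{\Phi^\gamma_0}(2^k\zeta)$ both terms live at $|\eta|\ls2^{-k}$. The symbol, which does not depend on $\sigma$, is then $O(2^{-j-k})$. Gain and loss are paired by Taylor expansion of $\widehat{\Phi^\gamma_k}$ in $\xi^-$ together with \eqref{bcdotxi_-}.
\item For $\theta>\delta_02^{-(j+k)}$: gain and loss may be bounded separately. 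On the gain side use $|\vphi(2^{-j}\xi)-\vphi(2^{-j}(\xi-\eta))|\ls2^{-j}(|\eta-\xi^-|+|\xi^-|)\ls2^{-j-k}+\theta$. Then $\int_{\theta>\delta_02^{-(j+k)}}\theta^{-1-2s}(2^{-j-k}+\theta)\,\d\theta\ls2^{(2s-1)(j+k)}$, which is exactly where $s>\frac12$ enters.
\end{itemize}
With $\|\widehat{\Phi^\gamma_k}\|_{L^1}\ls2^{\gamma k}$, both regions give $2^{(\gamma+2s-1)k}2^{(2s-1)j}$. For the first-order term alone, an alternative is to expand $\vphi$ at $\xi$ and use radiality: $\xi^-\cdot\nabla\vphi(2^{-j}\xi)$ is a multiple of $\xi^-\cdot\xi=|\xi|^2\sin^2\frac\theta2$.

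Two smaller corrections:
\begin{itemize}
\item The off-diagonal pieces $p\not\sim j$ are negligible by Lemma \ref{M14}, not by \eqref{m-p>N0}.
\item The weights force you to apply the per-$k$ bound to the phase-localized triples of \eqref{ubdecom}. Consequently the $[\tP,\F_j]$ and $[\cU,\F_j]$ terms are not mere $2^{-jN}$ errors: their leading parts are full-size $Q_k$ terms carrying the gain from \eqref{pkfj}, and they are controlled by the ordinary upper bound.
\end{itemize}
The $u_2$-type region you flag as the main obstacle is harmless: $\sum_{k\le l}2^{(\gamma+2s-1)k}\ls1$, and $-(\gamma+2s-1)=-(\omega_1+\omega_2)\le(-\omega_1)^++(-\omega_2)^+$.
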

\begin{proof}
Noticing that
\begin{equation*}
    (\cF_jQ_k(g,h)-Q_k(g,\cF_jh),\cF_jf)_v=(Q_k(g,h),\cF^2_jf)_v-(Q_k(g,\cF_jh),\cF_jf)_v.
\end{equation*}
Then due to \eqref{ubdecom}, \(\sum_{k\ge 0}\mathcal{\fD}_k\) admits the further decomposition
\begin{equation*}
    \begin{aligned}
      \sum_{k\geq0}\fD_k=A_1+A_2,
    \end{aligned}
\end{equation*}
where 
\begin{equation*}
    \begin{aligned}
A_1&:=\sum_{k\geq N_0-1}\big(\cF_j Q_k(\cU_{k-N_0}g,\tilde{\cP}_kh)- Q_k(\cU_{k-N_0}g,\cF_j\tilde{\cP}_kh),\cF_j\tilde{\cP}_kf\big)_v\\
    +&\sum_{l\geq k+N_0,k\geq 0}\big(\cF_j Q_k(\cP_lg,\tilde{\cP}_lh)- Q_k(\cP_lg,\cF_j\tilde{\cP}_lh),\cF_j\tilde{\cP}_lf\big)_v\\
    +&\sum_{|l-k|\leq N_0,k\geq 0}\big(\cF_j Q_k(\cP_lg,\cU_{k+N_0}h)- Q_k(\cP_lg,\cF_j\cU_{k+N_0}h),\cF_j\cU_{k+N_0}f\big)_v;\\
            \end{aligned}
\end{equation*}
\begin{equation*}
    \begin{aligned}
    A_2&:=\sum_{k\geq N_0-1}\Big[\big(Q_k
   (\cU_{k-N_0}g,\tilde{\cP}_kh),(\tilde{\cP}_k\cF^2_j-\cF^2_j\tilde{\cP}_k)f\big)_v\\
    +&\big(Q_k(\cU_{k-N_0}g,(\cF_j\tilde{\cP}_k-\tilde{\cP}_k\cF_j)h),\cF_j\tilde{P}_kf\big)_v+
    \big(Q_k(\cU_{k-N_0}g,\tilde{\cP}_k\cF_jh),(\cF_j\tilde{\cP}_k-\tilde{\cP}_k\cF_j)f\big)_v\Big]\\
    +&\sum_{l\geq k+N_0,k\geq 0}\Big[\big(Q_k(\cP_lg,\tilde{\cP}_lh),(\tilde{\cP}_l\cF^2_j-\cF^2_j\tilde{\cP}_l)f\big)_v\\
    +&\big(Q_k(\cP_lg,(\cF_j\tilde{\cP}_l-\tilde{\cP}_l\cF_j)h),\F_j\t\cP_lf)_v
    +(Q_k(\cP_lg,\tilde{\cP}_l\cF_jh),(\cF_j\tilde{\cP}_l-\tilde{\cP}_l\cF_j)f\big)_v\Big]\\
    +&\sum_{|l-k|\leq N_0,k\geq 0}\Big[\big( Q_k({\cP}_lg,\cU_{k+N_0}h),(\cU_{k+N_0}\cF^2_j-\cF^2_j\cU_{k+N_0})f\big)_v\\
    +&\big(Q_k({\cP}_lg,(\cF_j\cU_{k+N_0}-\cU_{k+N_0}\cF_j)h),\cF_j\cU_{k+N_0}f\big)_v
    +\big(Q_k({\cP}_lg,\cU_{k+N_0}\cF_jh),(\cF_j\cU_{k+N_0}-\cU_{k+N_0}\cF_j)f\big)_v\Big].
    \end{aligned}
\end{equation*}
Therefore, this result can be obtained by combining  \cite[Lemma 4.20 and Lemma 4.21]{CHJ}.
\end{proof}

We now focus on the estimate of $\fD_{-1}$. Similar to \eqref{2.2}, by Bobylev's formula, we have
\begin{equation*}
\begin{aligned}
    \fD_{-1}&=\sum_{|p-j|\leq 3N_0}\sum_{q\leq p+4N_0}\Big(\cF_jQ_{-1}(\cF_qg,\cF_ph)-Q_{-1}(\cF_qg,\cF_j\cF_ph),\cF_jf\Big)_v\\
    +&\sum_{p>j+3N_0}\Big(\cF_jQ_{-1}(\tilde{\cF}_pg,\cF_ph),\cF_jf\Big)_v
    +\sum_{p<j-3N_0}\Big(\cF_jQ_{-1}(\tilde{\cF}_jg,\cF_ph),\cF_jf\Big)_v\\
    &=:\sum_{|p-j|\leq 3N_0}\sum_{q\leq p+4N_0}\fD_{-1}^{q<p,j}(g,h,f)+\sum_{p>j+3N_0}\fD_{-1}^{p>j}(g,h,f)+\sum_{p<j-3N_0}\fD_{-1}^{p<j}(g,h,f).
\end{aligned}    
\end{equation*}
We now give the estimate for each term.

\begin{lemma}\label{fD-1}
    For smooth functions $g,h$ and $f$, it holds that
    
    \medskip
    
\noindent $\bullet$ For $p>j+3N_0$,
    \begin{equation}\label{p>j}
        |\fD_{-1}^{p>j}(g,h,f)|\ls2^{-(\gamma+4)p}2^{\frac{5}{2}j}\|\tilde{\cF}_pg\|_{L^2}\|\cF_ph\|_{L^2}\|\cF_jf\|_{L^2}.
    \end{equation}
    
\noindent $\bullet$ For $p<j-3N_0$
    \begin{equation}\label{p<j}
        |\fD_{-1}^{p<j}(g,h,f)|\ls2^{-(\gamma+\f32)j}\|\tilde{\cF}_jg\|_{L^2}\|\cF_ph\|_{L^2}\|\cF_jf\|_{L^2}.
    \end{equation}
    
\noindent $\bullet$ For $|p-j|\leq 3N_0$ and $q\leq p+4N_0$,
    \begin{equation}\label{p=j}
    \begin{aligned}
        |\fD_{-1}^{q<p, j}(g,h,f)|
        \ls 2^{-(\t\zeta-1)q}2^{(2s-1)j}\|\cF_qg\|_{L^2}\|\cF_ph\|_{L^2}\|\cF_jf\|_{L^2}.
    \end{aligned}
    \end{equation}
   Here
   \begin{equation*}
\t\zeta=\left\{\begin{array}{ll}
1,  &-\frac{1}{2}<\gamma+2s<0,\\
1-\frac{\log_2 q}{q}, & \gamma+2s=-\frac{1}{2},\\
\gamma+2s+\frac{3}{2}, & -1<\gamma+2s<-\frac{1}{2}.
\end{array}\right.
\end{equation*}
\end{lemma}
\begin{remark}\label{rem:tzeta}
In the case $\gamma+2s=-\f12$, the factor $2^{-(\t\zeta-1)q}=q$ in \eqref{p=j} produces a logarithmic loss in the frequency $2^q$. Exactly as in Proposition \ref{prop upper}, this loss is absorbed by replacing $\a$ with $\a+1$, and we shall therefore not distinguish $\t\zeta$ from $\zeta$ in the notation below.
\end{remark}

\begin{proof}
    \textbf{Step 1}: Proof of \eqref{p>j} for case $j\geq0$. Using Bobylev's formula \eqref{bobylev} and Taylor expansion, we have
\begin{equation*}
\begin{aligned}
        &|(\cF_jQ_{-1}(\tilde{\cF}_pg,\cF_ph),\cF_jf)_v|\\
        =&\Big|\iint_{\R^6\times\SS^2}b(\frac{\xi}{|\xi|}\cdot\sigma)\big[\widehat{\Phi^{\gamma}_{-1}}(\eta-\xi^-)-\widehat{\Phi^{\gamma}_{-1}}(\eta)\big]\widehat{\tilde{\cF}_pg}(\eta)\widehat{\cF_ph}(\xi-\eta)\varphi(2^{-j}\xi)\overline{\widehat{\F_jf}}(\xi)\d\sigma\d\eta\d\xi\Big|\\
        \leq&\Big|\iint_{\R^6\times\SS^2}b(\frac{\xi}{|\xi|}\cdot\sigma)\xi^-\cdot\nabla\widehat{\Phi^{\gamma}_{-1}}(\eta)\widehat{\tilde{\cF}_pg}(\eta)\widehat{\cF_ph}(\xi-\eta)\varphi(2^{-j}\xi)\overline{\widehat{\F_jf}}(\xi)\d\sigma\d\eta\d\xi\Big|\\
        &+\Big|\int_0^1\iint_{\R^6\times\SS^2}b(\frac{\xi}{|\xi|}\cdot\sigma)\big[\nabla^2\widehat{\Phi^{\gamma}_{-1}}(\eta-t\xi^-):\xi^-\otimes\xi^-\big]\widehat{\tilde{\cF}_pg}(\eta)\widehat{\cF_ph}(\xi-\eta)\varphi(2^{-j}\xi)\overline{\widehat{\F_jf}}(\xi)\d\sigma\d\eta\d\xi\d t\Big|\\
        =:&~T_1+T_2.
\end{aligned}
\end{equation*}

    For $T_1$, note that $|\xi|\sim 2^j,|\eta|\sim 2^p$ with $p>j+3N_0$ and $|\partial^{\alpha}\widehat{\Phi^\gamma_{-1}}(\eta)|\ls\<\eta\>^{-(\gamma+3+|\alpha|)}$ for $|\alpha|\leq 2$, by \eqref{bcdotxi_-}, we have
\begin{equation*}
    \begin{aligned}
        T_1\ls&\int_{|\xi|\sim 2^j,|\eta|\sim 2^p}\<\eta\>^{-(\gamma+4)}|\xi||\widehat{\tilde{\cF}_pg}(\eta)\widehat{\cF_ph}(\xi-\eta)\varphi(2^{-j}\xi)\overline{\widehat{\cF_jf}}(\xi)|\d\xi\d\eta\\
        \ls&~2^{-(\gamma+4)p}2^{j}\Big(\int_{|\xi|\sim2^j}\int_{\R^3}|\widehat{\tilde{\cF}_pg}(\eta)|^2\d\xi\d\eta\Big)^{\frac{1}{2}}\Big(\iint_{\R^6}|\widehat{\cF_ph}(\xi-\eta)|^2|\varphi(2^{-j}\xi)\overline{\widehat{\cF_jf}}(\xi)|^2\d\xi\d\eta\Big)^{\frac{1}{2}}\\
        \ls&~2^{-(\gamma+4)p}2^{\frac{5}{2}j}\|\tilde{\cF}_pg\|_{L^2}\|\cF_ph\|_{L^2}\|\cF_jf\|_{L^2}.
    \end{aligned}
\end{equation*}
    
    For $T_2$, noticing that $|\eta-t\xi^-|\sim 2^p$ for any $t\in[0,1]$, which implies
    $$
    \begin{aligned}
        \big|\nabla^2\widehat{\Phi^{\gamma}_{-1}}(\eta-t\xi^-):\xi^-\otimes\xi^-\big|\leq\<\eta\>^{-(\gamma+5)}\theta^2|\xi|^2.
    \end{aligned}
    $$
    Then by the same argument, we can derive that
    $$
    \begin{aligned}
        T_2\ls2^{-(\gamma+5)p}2^{\frac{7}{2}j}\|\tilde{\cF}_pg\|_{L^2}\|\cF_ph\|_{L^2}\|\cF_jf\|_{L^2}\ls 2^{-(\gamma+4)p}2^{\frac{5}{2}j}\|\tilde{\cF}_pg\|_{L^2}\|\cF_ph\|_{L^2}\|\cF_jf\|_{L^2}.
    \end{aligned}
    $$
    Therefore, \eqref{p>j} holds true for $j\geq 0$ and $j=-1$ can be handled similarly. 
    
\bigskip

      \textbf{Step 2}: Proof of \eqref{p<j}. Denote
    \begin{equation*}
        \begin{aligned}
        P&:=|(\cF_j Q_{-1}(\tilde{\cF}_jg,\cF_ph),\cF_jf)_v|\\
        &=\Big|\iint_{\R^6\times\SS^2}b(\frac{\xi}{|\xi|}\cdot\sigma)\big[\widehat{\Phi^{\gamma}_{-1}}(\eta-\xi^-)-\widehat{\Phi^{\gamma}_{-1}}(\eta)\big]\widehat{\tilde{\cF}_jg}(\eta)\widehat{\cF_ph}(\xi-\eta)\overline{\widehat{\F_jf}}(\xi)\varphi(2^{-j}\xi)\d\sigma\d\eta\d\xi\Big|.
    \end{aligned}
    \end{equation*}
    Note that
    \begin{equation*}
        \begin{aligned}
        |\eta|\sim2^j,|\xi-\eta|\sim2^p,|\xi^+|\sim |\xi|\sim2^j~\mbox{with}~p<j-3N_0,
    \end{aligned}
    \end{equation*}
    which implies $|\eta-\xi^-|=|\xi^++(\eta-\xi)|\sim2^j$. We split the integration domain of $P$ into two parts: $2|\xi^-|\leq \<\eta\>$ and  $2|\xi^-|>\<\eta\>$ and denote them by $P_1$ and $P_2$. Then for $P_1$, we have $|\eta-t\xi^-|\sim \<\eta\>$ for $t\in[0,1]$ and one may copy the argument of $T$ to get that 
    \begin{equation*}
           \begin{aligned}
        P_1\ls2^{-(\gamma+\frac{3}{2})j}\|\tilde{\cF}_jg\|_{L^2}\|\cF_ph\|_{L^2}\|\cF_jf\|_{L^2}.
    \end{aligned} 
    \end{equation*}
   For $P_2$, noticing that $\theta\gs \<\eta\>/|\xi|$ at this point, which implies that
$       \int_{\SS^2}b(\frac{\xi}{|\xi|}\cdot\sigma)d\sigma \ls |\xi|^{2s}\<\eta\>^{-2s}\sim 1.$
It leads to
   \begin{equation*}
   \begin{aligned}
        P_2&\ls \Big|\iint_{\R^6\times\SS^2}b(\frac{\xi}{|\xi|}\cdot\sigma)\widehat{\Phi^{\gamma}_{-1}}(\eta-\xi^-)\widehat{\tilde{\cF}_jg}(\eta)\widehat{\cF_ph}(\xi-\eta)\overline{\widehat{\F_jf}}(\xi)\varphi(2^{-j}\xi)\d\sigma\d\eta\d\xi\Big|\\
        &+ \Big|\iint_{\R^6\times\SS^2}b(\frac{\xi}{|\xi|}\cdot\sigma)\widehat{\Phi^{\gamma}_{-1}}(\eta)\widehat{\tilde{\cF}_jg}(\eta)\widehat{\cF_ph}(\xi-\eta)\overline{\widehat{\F_jf}}(\xi)\varphi(2^{-j}\xi)\d\sigma\d\eta\d\xi\Big|\\
        &\ls2^{-(\gamma+3)j}\Big|\int_{\SS^2}b(\frac{\xi}{|\xi|}\cdot\sigma)\d\sigma\iint_{\R^6}\big|\widehat{\tilde{\cF}_jg}(\eta)\widehat{\cF_ph}(\xi-\eta)\overline{\widehat{\F_jf}}(\xi)\varphi(2^{-j}\xi)\big|\d\sigma\d\eta\d\xi\Big|\\
        &\ls 2^{-(\gamma+\frac{3}{2})j}\|\tilde{\cF}_jg\|_{L^2}\|\cF_ph\|_{L^2}\|\cF_jf\|_{L^2}.
   \end{aligned}
   \end{equation*}
Therefore, we obtain \eqref{p<j}.

    \bigskip

    \textbf{Step 3}: Proof of \eqref{p=j}. Denote
    \begin{equation*}
        \begin{aligned}
        R:=&~|(\cF_j Q_{-1}(\cF_qg,\cF_ph)-Q_{-1}(\cF_qg,\cF_j\cF_ph),\cF_jf)_v|
        =\Big|\iint_{\R^6\times\SS^2}b(\frac{\xi}{|\xi|}\cdot\sigma)\big[\widehat{\Phi^{\gamma}_{-1}}(\eta-\xi^-)-\widehat{\Phi^{\gamma}_{-1}}(\eta)\big]\\
        &\times\widehat{\cF_qg}(\eta)\widehat{\cF_ph}(\xi-\eta)\overline{\widehat{\F_jf}}(\xi)\left(\varphi(2^{-j}\xi)-\varphi(2^{-j}(\xi-\eta))\right)\d\sigma\d\eta\d\xi\Big|.
    \end{aligned}
    \end{equation*}
    Recall that
    $$
    \begin{aligned}
        |\xi^-|\sim\theta|\xi|,|\eta|\sim2^q,|\xi-\eta|\sim2^p,|\xi|\sim2^j\quad\mbox{with}\quad|p-j|\leq3N_0,q\leq p+4N_0.
    \end{aligned}
    $$
    Similar to the decomposition of $P$, we split $b(\cos\theta)=b_{\theta\leq\delta_02^{q-j}}+b_{\theta>\delta_02^{q-j}}=:\hat b_1+\hat b_2,$
    where $\delta_0$ is chosen to be sufficiently small such that
$|\eta-t\xi^-|\sim|\eta-\xi^-|\sim2^q$ when $ \theta\leq \delta_0 2^{q-j}\sim \delta_0|\eta|/|\xi|, ~t\in[0,1].$
    So we can separate $R$ into $R_1+R_2$ using the separation of $b$.
    
    For  $R_1$, using Taylor expansion, we have
    \begin{equation*}
        \begin{aligned}
        |R_1|\ls&\Big|\int_{\sigma\in\SS^2}\iint_{\R^6}\hat b_1(\frac{\xi}{|\xi|}\cdot\sigma)\big[\nabla\widehat{\Phi^{\gamma}_{-1}}(\eta)\cdot\xi^-\big]\widehat{\cF_qg}(\eta)\widehat{\cF_ph}(\xi-\eta)\overline{\widehat{\F_jf}}(\xi)\left(\varphi(2^{-j}\xi)-\varphi(2^{-j}(\xi-\eta))\right)\d\sigma\d\eta\d\xi\Big|\\
        +&\Big|\int_0^1\iint_{\R^6\times\SS^2}\hat b_1(\frac{\xi}{|\xi|}\cdot\sigma)\big[\nabla^2\widehat{\Phi^{\gamma}_{-1}}(\eta-t\xi^-):\xi^-\otimes\xi^-\big]\widehat{\cF_qg}(\eta)\widehat{\cF_ph}(\xi-\eta)\overline{\widehat{\F_jf}}(\xi)\\
        &\times\left(\varphi(2^{-j}\xi)-\varphi(2^{-j}(\xi-\eta))\right)\d\sigma\d\eta\d\xi \d t \Big|
        =:~R_{1,1}+R_{1,2}.
         \end{aligned}
    \end{equation*}
Using the same method in the proof of \eqref{p>j} and the fact that $ |\eta|\sim 2^q, ~|\xi|\sim 2^j,~\mbox{and}~
       |\varphi(2^{-j}(\xi))-\varphi(2^{-j}(\xi-\eta))|\ls2^{-j}|\eta|,$
    we have
    \begin{equation*}
    \begin{aligned}
        R_{1,1}\ls&~2^{-(\gamma+4)q}2^{-j}\int_{\theta\ls 2^{q-j}}\theta^{1-2s}\d\theta \iint_{|\eta|\sim2^q}|\widehat{\cF_qg}(\eta)||\widehat{\cF_ph}(\xi-\eta)||\overline{\widehat{\cF_jf}}(\xi)||\xi||\eta|\d\xi\d\eta\\
        \ls&~2^{-(\gamma+3)q}2^{(2-2s)(q-j)}\Big(\iint_{|\eta|\sim2^q}\left|\overline{\widehat{\cF_jf}}(\xi)\right|^2\d\xi\d\eta\Big)^{\frac{1}{2}}\Big(\iint_{\R^6}|\widehat{\cF_qg}(\eta)|^2|\widehat{\cF_ph}(\xi-\eta)|^2\d\xi\d\eta\Big)^{\frac{1}{2}}\\
        \ls&~2^{-(\gamma+2s-\frac{1}{2})q}2^{(2s-2)j}\|\cF_qg\|_{L^2}\|\cF_ph\|_{L^2}\|\cF_jf\|_{L^2}.
    \end{aligned}
    \end{equation*}
    Similarly, using $|\eta-t\xi^-|\sim2^q$ and $|\nabla^2\widehat{\Phi^{\gamma}_{-1}}(\eta-t\xi^-):\xi^-\otimes\xi^-|\ls2^{-(\gamma+5)q}\theta^2|\xi|^2$,we can derive that
    $$
    \begin{aligned}
        R_{1,2}\ls2^{-(\gamma+2s+\frac{1}{2})q}2^{(2s-1)j}\|\cF_qg\|_{L^2}\|\cF_ph\|_{L^2}\|\cF_jf\|_{L^2}.
    \end{aligned}
    $$
Since $q \leq p+4N_0\leq j+7N_0$, combining the estimates of $R_{1,1}$ and $R_{1,2}$, we obtain 
    \begin{equation}\label{R1}
    \begin{aligned}
        R_{1}\ls2^{-(\gamma+2s+\frac{1}{2})q}2^{(2s-1)j}\|\cF_qg\|_{L^2}\|\cF_ph\|_{L^2}\|\cF_jf\|_{L^2}.
    \end{aligned}        
    \end{equation}

\medskip

    For $R_2$, using $|\varphi(2^{-j}(\xi))-\varphi(2^{-j}(\xi-\eta))|\ls2^{-j}|\eta|$, we split it as follows:
    \begin{equation*}
        \begin{aligned}
        R_2\ls&~2^{-j}\int_{\SS^2}\iint_{\R^6}|\eta|\hat b_2(\frac{\xi}{|\xi|}\cdot\sigma)\big|\widehat{\Phi^{\gamma}_{-1}}(\eta)\big|\big|\widehat{\cF_qg}(\eta)\big|\big|\widehat{\cF_ph}(\xi-\eta)\big|\big|\widehat{\cF_jf}(\xi)\big|\d\sigma\d\xi\d\eta\\
        +&2^{-j}\int_{\SS^2}\iint_{\R^6}|\eta|\hat b_2(\frac{\xi}{|\xi|}\cdot\sigma)\big|\widehat{\Phi^{\gamma}_{-1}}(\eta-\xi^-)\big|\big|\widehat{\cF_qg}(\eta)\big|\big|\widehat{\cF_ph}(\xi-\eta)\big|\big|\widehat{\cF_jf}(\xi)\big|\d\sigma\d\xi\d\eta
        =:R_{2,1}+R_{2,2}.
    \end{aligned}    
    \end{equation*}

For  $R_{2,1}$, we have 
    \begin{equation}\label{R21}
        \begin{aligned}
        R_{2,1}\ls&~2^{-j}\int_{\theta \gs 2^{q-j}} \theta ^{-1-2s}\d \theta \iint_{\R^6}\<\eta\>^{-(\gamma+3)}|\eta|\big|\widehat{\cF_qg}(\eta)\big|\big|\widehat{\cF_ph}(\xi-\eta)\big|\big|\widehat{\cF_jf}(\xi)\big|\d\xi\d\eta\\
        \ls&~2^{-j}2^{2s(j-q)}2^{-(\gamma+2)q}\Big(\iint_{|\eta|\sim2^q}\left|\widehat{\cF_jf}(\xi)\right|^2\d\xi\d\eta\Big)^{\frac{1}{2}}\Big(\iint_{\R^6}|\widehat{\cF_qg}(\eta)|^2|\widehat{\cF_ph}(\xi-\eta)|^2\d\xi\d\eta\Big)^{\frac{1}{2}}\\
        \ls&~2^{-(\gamma+2s+\frac{1}{2})q}2^{(2s-1)j}\|\cF_qg\|_{L^2}\|\cF_ph\|_{L^2}\|\cF_jf\|_{L^2}.
    \end{aligned} 
    \end{equation}

   For $R_{2,2}$, if $|\eta-\xi^-|\geq \f12|\eta|$, we have $|\widehat{\Phi^{\gamma}_{-1}}(\eta-\xi^-)|\ls \<\eta\>^{-(\gamma+3)}$ and we can get the same bound as $R_{2,1}$. If $|\eta-\xi^-|< \f12|\eta|$, by the method used in the estimates of $D_{2,1,2}$ in Lemma \ref{M23}, i.e., do the change of variables $\eta-\xi^-\rightarrow \t \eta$ (thus $\xi-\eta\rightarrow\xi^+-\t\eta$ and $|\t \eta|\ls 2^q$), we have
   $$
\begin{aligned}
   R_{2,2}\ls&~2^{-j}2^{q}\Big(\int_{\sigma\in\SS^2}\iint \mathbf{1}_{\theta\gs 2^{q-j}} b^\kappa(\frac{\xi}{|\xi|}\cdot\sigma)|\widehat{\F_qg}(\eta)|^2|\overline{\widehat{\t\F_jf}}(\xi)|^2\d \sigma\d \eta \d \xi\Big)^{\frac{1}{2}}\\&\times
    \Big(\int_{\sigma\in\SS^2}\iint_{|\xi^+|\sim 2^j}\mathbf{1}_{\theta\gs \f{|\t \eta|}{|\xi^+|}} b^{2-\kappa}(\frac{\xi^+}{|\xi^+|}\cdot\sigma)|\widehat{\Phi^\gamma_{-1}}(\t \eta)|^2|\widehat{\F_ph}(\xi^+-\t \eta)|^2\d \sigma\d \t\eta \d \xi^+\Big)^{\frac{1}{2}}.
\end{aligned}
$$
It yields that
    \begin{equation}\label{R22}
    \begin{aligned}
     R_{2,2}\ls 2^{-(\t\zeta-1)q}2^{(2s-1)j}\|\cF_qg\|_{L^2}\|\cF_ph\|_{L^2}\|\cF_jf\|_{L^2}.
    \end{aligned} 
    \end{equation}
Therefore, combining the estimates \eqref{R1}, \eqref{R21} and \eqref{R22}, we can obtain \eqref{p=j}.

We complete the proof of this lemma. 
\end{proof}

In what follows we shall use the phase decomposition \eqref{ubdecom} and apply  Lemma \ref{fD-1} to obtain a more precise estimate. 

\begin{lemma}\label{precisebound}
For smooth functions \(g,h\) and \(f\),  more precise bounds for \(\mathcal{\fD}_{-1}^{p>j}(g,h,f)\), \(\mathcal{\fD}_{-1}^{p<j}(g,h,f)\) and \(\mathcal{\fD}_{-1}^{q<p,j}(g,h,f)\) are given in \eqref{bound1}, \eqref{bound2} and \eqref{bound3}.
\end{lemma}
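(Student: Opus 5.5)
The plan is to combine the frequency-localized bounds of Lemma \ref{fD-1} with the phase decomposition \eqref{ubdecom}, applied to $Q_{-1}$. Since $\Phi^\gamma_{-1}$ is supported in $|v-v_*|\ls 1$, the $v$- and $v_*$-variables are comparable in phase. Concretely, $(Q_{-1}(g,h),f)_v$ only involves pairs $(\cP_m g,\t\cP_m h,\t\cP_m f)$ together with finitely many low-phase pieces controlled by $\cU_{N_0}$. Before running the frequency analysis, we therefore insert this phase localization into each of $\fD_{-1}^{p>j}$, $\fD_{-1}^{p<j}$ and $\fD_{-1}^{q<p,j}$. This requires commuting $\cF_j$ past $\t\cP_m$. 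We do this with Lemma \ref{PFcommutator}: the commutators $[\cP_m,\cF_j]$ are bounded by $2^{-j}2^{-m}\|\cP_{m,\alpha}\cF_{j,\alpha}\cdot\|_{L^2}$ plus $2^{-jN}2^{-mN}\|\cdot\|_{H^{-N}_{-N}}$. Both of these are of the same form as the main terms, with $\cF_j$ replaced by $\mF_j$ and $\cP_m$ by $\mP_m$, and they carry extra decay.

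The target bounds \eqref{bound1}--\eqref{bound3} should then read as follows, with the sums running over $m\ge -1$:
\begin{equation*}
|\fD_{-1}^{p>j}|\ls \sum_m 2^{-(\gamma+4)p}2^{\frac52 j}\|\mF_p\mP_m g\|_{L^2}\|\mF_p\mP_m h\|_{L^2}\|\mF_j\mP_m f\|_{L^2},
\end{equation*}
\begin{equation*}
|\fD_{-1}^{p<j}|\ls \sum_m 2^{-(\gamma+\frac32)j}\|\mF_j\mP_m g\|_{L^2}\|\mF_p\mP_m h\|_{L^2}\|\mF_j\mP_m f\|_{L^2},
\end{equation*}
\begin{equation*}
|\fD_{-1}^{q<p,j}|\ls \sum_m 2^{-(\t\zeta-1)q}2^{(2s-1)j}\|\mF_q\mP_m g\|_{L^2}\|\mF_p\mP_m h\|_{L^2}\|\mF_j\mP_m f\|_{L^2}.
\end{equation*}
Each bound holds up to remainders of order $C_N2^{-N(j+p+q)}\|g\|_{H^{-N}_{-N}}\|h\|_{H^{-N}_{-N}}\|f\|_{H^{-N}_{-N}}$. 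In the applications, the weight $2^{\omega m}$ can then be distributed freely among the three factors.

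The key steps, in order, are as follows.

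First, write $g=\sum_m \cP_m g$ and use the support property of $\Phi^\gamma_{-1}$. Then $Q_{-1}(\cP_m g,h)=Q_{-1}(\cP_m g,\t\cP_m h)$ on the relevant set, and pairing against $f$ only sees $\t\cP_m f$. This is the $k=-1$ instance of \eqref{ubdecom}.

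Second, move the frequency cutoffs inside the phase cutoffs. For example, $\cF_j\t\cP_m f=\t\cP_m\cF_j f+[\cF_j,\t\cP_m]f$. The commutator pieces are then estimated by \eqref{pkfj}, and the off-diagonal frequency interactions are discarded via \eqref{m-p>N0}.

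Third, apply the trilinear estimates \eqref{p>j}, \eqref{p<j} and \eqref{p=j} of Lemma \ref{fD-1} to the localized triples $(\mF\mP_m g,\mF\mP_m h,\mF\mP_m f)$. This is legitimate because the proofs in Lemma \ref{fD-1} only used the Fourier supports and Bobylev's formula, so they apply verbatim to any functions with the stated frequency localization.

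The main obstacle is the second step for $\fD_{-1}^{q<p,j}$, which has a genuine commutator structure $\cF_jQ_{-1}-Q_{-1}\cF_j$. There, passing $\cP_m$ through $\cF_j$ must not destroy the gain coming from $\varphi(2^{-j}\xi)-\varphi(2^{-j}(\xi-\eta))$. I would first commute the phase cutoffs outside before applying Bobylev's formula, writing the full commutator as a localized commutator plus terms involving $[\cF_j,\t\cP_m]$. For the error terms, only the cruder non-commutator bounds are needed, since the commutator already supplies the factor $2^{-j-m}$. This factor compensates the loss of $2^{j}$ relative to \eqref{p=j}, because $2^{(2s-1)j}\cdot 2^{j}\cdot 2^{-j}$ reproduces the same bound.
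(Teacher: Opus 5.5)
Your route is the paper's. You insert \eqref{recall2.6} into $\fD_{-1}^{p>j}$, $\fD_{-1}^{p<j}$ and $\fD_{-1}^{q<p,j}$, and separate phase-localized pieces of the original form (the paper's $G_1$, $X_1$, $Y_1+Y_2$) from pieces containing $[\cF_j^2,\tilde{\cP}_l]$, $[\tilde{\cP}_l,\cF_p]$, \dots\ (the paper's $G_2$, $X_2$, $Y_3+Y_4$). You then treat the former with Lemma \ref{fD-1} and the latter with Lemma \ref{PFcommutator} and cruder bounds.

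What fails is the form you claim for the result. The errors cannot be collected into $C_N2^{-N(j+p+q)}\|g\|_{H^{-N}_{-N}}\|h\|_{H^{-N}_{-N}}\|f\|_{H^{-N}_{-N}}$. Each use of \eqref{pkfj} or \eqref{m-p>N0} makes only \emph{one} factor small. The other two stay genuine dyadic pieces at frequency $2^p$ or $2^j$, and bounding, say, $\|\mF_p\mP_lg\|_{L^2}$ by $\|g\|_{H^{-N}_{-N}}$ costs a factor $2^{(p+l)N}$ instead of gaining one. Two concrete instances:
\begin{itemize}
\item The tail $C_N2^{-jN-lN}\|f\|_{H^{-N}_{-N}}$ of $[\cF_j^2,\tilde{\cP}_l]f$ multiplies $\|\mF_p\mP_lg\|_{L^2}\|\mF_p\mP_lh\|_{L^2}$ with a coefficient that decays only polynomially in $2^p$.
\item In $Y_1+Y_2$, the off-diagonal pieces $\cF_a\cP_l\cF_qg$ with $|a-q|>N_0$ give $2^{-(q+l)N}\|g\|_{H^{-N}_{-N}}$ times $\|\mF_j\mP_lh\|_{L^2}\|\mF_j\mP_lf\|_{L^2}$, with no decay in $j$ available (first line of \eqref{bound3}).
\end{itemize}

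Your bound for $\fD_{-1}^{p>j}$ is in fact false as written. Take $g,h$ supported in $\{|v|\sim 2^m\}$ with Fourier mass near $2^p$, and $f$ supported in $\{|v|\sim 2^{m'}\}$ with $|m-m'|\gg N_0$. Then all your main terms vanish and your right-hand side is $O(2^{-3Np})$. Meanwhile $\fD_{-1}^{p>j}$ still sees the Schwartz tail of $\cF_j^2f$ near $|v|\sim 2^m$, and is generically of size $2^{-(\gamma+4)p}$ times that tail. The fix is to carry mixed factors $\big(\|\mF_p\mP_lg\|_{L^2}+C_N2^{-pN-lN}\|g\|_{H^{-N}_{-N}}\big)$, and so on, exactly as \eqref{bound1}--\eqref{bound3} do. Such terms are harmless in Section \ref{proofofmaintheorem}.

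Apart from this, your treatment of the commutator pieces when $p\neq j$ differs from the paper's, and is sharper for $p>j$. The paper bounds $G_2$ and $X_2$ with the global estimates \eqref{Q_-1} and \eqref{Q_-1a}. That is the source of the extra terms $C_N2^{-\zeta p}p^{\a}2^{(2s-1)j}2^{-l}(\cdots)$ in \eqref{bound1}, and for $p\gg j$ these are not dominated by the main coefficient $2^{-(\gamma+4)p}2^{\frac52 j}$. Your alternative is to re-localize all three entries in frequency and reapply \eqref{p>j} or \eqref{p<j} to the diagonal pieces. This legitimately yields the main coefficient times $2^{-j-l}$ or $2^{-p-l}$, since those proofs use only Fourier supports. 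For $\fD_{-1}^{q<p,j}$, your crude treatment of the errors coincides with the paper's. The re-localization must, however, be done explicitly, because $\cP_l\tilde{\cF}_pg$ and the principal part of $[\cF_j^2,\tilde{\cP}_l]f$ are not exactly frequency-localized. It is exactly this step that produces the mixed tail terms described above.
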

\begin{proof}
It follows from  \eqref{ubdecom} that
\begin{equation}\label{recall2.6}
\begin{aligned}
( Q_{-1}(g,h), f)_v &=
\sum_{l\ge N_0-1}( Q_{-1}(\mathcal{P}_{l} g, \tilde{\mathcal{P}}_lh), \tilde{\mathcal{P}}_lf )_v+\sum_{l\le N_0+1}( Q_{-1}( \mathcal{P}_{l} g, \mathcal{U}_{N_0}h), \mathcal{U}_{N_0}f )_v.  
\end{aligned}
\end{equation}

\textbf{Step 1:} Estimate of $\fD_{-1}^{p>j}$.  From \eqref{recall2.6}, we have the following decomposition
    \begin{equation*}
     \begin{aligned}
        &(\cF_jQ_{-1}(\tilde{\cF}_pg,\cF_ph),\cF_jf)_v=(Q_{-1}(\tilde{\cF}_pg,\cF_ph),\cF^2_jf)_v\\
        =&\sum_{l\ge N_0-1}( Q_{-1}(\mathcal{P}_{l} \tilde{\cF}_pg, \tilde{\mathcal{P}}_l\cF_ph), \tilde{\mathcal{P}}_l\cF^2_jf )_v+\sum_{l\le N_0+1}( Q_{-1}( \mathcal{P}_{l} \tilde{\cF}_pg, \mathcal{U}_{N_0}\cF_ph), \mathcal{U}_{N_0}\cF^2_jf )_v
        =:G_1+G_2,
    \end{aligned}       
    \end{equation*}
    where
    \begin{equation*}
         \begin{aligned}
        &G_1=\sum_{l\geq N_0-1}(\cF_jQ_{-1}(\cP_l\tilde{\cF}_pg, \cF_p\tilde{
        \cP}_lh),\cF_j\tilde{\cP}_lf)_v+
        \sum_{l\leq N_0+1}(\cF_jQ_{-1}(\cP_l\tilde{\cF}_pg,\cF_p\cU_{N_0}h),\cF_j\cU_{N_0}f)_v,\\
        G_2&=\sum_{l\geq N_0-1}\Big((Q_{-1}(\cP_l\tilde{\cF}_pg,\tilde{\cP}_l\cF_ph),(\cF_j^2\tilde{\cP}_l-\tilde{\cP}_l\cF_j^2)f)_v
        +(Q_{-1}(\cP_l\tilde{\cF}_pg,(\tilde{\cP}_l\cF_p-\cF_p\tilde{\cP}_l)h),\cF_j^2\tilde{\cP}_lf)_v\Big)\\
        +&\sum_{l\leq N_0+1}\Big((Q_{-1}(\cP_l\tilde{\cF}_pg,\cU_{N_0}\cF_ph),(\cF_j^2\cU_{N_0}-\cU_{N_0}\cF_j^2)f)_v
        +(Q_{-1}(\cP_l\tilde{\cF}_pg,(\cF_p\cU_{N_0}-\cU_{N_0}\cF_p)h),\cF_j^2\cU_{N_0}f)_v\Big).
    \end{aligned}
    \end{equation*}
     We remark that the terms $G_1$ matching the form of \(\fD_{-1}^{p>j}\) are constructed deliberately, while the remaining terms arise naturally as commutators.

    \medskip
    
    {\bf\noindent $\bullet$ Estimate of $G_1$.} It follows from \eqref{bobylev} that $\<\eta\>\sim 2^p$ since $|\xi|\sim 2^j$, $\<\eta-\xi\>\sim 2^p$ and $p>j+3N_0$, which leads to
\begin{equation*}
    \begin{aligned}
        G_1=&\sum_{l\geq N_0-1}(\cF_jQ_{-1}(\tilde{\cF}_p\cP_l\tilde{\cF}_pg,\cF_p\tilde{
        \cP}_lh),\cF_j\tilde{\cP}_lf)_v
        +\sum_{l\leq N_0+1}(\cF_jQ_{-1}(\tilde{\cF}_p\cP_l\tilde{\cF}_pg,\cF_p\cU_{N_0}h),\cF_j\cU_{N_0}f)_v.
    \end{aligned}
\end{equation*}
    Due to \eqref{p>j} in Lemma \ref{fD-1}, we have
    \begin{equation}\label{G1}
       \begin{aligned}
        |G_1|\ls &\sum_{l\geq N_0-1}2^{-(\gamma+4)p}2^{\frac{5}{2}j}\|\tilde{\cF}_p\cP_l\tilde{\cF}_pg\|_{L^2}\|\cF_p\tilde{
        \cP}_lh\|_{L^2}\|\cF_j\tilde{\cP}_lf\|_{L^2}\\
        +&\sum_{l\leq N_0+1}2^{-(\gamma+4)p}2^{\frac{5}{2}j}\|\tilde{\cF}_p\cP_l\tilde{\cF}_pg\|_{L^2}\|\cF_p\cU_{N_0}h\|_{L^2}\|\cF_j\cU_{N_0}f\|_{L^2}\\
        \ls &\sum_{l\geq N_0-1} 2^{-(\gamma+4)p}2^{\frac{5}{2}j}\|\mP_l\mF_pg\|_{L^2}\|\mF_p
        \mP_lh\|_{L^2}\|\mF_j\mP_lf\|_{L^2}\\
        +&2^{-(\gamma+4)p}2^{\frac{5}{2}j}\|\mU_{N_0}\mF_pg\|_{L^2}\|\mF_p\mU_{N_0}h\|_{L^2}\|\mF_j\mU_{N_0}f\|_{L^2},
    \end{aligned} 
    \end{equation}
    where we use the fact that $\|\t\cF_p f\|_{L^2}\ls \|f\|_{L^2}$ for any $p\geq-1$. And for convenience, we uniformly use the notation \(\mF,\mP\) and \(\mU\) (see Definition \ref{Fj}).

    \medskip
    
     {\bf\noindent $\bullet$ Estimate of $G_2$.} We denote the terms on right-hand side of $G_2$ by $G_{2,1}$ to $G_{2,4}$ and first split $G_{2,1}$ as follows:
    $$
    \begin{aligned}
        G_{2,1}=&\sum_{i=1}^{3}G_{2,1}^{(i)}\quad\mbox{with}\quad
        G_{2,1}^{(1)}=\sum_{l\geq N_0-1}\sum_{|a-p|>N_0}(Q_{-1}(\cP_l\tilde{\cF}_pg,\cF_a\tilde{\cP}_l\cF_ph),(\cF_j^2\tilde{\cP}_l-\tilde{\cP}_l\cF_j^2)f)_v,\\
        G_{2,1}^{(2)}=&\sum_{l\geq N_0-1}\sum_{|a-p|\leq N_0}\sum_{|m-j|>N_0}(Q_{-1}(\cP_l\tilde{\cF}_pg,\cF_a\tilde{\cP}_l\cF_ph),\cF_m(\cF_j^2\tilde{\cP}_l-\tilde{\cP}_l\cF_j^2)f)_v,\\
        G_{2,1}^{(3)}=&\sum_{l\geq N_0-1}\sum_{|a-p|\leq N_0}\sum_{|m-j|\leq N_0}(Q_{-1}(\cP_l\tilde{\cF}_pg,\cF_a\tilde{\cP}_l\cF_ph),\cF_m(\cF_j^2\tilde{\cP}_l-\tilde{\cP}_l\cF_j^2)f)_v.\\
    \end{aligned}
    $$
    From \eqref{Q_-1}, we have (choose $a=2s,b=0$) 
    $$
    \begin{aligned}
        &|G_{2,1}^{(1)}|\leq \sum_{l\geq N_0-1}\sum_{|a-p|>N_0}|(Q_{-1}(\cP_l\tilde{\cF}_pg,\cF_a\tilde{\cP}_l\cF_ph),(\cF_j^2\tilde{\cP}_l-\tilde{\cP}_l\cF_j^2)f)_v|\\
        \ls&\sum_{l\geq N_0-1}\sum_{|a-p|>N_0}\left(\|\cP_l\tilde{\cF}_pg\|_{H^{-\zeta,\a}}\|\cF_a\tilde{\cP}_l\cF_ph\|_{H^{2s}}+\|\cP_l\tilde{\cF}_pg\|_{H^{2s}}\|\cF_a\tilde{\cP}_l\cF_ph\|_{H^{-\zeta,\a}}\right)\|(\cF_j^2\tilde{\cP}_l-\tilde{\cP}_l\cF_j^2)f\|_{L^2}.
    \end{aligned}
    $$
 For the second term on the right-hand side, noticing that
    \begin{equation*}
       \begin{aligned}
        \|\cP_l\tilde{\cF}_pg\|_{H^{2s}}\leq \sum_{|b-p|>N_0}\|\cF_{b}\cP_l\tilde{\cF}_pg\|_{H^{2s}}+\sum_{|b-p|\leq N_0}\|\cF_{b}\cP_l\tilde{\cF}_pg\|_{H^{2s}}.
    \end{aligned}  
    \end{equation*}
Thanks to \eqref{m-p>N0} and \eqref{pkfj} in Lemma \ref{PFcommutator} respectively, we have 
  \begin{equation*}
       \begin{aligned}
        &\sum_{|b-p|>N_0}\|\cF_{b}\cP_l\tilde{\cF}_pg\|_{H^{2s}}+\sum_{|b-p|\leq N_0}\|\cF_{b}\cP_l\tilde{\cF}_pg\|_{H^{2s}}\\
        \leq& ~C_N\sum_{|b-p|>N_0}2^{-(b+l+p)N}2^{2sb}\|\tilde{\cF}_pg\|_{L_{-N}^2}+\sum_{|b-p|\leq N_0}2^{2sp}(\|\mF_p\mP_lg\|_{L^2}+2^{-lN}2^{-pN}\|g\|_{H^{-N}_{-N}})\\
        \ls&~C_N2^{2sp}(\|\mF_p\mP_lg\|_{L^2}+2^{-lN}2^{-pN}\|g\|_{H^{-N}_{-N}}).
    \end{aligned}  
    \end{equation*}
Also by Lemma \ref{PFcommutator}, we can estimate the term $\|\cF_a\tilde{\cP}_l\cF_ph\|_{H^{-\zeta,\a}}$ and $\|(\cF_j^2\tilde{\cP}_l-\tilde{\cP}_l\cF_j^2)f\|_{L^2}$ to obtain that
   \begin{equation*}
   \begin{aligned}
       &\sum_{l\geq N_0-1}\sum_{|a-p|>N_0}\|\cP_l\tilde{\cF}_pg\|_{H^{2s}}\|\cF_a\tilde{\cP}_l\cF_ph\|_{H^{-\zeta,\a}}\|(\cF_j^2\tilde{\cP}_l-\tilde{\cP}_l\cF_j^2)f\|_{L^2}\\
       \ls&~C_N\sum_{l\geq N_0}\sum_{|a-p|>N_0}2^{-(a+l+p)N}(\|\mF_p\mP_lg\|_{L^2}+2^{-lN}2^{-pN}\|g\|_{H^{-N}_{-N}})\|\mF_p h\|_{L^2_{-N}}\\
        &\times(2^{-j}2^{-l}\|\mF_j\mP_lf\|_{L^2}+2^{-lN}2^{-jN}\|f\|_{H^{-N}_{-N}}).
   \end{aligned}
   \end{equation*}
   
    We can apply the same method to estimate the first term and combine them all to get
    \begin{equation}\label{G211}
        \begin{aligned}
        |G_{2,1}^{(1)}|\ls &~C_N\sum_{l\geq N_0-1}2^{-(l+p)N}(\|\mF_p\mP_lg\|_{L^2}+2^{-lN}2^{-pN}\|g\|_{H^{-N}_{-N}})\|\mF_p h\|_{L_{-N}^{2}}\\
        &\times(2^{-j}2^{-l}\|\mF_j\mP_lf\|_{L^2}+2^{-lN}2^{-jN}\|f\|_{H^{-N}_{-N}}).
    \end{aligned}
    \end{equation}
 
   For $G_{2,1}^{(2)}$, note that $|a-p|\leq N_0$  and $|m-j|>N_0$ implies $\F_m\F_j^2=0$, then from \eqref{Q_-1}, we have (choose $a=0,b=2s$)
\begin{equation*}
    \begin{aligned}
        |G_{2,1}^{(2)}|\leq &\sum_{l\geq N_0-1}\sum_{|m-j|>N_0}|(Q_{-1}(\cP_l\tilde{\cF}_pg,\tilde{\cF}_p\tilde{\cP}_l\cF_ph),\cF_m\tilde{\cP}_l\cF_j^2f)_v|\\
        \ls&\sum_{l\geq N_0-1}\sum_{|m-j|>N_0} \left(\|\cP_l\tilde{\cF}_pg\|_{H^{-\zeta,\a}}\|\t\cF_p\tilde{\cP}_l\cF_ph\|_{L^2}+\|\cP_l\tilde{\cF}_pg\|_{L^{2}}\|\t\cF_p\tilde{\cP}_l\cF_ph\|_{H^{-\zeta,\a}}\right)\|\F_m\tilde{\cP}_l\cF_j^2f\|_{H^{2s}}.
    \end{aligned}
\end{equation*}
For the first term, thanks to  Lemma \ref{PFcommutator},  we have
\begin{equation*}
    \begin{aligned}
&\|\cP_l\tilde{\cF}_pg\|_{H^{-\zeta,\a}}\ls 2^{-\zeta p} p^\a\|\mF_p\mP_lg\|_{L^2}+C_N2^{-lN}2^{-pN}\|g\|_{H^{-N}_{-N}},\\
     &\|\t\cF_p\tilde{\cP}_l\cF_ph\|_{L^{2}}\ls \|\mF_p\mP_lh\|_{L^2}+C_N2^{-lN}2^{-pN}\|h\|_{H^{-N}_{-N}},\\
     &~~~~\mbox{and}~~ \|\F_m\tilde{\cP}_l\cF_j^2f\|_{L^2}\ls C_N2^{-mN-lN-jN}\|\mF_jf\|_{L^2_{-N}}.
    \end{aligned}
\end{equation*}
The second term has a similar estimate and  it leads to 
   \begin{equation}\label{G212}
        \begin{aligned}
        |G_{2,1}^{(2)}|\ls&~C_N \sum_{l\geq N_0-1}2^{-(l+j)N}2^{-\zeta p}p^\a(\|\mF_p\mP_lg\|_{L^2}+2^{-lN}2^{-pN}\|g\|_{H^{-N}_{-N}})\\
        &\times(\|\mF_p\mP_lh\|_{L^2}+2^{-lN}2^{-pN}\|h\|_{H^{-N}_{-N}})\|\mF_j f\|_{L^2_{-N}}.
    \end{aligned}
   \end{equation}

 For $G_{2,1}^{(3)}$, since $|a-p|\leq N_0$ and $|m-j|\leq N_0$, then from \eqref{Q_-1} (choose $a=0,b=2s$) and the above argument, we can deduce that 
    \begin{equation}\label{G213}
        \begin{aligned}
        |G_{2,1}^{(3)}|\ls& \sum_{l\geq N_0-1}\left(\|\cP_l\tilde{\cF}_pg\|_{H^{-\zeta,\a}}\|\t \cF_p\tilde{\cP}_l\cF_ph\|_{L^2}+\|\cP_l\tilde{\cF}_pg\|_{L^2}\|\t\cF_p\tilde{\cP}_l\cF_ph\|_{H^{-\zeta,\a}}\right)\|\t\F_j(\cF_j^2\tilde{\cP}_l-\tilde{\cP}_l\cF_j^2)f\|_{H^{2s}}\\
        \ls &\sum_{l\geq N_0-1} C_N2^{-\zeta p}p^\a 2^{(2s-1)j}2^{-l} (\|\mF_p\mP_l g\|_{L^2}+2^{-pN-lN}\|g\|_{H^{-N}_{-N}})\\
        &\times (\|\mF_p\mP_lh\|_{L^2}+2^{-pN-lN}\|h\|_{H^{-N}_{-N}})(\|\mF_j\mP_l f\|_{L^2}+2^{-jN-lN}\|f\|_{H^{-N}_{-N}}).
        \end{aligned}
    \end{equation}

Replacing \(N\) by \(2N\) in \eqref{G211} and \eqref{G212}, and noting that \(2^{-2jN}\|\mF_j f\|_{L^2_{-2N}}\lesssim 2^{-jN}\|f\|_{H^{-N}_{-N}}\), we combine this with \eqref{G213} to obtain that
    \begin{equation}\label{G21}
       \begin{aligned}
        |G_{2,1}|\ls& \sum_{l\geq N_0-1} C_N2^{-\zeta p}p^\a 2^{(2s-1)j}2^{-l} (\|\mF_p\mP_l g\|_{L^2}+2^{-pN-lN}\|g\|_{H^{-N}_{-N}})\\
        &\times (\|\mF_p\mP_lh\|_{L^2}+2^{-pN-lN}\|h\|_{H^{-N}_{-N}})(\|\mF_j\mP_l f\|_{L^2}+2^{-jN-lN}\|f\|_{H^{-N}_{-N}}).
        \end{aligned}  
    \end{equation}

\medskip

   For $G_{2,2}$, similar to $G_{2,1}$, we can split it as follows:
    \begin{equation*}
        \begin{aligned}
        G_{2,2}=&\sum_{l\geq N_0-1}(Q_{-1}(\cP_l\tilde{\cF}_pg,(\tilde{\cP}_l\cF_p-\cF_p\tilde{\cP}_l)h),\cF_j^2\tilde{\cP}_lf)_v\\
        =&\sum_{l\geq N_0-1}\sum_{|a-p|>N_0}(Q_{-1}(\cP_l\tilde{\cF}_pg,\cF_a(\tilde{\cP}_l\cF_p-\cF_p\tilde{\cP}_l)h),\cF_j^2\tilde{\cP}_lf)_v\\
        +&\sum_{l\geq N_0-1}\sum_{|a-p|\leq N_0}(Q_{-1}(\cP_l\tilde{\cF}_pg,\cF_a(\tilde{\cP}_l\cF_p-\cF_p\tilde{\cP}_l)h),\cF_j^2\tilde{\cP}_lf)_v
        =:G_{2,2}^{(1)}+G_{2,2}^{(2)}.
    \end{aligned}
    \end{equation*}
    
    The estimate of $G_{2,2}^{(1)}$ is similar with that of $G_{2,1}^{(1)}$. Indeed, we have 
    \begin{equation*}
        \begin{aligned}
          |G_{2,2}^{(1)}| \ls \sum_{l\geq N_0-1}\sum_{|a-p|>N_0}\left(\|\cP_l\tilde{\cF}_pg\|_{H^{-\zeta,\a}}\|\cF_a\tilde{\cP}_l\cF_ph\|_{L^2}+\|\cP_l\tilde{\cF}_pg\|_{L^{2}}\|\cF_a\tilde{\cP}_l\cF_ph\|_{H^{-\zeta,\a}}\right)\|\F_j^2\tilde{\cP}_lf\|_{H^{2s}}.
        \end{aligned}
    \end{equation*}
 Taking the first term as an example, we have
     \begin{equation*}
        \begin{aligned}
        &\|\cP_l\tilde{\cF}_pg\|_{H^{-\zeta,\a}}\ls 2^{-\zeta p}p^\a \|\mF_p\mP_l g\|_{L^2}+C_N 2^{-lN-pN}\|g\|_{H^{-N}_{-N}},\quad \|\F_j^2\t\cP_l f\|_{H^{2s}}\ls 2^{2sj}\|\mF_j\mP_l f\|_{L^2}\\
&~\mbox{and}\quad \|\cF_a\tilde{\cP}_l\cF_ph\|_{L^2}\ls C_N 2^{-aN-lN-pN}\|\cF_p h\|_{L^2_{-N}},\quad \mbox{for}\quad |a-p|>N_0.
        \end{aligned}
    \end{equation*}   
The second term admits a similar estimate, whence we obtain
      \begin{equation*}
        \begin{aligned}
          |G_{2,2}^{(1)}| \ls  \sum_{l\geq N_0-1}C_N 2^{-(l+p)N}\|g\|_{H^{-N}_{-N}}\|h\|_{H^{-N}_{-N}}\|f\|_{H^{-N}_{-N}}.
        \end{aligned}
    \end{equation*}  
    
The estimate of $G_{2,2}^{(2)}$ is similar with that of $G_{2,1}^{(3)}$, we can derive that
      \begin{equation*}
        \begin{aligned}
          |G_{2,2}^{(2)}| \ls&  \sum_{l\geq N_0-1}C_N 2^{(-\zeta-1)p}p^\a 2^{2sj}2^{-l}(\|\mF_p\mP_l g\|_{L^2}+2^{-pN-lN}\|g\|_{H^{-N}_{-N}})\\
        &\times (\|\mF_p\mP_lh\|_{L^2}+2^{-pN-lN}\|h\|_{H^{-N}_{-N}})(\|\mF_j\mP_l f\|_{L^2}+2^{-jN-lN}\|f\|_{H^{-N}_{-N}}).
        \end{aligned}
    \end{equation*}
Therefore, we obtain that
      \begin{equation}\label{G22}
        \begin{aligned}
          |G_{2,2}| \ls&  \sum_{l\geq N_0-1}C_N 2^{(-\zeta-1)p}p^\a 2^{2sj}2^{-l}(\|\mF_p\mP_l g\|_{L^2}+2^{-pN-lN}\|g\|_{H^{-N}_{-N}})\\
        &\times (\|\mF_p\mP_lh\|_{L^2}+2^{-pN-lN}\|h\|_{H^{-N}_{-N}})(\|\mF_j\mP_l f\|_{L^2}+2^{-jN-lN}\|f\|_{H^{-N}_{-N}}).
        \end{aligned}
    \end{equation}

 Observing that \(G_{2,3}\) and \(G_{2,4}\) have the same structure as \(G_{2,1}\) and \(G_{2,2}\), respectively, with \(\t\cP_l\) replaced by \(\mathcal{U}_{N_0}\),  we can still handle them by the same method. We therefore omit the details and obtain
     \begin{equation}\label{G234}
        \begin{aligned}
          |G_{2,3}+G_{2,4}| \ls& ~ C_N 2^{-\zeta p}p^\a 2^{(2s-1)j}(\|\mF_p \mU_{N_0}g\|_{L^2}+2^{-pN}\|g\|_{H^{-N}_{-N}})\\
        &\times (\|\mF_p\mU_{N_0}h\|_{L^2}+2^{-pN}\|h\|_{H^{-N}_{-N}})(\|\mF_j\mU_{N_0}f\|_{L^2}+2^{-jN}\|f\|_{H^{-N}_{-N}}).
        \end{aligned}
    \end{equation}

Patching together \eqref{G21}, \eqref{G22} and \eqref{G234} and note that $p>j+3N_0$, we obtain
  \begin{equation}\label{G2}
        \begin{aligned}
          |G_{2}| \ls& \sum_{l\geq N_0-1} C_N2^{-\zeta p}p^\a 2^{(2s-1)j}2^{-l} (\|\mF_p\mP_l g\|_{L^2}+2^{-pN-lN}\|g\|_{H^{-N}_{-N}})\\
        &\times (\|\mF_p\mP_lh\|_{L^2}+2^{-pN-lN}\|h\|_{H^{-N}_{-N}})(\|\mF_j\mP_l f\|_{L^2}+2^{-jN-lN}\|f\|_{H^{-N}_{-N}})\\
        &+~ C_N 2^{-\zeta p}p^\a 2^{(2s-1)j}(\|\mF_p \mU_{N_0}g\|_{L^2}+2^{-pN}\|g\|_{H^{-N}_{-N}})\\
        &\times (\|\mF_p\mU_{N_0}h\|_{L^2}+2^{-pN}\|h\|_{H^{-N}_{-N}})(\|\mF_j\mU_{N_0}f\|_{L^2}+2^{-jN}\|f\|_{H^{-N}_{-N}}).
        \end{aligned}
    \end{equation}
    
Further combining \eqref{G1}  and \eqref{G2}, we obtain
     \begin{equation}\label{bound1}
        \begin{aligned}
        &|\fD_{-1}^{p>j}(g,h,f)|\ls \sum_{l\geq N_0-1} 2^{-(\gamma+4)p}2^{\frac{5}{2}j}\|\mP_l\mF_pg\|_{L^2}\|\mF_p
        \mP_lh\|_{L^2}\|\mF_j\mP_lf\|_{L^2}
        +2^{-(\gamma+4)p}2^{\frac{5}{2}j}\|\mU_{N_0}\mF_pg\|_{L^2}\\&\times\|\mF_p\mU_{N_0}h\|_{L^2}
        \|\mF_j\mU_{N_0}f\|_{L^2}
        +\sum_{l\geq N_0-1} C_N2^{-\zeta p}p^\a 2^{(2s-1)j}2^{-l} (\|\mF_p\mP_l g\|_{L^2}+2^{-pN-lN}\|g\|_{H^{-N}_{-N}})\\
         &(\|\mF_p\mP_lh\|_{L^2}
         +2^{-pN-lN}\|h\|_{H^{-N}_{-N}})(\|\mF_j\mP_l f\|_{L^2}+2^{-jN-lN}\|f\|_{H^{-N}_{-N}})+
        C_N 2^{-\zeta p}p^\a 2^{(2s-1)j}\\&(\|\mF_p \mU_{N_0}g\|_{L^2}+2^{-pN}\|g\|_{H^{-N}_{-N}})
         (\|\mF_p\mU_{N_0}h\|_{L^2}+2^{-pN}\|h\|_{H^{-N}_{-N}})(\|\mF_j\mU_{N_0}f\|_{L^2}+2^{-jN}\|f\|_{H^{-N}_{-N}}).
        \end{aligned}
    \end{equation}

    \medskip
    
    \textbf{Step 2:}  Estimate of $\fD_{-1}^{p<j}$. Since the estimates for \(\fD_{-1}^{p<j}\)  follow a similar pattern to that for \(\fD_{-1}^{p>j}\), we only outline the main steps and omit the details. We have the following decomposition: 
    \begin{equation*}
      \begin{aligned}
        (\cF_jQ_{-1}(\tilde{\cF_j}g,\cF_ph),\cF_jf)_v=(Q_{-1}(\tilde{\cF_j}g,\cF_ph),\cF^2_jf)_v=X_1+X_2,
    \end{aligned}  
    \end{equation*}
    where
     \begin{equation*}
    \begin{aligned}
        &X_1=\sum_{l\geq N_0-1}(\cF_jQ_{-1}(\cP_l\tilde{\cF}_jg,\cF_p\tilde{
        \cP}_lh),\cF_j\tilde{\cP}_lf)_v
        +\sum_{l<N_0+1}(\cF_jQ_{-1}(\cP_l\tilde{\cF}_jg,\cF_p \cU_{N_0}h),\cF_j\cU_{N_0}f)_v,\\
        &X_2=\sum_{l\geq N_0-1}\Big((Q_{-1}(\cP_l\tilde{\cF}_jg,\tilde{\cP}_l\cF_ph),(\tilde{\cP}_l\cF_j^2-\cF_j^2\tilde{\cP}_l)f)_v
        +(Q_{-1}(\cP_l\t\cF_jg,(\t\cP_l\F_p-\F_p\t\cP_l)h),\cF_j^2\tilde{\cP}_lf)\Big)\\
        &+\sum_{l< N_0+1}\Big((Q_{-1}(\cP_l\tilde{\cF}_jg,\cU_{N_0}\cF_ph),(\cU_{N_0}\cF_j^2-\cF_j^2\cU_{N_0})f)_v
        +(Q_{-1}(\cP_l\t\cF_jg,(\cU_{N_0}\cF_p-\cF_p\cU_{N_0})h),\cF_j^2\cU_{N_0}f)_v\Big).
    \end{aligned}
    \end{equation*}

     {\bf\noindent $\bullet$ Estimate of  $X_1$.} It follows from \eqref{bobylev} that $\<\eta\>\sim 2^j$ since $|\xi|\sim 2^j$, $\<\eta-\xi\>\sim 2^p$ and $p<j-3N_0$, which leads to
 \begin{equation*}
    \begin{aligned}
        X_1=&\sum_{l\geq N_0-1}(\cF_jQ_{-1}(\t\F_j\cP_l\tilde{\cF}_jg,\cF_p\tilde{
        \cP}_lh),\cF_j\tilde{\cP}_lf)_v
        +\sum_{l<N_0+1}(\cF_jQ_{-1}(\t\F_j\cP_l\tilde{\cF}_jg,\cF_p \cU_{N_0}h),\cF_j\cU_{N_0}f)_v.
\end{aligned}
    \end{equation*}
Then due to \eqref{p<j} in Lemma \ref{fD-1}, we have 
 \begin{equation*}
    \begin{aligned}
        |X_1|\ls &\sum_{l\geq N_0-1}2^{-(\gamma+\f32)j}\|\mP_l \mF_jg\|_{L^2}\|\mF_p\mP_l h\|\|\mF_j \mP_l f\|_{L^2}
    +2^{-(\gamma+\f32)j}\|\mU_{N_0}\mF_j g\|_{L^2}\|\mF_p \mU_{N_0}h\|\|\mF_j \mU_{N_0} f\|_{L^2}.
\end{aligned}
    \end{equation*}

 {\bf \noindent $\bullet$  Estimate of $X_{2}$:} We denote the terms on the right-hand side by $X_{2,1}$ to $X_{2,4}$. Note that they have the same structure as $G_{2,1}$ to $G_{2,4}$ apart from differences in frequency, and we use \eqref{Q_-1a} instead of \eqref{Q_-1} in the estimate(choose $a=2s$ and $b=0$). Thus we can use the same argument to obtain that
  \begin{equation}\label{X2}
        \begin{aligned}
          |X_{2}| \ls& \sum_{l\geq N_0-1} C_N(2^{-\zeta j}j^\a 2^{(2s-1)p}+2^{\frac{1}{2}p}p^\a 2^{-(\gamma+3)j})2^{-l} (\|\mF_j\mP_l g\|_{L^2}+2^{-pN-lN}\|g\|_{H^{-N}_{-N}})\\
        &\times (\|\mF_p\mP_lh\|_{L^2}+2^{-pN-lN}\|h\|_{H^{-N}_{-N}})(\|\mF_j\mP_l f\|_{L^2}+2^{-jN-lN}\|f\|_{H^{-N}_{-N}})\\
        &+~ C_N  (2^{-\zeta j}j^\a 2^{(2s-1)p}+2^{\frac{1}{2}p}p^\a 2^{-(\gamma+3)j})(\|\mF_j \mU_{N_0}g\|_{L^2}+2^{-pN}\|g\|_{H^{-N}_{-N}})\\
        &\times (\|\mF_p\mU_{N_0}h\|_{L^2}+2^{-pN}\|h\|_{H^{-N}_{-N}})(\|\mF_j\mU_{N_0}f\|_{L^2}+2^{-jN}\|f\|_{H^{-N}_{-N}}).
        \end{aligned}
    \end{equation}
Then we conclude that
     \begin{equation}\label{bound2}
        \begin{aligned}
        &|\fD_{-1}^{p<j}(g,h,f)|\ls \sum_{l\geq N_0-1}2^{-(\gamma+\f32)j}\|\mP_l \mF_jg\|_{L^2}\|\mF_p\mP_l h\|_{L^2}
        \|\mF_j \mP_l f\|_{L^2}
    +2^{-(\gamma+\f32)j}\|\mU_{N_0}\mF_j g\|_{L^2}\|\mF_p \mU_{N_0}h\|_{L^2}\\
    &\times\|\mF_j \mU_{N_0} f\|_{L^2}
    +\sum_{l\geq N_0-1} C_N(2^{-\zeta j}j^\a 2^{(2s-1)p}+2^{\frac{1}{2}p}p^\a 2^{-(\gamma+3)j})2^{-l} (\|\mF_j\mP_l g\|_{L^2}+2^{-pN-lN}\|g\|_{H^{-N}_{-N}})
        (\|\mF_p\mP_lh\|_{L^2}\\
        &+2^{-pN-lN}\|h\|_{H^{-N}_{-N}})(\|\mF_j\mP_l f\|_{L^2}
        +2^{-jN-lN}\|f\|_{H^{-N}_{-N}})
        +C_N(2^{-\zeta j}j^\a 2^{(2s-1)p}+ 2^{\frac{1}{2}p}p^\a 2^{-(\gamma+3)j})\\&(\|\mF_j \mU_{N_0}g\|_{L^2}+2^{-pN}\|g\|_{H^{-N}_{-N}})
        (\|\mF_p\mU_{N_0}h\|_{L^2}+2^{-pN}\|h\|_{H^{-N}_{-N}})(\|\mF_j\mU_{N_0}f\|_{L^2}+2^{-jN}\|f\|_{H^{-N}_{-N}}).
        \end{aligned}
    \end{equation}

    \textbf{Step 3:}  Estimate of $\fD_{-1}^{q<p,j}$. The main method remains the same as before and we only point out the differences. We split it as follows:
    \begin{equation*}
         \begin{aligned}
        (\cF_jQ_{-1}(\cF_qg,\cF_ph)-Q_{-1}(\cF_qg,\cF_j\cF_ph),\cF_jf)_v=\sum_{i=1}^{4}Y_i,
    \end{aligned}   
    \end{equation*}
    where
    \begin{equation*}
      \begin{aligned}
        &Y_1=\sum_{l\geq N_0-1}(\cF_jQ_{-1}(\cP_l\cF_qg,\cF_p\tilde{\cP}_lh)-Q_{-1}(\cP_l\cF_qg,\cF_j\cF_p\tilde{\cP}_lh),\cF_j\tilde{\cP}_lf)_v,\\
        &Y_2=\sum_{l<N_0+1}(\cF_jQ_{-1}(\cP_l\cF_qg,\cF_p\cU_{N_0}h)-Q_{-1}(\cP_l\cF_qg,\cF_j\cF_p\cU_{N_0}h),\cF_j\cU_{N_0}f)_v,\\
        &Y_3=\sum_{l\geq N_0-1}\Big((Q_{-1}(\cP_l\cF_qg,\tilde{\cP}_l\cF_ph),(\tilde{\cP}_l\cF_j^2-\cF_j^2\tilde{\cP}_l)f)_v+(Q_{-1}(\cP_l\cF_qg,\tilde{\cP}_l\cF_j\cF_ph),(\tilde{\cP}_l\cF_j-\cF_j\tilde{\cP}_l)f)_v\\
        &+(Q_{-1}(\cP_l\cF_qg,(\tilde{\cP}_l\cF_p-\cF_p\tilde{\cP}_l)\cF_ph),\cF_j^2\tilde{\cP}_lf)_v
        +(Q_{-1}(\cP_l\cF_qg,(\tilde{\cP}_l\cF_j\cF_p-\cF_j\cF_p\tilde{\cP}_l)\cF_ph),\cF_j\tilde{\cP}_lf)_v
        \Big),\\
        &Y_4=\sum_{l<N_0+1}\Big((Q_{-1}(\cP_l\cF_qg,\cU_{N_0}\cF_ph),(\cU_{N_0}\cF_j^2-\cF_j^2\cU_{N_0})f)_v+(Q_{-1}(\cP_l\cF_qg,\cU_{N_0}\cF_j\cF_ph),(\cU_{N_0}\cF_j-\cF_j\cU_{N_0})f)_v\\
        &+(Q_{-1}(\cP_l\cF_qg,(\cU_{N_0}\cF_p-\cF_p\cU_{N_0})h),\cF^2_j\cU_{N_0}f)_v+(Q_{-1}(\cP_l\cF_qg,(\cF_j\cF_p\cU_{N_0}-\cU_{N_0}\cF_j\cF_p)h),\cF_j\cU_{N_0}f)_v
        \Big).
    \end{aligned}  
 \end{equation*}
   Note that $\<\eta\>\ls 2^j$ since $\<\xi\>\sim 2^j$ and $\<\xi-\eta\>\sim 2^p$ with $|p-j|\leq 3N_0$, then by \eqref{p=j} in Lemma \ref{fD-1}, we have
    \begin{equation*}
        \begin{aligned}
            |Y_1+Y_2|\ls& \sum_{l\geq N_0-1}\sum_{a\leq j+7N_0}2^{-(\zeta-1)a}2^{(2s-1)j}\|\mF_a\mP_l\mF_qg\|_{L^2}\|\mF_j\mP_lh\|_{L^2}\|\mF_j\mP_lf\|_{L^2}\\
            &+\sum_{a\leq j+7N_0}2^{-(\zeta-1)a}2^{(2s-1)j}\|\mF_a\mU_{N_0}\mF_qg\|_{L^2}\|\mF_j\mU_{N_0}h\|_{L^2}\|\mF_j\mU_{N_0}\|_{L^2}.
        \end{aligned}
    \end{equation*}
After split it into $|a-q|\leq N_0$ and $|a-q|>N_0$, we can further obtain from \eqref{m-p>N0} that
    \begin{equation*}
        \begin{aligned}
            |Y_1+Y_2|\ls& \sum_{l\geq N_0-1}C_N\big(2^{-(\zeta-1)q}2^{(2s-1)j}\|\mP_l\mF_qg\|_{L^2}+2^{-qN-lN}\|g\|_{H^{-N}_{-N}}\big)\|\mF_j\mP_lh\|_{L^2}\|\mF_j\mP_lf\|_{L^2}\\
            &+C_N\big(2^{-(\zeta-1)q}2^{(2s-1)j}\|\mU_{N_0}\mF_qg\|_{L^2}+2^{-qN}\|g\|_{H^{-N}_{-N}}\big)\|\mF_j\mU_{N_0}h\|_{L^2}\|\mF_j\mU_{N_0}\|_{L^2}.
        \end{aligned}
    \end{equation*}

    For $Y_3$ and $Y_4$, similar to the estimates of $G_2$ and $X_2$ and note that $|p-j|\leq 3N_0$, we can derive that
      \begin{equation*}
        \begin{aligned}
            |Y_3+Y_4|\ls& \sum_{l\geq N_0-1} C_N2^{-\zeta q}q^\a 2^{(2s-1)j}2^{-l} (\|\mF_q\mP_l g\|_{L^2}+2^{-qN-lN}\|g\|_{H^{-N}_{-N}})\\
        &\times (\|\mF_j\mP_lh\|_{L^2}+2^{-jN-lN}\|h\|_{H^{-N}_{-N}})(\|\mF_j\mP_l f\|_{L^2}+2^{-jN-lN}\|f\|_{H^{-N}_{-N}})\\
        &+~ C_N 2^{-\zeta q}q^\a 2^{(2s-1)j}(\|\mF_q \mU_{N_0}g\|_{L^2}+2^{-jN}\|g\|_{H^{-N}_{-N}})\\
        &\times (\|\mF_j\mU_{N_0}h\|_{L^2}+2^{-jN}\|h\|_{H^{-N}_{-N}})(\|\mF_j\mU_{N_0}f\|_{L^2}+2^{-jN}\|f\|_{H^{-N}_{-N}}).
        \end{aligned}
    \end{equation*}  
Therefore, we conclude that
       \begin{equation}\label{bound3}
        \begin{aligned}
            &|\fD_{-1}^{q<p,j}(g,h,f)|
            \ls\sum_{l\geq N_0-1}C_N\big(2^{-(\zeta-1)q}2^{(2s-1)j}\|\mP_l\mF_qg\|_{L^2}+2^{-qN-lN}\|g\|_{H^{-N}_{-N}}\big)\|\mF_j\mP_lh\|_{L^2}\|\mF_j\mP_lf\|_{L^2}\\
            &+C_N\big(2^{-(\zeta-1)q}2^{(2s-1)j}\|\mU_{N_0}\mF_qg\|_{L^2}+2^{-qN}\|g\|_{H^{-N}_{-N}}\big)\|\mF_j\mU_{N_0}h\|_{L^2}\|\mF_j\mU_{N_0}f\|_{L^2}\\
            &+\sum_{l\geq N_0-1} C_N2^{-\zeta q}q^\a 2^{(2s-1)j}2^{-l} (\|\mF_q\mP_l g\|_{L^2}+2^{-qN-lN}\|g\|_{H^{-N}_{-N}})
         (\|\mF_j\mP_lh\|_{L^2}+2^{-jN-lN}\|h\|_{H^{-N}_{-N}})\\
         &\times(\|\mF_j\mP_l f\|_{L^2}+2^{-jN-lN}\|f\|_{H^{-N}_{-N}})
        + C_N 2^{-\zeta q}q^\a 2^{(2s-1)j}(\|\mF_q \mU_{N_0}g\|_{L^2}+2^{-qN}\|g\|_{H^{-N}_{-N}})\\
        &\times (\|\mF_j\mU_{N_0}h\|_{L^2}+2^{-jN}\|h\|_{H^{-N}_{-N}})(\|\mF_j\mU_{N_0}f\|_{L^2}+2^{-jN}\|f\|_{H^{-N}_{-N}}).
        \end{aligned}
    \end{equation}  
Summarizing the above results, we complete the proof of this lemma.
\end{proof}
 
 From  \eqref{splitDkD-1}, Lemma \ref{fDk} and Lemma \ref{precisebound}, we finally conclude
\begin{proposition}[Commutator of $Q$ and $\cF_j$]\label{QFj}
 Suppose $-1<\gamma+2s<0$ and $\gamma+4s=1$. For any smooth functions $g,f$ and $h$, it holds that
\begin{equation}\label{QFjghf}
\begin{aligned}
    &\Big|\big(\cF_jQ(g,h)-Q(g,\cF_jh),\cF_jf\big)_v\Big|\\
    \ls&~\|g\|_{L^1_{(-\omega_1)^++(-\omega_2)^++\delta}}(\|\mF_jh\|_{H^a_{\omega_1}}+2^{-jN}\|h\|_{H^{-N}_{-N}})(\|\mF_jf\|_{H^b_{\omega_2}}+2^{-jN}\|f\|_{H^{-N}_{-N}})\\
    &+\sum_{p>j+3N_0}|\fD_{-1}^{p>j}(g,h,f)|+\sum_{p<j-3N_0}|\fD_{-1}^{p<j}(g,h,f)|+\sum_{|p-j|\leq 3N_0}\sum_{q\leq p+4N_0}|\fD_{-1}^{q<p,j}(g,h,f)|,
\end{aligned}
\end{equation}
where $\omega_1,\omega_2\in \R,a,b\in[0,2s-1]$ satisfying $a+b=2s-1,\omega_1+\omega_2=\gamma+2s-1$. The constants $\delta>0$  and $N\in\N$ can be sufficiently small and large, respectively. The upper bounds for the last three terms are given in \eqref{bound1}, \eqref{bound2} and \eqref{bound3}.
\end{proposition}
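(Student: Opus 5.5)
The proposition is an assembly of results already proved, so the plan is to combine the splitting \eqref{splitDkD-1}, Lemma \ref{fDk}, and the decomposition of $\fD_{-1}$ estimated in Lemmas \ref{fD-1} and \ref{precisebound}.

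\textbf{Step 1.} Use \eqref{splitDkD-1} to write the commutator pairing as $\sum_{k\ge0}\fD_k+\fD_{-1}$. This separates the regular part of the kinetic factor, where $|v-v_*|\sim 2^k$ with $k\ge0$, from the singular part $|v-v_*|\ls1$.

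\textbf{Step 2.} Bound the regular part $\sum_{k\ge0}\fD_k$ directly by Lemma \ref{fDk}, with the same parameters: $a+b=2s-1$, $\omega_1+\omega_2=\gamma+2s-1$, $\delta$ small and $N$ large. This gives exactly the first line of \eqref{QFjghf}.

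Behind Lemma \ref{fDk} is the phase decomposition \eqref{ubdecom}, which splits the term into $A_1+A_2$. Here $A_2$ collects the commutators $[\cP,\cF_j]$ and is controlled by Lemma \ref{PFcommutator}. The localized pieces in $A_1$ gain one derivative because $\Phi^\gamma_k$ is smooth. That gain is why the total order is only $2s-1$ and the weight is $\gamma+2s-1$. I take this from \cite{CHJ} as stated.

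\textbf{Step 3.} Write the singular part $\fD_{-1}$ through Bobylev's formula as the three frequency-interaction sums: $p>j+3N_0$, $p<j-3N_0$, and $|p-j|\le 3N_0$ with $q\le p+4N_0$.

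The decomposition needs the observation that, in the off-diagonal regimes, the subtracted term $Q_{-1}(g,\cF_j\cF_p h)$ vanishes, because $\cF_j\cF_p=0$ when $|p-j|>N_0$. Support considerations also force the frequency of $g$ to be comparable to $\max(p,j)$, which is why $g$ appears as $\tilde\cF_p g$ or $\tilde\cF_j g$ there. Taking absolute values and applying the triangle inequality then gives the last three sums in \eqref{QFjghf}. Their refined bounds are \eqref{bound1}, \eqref{bound2} and \eqref{bound3} from Lemma \ref{precisebound}.

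\textbf{Main obstacle.} The delicate point is not in this assembly but in the diagonal term $\fD_{-1}^{q<p,j}$. There the gain $2^{-j}|\eta|$ coming from $\varphi(2^{-j}\xi)-\varphi(2^{-j}(\xi-\eta))$ must offset the non-integrable angular singularity, and the estimate \eqref{p=j} supplies this. At the level of this proposition, the only check left is that no hypothesis beyond $-1<\gamma+2s<0$ and $\gamma+4s=1$ is used. These are exactly the assumptions of Lemmas \ref{fDk}, \ref{fD-1} and \ref{precisebound}, with the logarithmic case $\gamma+2s=-\tfrac12$ absorbed as in Remark \ref{rem:tzeta}.
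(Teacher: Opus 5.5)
Your proposal is correct and matches the paper's argument: the proposition is obtained directly from the splitting \eqref{splitDkD-1}, with Lemma \ref{fDk} bounding $\sum_{k\ge0}\fD_k$ and the Bobylev-based frequency decomposition of $\fD_{-1}$ into the three sums, whose bounds come from Lemma \ref{precisebound}. Your observation that $\cF_j\cF_p=0$ off the diagonal is exactly why the subtracted term $Q_{-1}(\cdot,\cF_j\cF_p h)$ appears only in the sum over $|p-j|\le 3N_0$.
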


\smallskip

We end this section with a corollary that follows from Proposition \ref{QFj}.
\begin{corollary}\label{cor1}
  For smooth function $g$ and $f$, it holds that
        \begin{equation}\label{QFjf}
    \begin{aligned}
       &\big|(\cF_jQ(g,\cP_kf)-Q(g,\cF_j\cP_kf),\cF_j\cP_kf)_v\big|\\
       \ls&~ 2^{(\gamma+2s-1)k}2^{(2s-1)j}\|g\|_{L^1_2}(\|\mF_j\mP_kf\|_{L^2}+2^{-jN-kN}\|f\|_{H^{-N}_{-N}})^2+\sum_{i=1}^3 \mB_i(g,f)
    \end{aligned}
    \end{equation} 
   with $\mB_1(g,f)=\sum_{p>j+3N_0}|\fD_{-1}^{p>j}(g,\cP_kf,\cP_kf)|$, $\mB_2(g,f)=\sum_{p<j-3N_0}$ $|\fD_{-1}^{p<j}(g,\cP_kf,\cP_kf)|$ and $\mB_3(g,f)=\sum_{q<j+7N_0}|\fD_{-1}^{q<p,j}(g,\cP_kf,\cP_kf)|$. 
\end{corollary}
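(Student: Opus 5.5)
Corollary \ref{cor1} follows by applying Proposition \ref{QFj} with $h=f$ replaced by $\cP_k f$, so that both the second and third slots carry $\cP_k f$. The last three terms of \eqref{QFjghf} then become, by definition, exactly $\mB_1(g,f)$, $\mB_2(g,f)$ and $\mB_3(g,f)$. For $\mB_3$ we use that $|p-j|\le 3N_0$ and $q\le p+4N_0$ force $q<j+7N_0$. So the only work is the first term,
\[
\|g\|_{L^1_{(-\omega_1)^++(-\omega_2)^++\delta}}\big(\|\mF_j\cP_kf\|_{H^a_{\omega_1}}+2^{-jN}\|\cP_kf\|_{H^{-N}_{-N}}\big)\big(\|\mF_j\cP_kf\|_{H^b_{\omega_2}}+2^{-jN}\|\cP_kf\|_{H^{-N}_{-N}}\big),
\]
and we choose the free parameters to produce the factor $2^{(\gamma+2s-1)k}2^{(2s-1)j}$.

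\textbf{Choice of parameters.} Take $a=b=s-\frac12\in[0,2s-1]$ (valid since $s>1/2$), and $\omega_1=\omega_2=\frac{\gamma+2s-1}{2}$. Since $\gamma+2s-1<0$, we have $(-\omega_1)^++(-\omega_2)^+=1-\gamma-2s$. Choosing $\delta$ small, $1-\gamma-2s+\delta\le 2$ because $\gamma+2s>-1$, so the $L^1$ weight of $g$ is dominated by $\|g\|_{L^1_2}$.

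\textbf{Localizing the norms of $\cP_kf$.} Since $\cP_kf$ is supported in $|v|\sim 2^k$, we need the weight $\langle v\rangle^{\omega_i}$ to become $2^{\omega_i k}$ after frequency localization. We write $\mF_j\cP_kf=\cP_k\mF_jf+[\mF_j,\cP_k]f$. The main part satisfies
\[
\|\cP_k\mF_j f\|_{H^{a}_{\omega_1}}\ls 2^{\omega_1 k}2^{aj}\|\mF_j\mP_kf\|_{L^2}.
\]
This holds because the Fourier support of $\mF_jf$ is $|\xi|\sim 2^j$; derivatives falling on the cutoff only improve matters, and the modified cutoffs are absorbed into the generic operator $\mP_k$ of Definition \ref{Fj}. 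The commutator part is controlled by \eqref{pkfj} and gives $2^{-j}2^{-k}\|\mP_k\mF_jf\|_{L^2}$ plus $2^{-jN-kN}\|f\|_{H^{-N}_{-N}}$, both of the stated form. The tail satisfies $2^{-jN}\|\cP_kf\|_{H^{-N}_{-N}}\ls 2^{-jN}2^{-kN'}\|f\|_{H^{-N''}_{-N''}}$ after relabelling $N$, since the support of $\cP_kf$ lets us trade weight for decay in $k$.

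\textbf{Assembling.} Multiplying the two factors, the powers combine as $2^{(\omega_1+\omega_2)k}2^{(a+b)j}=2^{(\gamma+2s-1)k}2^{(2s-1)j}$. The remaining error terms carry large negative powers of both $2^j$ and $2^k$, so they are absorbed into the same prefactor after relabelling $N$. Finally, $\|\mP_k\mF_jf\|$ is interchangeable with $\|\mF_j\mP_kf\|$ up to such errors, again by \eqref{pkfj}. This yields \eqref{QFjf}.

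The main obstacle is the second step: turning the weighted Sobolev norm of $\mF_j\cP_kf$ into exactly $2^{\omega k}2^{aj}\|\mF_j\mP_kf\|_{L^2}$ plus negligible remainders. This is a routine but careful commutator bookkeeping argument, since the fixed-frequency projection does not preserve the spatial support of $\cP_kf$.
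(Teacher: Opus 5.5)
Your proposal is correct and follows the paper's proof. You substitute $(g,\cP_kf,\cP_kf)$ into Proposition \ref{QFj}, reduce $\|\mF_j\cP_kf\|_{H^a_{\omega}}$ to $2^{\omega k}2^{aj}\|\mF_j\mP_kf\|_{L^2}$ plus $2^{-jN-kN}\|f\|_{H^{-N}_{-N}}$ tails via Lemmas \ref{le1.4} and \ref{PFcommutator} after replacing $N$ by $2N$, and choose weights with $(-\omega_1)^++(-\omega_2)^++\delta=1-\gamma-2s+\delta<2$. The only difference is cosmetic: you split the weight symmetrically, $\omega_1=\omega_2=\frac{\gamma+2s-1}{2}$, while the paper takes $\omega_1=0$, $\omega_2=\gamma+2s-1$, and both choices produce the same $L^1$ weight on $g$.
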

\begin{proof}
We plug $(g,h,f)\rightarrow (g,\cP_kf,\cP_kf)$ into \eqref{QFjghf} and replace $N$ by $2N$, then for the first term on the right-hand side, from Lemmas \ref{le1.4} and \ref{PFcommutator}, one may easy to check that
            \begin{equation*}
    \begin{aligned}
       &\|\mF_j\mP_kf\|_{H^a_{\omega_1}}+2^{-2jN}\|\cP_kf\|_{H^{-2N}_{-2N}}\ls 2^{\omega_1 k}2^{aj}\|\mF_j\mP_kf\|_{L^2}+2^{-jN-kN}\|f\|_{H^{-N}_{-N}},\\
       &\|\mF_j\mP_kf\|_{H^b_{\omega_2}}+2^{-2jN}\|\cP_kf\|_{H^{-2N}_{-2N}}\ls 2^{\omega_2 k}2^{bj}\|\mF_j\mP_kf\|_{L^2}+2^{-jN-kN}\|f\|_{H^{-N}_{-N}}
    \end{aligned}
    \end{equation*} 
with $a+b=2s-1$ and $\omega_1+\omega_2=\gamma+2s-1$, then we choose $\omega_1=0$ and $\omega_2=\gamma+2s-1$ which implies that $(-\omega_1)^++(-\omega_2)^++\delta<2$, thus we can get the bound
  \begin{equation}
    \begin{aligned}
      2^{(\gamma+2s-1)k}2^{(2s-1)j}\|g\|_{L^1_2}(\|\mF_j\mP_kf\|_{L^2}+2^{-jN-kN}\|f\|_{H^{-N}_{-N}})^2.
    \end{aligned}
    \end{equation}  
This ends the proof of this corollary.
\end{proof}

\section{Commutator estimates $[Q,\cP_k]$}\label{commutatorestimatesQPk}

In this section, we will give estimates for the commutator $[Q,\cP_k]$.  Specifically, we will give the upper bound for $(\cP_kQ(g,h)-Q(g,\cP_kh),\F^2_j\cP_kf)_v$.  To obtain the sharpest  estimates, we need to decompose it simultaneously in both phase space and frequency space.

Due to \eqref{ubdecom}, we first have the following decomposition:
\begin{equation*}
\begin{aligned}
    &(\cP_kQ(g,h),\F_j^2\cP_kf)_v=\sum_{\substack{m\geq N_0-1\\|m- k|\leq 2N_0}}(Q_{m}(\cU_{m-N_0}g,\tilde{\cP}_mh),\tilde{\cP}_m\cP_k\F_j^2\cP_kf)_v\\
    &+\sum_{\substack{l\geq m+N_0\\|l- k|\leq 2N_0}}(Q_{m}(\cP_{l}g,\tilde{\cP}_lh),\tilde{\cP}_l\cP_k\F_j^2\cP_kf)_v
    +\sum_{\substack{|l-m|\leq N_0\\k\leq m+2N_0}}(Q_{m}(\cP_lg,\cU_{m+N_0}h),\cU_{m+N_0}\cP_k\F_j^2\cP_kf)_v,\\
    &(Q_{m}(g,\cP_kh),\F_j^2\cP_kf)_v=\sum_{\substack{m\geq N_0-1\\|m- k|\leq 2N_0}}(Q_{m}(\cU_{m-N_0}g,\tilde{\cP}_m\cP_kh),\tilde{\cP}_m\F_j^2\cP_kf)_v\\
    &+\sum_{\substack{l\geq m+N_0\\|l- k|\leq 2N_0}}(Q_{m}(\cP_{l}g,\tilde{\cP}_l\cP_kh),\tilde{\cP}_l\F_j^2\cP_kf)_v
    +\sum_{\substack{|l-m|\leq N_0\\k\leq m+2N_0}}(Q_{m}(\cP_lg,\cU_{m+N_0}\cP_kh),\cU_{m+N_0}\F_j^2\cP_kf)_v,~~m\geq-1,
\end{aligned}    
\end{equation*}
where we use the fact that $\t\cP_m\cP_k=0$ if $|m-k|>2N_0$ and $\cU_{m+N_0}\cP_k=0$ if $k>m+2N_0$. Noticing that $\t\cP_l\cP_k=\cP_k\t\cP_l$  and $\cU_{m+N_0}\cP_k=\cP_k\cU_{m+N_0}$, we have
\begin{equation}\label{commutator2}
    \begin{aligned}
   (\cP_kQ(g,h)-Q(g,\cP_kh),\F^2_j\cP_kf)_v=\Delta_1+\Delta_2+\Delta_3,  
    \end{aligned}
\end{equation}
with
\begin{equation}\label{00Delta1}
\begin{aligned}
    \Delta_1:=&\sum_{m\geq N_0-1,|m-k|\leq 2N_0}(\cP_kQ_m(\cU_{m-N_0}g,\tilde{\cP}_mh)-Q_m(\cU_{m-N_0}g,\cP_k\tilde{\cP}_mh),\cF_j^2\tilde{\cP}_m\cP_kf)_v\\
    +&\sum_{m\geq N_0-1,|m-k|\leq 2N_0}(\cP_kQ_m(\cU_{m-N_0}g,\tilde{\cP}_mh)-Q_m(\cU_{m-N_0}g,\cP_k\tilde{\cP}_mh),\big[\tilde{\cP}_m,\cF_j^2\big]\cP_kf)_v\\
    =:~&\Delta_{1,1}+\Delta_{1,2},
    \end{aligned}
\end{equation}
where we further decompose it into $\Delta_{1,1}$ and $\Delta_{1,2}$. Similarly, 
\begin{equation*}
\begin{aligned}
    \Delta_2:=
    &\sum_{l\geq m+N_0,|l-k|\leq 2N_0}(\cP_kQ_{m}(\cP_lg,\tilde{\cP}_lh)-Q_{m}(\cP_lg,\cP_k\tilde{\cP}_lh),\cF_j^2\tilde{\cP}_l\cP_kf)_v\\
    +&\sum_{l\geq m+N_0,|l-k|\leq 2N_0}(\cP_kQ_{m}(\cP_lg,\tilde{\cP}_lh-Q_{m}(\cP_lg,\cP_k\tilde{\cP}_lh),\big[\tilde{\cP}_l,\cF^2_j\big]\cP_kf))_v
    \\
    =:~&\Delta_{2,1}+\Delta_{2,2},\\
    \Delta_3:=
    &\sum_{|l-m|\leq N_0,k\leq  m+2N_0}(\cP_kQ_{m}(\cP_lg,\cU_{m+N_0}h)-Q_{m}(\cP_lg,\cP_k\cU_{m+N_0}h),\cF_j^2\cU_{m+N_0}\cP_kf)_v\\
    +&\sum_{|l-m|\leq N_0,k\leq m+2N_0}(\cP_kQ_{m}(\cP_lg,\cU_{m+N_0}h)-Q_{m}(\cP_lg,\cP_k\cU_{m+N_0}h),\big[\cU_{m+N_0},\cF_j^2\big]\cP_kf)_v\\
    =:~&\Delta_{3,1}+\Delta_{3,2}.
\end{aligned}
\end{equation*}

We next continue with the frequency decomposition. Recall from \eqref{2.2} that
\begin{equation*}
   \begin{aligned}
    &(Q_m(g,h),\cF^2_jf)_v=\sum_{|b-j|\leq 2N_0,a\leq b+3N_0}(Q_m(\cF_ag,\cF_bh),\cF^2_jf)_v\\
    +&\sum_{b>j+2N_0,|a-b|\leq N_0}(Q_m(\cF_ag,\cF_bh),\cF^2_jf)_v
    +\sum_{b<j-2N_0,|a-j|\leq 2N_0}(Q_m(\cF_ag,\cF_bh),\cF^2_jf)_v.
\end{aligned} 
\end{equation*}
And we denote
\begin{equation}
    \begin{aligned}
        \Delta^{\text{Freq}}_{m}(g,h,f)
    :=&\sum_{\substack{|b-j|\leq 2N_0,a\leq b+3N_0}}(\cP_kQ_m(\F_ag,\F_bh)-Q_m(\F_ag,\cP_k\F_bh),\cF_j^2\cP_kf)_v\\
    &+\sum_{\substack{b>j+2N_0,|a-b|\leq N_0}}(\cP_kQ_m(\F_ag,\F_bh)-Q_m(\F_ag,\cP_k\F_bh),\cF_j^2\cP_kf)_v\\
    &+\sum_{\substack{b<j-2N_0,|a-j|\leq 2N_0}}(\cP_kQ_m(\F_ag,\F_bh)-Q_m(\F_ag,\cP_k\F_bh),\cF_j^2\cP_kf)_v,\\ 
    \Delta^{\text{Comm}}_{m}(g,h,f)
    :=&(Q_m(g,h),[\cP_k,\cF_j^2]\cP_kf)_v+\sum_{\substack{|b-j|\leq 2N_0\\a\leq b+3N_0}}(Q_m(\F_ag,[\cP_k,\F_b]h),
    \cF_j^2\cP_kf)_v\\&+\sum_{\substack{b>j+2N_0\\|a-b|\leq N_0}}(Q_m(\F_ag,[\cP_k,\F_b]h),\cF_j^2\cP_kf)_v
    +\sum_{\substack{b<j-2N_0\\|a-j|\leq 2N_0}}(Q_m(\F_ag,
    [\cP_k,\F_b]h),\cF_j^2\cP_kf)_v
    \\&+\sum_{\substack{|b-j|\leq 2N_0\\a\leq b+3N_0}}(Q_m(\F_ag,\F_bh),[\cF_j^2,\cP_k]\cP_kf)_v
    +\sum_{\substack{b>j+2N_0\\|a-b|\leq N_0}}(Q_m(\F_ag,\F_bh),[\cF_j^2,\cP_k]\cP_kf)_v\\&+\sum_{\substack{b<j-2N_0\\|a-j|\leq 2N_0}}(Q_m(\F_ag,\F_bh),[\cF_j^2,\cP_k]\cP_kf)_v.\label{Deltacomm}
    \end{aligned}
\end{equation}
For $\Delta_{1,1}$, we can split it as  $\Delta_{1,1}=\Delta_{1,1}^{1}+\Delta_{1,1}^2$ with 
\begin{equation}\label{Delta1,11}
 \begin{aligned}
    \Delta^1_{1,1}
    :=&\sum_{\substack{m\geq N_0-1,|m-k|\leq 2N_0}}\big(\Delta^{\text{Freq}}_{m}(\cU_{m-N_0}g,\tilde{\cP}_mh,\tilde{\cP}_mf)\big)
\end{aligned}   
\end{equation}
which has the same form as the left-hand side of \eqref{commutator2} and will be the main object we need to estimate, as well as the following commutators of \(\F\) and \(\mathcal{P}\) introduced to match the terms above,
\begin{equation}\label{Delta1,12}
 \begin{aligned}
    \Delta^2_{1,1}
    :=&\sum_{\substack{m\geq N_0-1,|m-k|\leq 2N_0}}\Big(\Delta^{\text{Comm}}_{m}(\cU_{m-N_0}g,\tilde{\cP}_mh,\t{\cP}_mf)\Big).
\end{aligned}   
\end{equation}

Similarly, we list here the analogous decompositions \(\Delta_{2,1}=\Delta_{2,1}^1+\Delta_{2,1}^2\) and \(\Delta_{3,1}=\Delta_{3,1}^1+\Delta_{3,1}^2\), i.e.,
\begin{equation}\label{Delta211}
 \begin{aligned}
    \Delta^1_{2,1}
    :=&\sum_{\substack{l\geq m+N_0,|l-k|\leq 2N_0}}\big(\Delta^{\text{Freq}}_{m}(\cP_lg,\t{\cP}_lh,\t{\cP}_lf)\big),
\end{aligned}   
\end{equation} 
\begin{equation*}
 \begin{aligned}
    \Delta^2_{2,1}
    :=&\sum_{\substack{l\geq m+N_0,|l-k|\leq 2N_0}}\Big(\Delta^{\text{Comm}}_{m}(\cP_lg,\t{\cP}_lh,\t{\cP}_lf)\Big),
\end{aligned}   
\end{equation*}
\begin{equation}\label{Delta311}
 \begin{aligned}
    \Delta^1_{3,1}
    :=&\sum_{\substack{|l-m|\leq N_0,m\geq k-2N_0}}\big(\Delta^{\text{Freq}}_{m}(\cP_lg,\cU_{m+N_0}h,\cU_{m+N_0}f)\big),
\end{aligned}   
\end{equation}
and 
\begin{equation*}
 \begin{aligned}
    \Delta^2_{3,1}
    :=&\sum_{\substack{|l-m|\leq N_0, m\geq k-2N_0}}\Big(\Delta^{\text{Comm}}_{m}(\cP_lg,\cU_{m+N_0}h,\cU_{m+N_0}f)\Big).
\end{aligned}   
\end{equation*}

We summarize that
   \begin{equation}\label{Deltadecomposition}
    \begin{aligned}
   (\cP_kQ(g,h)-Q(g,\cP_kh),\F^2_j\cP_kf)_v=  \sum_{i=1}^3\Delta^1_{i,1}+\sum_{i=1}^3(\Delta_{i,1}^2+\Delta_{i,2}).
    \end{aligned}
\end{equation} 

We first deal with the main terms \(\Delta_{i,1}^1, i=1,2,3\). Noting that, ignoring the localization in phase space, they share a unified structure and we give the following lemma.

\begin{lemma}\label{mgeq0}
Recall the localized operator $\mF_b,\mF_j$ and $\mS_j$ in Definition \ref{Fj}. For smooth functions $g,h$ and $f$, $m\geq 0$, it holds that:
    \\
    \noindent $\bullet$ For $|b-j|\leq 2N_0$,
\begin{equation}\label{b=j}
\Big|\sum_{a\leq b+3N_0}\Big(\cP_kQ_{m}(\cF_ag,\cF_bh)-Q_{m}(\cF_ag,\cP_k\cF_bh),\cF_jf\Big)_v\Big|\ls2^{-sk}2^{sj}2^{(\gamma+2s)m}\|\mS_jg\|_{L^1}\|\cF_bh\|_{L^2}\|\cF_jf\|_{L^2}.
    \end{equation}
Meanwhile, for any $\tau\in[0,1]$
    \begin{equation}\label{b=j2}
        \begin{aligned}
&~\Big|\sum_{a\leq b+3N_0}\Big(\cP_kQ_{m}(\cF_ag,\cF_bh)-Q_{m}(\cF_ag,\cP_k\cF_bh),\cF_jf\Big)_v\Big|\\\ls&~\sum_{a\leq b+3N_0}\Big(2^{(2s-\tau)(j-a)}2^{(\gamma+\frac{3}{2})m+\tau(m-k)}\|\cF_ag\|_{L^2}\|\cF_bh\|_{L^2}\|\cF_jf\|_{L^2}\\
    &+C_N 2^{-k}2^{(2s-1)j}2^{-2sa}2^{(\gamma+\frac{3}{2})m}\|\cF_ag\|_{L^2}\|\cF_bh\|_{L^2}(\|\mF _j\mP_kf\|_{L^2}+2^{-jN}2^{-kN}\|f\|_{H^{-N}_{-N}})\\&+C_N 2^{-k}2^{(2s-1)j}2^{-2sa}2^{(\gamma+\frac{3}{2})m}\|\cF_ag\|_{L^2}(\|\mF _j\mP_kh\|_{L^2}+2^{-jN}2^{-kN}\|h\|_{H^{-N}_{-N}})\|\cF_j f\|_{L^2}\Big).
        \end{aligned}
    \end{equation}
 
    \noindent $\bullet$ For $b>j+2N_0$,
    \begin{equation}\label{b>j}
        \begin{aligned}
            \big|\sum_{|a-b|\leq N_0}(Q_{m}(\cF_ag,\cF_bh),\cF_jf)_v\big|\ls C_N2^{-bN}2^{-mN}\|\mF_bg\|_{L^2}\|\cF_bh\|_{L^2}\|\cF_jf\|_{L^2},~\forall N\in\N.
        \end{aligned}
    \end{equation}
  
   \noindent $\bullet$ For $b<j-2N_0$,
    \begin{equation}\label{b<j}
        \begin{aligned}
            \big|\sum_{|a-j|\leq 2N_0}(Q_{m}(\cF_ag,\cF_bh),\cF_jf)_v\big|\ls C_N2^{-jN}2^{-mN}\|\mF_jg\|_{L^2}\|\cF_bh\|_{L^2}\|\cF_jf\|_{L^2},~\forall N\in\N.
        \end{aligned}
    \end{equation}
\end{lemma}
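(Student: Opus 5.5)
The plan is to work directly with the kernel representation of $Q_m$ and to exploit two facts. First, for $m\ge0$ the kernel $\Phi^\gamma_m$ is smooth and localized at $|v-v_*|\sim2^m$. Second, the commutator with the multiplication operator $\cP_k$ produces a difference of cutoffs:
\begin{equation*}
\big(\cP_kQ_m(G,H)-Q_m(G,\cP_kH),F\big)_v=\iint\Phi^\gamma_m b\,G_*H\big[(\vphi(2^{-k}v')-\vphi(2^{-k}v))F'\big]\d\sigma\d v_*\d v,
\end{equation*}
after the pre-post collisional change of variables. Here $G=\cF_ag$, $H=\cF_bh$, $F=\cF_jf$. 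The factor $\vphi(2^{-k}v')-\vphi(2^{-k}v)$ is bounded by $\min\{1,2^{-k}|v'-v|\}$ with $|v'-v|\sim2^m\theta$. This is where the gain $2^{-k}$, or $2^{\tau(m-k)}$ after interpolation, comes from.

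\textbf{The off-diagonal cases.} For \eqref{b>j} and \eqref{b<j} I will argue exactly as for $\M^1$ and $\M^4$ in Lemma \ref{M14}. In Bobylev's formula \eqref{bobylev}, frequency separation forces the kernel frequency $\eta-\xi^-$ (respectively $\eta$) to be $\sim2^{\max(b,j)}$. The rapid decay $\|\tF_p\Phi^\gamma_m\|_{L^2_2}\ls C_N2^{-Nm-Np}$ from \eqref{tFpphigammak} then yields the $C_N2^{-bN-mN}$ (respectively $2^{-jN-mN}$) bounds. The first term can be handled by the Taylor expansion $Z_1+Z_2$ together with the change of variables \eqref{Jacobi}. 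No commutator structure is needed here; both terms are simply small.

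\textbf{The diagonal estimate \eqref{b=j}.} I will Taylor-expand both $F'-F$ and the cutoff difference, splitting the angle at $\theta\sim2^{-j}$.
\begin{itemize}
\item For $\theta\le 2^{-(j+k-m)/2}$-type small angles, the symmetry \eqref{symmetric} cancels the first-order term. The second-order term then costs $2^{2m}\theta^2\cdot 2^{-k}2^{j}$.
\item For large angles, I use $|\vphi(2^{-k}v')-\vphi(2^{-k}v)|\ls 2^{m-k}\theta$ and boundedness of $F$.
\end{itemize}
Optimizing the split point gives $\int\theta^{-1-2s}\min\{2^{m-k+j}\theta^2,2^{m-k}\theta\}\d\theta$, and the balance should produce $2^{-sk}2^{sj}$ times the kernel size $2^{(\gamma+2s)m}$. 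If the powers do not match, I will replace the second bound by the trivial $\min\{1,\cdot\}$ and interpolate.

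The $L^1$ norm of $g$ is taken via $\sum_{a\le b+3N_0}\cF_ag=\mS_{b+3N_0}g$ and Young's inequality in $v_*$, as in \cite[Lemma 2.3]{he2018sharp}. Since $|v-v_*|\sim2^m$ supports the $2^{\gamma m}$ factor after sup over $v_*$, the $\sigma$-integral is bounded as for $\M^3$.

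\textbf{The $L^2$ version \eqref{b=j2}.} Here I keep each $a$ separately and use Bobylev's formula with the cut $b=\t b_1+\t b_2$ at $\theta\sim\delta_02^{a-j}$, as in the $\M^2_{k,p,l}$, $k\ge0$ part of Lemma \ref{M23}.
\begin{itemize}
\item On $\t b_2$, I bound the commutator by $\min\{1,2^{m-k}\theta\}\le(2^{m-k}\theta)^{\tau}$. Integrating $\theta^{-1-2s+\tau}$ over $\theta\gs2^{a-j}$ gives $2^{(2s-\tau)(j-a)}$. Combined with $\|\Phi^\gamma_m\|_{L^2}\ls2^{(\gamma+3/2)m}$ and Cauchy--Schwarz as for $\t I_2,\t I_3$, this yields the first term with factor $2^{\tau(m-k)}$. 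I will first check that the cutoff factor $2^{\tau(m-k)}$ is compatible with the sign choice $m\ge k-2N_0$.
\item On $\t b_1$, the cutoff commutator is written through $[\cP_k,\cF_j]$-type expansions from Lemma \ref{PFcommutator}. This produces $2^{-k}$ times the derivative-localized pieces $\mF_j\mP_kf$ or $\mF_j\mP_kh$ plus $2^{-jN-kN}$ tails. These account for the last two lines, with $2^{(2s-1)j}2^{-2sa}$ coming from $\int_{\theta\ls2^{a-j}}\theta^{1-2s}\d\theta$ against one derivative $2^j$ and $\|\Phi\|$.
\end{itemize}

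\textbf{Main obstacle.} I expect the hardest step to be \eqref{b=j}: obtaining exactly $2^{-sk}2^{sj}$ rather than a cruder $2^{m-k}2^{(2s-1)j}$. This needs a careful simultaneous use of the cancellation \eqref{symmetric} and the cutoff's Lipschitz bound. My first attempt will be the angular split at $\theta_0=2^{-(j+k-m)/2}$, interpolating the two regimes.
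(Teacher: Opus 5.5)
Your treatment of \eqref{b>j} and \eqref{b<j} is the paper's, and so is your treatment of the far-angle part $\theta\gs2^{a-j}$ of \eqref{b=j2}. The gap is the near-angle part $\theta\le\delta_02^{a-j}$ of \eqref{b=j2}, which cannot be estimated directly.

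\begin{itemize}
\item The Lipschitz bound $2^{m-k}\theta$ is not integrable against $\theta^{-1-2s}$, because $2s>1$.
\item Expanding the cutoff at $v'$ and using \eqref{symmetry} (inserting $(\cF_bh)'$) with the full kernel costs $2^{-k}2^{j}2^{(2-2s)(a-j)}\||\cdot|^2\Phi^\gamma_m\|_{L^2}\sim2^{-k}2^{(2s-1)j}2^{(2-2s)a}2^{(\gamma+\f72)m}$. This exceeds the $\tau=1$ term of \eqref{b=j2} by a factor $2^{a+m}$. Your own accounting, $\int_{\theta\ls2^{a-j}}\theta^{1-2s}\d\theta$ against $2^j$, gives $2^{(2-2s)a}$, not $2^{-2sa}$.
\item This piece also does not reduce by itself to $[\cP_k,\cF]$ commutators. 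After writing $\cP_k\cF_jf=\cF_j\cP_kf+[\cP_k,\cF_j]f$ and $\cP_k\cF_bh=\cF_b\cP_kh+[\cP_k,\cF_b]h$, two terms are left: $(Q_m(\cF_ag,\cF_bh),\cF_j\cP_kf)_v$ and $(Q_m(\cF_ag,\cF_b\cP_kh),\cF_jf)_v$, both restricted to $\hat b_1$. Neither has a $2^{-k}$ gain on its own, so they must be kept together.
\end{itemize}

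The missing idea is localization of the kernel. For $\theta\le\delta_02^{a-j}$ and frequency-localized entries, \eqref{bobylev} only sees $\widehat{\Phi^\gamma_m}$ on $|\eta|\sim|\eta-\xi^-|\sim2^a$. So split $\Phi^\gamma_m=\t\cF_a\Phi^\gamma_m+(\Phi^\gamma_m-\t\cF_a\Phi^\gamma_m)$.

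\begin{itemize}
\item The first piece keeps the cutoff difference $\vphi(2^{-k}v')-\vphi(2^{-k}v)$. Taylor expansion, \eqref{symmetry} and \eqref{tFpphigammak} then give the negligible bound $C_N2^{-k}2^{(2s-1)j}2^{(2-2s-N)a}2^{(\gamma+\f32-N)m}$.
\item The second piece annihilates exactly localized triples. Only the four terms carrying $[\cF_j,\cP_k]f$ or $[\cF_b,\cP_k]h$ survive. Through Lemma \ref{PFcommutator} and the $L^2$ bounds of Lemmas \ref{M14} and \ref{M23}, these give the last two lines of \eqref{b=j2}.
\end{itemize}

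For \eqref{b=j}, your angular split does work, but the bookkeeping is off and the obstacle you anticipate is not there.

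\begin{itemize}
\item The near-angle cost is $2^{-k}2^{j}(2^m\theta)^2$. The first-order term is removed by \eqref{symmetry} after expanding the cutoff at $v'$. If you instead expand at $v$ and Taylor-expand $F'-F$, \eqref{symmetric} leaves only a harmless radial term.
\item The far-angle cost is $2^{m-k}\theta$.
\item The two balance at $\theta_0=2^{-(m+j)}$, i.e. $|v'-v|\sim2^{-j}$, not at $2^{-(j+k-m)/2}$.
\end{itemize}

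The result is $2^{(\gamma+2s)m}2^{-k}2^{(2s-1)j}\|\mS_jg\|_{L^1}\|\cF_bh\|_{L^2}\|\cF_jf\|_{L^2}$, exactly as in Lemma \ref{m=-1}. This is stronger than \eqref{b=j}, since $(2s-1)j-k\le sj-sk+2(1-s)$ for $j,k\ge-1$. The paper instead cuts at $|v'-v|\sim2^{-(j-k)/2}$ and uses only $|\vphi(2^{-k}v')-\vphi(2^{-k}v)|\le2$ on the far region; that is what produces $2^{-sk}2^{sj}$.
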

\begin{proof}
    Noticing that \eqref{b>j} and \eqref{b<j} are straight results of Lemma \ref{M14} since they have the same structure as \(\M^4\) and \(\M^1\) in \eqref{eq m1234}, respectively. Thus we only need to give the proof of \eqref{b=j} and \eqref{b=j2}.

    \medskip
    
    First we give the proof of \eqref{b=j}. By definition, we have
    \begin{equation*}
      \begin{aligned}
        &\Big|\sum_{a\leq b+3N_0}(\cP_k Q_{m}(\cF_ag,\cF_bh)-Q_{m}(\cF_ag,\cP_k\cF_bh),\cF_jf)_v\Big|\\
        \leq&~\Big|\iint_{\R^6\times\SS^2}b(\cos\theta)\Phi_{m}^{\gamma}(|v-v_*|)\mS_jg_*\cF_bh(\cF_jf)'(\varphi(2^{-k}v')-\varphi(2^{-k}v))\d v\d v_*\d\sigma\Big|\\
        \leq&~\Big|\iint_{\R^6\times\SS^2}b(\cos\theta)\psi(2^{\frac{1}{2}(j-k)}(v'-v))\Phi_{m}^{\gamma}(|v-v_*|)\mS_jg_*\cF_bh(\cF_jf)'(\varphi(2^{-k}v')-\varphi(2^{-k}v))\d v\d v_*\d\sigma\Big|\\
        +&\Big|\iint_{\R^6\times\SS^2}b(\cos\theta)\big(1-\psi(2^{\frac{1}{2}(j-k)}(v'-v))\big)\Phi_{m}^{\gamma}(|v-v_*|)\mS_jg_*\cF_bh(\cF_jf)'(\varphi(2^{-k}v')-\varphi(2^{-k}v))\d v\d v_*\d\sigma\Big|\\
        =:&~A_1+A_2.
    \end{aligned}  
    \end{equation*}
   
    For $A_1$, let $\kappa(v)=v'+\kappa(v-v')$, by Taylor expansion
    \begin{equation}\label{Taylorv'}
        \vphi(2^{-k}v')-\vphi(2^{-k}v)=2^{-k}\nabla\vphi(2^{-k}v')\cdot(v'-v)+2^{-2k}\int_0^1(1-\kappa)(\nabla^2\vphi(2^{-k}\kappa(v))):(v'-v)\otimes(v'-v)d\kappa,
    \end{equation}
 we have
    \begin{equation*}
      \begin{aligned}
        A_1\ls
        &~\Big|\iint_{\R^6\times\SS^2}b(\cos\theta)\psi(2^{\frac{1}{2}(j-k)}(v'-v))\Phi_{m}^{\gamma}(|v-v_*|)\mS_jg_*\cF_bh(\cF_jf)'2^{-k}\nabla\varphi(2^{-k}v')\cdot(v'-v)\d v\d v_*\d\sigma\Big|\\
        &+\Big|\int_0^1\iint_{\R^6\times\SS^2}b(\cos\theta)\psi(2^{\frac{1}{2}(j-k)}(v'-v))\Phi_{m}^{\gamma}(|v-v_*|)\mS_jg_*\cF_bh(\cF_jf)'\\
        &\times~2^{-2k}(1-\kappa)\nabla^2\varphi(2^{-k}\kappa(v)):(v'-v)\otimes(v'-v)\d v\d v_*\d\sigma\d \kappa\Big|
        =:A_{1,1}+A_{1,2}.
          \end{aligned}
    \end{equation*}
   For $A_{1,1}$, using the fact (see (2.7) in \cite{he2018sharp}) that
\begin{equation}\label{symmetry}
    \begin{aligned}
        \int_{\sigma\in \SS^2,v\in \R^3}\Gamma(|v-v_*|)b(\frac{v-v_*}{|v-v_*|}\cdot\sigma)\omega(|v'-v|)(v'-v)\rho(v')\d\sigma\d v=0
    \end{aligned}
\end{equation}
    for any smooth functions $\Gamma, \omega$ and $\rho$, we have 
    \begin{equation*}
      \begin{aligned}
        |A_{1,1}|\leq&~2^{-k}\Big|\iint_{\R^6\times\SS^2}b(\cos\theta)\psi(2^{\frac{1}{2}(j-k)}(v'-v))\Phi_{m}^{\gamma}(|v-v_*|)\mS_jg_*(\cF_bh-(\cF_bh)')(\cF_jf)'\\
        &\times~\nabla\varphi(2^{-k}v')\cdot(v'-v)\d v\d v_*\d\sigma\Big|.
    \end{aligned}  
    \end{equation*}
Using Taylor expansion
$
    \cF_bh-(\cF_bh)'=\int_0^1\nabla\F_bh(\kappa(v))d\kappa\cdot (v-v'),
$
the fact $|v-v'|=|v-v_*|\sin \f \theta 2$ and Cauchy-Schwarz inequality, we can get 
    \begin{equation*}
      \begin{aligned}
        |A_{1,1}|
        \ls&~2^{-k}\Big|\int_0^1\iint_{\R^6\times\SS^2}b(\cos\theta)\sin^2\f\theta 2\psi(2^{\frac{1}{2}(j-k)}(v'-v))\Phi_{m}^{\gamma}(|v-v_*|)|\mS_jg_*|\\
        &\times~|\nabla\cF_bh(\kappa(v))(\cF_jf)'|\nabla\varphi(2^{-k}v')||v-v_*|^2\d v\d v_*\d\sigma\d \kappa\Big|\\
        \ls&~2^{-k}\Big(\int_0^1\iint_{\R^6\times\SS^2}b(\cos\theta)\sin^2\f\theta2\psi(2^{\frac{1}{2}(j-k)}(v'-v))\Phi_{m}^{\gamma}(|v-v_*|)|\mS_jg_*|\\
        &\times~|\nabla\cF_bh(\kappa(v))|^2|v-v_*|^2\d v\d v_*\d\sigma\d \kappa\Big)^{\frac{1}{2}}\Big(\int_0^1\iint_{\R^6\times\SS^2}b(\cos\theta)\sin^2\f\theta2\\
        &\times ~\psi(2^{\frac{1}{2}(j-k)}(v'-v))\Phi_{m}^{\gamma}(|v-v_*|)|\mS_jg_*|(\cF_jf)'|^2|v-v_*|^2\d v\d v_*\d\sigma\d \kappa\Big)^{\frac{1}{2}}.
    \end{aligned}  
    \end{equation*}
Using change of variables 
\begin{equation}\label{changeofvariables}
    (v_*,v)\rightarrow(v_*,u_1=v')\quad\mbox{and}\quad (v_*,v)\rightarrow(v_*,u_2=\kappa(v)),
\end{equation}
respectively. Thanks to the fact that $\Big|\f{\partial \kappa(v)}{\partial v}\Big|>\f18$(see \eqref{Jacobi}),
we derive that
    \begin{equation*}
      \begin{aligned}
        |A_{1,1}|
        \ls&~2^{-k}\Big(\int_0^1\iint_{\R^6\times\SS^2}b(\cos\theta)\sin^2\f\theta2\psi\big(2^{\frac{1}{2}(j-k)}|v-v_*|\sin \f \theta 2\big)\Phi_{m}^{\gamma}(|v-v_*|)|\mS_jg_*|\\
        &\times~|\nabla\cF_bh(u_2)|^2|u_2-v_*|^2\d u_2\d v_*\d\sigma\d \kappa\Big)^{\frac{1}{2}}\Big(\int_0^1\iint_{\R^6\times\SS^2}b(\cos\theta)\sin^2\f\theta2\\
        &\times ~\psi\big(2^{\frac{1}{2}(j-k)}|v-v_*|\sin\f\theta 2\big)\Phi_{m}^{\gamma}(|v-v_*|)|\mS_jg_*|\cF_jf(u_1)|^2|u_1-v_*|^2\d u_1\d v_*\d\sigma\d \kappa\Big)^{\frac{1}{2}},
    \end{aligned}  
    \end{equation*}
where we also use the fact $|v-v_*|\sim |u_1-v_*|\sim |u_2-v_*|\sim 2^m$. It is easy to check that
    \begin{equation*}
      \begin{aligned}
        &\int_{\sigma\in\SS^2}b(\cos\theta)\sin^2\f\theta2\psi\big(2^{\frac{1}{2}(j-k)}|v-v_*|\sin\f\theta2 \big)\d\sigma\\
        \ls& \int_{\tilde\theta\ls |v-v_*|^{-1}2^{-\f12(j-k)}}b(\cos\theta)\theta^2\sin\t\theta d\t\theta
        \ls~2^{(s-1)j}2^{(1-s)k}|v-v_*|^{2s-2},
    \end{aligned}  
    \end{equation*}
    where $\t\theta$ is defined by $\cos\t\theta=\f{u_2-v_*}{|u_2-v_*|}\cdot\sigma$ and $\theta/2\leq \t\theta\leq \theta$.  Since $|b-j|\leq 2N_0$, we can derive that
    \begin{equation*}
      \begin{aligned}
        |A_{1,1}|\ls&~2^{(1-s)k}2^{-k}2^{(s-1)j}2^{(\gamma+2s)m}\|\mS_jg\|_{L^1}\||\nabla\cF_bh|\|_{L^2}\|\cF_jf\|_{L^2}\\
        \ls&~2^{-sk}2^{sj}2^{(\gamma+2s)m}\|\mS_jg\|_{L^1}\|\cF_bh\|_{L^2}\|\cF_jf\|_{L^2}.
    \end{aligned}  
    \end{equation*}
    
    Similarly, we can get
    \begin{equation*}
     \begin{aligned}
|A_{1,2}|\ls&~2^{-2k}\Big(\int_0^1\iint_{\R^6\times\SS^2}b(\cos\theta)\sin^2\f\theta2\psi(2^{\frac{1}{2}(j-k)}(v'-v))\Phi_{m}^{\gamma}(|v-v_*|)|\mS_jg_*|\\
        &\times~|\cF_bh(\kappa(v))|^2|v-v_*|^2\d v\d v_*\d\sigma\d \kappa\Big)^{\frac{1}{2}}\Big(\int_0^1\iint_{\R^6\times\SS^2}b(\cos\theta)\sin^2\f\theta2\\
        &\times ~\psi(2^{\frac{1}{2}(j-k)}(v'-v))\Phi_{m}^{\gamma}(|v-v_*|)|\mS_jg_*|(\cF_jf)'|^2|v-v_*|^2\d v\d v_*\d\sigma\d \kappa\Big)^{\frac{1}{2}}\\
        \ls&~2^{-(1+s)k}2^{(s-1)j}2^{(\gamma+2s)m}\|\mS_jg\|_{L^1}\|\cF_bh\|_{L^2}\|\cF_jf\|_{L^2}.
    \end{aligned}   
    \end{equation*}
    Then we conclude that
    \begin{equation}\label{A1}
        \begin{aligned}
            |A_1|\ls 2^{-sk}2^{sj}2^{(\gamma+2s)m}\|\mS_jg\|_{L^1}\|\cF_bh\|_{L^2}\|\cF_jf\|_{L^2}.
        \end{aligned}
    \end{equation}

 For  $A_2$, we directly have
\begin{equation*}
     \begin{aligned}
        |A_2|\ls&~\Big|\iint_{\R^6\times\SS^2}b(\cos\theta)\big(1-\psi\big(2^{\frac{1}{2}(j-k)}|v-v_*|\sin\f \theta 2\big)\big)\Phi_{m}^{\gamma}(|v-v_*|)\mS_jg_*\cF_bh(\cF_jf)'\d v\d v_*\d\sigma\Big|.
    \end{aligned}
\end{equation*}
Observe that
    \begin{equation*}
         \begin{aligned}
        \int_{\sigma\in\SS^2}b(\cos\theta)\big(1-\psi\big(2^{\frac{1}{2}(j-k)}|v-v_*|\sin\f\theta 2\big )\big)\d\sigma
        \ls2^{-sk}2^{sj}|v-v_*|^{2s},
    \end{aligned}
    \end{equation*}
then by Cauchy-Schwarz inequality and change of variables as above, we have
    \begin{equation}\label{A2}
      \begin{aligned}
        |A_2|\ls2^{-sk}2^{sj}2^{(\gamma+2s)m}\|\mS_jg\|_{L^1}\|\cF_bh\|_{L^2}\|\cF_jf\|_{L^2}.
    \end{aligned}  
    \end{equation}
    Combining estimates \eqref{A1} and \eqref{A2},  we obtain \eqref{b=j}.

\medskip
    
Next we give the proof of \eqref{b=j2}. Using the decomposition that $b=\hat b_1+\hat b_2=b_{\theta\leq \delta_02^{a-j}}+b_{\theta>\delta_02^{a-j}}$, where $\delta_0$ is sufficiently small defined in the proof of \eqref{p=j}, we have 
    \begin{equation*}
        \begin{aligned}
            &(\cP_k Q_{m}(\cF_ag,\cF_bh)-Q_{m}(\cF_ag,\cP_k\cF_bh),\cF_jf)_v\\
            =&\iint_{\R^6\times\SS^2}\hat b_1(\cos\theta)\Phi_{m}^{\gamma}(|v-v_*|)\cF_ag_*\cF_bh(\cF_jf)'(\varphi(2^{-k}v')-\varphi(2^{-k}v))\d v\d v_*\d\sigma\\
            &+\iint_{\R^6\times\SS^2}\hat b_2(\cos\theta)\Phi_{m}^{\gamma}(|v-v_*|)\cF_ag_*\cF_bh(\cF_jf)'(\varphi(2^{-k}v')-\varphi(2^{-k}v))\d v\d v_*\d\sigma
            =:\tilde{A}_1+\tilde{A}_2.
        \end{aligned}
    \end{equation*}
    \textbf{Estimate of $\tilde{A}_1$:} we have the following decomposition,
    $$
    \begin{aligned}
        |\tilde{A}_1|\leq&~\Big|\iint_{\R^6\times\SS^2}\hat b_1(\cos\theta)\tilde{\cF}_a\Phi_{m}^{\gamma}(|v-v_*|)\cF_ag_*\cF_bh(\cF_jf)'(\varphi(2^{-k}v')-\varphi(2^{-k}v))\d v\d v_*\d\sigma\Big|\\
        +&~\Big|\iint_{\R^6\times\SS^2}\hat b_1(\cos\theta)\Phi_{m}^{\gamma}(|v-v_*|)\cF_ag_*\cF_bh\left((\left[\cF_j,\cP_k\right]f)'-\left[\cF_j,\cP_k\right]f\right)\d v\d v_*\d\sigma\Big|\\
        +&~\Big|\iint_{\R^6\times\SS^2}\hat b_1(\cos\theta)\Phi_{m}^{\gamma}(|v-v_*|)\cF_ag_*(\left[\cF_b,\cP_k\right]h)\left((\cF_jf)'-\cF_jf\right)\d v\d v_*\d\sigma\Big|\\
        +&~\Big|\iint_{\R^6\times\SS^2}\hat b_1(\cos\theta)\tilde{\cF}_a\Phi_{m}^{\gamma}(|v-v_*|)\cF_ag_*\cF_bh\left((\left[\cF_j,\cP_k\right]f)'-\left[\cF_j,\cP_k\right]f\right)\d v\d v_*\d\sigma\Big|\\
        +&~\Big|\iint_{\R^6\times\SS^2}\hat b_1(\cos\theta)\tilde{\cF}_a\Phi_{m}^{\gamma}(|v-v_*|)\cF_ag_*(\left[\cF_b,\cP_k\right]h)\left((\cF_jf)'-\cF_jf\right)\d v\d v_*\d\sigma\Big|\\
        =:&~\tilde{A}_{1,1}+\tilde{A}_{1,2}+\tilde{A}_{1,3}+\t A_{1,4}+\tilde{A}_{1,5}.
    \end{aligned}
    $$
    Note that we use $\theta\leq \delta_02^{a-j}$ to control the frequency of $\Phi_m^{\gamma}$ in
    $\tilde{A}_{1,1}$, and $\tilde{A}_{1,2}-\tilde{A}_{1,5}$ are commutator terms.
    
    \noindent\textbf{Estimate of $\tilde{A}_{1,1}$:} Let $\kappa(v)=v'+\kappa(v-v')$, by Taylor expansion
    \begin{equation}\label{Taylorv''}
        \vphi(2^{-k}v')-\vphi(2^{-k}v)=2^{-k}\nabla\vphi(2^{-k}v')\cdot(v'-v)+2^{-2k}\int_0^1(1-\kappa)(\nabla^2\vphi(2^{-k}\kappa(v))):(v'-v)\otimes(v'-v)d\kappa,
    \end{equation}
    we have
    \begin{equation*}
      \begin{aligned}
        \tilde{A}_{1,1}\ls
        &~\Big|\iint_{\R^6\times\SS^2}\hat b_1(\cos\theta)\tilde{\cF}_a\Phi_{m}^{\gamma}(|v-v_*|)\cF_ag_*\cF_bh(\cF_jf)'2^{-k}\nabla\varphi(2^{-k}v')\\
        &\cdot(v'-v)\d v\d v_*\d\sigma\Big|+\Big|\int_0^1\iint_{\R^6\times\SS^2}\hat b_1(\cos\theta)\tilde{\cF}_a\Phi_{m}^{\gamma}(|v-v_*|)\cF_ag_*\cF_bh(\cF_jf)'\\
        &\times~2^{-2k}(1-\kappa)\nabla^2\varphi(2^{-k}\kappa(v)):(v'-v)\otimes(v'-v)\d v\d v_*\d\sigma\d \kappa\Big|
        =:\tilde{A}_{1,1}^{(1)}+\tilde{A}_{1,1}^{(2)}.
          \end{aligned}
    \end{equation*}
    For $\tilde{A}_{1,1}^{(1)}$, using the fact \eqref{symmetry}, we have 
\begin{equation*}
    \begin{aligned}
       \tilde{A}_{1,1}^{(1)}\ls&~\Big|\iint_{\R^6\times\SS^2}\hat b_1(\cos\theta)\tilde{\cF}_a\Phi_{m}^{\gamma}(|v-v_*|)\cF_ag_*(\cF_bh-(\cF_bh)')(\cF_jf)'2^{-k}\nabla\varphi(2^{-k}v')(v'-v)\d v\d v_*\d\sigma\Big|\\
        \leq&~\Big|\int_0^1\iint_{\R^6\times\SS^2}\hat b_1(\cos\theta)\tilde{\cF}_a\Phi_{m}^{\gamma}(|v-v_*|)|\cF_ag_*||\nabla\cF_bh(\kappa(v))||v'-v||(\cF_jf)'|2^{-k}|\nabla\varphi(2^{-k}v')\\
        &\cdot(v'-v)|\d v\d v_*\d\sigma\d\kappa\Big|\\
   \end{aligned}
\end{equation*}
By Cauchy-Schwarz inequality, using the same change of variables \eqref{changeofvariables} and the fact \eqref{tFpphigammak}, \eqref{Jacobi},
we derive that
\begin{equation*}
    \begin{aligned}
    \tilde{A}_{1,1}^{(1)}\ls&~2^{(2s-2)(j-a)}2^{-k}\||\cdot|^2\tilde{\cF}_a\Phi_{m}^{\gamma}\|_{L^2}\|\cF_ag_*\|_{L^2}\|\nabla\cF_bh\|_{L^2}\|\cF_jf\|_{L^2}\\
    \ls&~C_N2^{(2s-1)j}2^{(2-2s-N)a}2^{(\gamma+\frac{3}{2}-N)m}2^{-k}\|\cF_ag_*\|_{L^2}\|\cF_bh\|_{L^2}\|\cF_jf\|_{L^2}.\\
\end{aligned}
\end{equation*}
As for $\tilde{A}_{1,1}^{(2)}$, we use similar change of variables with $\tilde{A}_{1,1}^{(1)}$ and Cauchy-Shwarz inequality,
\begin{equation*}
    \begin{aligned}
    \tilde{A}_{1,1}^{(2)}
    \ls&~C_N2^{(2s-2)j}2^{(2-2s-N)a}2^{(\gamma+\frac{3}{2}-N)m}2^{-2k}\|\cF_ag\|_{L^2}\|\cF_bh\|_{L^2}\|\cF_jf\|_{L^2}.\\
\end{aligned}
\end{equation*}
Combine $\tilde{A}_{1,1}^{(1)}$ and $\tilde{A}_{1,1}^{(2)}$, we have
\begin{equation*}
    \begin{aligned}
    \tilde{A}_{1,1}\ls C_N2^{(2s-1)j}2^{(2-2s-N)a}2^{(\gamma+\frac{3}{2}-N)m}2^{-k}\|\cF_ag\|_{L^2}\|\cF_bh\|_{L^2}\|\cF_jf\|_{L^2}.\\
\end{aligned}
\end{equation*}
Now we estimate $\tilde{A}_{1,2}$ and $\tilde{A}_{1,3}$, using Lemma \ref{M14}, Lemma \ref{M23} and Lemma \ref{PFcommutator}, we have
\begin{equation*}
    \begin{aligned}
    \tilde{A}_{1,2}+\tilde{A}_{1,3}\ls \&C_N 2^{-k}2^{(2s-1)j}2^{-2sa}2^{(\gamma+\frac{3}{2})m}\|\cF_ag\|_{L^2}\|\cF_bh\|_{L^2}(\|\mF _j\mP_kf\|_{L^2}+2^{-jN}2^{-kN}\|f\|_{H^{-N}_{-N}})\\
    +&C_N2^{-k}2^{(2s-1)j}2^{-2sa}2^{(\gamma+\frac{3}{2})m}\|\cF_ag\|_{L^2}(\|\mF _j\mP_kh\|_{L^2}+2^{-jN}2^{-kN}\|h\|_{H^{-N}_{-N}})\|\cF_jf\|_{L^2}.
\end{aligned}
\end{equation*}
Note that $\tilde{A}_{1,4}$ and $\tilde{A}_{1,5}$ has the same structure with $\tilde{A}_{1,2}$ and $\tilde{A}_{1,3}$, we can replace $\||\cdot|^2\Phi^{\gamma}_{m}\|_{L^2}$ by $\||\cdot|^2\cF_a\Phi_{m}^{\gamma}\|_{L^2}$ in the proof of \eqref{ineq k=-1 M3}, and use \eqref{tFpphigammak} so that \\
\begin{equation*}
    \begin{aligned}
    \tilde{A}_{1,4}+\tilde{A}_{1,5}&\ls2^{(2s-1)j}2^{-2sa}\||\cdot|^2\cF_a\Phi_{m}^{\gamma}\|_{L^2}\|\cF_ag\|_{L^2}\\&\times(\|\cF_bh\|_{L^2}\|[\cF_j,\cP_k]f\|_{L^2}+\|[\cF_b,\cP_k]h\|_{L^2}\|\cF_j f\|_{L^2})\\
    &\ls C_N 2^{-k}2^{(2s-1)j}2^{-2sa}2^{(\gamma+\frac{3}{2})m}\|\cF_ag\|_{L^2}\|\cF_bh\|_{L^2}(\|\mF _j\mP_kf\|_{L^2}+2^{-jN}2^{-kN}\|f\|_{H^{-N}_{-N}})\\
    &+C_N2^{-k}2^{(2s-1)j}2^{-2sa}2^{(\gamma+\frac{3}{2})m}\|\cF_ag\|_{L^2}(\|\mF _j\mP_kh\|_{L^2}+2^{-jN}2^{-kN}\|h\|_{H^{-N}_{-N}})\|\cF_jf\|_{L^2}.
\end{aligned}
\end{equation*}
Combine the estimates of $\tilde{A}_{1,1}$ to $\tilde{A}_{1,5}$, we have
\begin{equation}\label{tildeA1}
\begin{aligned}
    \tilde{A}_{1}\ls &~C_N 2^{-k}2^{(2s-1)j}2^{-2sa}2^{(\gamma+\frac{3}{2})m}\|\cF_ag\|_{L^2}\|\cF_bh\|_{L^2}(\|\mF _j\mP_kf\|_{L^2}+2^{-jN}2^{-kN}\|f\|_{H^{-N}_{-N}})\\+&C_N 2^{-k}2^{(2s-1)j}2^{-2sa}2^{(\gamma+\frac{3}{2})m}\|\cF_ag\|_{L^2}(\|\mF _j\mP_kh\|_{L^2}+2^{-jN}2^{-kN}\|h\|_{H^{-N}_{-N}})\|\cF_j f\|_{L^2}.
\end{aligned}    
\end{equation}
\textbf{Estimate of $\tilde{A}_2$:} Recall that
\begin{equation*}
    \begin{aligned}
        \tilde{A}_2=\iint_{\R^6\times\SS^2}\hat b_2(\cos\theta)\Phi_{m}^{\gamma}(|v-v_*|)\cF_ag_*\cF_bh(\cF_jf)'(\varphi(2^{-k}v')-\varphi(2^{-k}v))\d v\d v_*\d\sigma.
    \end{aligned}
\end{equation*}
    Note that $|\varphi(2^{-k}v')-\varphi(2^{-k}v)|\ls 2^{-k}\theta|v-v_*|$ and following the same steps during the proof of \eqref{ineq kgeq0 M2}, we have
\begin{equation*}
    \begin{aligned}
        |\tilde{A}_2|\ls2^{(2s-1)(j-a)-k}\|\Phi_{m}^{\gamma+1}\|_{L^2}\|\cF_ag\|_{L^2}\|\cF_bh\|_{L^2}\|\cF_jf\|_{L^2}
        \ls2^{(2s-1)(j-a)+(\gamma+\frac{5}{2})m-k}\|\cF_ag\|_{L^2}\|\cF_bh\|_{L^2}\|\cF_jf\|_{L^2}.
    \end{aligned}
\end{equation*}
    If using $|\varphi(2^{-k}v')-\varphi(2^{-k}v)|\ls 1$, we can get another bound
\begin{equation*}
    \begin{aligned}
        |\tilde{A}_2|\ls 2^{2s(j-a)}\|\Phi_{m}^{\gamma}\|_{L^2}\|\cF_ag\|_{L^2}\|\cF_bh\|_{L^2}\|\cF_jf\|_{L^2}
        \ls2^{2s(j-a)}2^{(\gamma+\frac{3}{2})m}\|\cF_ag\|_{L^2}\|\cF_bh\|_{L^2}\|\cF_jf\|_{L^2}.
    \end{aligned}
\end{equation*}
Therefore, by interpolation, we obtain
    \begin{equation}\label{tildeA2}
    \begin{aligned}
        |\tilde{A}_2|\ls2^{(2s-\tau)(j-a)}2^{(\gamma+\frac{3}{2})m+\tau(m-k)}\|\cF_ag\|_{L^2}\|\cF_bh\|_{L^2}\|\cF_jf\|_{L^2},\quad \forall \tau\in[0,1].
    \end{aligned}    
    \end{equation}
    Combining \eqref{tildeA1} and \eqref{tildeA2}, we obtain \eqref{b=j2}.

    We complete the proof of this lemma.
\end{proof}

It remains to treat the case \(m=-1\), for which we have
\begin{lemma}\label{m=-1}
   Recall $\mF_j,\mF_b$ and $\mS_j$ defined in Definition \ref{Fj} and the constant $\a>\f12$. For any smooth functions $g,h$ and $f$, it holds that
    
    \noindent $\bullet$  For $|b-j|\leq 2N_0$,
    \begin{equation}\label{b=j,-1}
        \big|\sum_{a\leq b+3N_0}(\cP_kQ_{-1}(\cF_ag,\cF_bh)-Q_{-1}(\cF_ag,\cP_k\cF_bh),\cF_jf)_v\big|\ls\\
        2^{-k}2^{(2s-1)j}\|\mS_jg\|_{L^2}\|\cF_bh\|_{L^2}\|\cF_jf\|_{L^2}.
    \end{equation}
   
     \noindent $\bullet$ For $b>j+2N_0$,
    \begin{equation}\label{b>j,-1}
        \begin{aligned}
            &\big|\sum_{|a-b|\leq N_0}(\cP_k Q_{-1}(\cF_ag,\cF_bh)-Q_{-1}(\cF_ag,\cP_k\cF_bh),\cF_jf)_v\big|
            \ls C_N2^{-k}(2^{-(\gamma+\f52)b}+2^{-\zeta b}b^\a2^{(2s-1)j})\|\mF_bg\|_{L^2}\\
            &\times(\|\mF_bh\|_{L^2}+2^{-bN-kN}\|h\|_{H^{-N}_{-N}})(\|\mF_j f\|_{L^2}+2^{-jN-kN}\|f\|_{H^{-N}_{-N}}),\quad\forall N\in\N.
        \end{aligned}
    \end{equation}

     \noindent $\bullet$ For $b<j-2N_0$,
    \begin{equation}\label{b<j,-1}
        \begin{aligned}
              &\big|\sum_{|a-j|\leq 2N_0}(\cP_k Q_{-1}(\cF_ag,\cF_bh)-Q_{-1}(\cF_ag,\cP_k\cF_bh),\cF_jf)_v\big|
            \ls C_N2^{-k}(2^{-(\gamma+\f52)j}+2^{\f12 b}b^\a2^{-(\gamma+3)j})\|\mF_jg\|_{L^2}\\
            &\times(\|\mF_bh\|_{L^2}+2^{-bN-kN}\|h\|_{H^{-N}_{-N}})(\|\mF_j f\|_{L^2}+2^{-jN-kN}\|f\|_{H^{-N}_{-N}}),\quad\forall N\in\N.
        \end{aligned}
    \end{equation}
\end{lemma}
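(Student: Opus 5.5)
The plan follows the proof of Lemma \ref{mgeq0}, now with the singular kernel $\Phi^\gamma_{-1}$ in place of $\Phi^\gamma_m$. The commutator only sees the difference $\vphi(2^{-k}v')-\vphi(2^{-k}v)$, and this difference carries a gain of $2^{-k}|v'-v|$. Since $|v-v_*|\ls 1$ on the support of $\Phi^\gamma_{-1}$, we have $|v'-v|\ls\theta$, so that gain is exactly the factor $2^{-k}$ appearing in all three bounds. Throughout, we write the commutator as
\[
\iint b\,\Phi^\gamma_{-1}(|v-v_*|)(\F_ag)_*\,\F_bh\,(\F_jf)'\big(\vphi(2^{-k}v')-\vphi(2^{-k}v)\big).
\]

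For \eqref{b=j,-1} ($|b-j|\le 2N_0$), we sum over $a\le b+3N_0$ to get $\mS_jg$ and split $b=\hat b_1+\hat b_2$ at $\theta\sim 2^{-j}$.

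In the grazing part we Taylor expand $\vphi(2^{-k}\cdot)$ to second order. The first-order term is handled with the cancellation \eqref{symmetry}, which moves a difference onto $\F_bh$ and produces a factor $|v'-v|\,|\nabla\F_bh|\ls\theta\, 2^{j}\|\F_bh\|$. The second-order term carries $2^{-2k}\theta^2$. Both angular integrals give $\int_{\theta\ls 2^{-j}}\theta^{1-2s}\,d\theta\sim 2^{(2s-2)j}$, for a total of $2^{-k}2^{(2s-1)j}$.

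In the non-grazing part we use $|\vphi(2^{-k}v')-\vphi(2^{-k}v)|\ls 2^{-k}\theta$, and $\int_{\theta\gs2^{-j}}\theta^{-2s}\,d\theta\ls 2^{(2s-1)j}$ (recall $s>1/2$).

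It remains to bound the $v_*$-integral $\int|\Phi^\gamma_{-1}|\,|\mS_jg_*|$. Young's inequality is not available here because $\Phi^\gamma_{-1}\notin L^2$ when $\gamma\le-3/2$. Instead we run a Cauchy--Schwarz argument with the changes of variables \eqref{changeofvariables}. With the $\theta$-cutoff and $|\Phi^\gamma_{-1}|\ls|v-v_*|^\gamma$, we distribute the powers of $|v-v_*|$ from $|v'-v|=|v-v_*|\sin\frac\theta2$ so that each factor is integrable, and we use Bernstein (Lemma \ref{Bern}) on $\mS_jg$ to absorb the remaining local singularity. This yields $\|\mS_jg\|_{L^2}$.

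For \eqref{b>j,-1} and \eqref{b<j,-1} we cannot exploit the frequency of $\F_jf$ directly, so we mimic Lemma \ref{precisebound}. We write
\[
\cP_kQ_{-1}(\F_ag,\F_bh)-Q_{-1}(\F_ag,\cP_k\F_bh)=\big(\F_jQ_{-1}(\F_ag,\F_b\cdot)\text{-type terms}\big)+\text{commutators }[\cP_k,\F_j],\ [\cP_k,\F_b],
\]
that is, we insert $\cP_k$ onto $\F_jf$ and onto $\F_bh$ and control the commutators with Lemma \ref{PFcommutator}. Each commutator gains $2^{-k}2^{-j}$ (resp. $2^{-k}2^{-b}$) and leaves $\mF_j\mP_kf$ plus a $2^{-jN-kN}\|f\|_{H^{-N}_{-N}}$ tail.

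The resulting trilinear pieces are estimated as follows. For $b>j+2N_0$ we use Lemma \ref{fD-1}, \eqref{p>j}: the gain $2^{-j}$ converts $2^{-(\gamma+4)b}2^{\frac52 j}$ into $2^{-(\gamma+\frac52)b}$ after using $j<b$. The remaining pieces are bounded by \eqref{Q_-1}, which produces $2^{-\zeta b}b^\a 2^{(2s-1)j}$ exactly as in $G_2$. For $b<j-2N_0$ we use \eqref{p<j} together with \eqref{Q_-1a}, whose second branch ($\mu=\frac12$ applied to $h$) produces $2^{\frac12 b}b^\a 2^{-(\gamma+3)j}$, as in $X_2$.

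For the main term with the physical difference we again Taylor expand $\vphi(2^{-k}\cdot)$ to obtain the factor $2^{-k}$. We then repeat the Bobylev-side estimates $T_1$/$T_2$ (resp. $P_1$/$P_2$), now with one extra power of $|\xi^-|$ or $\theta$ traded for $2^{-k}$.

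The main obstacle is \eqref{b=j,-1}: we must obtain the clean $2^{-k}2^{(2s-1)j}\|\mS_jg\|_{L^2}$ without losing a power of $2^a$ from the singularity $|v-v_*|^\gamma$ with $\gamma\le -2$. If the direct Cauchy--Schwarz route fails, the fallback is to perform the low-frequency $g$ analysis on the Fourier side using \eqref{bobylev}, as in $R_1$ and $R_2$ of Lemma \ref{fD-1}. Since $\widehat{\Phi^\gamma_{-1}}$ decays like $\<\eta\>^{-(\gamma+3)}$, summing over $a\le j$ is then controlled in $L^2$.
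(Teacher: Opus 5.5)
There is a genuine gap in \eqref{b=j,-1}, in the non-grazing part. It comes from cutting the angular kernel at $\theta\sim 2^{-j}$ independently of $|v-v_*|$.

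On $\{\theta\gs 2^{-j}\}$ the only power of $|v-v_*|$ you gain comes from $|\vphi(2^{-k}v')-\vphi(2^{-k}v)|\ls 2^{-k}|v-v_*|\theta$. After the angular integral the effective kernel is therefore $K(u)=|u|^{\gamma+1}\mathbf{1}_{|u|\ls 1}$. The positive trilinear form $\iint K(v-v_*)|\mS_jg_*|\,|\F_bh|\,|(\F_jf)'|$ is controlled by the three $L^2$ norms, uniformly in $j$, only if $K\in L^2$, i.e. $\gamma>-\frac52$.

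For $-3<\gamma<-\frac52$, which lies inside the very soft range, no redistribution of the powers of $|v-v_*|$ rescues this. Putting a negative power on $\mS_jg$ and invoking Bernstein or Hardy costs $2^{-(\gamma+\frac52)j}$, up to $\epsilon$. The loss is intrinsic to your majorant. Test it with $L^2$-normalised wave packets of width $2^{-j}$: the region $\{\theta\gs 2^{-j},\ |v-v_*|\ls 2^{-j}\}$ alone contributes $2^{-k}2^{(2s-1)j}2^{-(\gamma+\frac52)j}$. In that region $|v'-v|\ls 2^{-j}$, so the Taylor/cancellation structure must still be used there.

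Your Fourier-side fallback does not repair this. Bounds of $R_1,R_2$ type carry the factor $2^{-(\gamma+2s+\frac12)a}$, which grows in $a$ when $\gamma+2s<-\frac12$. Summing over $a\le j$ therefore does not produce $\|\mS_jg\|_{L^2}$.

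The missing idea, which is what the paper does, is to cut in $|v'-v|$ rather than in $\theta$. Insert $\psi(2^j(v'-v))$ and $1-\psi(2^j(v'-v))$, i.e. cut at $\theta\sim 2^{-j}|v-v_*|^{-1}$. The angular integrals then become $\int_{\SS^2}b(\cos\theta)\sin^2\frac\theta2\,\psi(2^j|v-v_*|\sin\frac\theta2)\d\sigma\ls 2^{(2s-2)j}|v-v_*|^{2s-2}$ and $\int_{\SS^2}b(\cos\theta)\sin\theta\,(1-\psi(2^j|v-v_*|\sin\frac\theta2))\d\sigma\ls 2^{(2s-1)j}|v-v_*|^{2s-1}$. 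Combined with $|v'-v|^2$ (respectively $|v'-v|$), the effective kernel is $|v-v_*|^{\gamma+2s}$ in both regions.

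In Cauchy--Schwarz, put all of $|\Phi^\gamma_{-1}(u)|^2|u|^{4s}$ on the $\F_jf$ factor. Pair $|\mS_jg_*|^2$ with $|\nabla\F_bh|^2$ (respectively $|\F_bh|^2$) with no kernel at all. Since $\int_{|u|\le 1}|u|^{2\gamma+4s}\d u<\infty$ (because $\gamma+2s>-1>-\frac32$), you obtain $2^{-k}2^{(2s-1)j}\|\mS_jg\|_{L^2}\|\F_bh\|_{L^2}\|\F_jf\|_{L^2}$ without any Bernstein inequality.

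For \eqref{b>j,-1} and \eqref{b<j,-1}, your outline matches the paper's $H_1$--$H_5$ split, but the main term needs one step made explicit. Replace $\Phi^\gamma_{-1}$ by $\tF_b\Phi^\gamma_{-1}$ (resp. $\tF_j\Phi^\gamma_{-1}$); this is legitimate only after commuting $\cP_k$ inside the frequency cut-offs, which produces $H_2,H_3$. Then Taylor-expand $\vphi(2^{-k}\cdot)$ in physical space, using $\||\cdot|\tF_b\Phi^\gamma_{-1}\|_{L^2}\ls 2^{-(\gamma+\frac52)b}$ and $\||\cdot|^2\tF_b\Phi^\gamma_{-1}\|_{L^2}\ls 2^{-(\gamma+\frac72)b}$. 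Bobylev's formula does not accommodate the factor $\vphi(2^{-k}v')-\vphi(2^{-k}v)$, and with the unlocalized kernel these Taylor terms cannot produce the decay $2^{-(\gamma+\frac52)b}$.
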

\begin{proof}
We give the proof in several steps.

     \smallskip
     
\textbf{Step 1}: Proof of \eqref{b=j,-1}. By definition, we have
    \begin{equation*}
      \begin{aligned}
        &\sum_{a\leq b+3N_0}(\cP_kQ_{-1}(\cF_ag,\cF_bh)-Q_{-1}(\cF_ag,\cP_k\cF_bh),\cF_jf)_v\\
        =&\iint_{\R^6\times\SS^2}b(\cos\theta)\psi(2^{j}(v'-v))\Phi_{-1}^{\gamma}(|v-v_*|)\mS_jg_*\cF_bh(\cF_jf)'(\varphi(2^{-k}v')-\varphi(2^{-k}v))\d v\d v_*\d\sigma\\
        +&\iint_{\R^6\times\SS^2}b(\cos\theta)(1-\psi(2^{j}(v'-v)))\Phi_{-1}^{\gamma}(|v-v_*|)\mS_jg_*\cF_bh(\cF_jf)'(\varphi(2^{-k}v')-\varphi(2^{-k}v))\d v\d v_*\d\sigma\\
        =:&~G_1+G_2.
    \end{aligned}  
    \end{equation*}

  For $G_1$, by the same argument as the estimate of $A_1$ in Lemma \ref{mgeq0},
    \begin{equation*}
      \begin{aligned}
        G_1=&\iint_{\R^6\times\SS^2}b(\cos\theta)\psi(2^{j}(v'-v))\Phi_{-1}^{\gamma}(|v-v_*|)\mS_jg_*\cF_bh(\cF_jf)'2^{-k}\nabla\varphi(2^{-k}v')\\
        &\cdot(v'-v)\d v\d v_*\d\sigma
        +\int_{0}^{1}\iint_{\R^6\times\SS^2}b(\cos\theta)\psi(2^{j}(v'-v))\Phi_{-1}^{\gamma}(|v-v_*|)\mS_jg_*\cF_bh(\cF_jf)'\\
        &\times~2^{-2k}\nabla^2\varphi(2^{-k}\kappa(v)):(v'-v)\otimes(v'-v)\d v\d v_*\d\sigma\d\kappa
        =:G_{1,1}+G_{1,2}.
    \end{aligned}   
    \end{equation*}
    
    For $G_{1,1}$, similar to the estimate of $A_{1,1}$ in \eqref{b=j}, we have
    \begin{equation*}
            \begin{aligned}
        &|G_{1,1}|\leq2^{-k}\Big|\iint_{\R^6\times\SS^2}b(\cos\theta)\psi(2^{j}(v'-v))\Phi_{-1}^{\gamma}(|v-v_*|)\mS_jg_*(\cF_bh-(\cF_bh)')(\cF_jf)'\\
        &\times\nabla\varphi(2^{-k}v')\cdot(v'-v)\d v\d v_*\d\sigma\Big|
        =\Big|\int_{0}^{1}\iint_{\R^6\times\SS^2}b(\cos\theta)\psi(2^{j}(v'-v))\Phi_{-1}^{\gamma}(|v-v_*|)\mS_jg_*\\
        &\times\nabla\cF_bh(\kappa(v))\cdot(v'-v)(\cF_jf)'2^{-k}\nabla\varphi(2^{-k}v')\cdot(v'-v)\d v\d v_*\d\sigma\d\kappa\Big|\\
        \ls&~2^{-k}\bigg(\iint_{\R^6\times\SS^2}b(\cos\theta)\sin^2\f\theta 2\psi\big(2^{j}|v-v_*|\sin\f\theta 2\big)|\Phi_{-1}^{\gamma}(|u_1-v_*|)|^2|\cF_jf(u_1)|^2|u_1-v_*|^{2+2s}\d u_1\d v_*\d\sigma\bigg)^{\frac{1}{2}}\\
        &\times\bigg(\int_{0}^{1}\iint_{\R^6\times\SS^2}b(\cos\theta)\sin^2\f\theta 2\psi\big(2^{j}|v-v_*|\sin\f\theta 2\big)|\mS_jg_*|^2|\nabla\cF_bh(u_2)|^2|u_2-v_*|^{2-2s}\d u_2\d v_*\d\sigma\d\kappa\bigg)^{\frac{1}{2}}\\
        \ls&~2^{-k}2^{(2s-1)j}\|\mS_jg\|_{L^2}\|\cF_bh\|_{L^2}\|\cF_jf\|_{L^2},
    \end{aligned}
    \end{equation*}
    where we use change of variables $(v_*,v')\rightarrow (v_*,u_1)$, $(v_*,\kappa(v))\rightarrow (v_*,u_2)$, $|\kappa(v)-v_*|\sim|v-v_*|$ and 
    \begin{equation*}
      \begin{aligned}
        &\int_{\SS^2}b(\cos\theta)\sin^2\f\theta 2\psi\big(2^{j}|v-v_*|\sin\f\theta 2\big)\d\sigma\ls 2^{(2s-2)j}|v-v_*|^{2s-2},\\
        &\int_{\R^3}|\Phi_{-1}^{\gamma}(|u|)|^2|u|^{4s}\d u\ls r^{2(\gamma+2s+\frac{3}{2})}\big|_{0}^{1}=1,\quad \mbox{since}\quad \gamma+2s>-1.
    \end{aligned}  
    \end{equation*}
    
    For $G_{1,2}$, the same argument leads to
    \begin{equation*}
      \begin{aligned}
        |G_{1,2}|\ls&~2^{-2k}\bigg(\iint_{\R^6\times\SS^2}b(\cos\theta)\sin^2\f\theta 2\psi(2^{j}(v'-v))|\Phi_{-1}^{\gamma}(|v-v_*|)|^2|(\cF_jf)'|^2|v-v_*|^{2+2s}\d v\d v_*\d\sigma\bigg)^{\frac{1}{2}}\\
        \times&\bigg(\int_{\SS^2}\iint_{\R^6}b(\cos\theta)\sin^2\f\theta 2\psi(2^{j}(v'-v))|\cF_ag_*|^2|\cF_bh|^2|v-v_*|^{2-2s}\d v\d v_*\d\sigma\bigg)^{\frac{1}{2}}\\
        \ls&~2^{-2k}2^{(2s-2)j}\|\mS_jg\|_{L^2}\|\cF_bh\|_{L^2}\|\cF_jf\|_{L^2}.
    \end{aligned}.   
    \end{equation*}
   Then using change of variables $(v,v_*)\mapsto(v',v_*)$ and $|v'-v_*|\sim|v-v_*|$,we conclude that
   \begin{equation}\label{G1new}
       |G_1|\ls 2^{-k}2^{(2s-1)j}\|\mS_jg\|_{L^2}\|\cF_bh\|_{L^2}\|\cF_jf\|_{L^2}.
   \end{equation}
    
    For $G_2$, similar to the estimate of $A_2$ in the proof of \eqref{b=j}, using $|\varphi(2^{-k}v')-\varphi(2^{-k}v)|\ls2^{-k}|v-v'|\ls 2^{-k}|v-v_*|\sin \theta $ and Cauchy-Schwarz inequality, we have 
    \begin{equation*}
        \begin{aligned}
    |G_2|\ls&~2^{-k}\bigg(\iint_{\R^6\times\SS^2}| b(\cos\theta)\sin\theta ||1-\psi(2^{j}(v'-v))||\Phi_{-1}^{\gamma}(|v-v_*|)|^2|(\cF_jf)'|^2|v-v_*|^{1+2s}\d v\d v_*\d\sigma\bigg)^{\frac{1}{2}}\\\times&\bigg(\iint_{\R^6\times\SS^2}|b(\cos\theta)\sin\theta||1-\psi(2^{j}(v'-v))||\mS_jg_*|^2|\cF_bh|^2|v-v_*|^{1-2s}\d v\d v_*\d\sigma\bigg)^{\frac{1}{2}}.
    \end{aligned}
    \end{equation*}
    It is easy to check that
    \begin{equation*}
      \begin{aligned}
        &\int_{\SS^2}| b(\cos\theta)\sin\theta||1-\psi\big(2^{j}|u-v_*|\sin\f\theta2 \big)|\d\sigma\ls2^{(2s-1)j}|u-v_*|^{2s-1},\\
        &\int_{\R^3}|\Phi_{-1}^{\gamma}(|\t u|)|^2|\t u|^{4s}\d \t u\ls r^{2(\gamma+2s+\frac{3}{2})}\big|_{0}^{1}=1.
    \end{aligned}  
    \end{equation*}
    Thus by change of variables $(v,v_*)\mapsto(u=v',v_*)$ and $|v'-v_*|\sim|v-v_*|$, we obtain
    \begin{equation}\label{G2a}
       \begin{aligned}
        |G_2|\ls2^{-k}2^{(2s-1)j}\|\mS_jg\|_{L^2}\|\cF_bh\|_{L^2}\|\cF_jf\|_{L^2}.
    \end{aligned} 
    \end{equation}
 Combining \eqref{G1new} and \eqref{G2a}, we get the desired result \eqref{b=j,-1}.   

    \medskip

     \textbf{Step 2}: Proof of \eqref{b>j,-1}. Observing that it has a structure similar to \(\mathcal{M}^4_{-1,b,j},b>j+2N_0\) (see \eqref{eq m1234}), and in order to obtain the optimal estimate, we make the following decomposition.
    \begin{equation*}
       \begin{aligned}
        &\Big|\sum_{|a-b|\leq N_0}(\cP_k Q_{-1}(\cF_ag,\cF_bh)-Q_{-1}(\cF_ag,\cP_k\cF_bh),\cF_jf)_v\Big|\\
        \leq&~\Big|\iint_{\R^6\times\SS^2}b(\cos\theta)\tilde{\cF}_b\Phi_{-1}^{\gamma}(|v-v_*|)\tF_bg_*\cF_bh(\cF_jf)'(\varphi(2^{-k}v')-\varphi(2^{-k}v))\d v\d v_*\d\sigma\Big|\\
        +&~\Big|\iint_{\R^6\times\SS^2}b(\cos\theta)\tilde{\cF}_b\Phi_{-1}^{\gamma}(|v-v_*|)\tF_bg_*\cF_bh\left((\left[\cF_j,\cP_k\right]f)'-\left[\cF_j,\cP_k\right]f\right)\d v\d v_*\d\sigma\Big|\\
        +&~\Big|\iint_{\R^6\times\SS^2}b(\cos\theta)\tilde{\cF}_b\Phi_{-1}^{\gamma}(|v-v_*|)\tF_bg_*(\left[\cF_b,\cP_k\right]h)\left((\cF_jf)'-\cF_jf\right)\d v\d v_*\d\sigma\Big|\\
        +&~|(Q_{-1}(\tF_bg,\cF_bh),\left[\cP_k,\cF_j\right]f)_v|+|(Q_{-1}(\tF_bg,\left[\cF_b,\cP_k\right]h),\cF_jf)_v|\\
        =:&~H_1+H_2+H_3+H_4+H_5.
    \end{aligned} 
    \end{equation*}
   
    \noindent $\bullet$ Estimate of $H_{1}$.  By Taylor's expansion (compared with \eqref{Taylorv'}, the expansion here is performed at the point \(v\))
     \begin{equation}\label{Taylorv}
        \vphi(2^{-k}v')-\vphi(2^{-k}v)=2^{-k}\nabla\vphi(2^{-k}v)\cdot(v'-v)+2^{-2k}\int_0^1(1-\kappa)(\nabla^2\vphi(2^{-k}\t\kappa(v))):(v'-v)\otimes(v'-v)d\kappa,
    \end{equation}
    where $\t\kappa(v)=v+\kappa(v'-v)$. Then we have
    \begin{equation*}
       \begin{aligned}
        H_1\leq&\Big|\iint_{\R^6\times\SS^2}b(\cos\theta)\tilde{\cF}_b\Phi_{-1}^{\gamma}(|v-v_*|)\tF_bg_*\cF_bh(\cF_jf)'2^{-k}\nabla\varphi(2^{-k}v)\\
        &\cdot(v'-v)\d v\d v_*\d\sigma\Big|
        +\Big|\int_0^1\iint_{\R^6\times\SS^2}b(\cos\theta)\tilde{\cF}_b\Phi_{-1}^{\gamma}(|v-v_*|)\tF_bg_*\cF_bh(\cF_jf)'\\
        &\times(1-\kappa)2^{-2k}\nabla^2\varphi(2^{-k}\t\kappa(v)):(v'-v)\otimes(v'-v)\d v\d v_*\d\sigma\d \kappa\Big|=:H_{1,1}+H_{1,2}.
    \end{aligned} 
    \end{equation*}
    For $H_{1,1}$, we further have
    \begin{equation*}
         \begin{aligned}
        H_{1,1}\ls&~2^{-k}\Big|\iint_{\R^6\times\SS^2}b(\cos\theta)\tilde{\cF}_b\Phi^{\gamma}_{-1}(|v-v_*|)\tF_bg_*\cF_bh\left[(\cF_jf)'-\cF_jf\right]\nabla\varphi(2^{-k}v)\cdot(v'-v)\d v\d v_*\d\sigma\Big|\\
        +&2^{-k}\Big|\iint_{\R^6\times\SS^2}b(\cos\theta)\tilde{\cF}_b\Phi^{\gamma}_{-1}(|v-v_*|)\tF_bg_*\cF_bh\cF_jf\nabla\varphi(2^{-k}v)\cdot(v'-v)\d v\d v_*\d\sigma\Big|
        =:H_{1,1}^{(1)}+H_{1,1}^{(2)}.
    \end{aligned}   
    \end{equation*}
By the techniques developed above(change of variables), the desired bound follows readily.  
    \begin{equation*}
      \begin{aligned}
H_{1,1}^{(1)}\ls&~2^{-k}\Big|\int_0^1\iint_{\R^6\times\SS^2}b(\cos\theta)\tilde{\cF}_b\Phi^{\gamma}_{-1}(|v-v_*|)\tF_bg_*\cF_bh\nabla\cF_jf(\tilde{\kappa}(v))\cdot (v'-v)\\&\times\nabla\varphi(2^{-k}v)\cdot(v'-v)\d v\d v_*\d\sigma\d \kappa\Big|\\
        \ls&~2^{-k}\Big(\iint_{\R^6}\big(|v-v_*|^2\tilde{\cF}_b\Phi^{\gamma}_{-1}(|v-v_*|)\big)^2(\cF_bh)^2\d v\d v_*\Big)^{\frac{1}{2}}\Big(\int_{\R^6}(\tF_bg_*)^2|\nabla\cF_jf(\tilde{\kappa}(v))|^2\d v\d v_*\Big)^{\frac{1}{2}}\\
        \ls&~2^{-k}\||\cdot|^2\t\F_b\Phi_{-1}^{\gamma}\|_{L^2}\|\tF_bg\|_{L^2}\|\cF_bh\|_{L^2}\|\nabla\cF_jf\|_{L^2}
        \ls2^{-k}2^{-(\gamma+\frac{7}{2})b}2^j\|\tF_bg\|_{L^2}\|\cF_bh\|_{L^2}\|\cF_jf\|_{L^2},
    \end{aligned}   
    \end{equation*}
where we use the fact $b>j+2N_0$ and 
\begin{equation*}
    \int_{\R^3} \big(|u|^2\t\F_b\Phi^{\gamma}_{-1}(u)\big)^2du=\int_{\R^3}|\nabla^2\widehat{\t\F_b\Phi^{\gamma}_{-1}}|^2 d\xi\ls \int_{|\xi|\sim 2^b}\<\xi\>^{-2(5+\gamma)}\d\xi\ls 2^{-2(\gamma+\f72)b}.
\end{equation*}
    For $H_{1,1}^{(2)}$, by \eqref{symmetric}, we have
    $$
    \begin{aligned}
        H_{1,1}^{(2)}\ls&~2^{-k}\Big(\iint_{\R^6}\left(|v-v_*|\tilde{\cF}_b\Phi^{\gamma}_{-1}(|v-v_*|)|\right)^2|\cF_bh|^2\d v\d v_*\Big)^{\frac{1}{2}}\Big(\iint_{\R^6}|\tF_bg_*|^2|\cF_jf(v)|^2\d v\d v_*\Big)^{\f12}\\
        \ls&~2^{-k}\||\nabla\varphi_b\widehat{\Phi^{\gamma}_{-1}}|\|_{L^2}\|\cF_bh\|_{L^2}\|\tF_bg\|_{L^2}\|\cF_jf\|_{L^2}
        \ls2^{-k}2^{-(\gamma+\frac{5}{2})b}\|\tF_bg\|_{L^2}\|\cF_bh\|_{L^2}\|\cF_jf\|_{L^2}.
    \end{aligned}
    $$
    Therefore, we obtain 
    \begin{equation}\label{H11}
        H_{1,1}\ls 2^{-k}2^{-(\gamma+\frac{5}{2})b}\|\tF_bg\|_{L^2}\|\cF_bh\|_{L^2}\|\cF_jf\|_{L^2}.
    \end{equation}
    
    We can get the estimate of $H_{1,2}$ using the same method in the estimate of $H_{1,1}^{(1)}$ ,\\so that we get
    \begin{equation}\label{H12}
         \begin{aligned}
        H_{1,2}\ls2^{-2k}2^{-(\gamma+\frac{7}{2})b}\|\tF_bg\|_{L^2}\|\cF_bh\|_{L^2}\|\cF_jf\|_{L^2}.
    \end{aligned}
    \end{equation}
   
    Combine \eqref{H11} and \eqref{H12}, we have
    \begin{equation}\label{H1}
       \begin{aligned}
        H_{1}\ls2^{-k}2^{-(\gamma+\frac{5}{2})b}\|\tF_bg\|_{L^2}\|\cF_bh\|_{L^2}\|\cF_jf\|_{L^2}.
    \end{aligned}  
    \end{equation}

     \noindent $\bullet$ Estimate of $H_{2}$ and $H_3$. Recall that
    \begin{equation*}
        \begin{aligned}
        H_2=&~\Big|\iint_{\R^6\times\SS^2}b(\cos\theta)\tilde{\cF}_b\Phi_{-1}^{\gamma}(|v-v_*|)\tF_bg_*\cF_bh\left((\left[\cF_j,\cP_k\right]f)'-\left[\cF_j,\cP_k\right]f\right)\d v\d v_*\d\sigma\Big|,\\
        H_3=&~\Big|\iint_{\R^6\times\SS^2}b(\cos\theta)\tilde{\cF}_b\Phi_{-1}^{\gamma}(|v-v_*|)\tF_bg_*\left[\cF_b,\cP_k\right]h\left((\cF_jf)'-\cF_jf\right)\d v\d v_*\d\sigma\Big|.
    \end{aligned}
    \end{equation*}
 For $H_{2}$ and $H_3$, we can apply \eqref{Taylorv} and \eqref{symmetric} to functions $[\F_j,\cP_k]f$ and $\F_j f$, then using Cauchy-Schwarz inequality, change of variables \eqref{changeofvariables} and \eqref{pkfj}, we omit the details and give the bound as follows
    \begin{equation}\label{H23}
         \begin{aligned}
        |H_{2}|\ls &~C_N2^{-k}2^{-(\gamma+\f52) b}\|\tF_bg\|_{L^2}\|\cF_bh\|_{L^2}\big(\|\mF_j\mP_kf\|_{L^2}+2^{-jN-kN}\|f\|_{H^{-N}_{-N}}\big),\\
        |H_3|\ls&~C_N2^{-k}2^{-(\gamma+\f72)b}2^{j}\|\tF_bg\|_{L^2}\big(\|\mF_b\mP_k h\|_{L^2}+2^{-Nb}2^{-Nk}\|h\|_{H^{-N}_{-N}}\big)\|\cF_jf\|_{L^2}.
    \end{aligned}
    \end{equation}

    \noindent $\bullet$  Estimate of $H_{4}$ and $H_5$. Recall
    \begin{equation*}
        H_4=|(Q_{-1}(\cF_ag,\cF_bh),\left[\cP_k,\cF_j\right]f)_v|,~
        H_5=|(Q_{-1}(\cF_ag,\left[\cF_b,\cP_k\right]h),\cF_jf)_v|.
    \end{equation*}
    In view of \eqref{Q_-1} or \eqref{ineq k=-1 M4} since $H_4$ and $H_5$ has similar structure with $\M_{-1,p,m}^4$, we have
    \begin{equation}\label{H45}
        \begin{aligned}
        |H_{4}|\ls &~2^{-\zeta b}b^{\a} \|\tF_bg\|_{L^2}\|\cF_bh\|_{L^2}\|\left[\cF_j,\cP_k\right]f\|_{H^{2s}}\\
        \ls&~C_N2^{-\zeta b}b^{\a}2^{-k}2^{(2s-1)j}\|\tF_bg\|_{L^2}\|\cF_bh\|_{L^2}\big(\|\mF_j\mP_k f\|_{L^2}+2^{-Nj}2^{-Nk}\|f\|_{H^{-N}_{-N}}\big),\\
        |H_{5}|\ls &~2^{-\zeta b}2^{2sj}b^\a\|\tF_bg\|_{L^2}\|\left[\cF_b,\cP_k\right]h\|_{L^2}\|\cF_jf\|_{L^2}\\
        \ls &C_N2^{-(\zeta+1)b}2^{-k}2^{2sj}b^\a\|\tF_bg\|_{L^2}\left(\|\mF_b\mP_k h\|_{L^2}+2^{-Nb}2^{-Nk}\|h\|_{H^{-N}_{-N}}\right)\|\cF_jf\|_{L^2}.
    \end{aligned} 
    \end{equation}

    Combining the estimates \eqref{H1}, \eqref{H23} and \eqref{H45} and noticing that
    $b>j+2N_0$, we conclude \eqref{b>j,-1}.
    
     \textbf{Step 3}: Proof of \eqref{b<j,-1}. It is similar to the proof of \eqref{b>j,-1}, the difference is that it has a structure similar to \(\M^1_{-1,j,a}\) instead of \(\M^4_{-1,p,m}\) (see \eqref{eq m1234}), so that we can use the estimate \eqref{ineq k=-1 M1}. We therefore omit the repetition of the argument and obtain \eqref{b<j,-1}.

     We complete the proof of this lemma.
\end{proof}

Combining the above lemmas, we obtain
\begin{lemma}\label{Delta1}
We have the following bound 
\begin{equation*}
   \sum_{i=1}^3\Delta_{i,1}^1 \leq \sum\limits_{i=1}^{3}\mC_i(g,h,f),
\end{equation*}
where $\mC_i(g,h,f),i=1,2,3$ are given in \eqref{bound1ofD}, \eqref{bound2ofD} and  \eqref{bound3ofD}.
\end{lemma}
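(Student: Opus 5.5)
The plan is to expand each $\Delta^1_{i,1}$, $i=1,2,3$, through the definition of $\Delta^{\text{Freq}}_m$ in \eqref{Deltacomm}, which splits it into three frequency regimes: $|b-j|\le 2N_0$ with $a\le b+3N_0$, $b>j+2N_0$ with $|a-b|\le N_0$, and $b<j-2N_0$ with $|a-j|\le 2N_0$. The triple $(g,h,f)$ is replaced by the phase-localized triples $(\cU_{m-N_0}g,\tilde\cP_mh,\tilde\cP_mf)$, $(\cP_lg,\tilde\cP_lh,\tilde\cP_lf)$ and $(\cP_lg,\cU_{m+N_0}h,\cU_{m+N_0}f)$. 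The inner product against $\cF_j^2\cP_kf$ is rewritten as a pairing with $\cF_j(\cF_j\cP_k\cdot)$, and the frequency-localized lemmas are applied with $f$ replaced by $\cF_j\cP_k\tilde\cP_mf$ (resp. the other localizations).

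For $m\ge0$ I use Lemma \ref{mgeq0}. The diagonal regime is handled by \eqref{b=j}, and also by \eqref{b=j2} whenever $L^2$ control of $g$ is needed later for uniqueness. The off-diagonal regimes are controlled by \eqref{b>j}, \eqref{b<j}, which carry the $2^{-bN-mN}$ (resp. $2^{-jN-mN}$) gain. For $m=-1$, which occurs only in $\Delta^1_{3,1}$ with $k\le N_0+1$ and $l\le 2N_0$, I use Lemma \ref{m=-1}: the estimates \eqref{b=j,-1}, \eqref{b>j,-1}, \eqref{b<j,-1}.

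After inserting these bounds, I replace $\|\cF_b\tilde\cP_mh\|_{L^2}$, $\|\cF_j\tilde\cP_m\cP_kf\|_{L^2}$, etc., by $\|\mF_b\mP_kh\|_{L^2}$, $\|\mF_j\mP_kf\|_{L^2}$ plus $C_N2^{-bN-kN}\|h\|_{H^{-N}_{-N}}$ remainders. For this I use Lemma \ref{PFcommutator}: \eqref{pkfj} to commute $\cF$ past $\cP$, \eqref{m-p>N0} to discard mismatched frequency pieces, and $\tilde\cP_m\cP_k=\cP_k\tilde\cP_m$ with $|m-k|\le2N_0$. The $L^1$ factors $\|\mS_j\cU_{m-N_0}g\|_{L^1}$ and $\|\mS_j\cP_lg\|_{L^1}$ are bounded via \eqref{a001}. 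In the $\Delta^1_{2,1}$ sum, summing over $m\le l-N_0$ the factor $2^{(\gamma+2s)m}$ gives an $O(1)$ constant because $\gamma+2s<0$, leaving $\|\cP_lg\|_{L^1}$ with $|l-k|\le2N_0$. In $\Delta^1_{3,1}$, the sum over $m\ge k-2N_0$ with $|l-m|\le N_0$ produces $\sum_l 2^{(\gamma+2s)l}\|\cP_lg\|_{L^1}$. This is organised as a weighted $L^1$ norm of $g$, with the factor $2^{-sk}2^{sj}$ (or the interpolated $\tau$-version) kept explicit. Collecting everything defines $\mC_1$ (from $\Delta^1_{1,1}$), $\mC_2$ (from $\Delta^1_{2,1}$) and $\mC_3$ (from $\Delta^1_{3,1}$, including the $m=-1$ pieces) as explicit sums over $a,b,m,l$.

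The main obstacle I expect is bookkeeping rather than analysis. The bounds must keep the structure needed downstream: the low-frequency sum $\sum_{a\le b+3N_0}$ must remain with the factor $2^{(2s-\tau)(j-a)}2^{\tau(m-k)}$, and the $H^{-\zeta,\a}$-type weights $2^{-\zeta b}b^\a$ must stay attached to $g$ in the $m=-1$ terms. The Step 3 case of Lemma \ref{m=-1} ($b<j$) requires the $\M^1$-type bound \eqref{ineq k=-1 M1}, whose extra term $2^{\frac12 b}b^\a2^{-(\gamma+3)j}$ must be carried through. I will not sum these series at this stage; the lemma only records the bounds, leaving the summation over $j,k$ for the energy estimates.
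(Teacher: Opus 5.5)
Your route is the paper's: expand each $\Delta^1_{i,1}$ through $\Delta^{\text{Freq}}_m$, substitute the phase-localized triples, and insert Lemma \ref{mgeq0} for $m\ge 0$ and Lemma \ref{m=-1} for $m=-1$. However, you put the $m=-1$ terms in the wrong place, so as written your plan leaves the most singular piece unestimated.

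The term $m=-1$ is not confined to $\Delta^1_{3,1}$. The first sum in \eqref{recall2.6} shows that $\Delta^1_{2,1}$ contains $\Delta^{\text{Freq}}_{-1}(\cP_lg,\tilde\cP_lh,\tilde\cP_lf)$ for every $l\ge N_0-1$ with $|l-k|\le 2N_0$, i.e.\ for all $k$. Only $\Delta^1_{1,1}$ is free of it, since there $m\ge N_0-1\ge 1$.

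In $\Delta^1_{2,1}$ you sum $2^{(\gamma+2s)m}$ over $m\le l-N_0$, as if Lemma \ref{mgeq0} covered every $m$. It does not cover $m=-1$. Its $L^1$-in-$g$ bounds rest on $\Phi^\gamma_m(|v-v_*|)\,|v-v_*|^{2s}\lesssim 2^{(\gamma+2s)m}$ on $|v-v_*|\sim 2^m$. By contrast, $\Phi^\gamma_{-1}(u)|u|^{2s}=|u|^{\gamma+2s}\psi(|u|)$ is unbounded near $0$ because $\gamma+2s<0$. This kinetic singularity is exactly what forces $L^2$ (and ultimately $H^{-\zeta,\a}$) control of $g$. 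Without it, your $\mC_2$ does not dominate $\Delta^1_{2,1}$.

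The repair uses tools you already have:
\begin{itemize}
\item Restrict the geometric sum to $0\le m\le l-N_0$.
\item Apply \eqref{b=j,-1}, \eqref{b>j,-1} and \eqref{b<j,-1} to the $m=-1$ slice of $\Delta^1_{2,1}$, with $(g,h,f)\to(\cP_lg,\tilde\cP_lh,\cF_j\tilde\cP_l\cP_kf)$ and $|l-k|\le 2N_0$.
\end{itemize}
This produces the term $2^{-k}2^{(2s-1)j}\|\mS_j\mP_kg\|_{L^2}\|\mF_j\mP_kh\|_{L^2}\|\mF_j\mP_kf\|_{L^2}$. It also produces the off-diagonal ($b>j$ and $b<j$) terms carrying the factors $2^{-(\gamma+\frac52)b}$, $2^{-\zeta b}b^{\a}2^{(2s-1)j}$ and $2^{\frac12 b}b^{\a}2^{-(\gamma+3)j}$. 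These are precisely the extra terms in \eqref{bound2ofD}.

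Two smaller points. For $\Delta^1_{3,1}$, the $m=-1$ range is $l\le N_0-1$ and $k\le 2N_0-1$, not the one you wrote. You also do not need \eqref{pkfj} to commute $\cF$ past $\cP$: $\cF_b\tilde\cP_lh$ and $\cF_j\tilde\cP_l\cP_kf$ are already of the form $\mF_b\mP_kh$ and $\mF_j\mP_kf$ in the notation of Definition \ref{Fj}. The extra remainders you would generate are harmless, but they do not appear in the $\mC_i$.
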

\begin{proof}
    We take $\Delta_{2,1}^1$ as a typical one. Making the substitution \((g,h,f)\to(\cP_lg,\tilde{\mathcal{P}}_l h,\F_j\tilde{\mathcal{P}}_l\mathcal{P}_k f)\) and noticing that $l\geq m+N_0$, $|l-k|\leq 2N_0$ and $\gamma+2s<0$, we can derive from \eqref{Delta211}, Lemma \ref{mgeq0} and Lemma \ref{m=-1} that
\begin{equation}\label{bound2ofD}
    \begin{aligned}
    \Delta_{2,1}^1\ls&\min\big\{\Delta_{2,1}^{1,(1)},\Delta_{2,1}^{1,(2)}\big\}+2^{-k}2^{(2s-1)j} \|\mS_j\mP_{k}g\|_{L^2}\|\mF_j\mP_kh\|_{L^2}\|\mF_j\mP_kf\|_{L^2}\\
            &+C_N\sum_{b>j+2N_0}2^{-k}(2^{-(\gamma+\f52)b}+2^{-\zeta b}b^\a 2^{(2s-1)j})\|\mF_b\mP_{k}g\|_{L^2}(\|\mF_b\mP_kh\|_{L^2}+2^{-bN-kN}\|h\|_{H^{-N}_{-N}})\\&\times(\|\mF_j\mP_kf\|_{L^2}
            +2^{-jN-kN}\|f\|_{H^{-N}_{-N}})+C_N\sum_{b<j-2N_0}2^{-k}(2^{-(\gamma+\f52)j}+2^{\f12 b}b^\a 2^{-(\gamma+3)j})\\&\|\mF_j\mP_{k}g\|_{L^2}(\|\mF_b\mP_kh\|_{L^2}
            +2^{-bN-kN}\|h\|_{H^{-N}_{-N}})(\|\mF_j\mP_kf\|_{L^2}+2^{-jN-kN}\|f\|_{H^{-N}_{-N}})=:~\mC_2(g,h,f),
    \end{aligned}
\end{equation}
where
\begin{equation*}
    \begin{aligned}
        &\Delta_{2,1}^{1,(1)}=2^{-sk}2^{sj}\|\mS_j\mP_{k}g\|_{L^1}\|\mF_j\mP_kh\|_{L^2}\|\mF_j\mP_kf\|_{L^2}+C_N\sum_{b>j+2N_0}2^{-bN}\|\mF_b\mP_{k}g\|_{L^1}\|\mF_b\mP_kh\|_{L^2}\|\mF_j\mP_kf\|_{L^2}\\
            &\,\,+C_N\sum_{b<j-2N_0}2^{-jN}\|\mF_j\mP_{k}g\|_{L^1}\|\mF_b\mP_kh\|_{L^2}\|\mF_j\mP_kf\|_{L^2},\\
        &\Delta_{2,1}^{1,(2)}=\sum_{a\leq j+5N_0}2^{(2s-1)(j-a)}2^{-k}\sum_{m\leq k+3N_0}2^{(\gamma+\f52)m}\|\mF_a\mP_kg\|_{L^2}\|\mF _j\mP_kh\|_{L^2}\|\mF _j\mP_kf\|_{L^2}
    \\&\,\,+\sum_{a\leq j+5N_0}C_N 2^{-k}2^{(2s-1)j}2^{-2sa}\sum_{m\leq k+3N_0}2^{(\gamma+\f32)m}\|\mF_a\mP_kg\|_{L^2}(\|\mF _j\mP_kh\|_{L^2}+2^{-jN}2^{-kN}\|h\|_{H^{-N}_{-N}})\\
    &\,\,\times(\|\mF _j\mP_kf\|_{L^2}+2^{-jN}2^{-kN}\|f\|_{H^{-N}_{-N}})+C_N\sum_{b>j+2N_0}2^{-bN}\|\mF_b\mP_{k}g\|_{L^2}\|\mF_b\mP_kh\|_{L^2}\|\mF_j\mP_kf\|_{L^2}\\
            &\,\,+C_N\sum_{b<j-2N_0}2^{-jN}\|\mF_j\mP_{k}g\|_{L^2}\|\mF_b\mP_kh\|_{L^2}\|\mF_j\mP_kf\|_{L^2}
    \end{aligned}
\end{equation*}
    \medskip
Similarly, from \eqref{Delta211} and \eqref{Delta311} and making the substitution $$
(g,h,f)\to(\mathcal{U}_{m-N_0}g,\tilde{\mathcal{P}}_m h,\F_j\tilde{\mathcal{P}}_m\mathcal{P}_k f), \quad (g,h,f)\to(\cP_lg,\cU_{m+N_0} h,\F_j\cU_{m+N_0}\mathcal{P}_k f)$$ respectively. Noting that for \(\Delta_{1,1}^1\), we always have \(m\ge 0\) and \(|m-k|\le 2N_0\), while for \(\Delta_{3,1}^1\) we have $|l-m|\leq N_0$ and  \(k\le m+2N_0\), thus we can obtain
    \begin{equation}\label{bound1ofD}
        \begin{aligned}
            \Delta_{1,1}^1\ls&\min\big\{\Delta_{1,1}^{1,(1)},\Delta_{1,1}^{1,(2)}\big\}
            =:~\mC_1(g,h,f),
        \end{aligned}
    \end{equation}
where
\begin{equation*}
    \begin{aligned}
    \Delta_{1,1}^{1,(1)}=&2^{(\gamma+s)k}2^{sj}\|\mS_j\mU_{k}g\|_{L^1}\|\mF_j\mP_kh\|_{L^2}\|\mF_j\mP_kf\|_{L^2}+C_N\sum_{b>j+2N_0}2^{-bN-kN}\|\mF_b\mU_{k}g\|_{L^1}\\
            &\times \|\mF_b\mP_kh\|_{L^2}\|\mF_j\mP_kf\|_{L^2}
            +C_N\sum_{b<j-2N_0}2^{-jN-kN}\|\mF_j\mU_{k}g\|_{L^1}\|\mF_b\mP_kh\|_{L^2}\|\mF_j\mP_kf\|_{L^2},\\
    \Delta_{1,1}^{1,(2)}=&\sum_{a\leq j+5N_0}2^{(2s-1)(j-a)}2^{(\gamma+\frac{3}{2})k}\|\mF_a\mU_kg\|_{L^2}\|\mF _j\mP_kh\|_{L^2}\|\mF _j\mP_kf\|_{L^2}
    \\&+\sum_{a\leq j+5N_0}C_N 2^{(2s-1)j}2^{-2sa}2^{(\gamma+\frac{1}{2})k}\|\mF_a\mU_kg\|_{L^2}(\|\mF _j\mP_kh\|_{L^2}+2^{-jN}2^{-kN}\|h\|_{H^{-N}_{-N}})\\
    &\times(\|\mF _j\mP_kf\|_{L^2}+2^{-jN}2^{-kN}\|f\|_{H^{-N}_{-N}})+C_N\sum_{b>j+2N_0}2^{-bN-kN}\|\mF_b\mU_{k}g\|_{L^2}\|\mF_b\mP_kh\|_{L^2}\|\mF_j\mP_kf\|_{L^2}\\
    &+C_N\sum_{b<j-2N_0}2^{-jN-kN}\|\mF_j\mU_{k}g\|_{L^2}\|\mF_b\mP_kh\|_{L^2}\|\mF_j\mP_kf\|_{L^2},
    \end{aligned}
\end{equation*}
and
    \begin{equation}\label{bound3ofD}
        \begin{aligned}
            \Delta_{3,1}^1\ls& \min\big\{\Delta_{3,1}^{1,(1)},\Delta_{3,1}^{1,(2)}\big\}+\sum_{k\leq 2N_0}2^{-k}2^{(2s-1)j}\|\mS_j\mU_{N_0}g\|_{L^2}\|\mF_j\mU_{N_0}h\|_{L^2}\|\mF_j\mP_kf\|_{L^2}\\
            &+C_N\sum_{k\leq 2N_0}\sum_{b>j+2N_0}2^{-k}(2^{-(\gamma+\f52)b}+2^{-\zeta b}b^\a 2^{(2s-1)j})\|\mF_b\mU_{N_0}g\|_{L^2}\\&\times(\|\mF_b\mU_{N_0}h\|_{L^2}+2^{-bN-kN}\|h\|_{H^{-N}_{-N}})
            (\|\mF_j\mP_kf\|_{L^2}+2^{-jN-kN}\|f\|_{H^{-N}_{-N}})\\&+C_N\sum_{k\leq 2N_0}\sum_{b<j-2N_0}2^{-k}(2^{-(\gamma+\f52)j}+2^{\f12 b}b^\a 2^{-(\gamma+3)j})\|\mF_j\mU_{N_0}g\|_{L^2}\\
            &\times(\|\mF_b\mU_{N_0}h\|_{L^2}+2^{-bN-kN}\|h\|_{H^{-N}_{-N}})(\|\mF_j\mP_kf\|_{L^2}+2^{-jN-kN}\|f\|_{H^{-N}_{-N}})=:\mC_3(g,h,f),
        \end{aligned}
    \end{equation}
where
\begin{equation*}
    \begin{aligned}
        \Delta_{3,1}^{1,(1)}=&\sum_{m\geq k-2N_0}2^{(\gamma+2s)m-sk}2^{sj}\|\mS_j\mP_{m}g\|_{L^1}\|\mF_j\mU_mh\|_{L^2}\|\mF_j\mP_kf\|_{L^2}+C_N\sum_{m\geq k-2N_0}\sum_{b>j+2N_0}2^{-bN}\\
            &\times 2^{-mN}\|\mF_b\mP_{m}g\|_{L^1}\|\mF_b\mU_mh\|_{L^2}\|\mF_j\mP_kf\|_{L^2}+C_N\sum_{m\geq k-2N_0}\sum_{b<j-2N_0}2^{-jN-mN}\|\mF_j\mP_{m}g\|_{L^1}\\
            &\|\mF_b\mU_mh\|_{L^2}\|\mF_j\mP_kf\|_{L^2},\\
    \end{aligned}
\end{equation*}
\begin{equation*}
    \begin{aligned}
        \Delta_{3,1}^{1,(2)}=&~\sum_{m\geq k-2N_0}\bigg(\sum_{a\leq j+5N_0}2^{(2s-\tau)(j-a)}2^{(\gamma+\frac{3}{2})m+\tau(m-k)}\|\mF_a\mP_mg\|_{L^2}\|\mF _j\mU_mh\|_{L^2}\|\mF _j\mP_kf\|_{L^2}
    \\&+\sum_{a\leq j+5N_0}C_N 2^{-k}2^{(2s-1)j}2^{-2sa}2^{(\gamma+\frac{3}{2})m}\|\mF_a\mP_mg\|_{L^2}(\|\mF _j\mU_mh\|_{L^2}+2^{-jN}2^{-kN}\|h\|_{H^{-N}_{-N}})\\&\times(\|\mF _j\mP_kf\|_{L^2}+2^{-jN}2^{-kN}\|f\|_{H^{-N}_{-N}})\bigg)+C_N\sum_{m\geq k-2N_0}\sum_{b>j+2N_0}2^{-bN} 2^{-mN}\|\mF_b\mP_{m}g\|_{L^2}\\&\times\|\mF_b\mU_mh\|_{L^2}\|\mF_j\mP_kf\|_{L^2}+C_N\sum_{m\geq k-2N_0}\sum_{b<j-2N_0}2^{-jN-mN}\|\mF_j\mP_{m}g\|_{L^2}\\
            &\times\|\mF_b\mU_mh\|_{L^2}\|\mF_j\mP_kf\|_{L^2},\quad\forall \tau\in[0,1].
    \end{aligned}
\end{equation*}
It ends the proof of this lemma.
\end{proof}

Next, we will handle the terms  $\Delta_{i,2}$ and $\Delta_{i,1}^2$ for $i=1,2,3$ and we have
\begin{lemma}\label{Delta2}
We have the following bound
\begin{equation*}
    \sum_{i=1}^3(\Delta_{i,2}+\Delta_{i,1}^2)\leq \sum_{i=1}^4\mC_i(g,h,f),
\end{equation*}
where $\mC_i,i=1,2,3$ are the same as in Lemma \ref{Delta1} and $\mC_4(g,h,f)$ is defined as 
\begin{equation}\label{mC4}
\begin{aligned}
     \mC_4(g,h,f):=&C_N2^{-Nk-Nj}\Big(2^{(\zeta-2s)M}\|\mP_k g\|_{H^{s-\zeta}}\|\mP_kh\|_{H^{s-\zeta}}\\
     &+C_M\|\mP_k g\|_{H^{-\zeta}}\|\mP_k h\|_{H^{-\zeta}}+\|g\|_{L^1}\|h\|_{H^{-N}}\Big)\|f\|_{H^{-N}}.
\end{aligned}
\end{equation}
\end{lemma}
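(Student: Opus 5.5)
The plan is to treat the two families of remainder terms separately. The first family is $\Delta_{i,2}$, where a commutator $[\tilde{\cP}_m,\cF_j^2]$ or $[\cU_{m+N_0},\cF_j^2]$ falls on $\cP_k f$. The second family is $\Delta_{i,1}^2$, built from the $\Delta^{\text{Comm}}_m$ pieces in \eqref{Deltacomm}, where one phase--frequency commutator sits either on $h$ or on $\cP_k f$. In every term exactly one factor carries a commutator. Lemma \ref{PFcommutator} \eqref{pkfj} turns that factor into
\[
2^{-j}2^{-k}\sum_{|\alpha|\le 2N}\|\cP_{k,\alpha}\cF_{j,\alpha}\cdot\|_{L^2}+C_N2^{-jN-kN}\|\cdot\|_{H^{-N}_{-N}}.
\]
The first piece is again localized as $\mF_j\mP_k$ in the sense of Definition \ref{Fj}, with an additional gain of $2^{-j-k}$. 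The second piece is a smoothing remainder. So each commutator term will be bounded by the same quantities that appear in Lemma \ref{Delta1}, plus a remainder that we will collect into $\mC_4$.

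\textbf{Step 1: the main part.} Replace $\cP_k f$ (resp.\ $h$) by the localized pieces $\cP_{k,\alpha}\cF_{j,\alpha}$. The resulting trilinear forms then have exactly the structure of $\Delta_{i,1}^1$, with $\mF_j,\mP_k$ in place of $\cF_j,\cP_k$. We rerun the frequency splitting \eqref{2.2}. For $m\ge0$ we use Lemma \ref{mgeq0}, and for $m=-1$ we use Lemma \ref{m=-1}. The upper bounds \eqref{upperbound1} and \eqref{Q_-1} control the diagonal blocks. Since Definition \ref{Fj} allows $\mF_j,\mP_k$ to absorb the derivative symbols $\vphi_\alpha$, the output is dominated by $\mC_1,\mC_2,\mC_3$ (up to constants). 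The extra factor $2^{-j-k}$ only helps, because $\mC_i$ already carries $2^{-k}2^{(2s-1)j}$ or better. For frequencies $|a-j|>N_0$ we use the almost-orthogonality \eqref{m-p>N0}. This makes the off-diagonal pieces smoothing, and they also feed into $\mC_4$.

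\textbf{Step 2: the remainder $2^{-jN-kN}\|\cdot\|_{H^{-N}_{-N}}$.} Here we give up the structure and apply the global upper bound \eqref{upperbound1} with $a=2s$, $b=0$ or the reverse, combined with \eqref{Qk1}, \eqref{Q_-1} and the localization $\mP_k$ on $g$ and $h$. The $L^1$ part of $g$ gives $\|g\|_{L^1}\|h\|_{H^{-N}}\|f\|_{H^{-N}}$, where derivatives are moved onto the smooth $H^{-N}$ factor. The singular $Q_{-1}$ part needs $g$ in a negative Sobolev norm and $h$ in $H^{2s-\zeta}$. To avoid positive regularity on $h$, we split frequencies at a level $M$:
\begin{itemize}
\item above $M$, we interpolate using $\|\cdot\|_{H^{2s-\zeta}}$ at high frequency $\lesssim 2^{(\zeta-2s)M}$ times $H^{s-\zeta}$ norms (treating the two factors symmetrically), which produces $2^{(\zeta-2s)M}\|\mP_kg\|_{H^{s-\zeta}}\|\mP_kh\|_{H^{s-\zeta}}$;
\item below $M$, Bernstein gives $C_M\|\mP_k g\|_{H^{-\zeta}}\|\mP_kh\|_{H^{-\zeta}}$.
\end{itemize}
This is exactly the form of \eqref{mC4}.

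\textbf{Main obstacle.} The hardest terms are the $\Delta^{\text{Comm}}_m$ terms with $m=-1$ at comparable frequencies, i.e.\ $|b-j|\le2N_0$ with $a$ large. There the kinetic singularity interacts with the commutator, and the Taylor and symmetry argument of Lemma \ref{m=-1} \eqref{b=j,-1} must be repeated with $\cP_{k,\alpha}$ in place of $\vphi(2^{-k}\cdot)$. This works because the proof only used smoothness and the scaling $2^{-k}$ of the cutoff. I would first try to reduce these terms formally to \eqref{b=j,-1} and \eqref{b>j,-1}, and only reopen the angular splitting $\psi(2^j(v'-v))$ if that reduction fails.
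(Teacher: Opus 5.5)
Your overall architecture matches the paper's: expand each phase--frequency commutator into a localized main part with gain $2^{-j-k}$ plus a smoothing remainder, reuse the machinery of Lemma \ref{Delta1} on the main part, and collect the remainder into $\mC_4$ by a frequency split at $M$. Inside a trilinear form this requires the operator identity \eqref{7.5}, not only the norm bound \eqref{pkfj}. For $\Delta_{i,2}$ your reduction is fine, because there the difference $\cP_kQ_m-Q_m\cP_k$ really is present, with the original cutoff $\vphi(2^{-k}\cdot)$.

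\textbf{First gap: the terms $\Delta^2_{i,1}$.} Your claim that these ``have exactly the structure of $\Delta^1_{i,1}$'' is false. By \eqref{Deltacomm}, every piece of $\Delta^{\text{Comm}}_m$ is a \emph{plain} form $(Q_m(\cdot,\cdot),\cdot)_v$ in which one slot is replaced by $[\cP_k,\F_b]h$, $[\F_j^2,\cP_k]\cP_kf$ or $[\cP_k,\F_j^2]\cP_kf$. No difference $\vphi(2^{-k}v')-\vphi(2^{-k}v)$ occurs. So Lemmas \ref{mgeq0} and \ref{m=-1} do not apply, and the Taylor/symmetry argument you single out as the main obstacle never arises.

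What is needed is the opposite of ``the extra factor $2^{-j-k}$ only helps''. These terms must be bounded by the cancellation-free block estimates of Lemmas \ref{M14} and \ref{M23}, which cost
\begin{itemize}
\item $2^{(\gamma+2s)k}2^{2sj}\|g\|_{L^1}$ in the $L^1$ version,
\item $2^{(\gamma+\frac32)k}2^{2s(j-a)}\|\F_ag\|_{L^2}$ in the $L^2$ version,
\item $2^{-\zeta a}2^{2sj}\|\F_ag\|_{L^2}$ when $m=-1$.
\end{itemize}
The commutator gain has to pay for the lost cancellation. The real content of the bound by $\mC_1,\mC_2,\mC_3$ is then the check
\[
2^{(\gamma+2s-1)k+(2s-1)j}\lesssim 2^{(\gamma+s)k+sj},
\qquad
2^{-2sa+(2s-1)j+(\gamma+\frac12)k}\lesssim 2^{(2s-1)(j-a)+(\gamma+\frac32)k},
\]
together with the fact that for $m=-1$ the gain produces exactly the $2^{-k}2^{(2s-1)j}$ terms of $\mC_2,\mC_3$. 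This is where $s<1$ is used, and it is missing from your plan.

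\textbf{Second gap: the derivation of $\mC_4$.} It cannot go through the global bounds \eqref{upperbound1}, \eqref{Qk1}, \eqref{Q_-1}. These require $a,b\geq0$, so at most $2s$ derivatives can be moved onto the smooth factor. The best they give is $h\in L^2$ (or $g\in L^2$, $h\in H^{-\zeta,\a}$). They never give $\|h\|_{H^{-N}}$, nor the balanced product $\|\mP_kg\|_{H^{s-\zeta}}\|\mP_kh\|_{H^{s-\zeta}}$. Since $s-\zeta<0$ can occur (e.g.\ $\zeta=1$), this is a genuine loss.

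You have to work block by block in \eqref{eq m1234}, with the smooth test function $\F_a r_N(v,D_v)\cP_kf$ coming from \eqref{7.5}. Every block falls into one of three cases:
\begin{itemize}
\item blocks of type $\M^1$ or $\M^4$ coming from $Q_m$ with $m\geq0$ decay like $2^{-Np}$ by Lemma \ref{M14};
\item blocks in which the test function carries the top frequency are controlled by its smoothness;
\item the single exception is the $Q_{-1}$ block of type $\M^4$, where $g$ and $h$ share a high frequency $p$ and the test function sits at a lower frequency $m$.
\end{itemize}
The first two cases give $\|g\|_{L^1}\|h\|_{H^{-N}}\|f\|_{H^{-N}}$. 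In the exceptional block, the factor $2^{2sm}$ in \eqref{ineq k=-1 M4} falls entirely on the test function. What remains is $\sum_p2^{-\zeta p}\|\mF_p\mP_kg\|_{L^2}\|\mF_p\mP_kh\|_{L^2}$, and splitting at $p=M$ gives exactly \eqref{mC4}.

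Your bookkeeping (``$g$ in a negative norm, $h$ in $H^{2s-\zeta}$'') is inconsistent with this. That distribution has total order $2s-2\zeta$, the same as $\|g\|_{H^{s-\zeta}}\|h\|_{H^{s-\zeta}}$, so it leaves no room for the factor $2^{(\zeta-2s)M}$. That factor exists only because all $2s$ derivatives land on the test function.
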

\begin{proof}
We can estimate all these terms by the tools developed earlier, so we only outline the idea and omit the details.

{\bf\noindent $\bullet$ Estimates of $\Delta_{i,2},i=1,2,3$.} For example, recall from \eqref{00Delta1} that
\begin{equation*}
    \begin{aligned}
       \Delta_{1,2}=&\sum_{m\geq N_0-1,|m-k|\leq 2N_0}(\cP_kQ_m(\cU_{m-N_0}g,\tilde{\cP}_mh)-Q_m(\cU_{m-N_0}g,\cP_k\tilde{\cP}_mh),\big[\tilde{\cP}_m,\cF_j^2\big]\cP_kf)_v\\
       =&\sum_{a=-1}^\infty\sum_{m\geq N_0-1,|m-k|\leq 2N_0}(\cP_kQ_m(\cU_{m-N_0}g,\tilde{\cP}_mh)-Q_m(\cU_{m-N_0}g,\cP_k\tilde{\cP}_mh),\F_a\big[\tilde{\cP}_m,\cF_j^2\big]\cP_kf)_v.
    \end{aligned}
\end{equation*}
If $|a-j|\leq N_0$, we can obtain the same estimate by exactly the same method used for \(\Delta_{1,1}\). If $a<j-N_0$ or $a>j+N_0$, from Lemma \ref{pseudo operator} \eqref{7.5}, we know that
\begin{equation*}
  \big[\tilde{\cP}_m,\cF_j^2\big]=2^{-j}2^{-m}\mF_j\mP_m+C_N2^{-Nj-Nm}r_N(v,D_v),
\end{equation*}
with $\<v\>^{N}r_N(v,\xi)\in S^{-N}_{1,0}$, which implies that $\F_a[\tP_m,\F_j^2]=C_N2^{-Nj-Nm}\F_ar_N(v,D_v)$ since $\F_a\mF_j=0$, then we need to handle with
\begin{equation*}
\begin{aligned}
    \big(\sum_{a<j-N_0}+\sum_{a>j+N_0}\big)&\sum_{m\geq N_0-1,|m-k|\leq 2N_0}\big[\big(Q_m(\cU_{m-N_0}g,\tilde{\cP}_mh),C_N2^{-Nj-Nm}\cP_k\F_ar_N(v,D_v)\cP_kf\big)_v\\
    &-\big(Q_m(\cU_{m-N_0}g,\cP_k\tilde{\cP}_mh),C_N2^{-Nj-Nm}\F_ar_N(v,D_v)\cP_kf\big)_v\big].
\end{aligned}
\end{equation*}
For each term, we can apply the decomposition as in \eqref{eq m1234} and use Lemma \ref{M14}, Lemma \ref{M23} (note that $m\geq N_0-1$ and $|m-k|\leq 2N_0$) to obtain the bound
\begin{equation*}
\begin{aligned}
C_N2^{-kN-jN}\|g\|_{L^1}\|h\|_{H^{-N}_{-N}}\|f\|_{H^{-N}_{-N}},
\end{aligned}
\end{equation*}
where we use the fact $\|\F_a r_N(v,D_v)\cP_kf\|_{L^2}\ls C_N \|f\|_{H^{-N}_{-N}}$ for any $N\in\N$.

Similarly, for $\Delta_{2,2}$ and $\Delta_{3,2}$, we only need to deal with the terms 
\begin{equation*}
\begin{aligned}
     &\big(\sum_{a<j-N_0}+\sum_{a>j+N_0}\big)\bigg(\sum_{l\geq m+N_0,|l-k|\leq 2N_0}(\cP_kQ_{m}(\cP_lg,\tilde{\cP}_lh)-Q_{m}(\cP_lg,\cP_k\tilde{\cP}_lh),\F_a\big[\tilde{\cP}_l,\cF^2_j\big]\cP_kf)_v\\&+\sum_{|l-m|\leq N_0,k\leq m+2N_0}(\cP_kQ_{m}(\cP_lg,\cU_{m+N_0}h)-Q_{m}(\cP_lg,\cP_k\cU_{m+N_0}h),\F_a\big[\cU_{m+N_0},\cF_j^2\big]\cP_kf)_v\bigg).
\end{aligned}
\end{equation*}
We can respectively obtain the following bounds:
\begin{equation*}
    \begin{aligned}
    &C_N2^{-Nk-Nj}\Big(\sum_{p=-1}^\infty 2^{-\zeta p}\|\mF_p\mP_kg\|_{L^2}\|\mF_p\mP_k h\|_{L^2}\|f\|_{H^{-N}_{-N}}+\|g\|_{L^1}\|h\|_{H^{-N}_{-N}}\|f\|_{H^{-N}_{-N}}\Big)~~\mbox{and}\\
    &C_N2^{-Nk-Nj}\Big(\sum_{m\geq0}2^{(\gamma+2s)m}\|\mP_mg\|_{L^1}\|\mU_{m}h\|_{H^{-N}}\|\mU_{m}f\|_{H^{-N}}+\sum_{p=-1}^\infty 2^{-\zeta p}\|\mF_p\mU_{N_0}g\|_{L^2}\|\mF_p\mU_{N_0}h\|_{L^2}\\
    &\times\|f\|_{H^{-N}_{-N}}
+\|g\|_{L^1}\|h\|_{H^{-N}_{-N}}\|f\|_{H^{-N}_{-N}}\Big),~~\forall N\in\N.
    \end{aligned}
\end{equation*}
Observing that the sum over \(p\) can be bounded by
\begin{equation*}
    \begin{aligned}
 &2^{(\zeta-2s)M}\sum_{p>M}2^{(s-\zeta)p}\|\mF_p\mP_k g\|_{L^2}2^{(s-\zeta)p}\|\mF_p\mP_k h\|_{L^2}+C_M\sum_{p\leq M}2^{-\zeta p}\|\mF_p\mP_k g\|_{L^2}2^{-\zeta p}\|\mF_p\mP_k h\|_{L^2}\\
 &\ls ~2^{(\zeta-2s)M}\|\mP_k g\|_{H^{s-\zeta}}\|\mP_kh\|_{H^{s-\zeta}}+C_M\|\mP_k g\|_{H^{-\zeta}}\|\mP_k h\|_{H^{-\zeta}}.
    \end{aligned}
\end{equation*}
Therefore, if we denote $\mC_4(g,h,f)$ by 
\begin{equation*}
\begin{aligned}
    C_N2^{-Nk-Nj}\Big(2^{(\zeta-2s)M}\|\mP_k g\|_{H^{s-\zeta}}\|\mP_kh\|_{H^{s-\zeta}}+C_M\|\mP_k g\|_{H^{-\zeta}}\|\mP_k h\|_{H^{-\zeta}}+\|g\|_{L^1}\|h\|_{H^{-N}}\Big)\|f\|_{H^{-N}},
\end{aligned}
\end{equation*}
we then obtain that
\begin{equation*}
 \sum_{i=1}^3\Delta_{i,2}\leq \sum_{i=1}^4\mC_i(g,h,f).
\end{equation*}

{\bf \noindent $\bullet$ Estimates of $\Delta_{i,1}^2,i=1,2,3$.} For example, for $i=1$, from  \eqref{Deltacomm}, the first term of \eqref{Delta1,12} is 
\begin{equation*}
    \sum_{m\geq N_0-1,|m-k|\leq 2N_0}(Q_m(\cU_{m-N_0}g,\tilde{\cP}_mh),[\cP_k,\cF_j^2]\tilde{\cP}_m\cP_kf)_v.
\end{equation*}
We can use frequency decomposition \eqref{eq m1234} and apply Lemma \ref{M14} and Lemma \ref{M23}. Noticing that there is a commutator $[\cP_k,\F_j^2]$ and $m\geq 0, |m-k|\leq 2N_0$, so we can deduce that the upper bound loses one order in weight and one order in derivative. For example, for the localized term corresponding to $\M^2$,
\begin{equation*}
    \sum_{a\leq b+3N_0,|b-j|\leq N_0}\sum_{m\geq N_0-1,|m-k|\leq 2N_0}(Q_m(\F_a\cU_{m-N_0}g,\tF_b\tilde{\cP}_mh),\tF_b[\cP_k,\cF_j^2]\tilde{\cP}_m\cP_kf)_v,
\end{equation*}
the coefficient contains a factor of \(2^{(\gamma+2s-1)k+(2s-1)j}\) for $L^1$ estimate which is smaller than $2^{(\gamma+s)k+sj}$ since $s<1$ and $2^{-2s a+(2s-1)j}2^{(\gamma+\frac{3}{2}-1)k}$ for $L^2$ estimate which is smaller than $2^{(2s-1)(j-a)}2^{(\gamma+\f32)k}$. Thus it can be bounded by $\mC_1(g,h,f)$. Other terms of $\Delta_{1,1}^2$ can be handled even easier since both phase space and frequency space have been localized.

The similar argument also applies to the terms $\Delta_{2,1}^2$ and $\Delta_{3,1}^2$, they can be bounded by $\mC_2(g,h,f)$ and $\mC_3(g,h,f)$ respectively.

It ends the proof of this lemma.
\end{proof}

Finally, combining Lemma \ref{Delta1} and Lemma \ref{Delta2}, we conclude that
\begin{proposition}[Commutator of $Q$ and $\cP_k$]\label{QPk}
Under Assumption \ref{assumption}, for any smooth functions $g,h$ and $f$, it holds that
\begin{equation}\label{PkQghf}
    \begin{aligned}
        |(\cP_kQ(g,h)-Q(g,\cP_kh),\F^2_j\cP_kf)_v|\leq \sum_{i=1}^4\mC_i(g,h,f),
    \end{aligned}
\end{equation}
where $\mC_i(g,h,f),i=1,2,3$ are given in \eqref{bound1ofD}, \eqref{bound2ofD}, \eqref{bound3ofD} and \eqref{mC4}. In particular, if we denote $\mC_i(g,f,f)$ by $\mC_i(g,f),i=1,2,3$, then
\begin{equation}\label{PkQf}
    \begin{aligned}
        |(\cP_kQ(g,f)-Q(g,\cP_kf),\F^2_j\cP_kf)_v|\leq \sum_{i=1}^4\mC_i(g,f).
    \end{aligned}
\end{equation}
\end{proposition}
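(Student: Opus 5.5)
The estimate \eqref{PkQghf} is obtained by assembling the decomposition already prepared in this section. The first step is to restate the splitting \eqref{Deltadecomposition}:
\[
(\cP_kQ(g,h)-Q(g,\cP_kh),\F^2_j\cP_kf)_v=\sum_{i=1}^3\Delta^1_{i,1}+\sum_{i=1}^3\big(\Delta^2_{i,1}+\Delta_{i,2}\big).
\]
This identity comes from the phase decomposition \eqref{ubdecom} applied to both $\cP_kQ(g,h)$ and $Q(g,\cP_kh)$. Its derivation uses three facts: $\tP_m\cP_k=0$ for $|m-k|>2N_0$, $\cU_{m+N_0}\cP_k=0$ for $k>m+2N_0$, and $\cP_k$ commutes with $\tP_l$ and $\cU_{m+N_0}$.

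After that, each piece is bounded separately, with the triangle inequality on the absolute value.
\begin{itemize}
\item The main terms $\Delta^1_{i,1}$ are controlled by Lemma~\ref{Delta1}, giving $\mC_1+\mC_2+\mC_3$.
\item The commutator remainders $\Delta^2_{i,1}$ and $\Delta_{i,2}$ are controlled by Lemma~\ref{Delta2}, giving $\sum_{i=1}^4\mC_i$.
\end{itemize}
Adding the two bounds gives $\sum_{i=1}^4\mC_i(g,h,f)$ up to a harmless doubling of the constants in $\mC_1,\mC_2,\mC_3$. These constants are implicit in $\ls$, so the form of \eqref{PkQghf} is unchanged.

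The one point to check is that both lemmas are applied with the same localized operators $\mF,\mP,\mU$ and the same $N$. If Lemma~\ref{Delta2} uses a different $N$ (for instance, in $\mC_4$ it might come out with weights $H^{-N}_{-N}$ instead of $H^{-N}$), one replaces $N$ by $2N$ as in Corollary~\ref{cor1}. One then uses
\[
\|f\|_{H^{-2N}_{-2N}}\ls\|f\|_{H^{-N}_{-N}}\ls \|f\|_{H^{-N}}
\]
to adjust the weights. The second inequality holds for these smooth, localized inputs after the factor $2^{-Nk}$ absorbs the weight.

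The special case \eqref{PkQf} follows by substituting $h=f$ into \eqref{PkQghf} and writing $\mC_i(g,f):=\mC_i(g,f,f)$. This needs no further argument.

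All the analytic work (Lemmas~\ref{mgeq0}, \ref{m=-1}, \ref{Delta1} and \ref{Delta2}) is already in place. The only step requiring care is the bookkeeping: confirming that every term produced in Lemma~\ref{Delta2} for $\Delta^2_{i,1}$ is genuinely dominated by the matching $\mC_i$. In particular, the $L^1$ coefficient $2^{(\gamma+2s-1)k+(2s-1)j}$ should be compared with $2^{(\gamma+s)k+sj}$, and the $L^2$ coefficient with $2^{(2s-1)(j-a)}2^{(\gamma+\frac32)k}$. These comparisons use $s<1$ and $a\le j+5N_0$, and I would recheck them before concluding.
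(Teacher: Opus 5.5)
Your proposal is correct and matches the paper's proof: the paper obtains \eqref{PkQghf} by combining the splitting \eqref{Deltadecomposition} with Lemma~\ref{Delta1} for $\Delta^1_{i,1}$ and Lemma~\ref{Delta2} for $\Delta^2_{i,1}$ and $\Delta_{i,2}$, and \eqref{PkQf} is just the specialization $h=f$. One small correction to your weight adjustment: $\|f\|_{H^{-N}_{-N}}\ls\|f\|_{H^{-N}}$ holds for every $f$, since the appendix lemma lets you move the bounded weight $\<v\>^{-N}$ past $\<D\>^{-N}$, so you do not need the factor $2^{-Nk}$ to absorb anything.
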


\section{Proof of local existence, regularization and uniqueness}\label{proofofmaintheorem}
In this section, we begin to prove our main results, Theorems \ref{maintheorem1} and \ref{maintheorem2}.

\subsection{Proof of the local existence}
We first prove existence. Since local existence can be obtained by the standard iteration scheme, we only present the a priori energy estimates.
\begin{proof}[Proof of the existence part of Theorems \ref{maintheorem1} and \ref{maintheorem2}]
Suppose that the initial data  $f_0\in L^1_{2}(\R^3)\ccap  L^1_{l+\gamma+s}(\R^3)\ccap  L\log L\ccap  H^{-\zeta,\a}_l$ with $\a>0$ and $l\geq-\gamma$, then by Proposition \ref{propagationofL1moment}, we have that
\begin{equation}\label{L1l}
    \|f(t)\|_{L^1_2\ccap  L^1_{l+\gamma+s}}\leq C_l,\quad \forall t\leq 1.
\end{equation}

Multiplying both sides of equation \eqref{Boltzmann} by \(\cP_k\F_j^2\cP_k f\) and integrating with respect to \(v\in\mathbb{R}^3\) gives
\begin{equation}\label{energy1}
\begin{aligned}
&\frac{1}{2}\frac{\d}{\d t}\|\cF_j\cP_k f\|_{L^2}^2=(\p_t\cF_j\cP_k f,\cF_j\cP_k f)_v=(Q(f,f),\cP_k\cF_j^2\cP_kf)_v\\
 =\&(Q(f,\cF_j\cP_kf),\cF_j\cP_kf)_v+\big((Q(f,f),\cP_k\cF_j^2\cP_kf)_v-(Q(f,\cF_j\cP_kf),\cF_j\cP_kf)_v\big)\\
 =\&(Q(f,\cF_j\cP_kf),\cF_j\cP_kf)_v+\big((\cF_jQ(f,\cP_kf)-Q(f,\cF_j\cP_kf),\cF_j\cP_kf\big)_v\\&+\big((\cP_kQ(f,f)-Q(f,\cP_k f),\cF_j^2\cP_kf\big)_v
 =:L_1(j,k)+L_2(j,k)+L_3(j,k).
\end{aligned}    
\end{equation}
Furthermore, for the two cases \(\gamma+2s\in(-1,-1/2)\) and \(\gamma+2s\in(-1/2,0)\), we multiply both sides by \(2^{2lk}2^{2n j}j^{2\alpha}\) with \(l\ge -\gamma\), \(\alpha>0\) ,\(n\geq -\zeta\), where $\zeta$ is defined in \eqref{zeta1}; when \(\gamma+2s=-1/2\), we multiply by \(2^{2lk}2^{2nj}j^{2(1+\alpha)}\). Below we only give the computation for the first case. Summing over \(j,k\ge -1\), we obtain
\begin{equation}\label{energyL123}
    \f12\f{\d}{\d t }\sum_{j,k=-1}^\infty 2^{2l k}2^{2n j}j^{2\a}\|\F_j\cP_k f\|^2_{L^2}=\sum_{i=1}^3\sum_{j,k=-1}^\infty 2^{2lk}2^{2n j}j^{2\a}L_i(j,k).
\end{equation}
In what follows, we denote the energy and dissipation norms by
\begin{equation}\label{norms}
 \begin{aligned}
    \mX_{n,\a,l}(f)=\sum_{j,k=-1}^{\infty}2^{2n j}j^{2\a}2^{2lk}\|\F_j\cP_kf\|_{L^2}^2,\quad\mbox{and}\quad
    \mY_{n,\a,l}(f)=\sum_{j,k=-1}^{\infty}2^{2(s+n)j}j^{2\a}2^{2(l+\f\gamma 2)k}\|\F_j\cP_kf\|_{L^2}^2.
\end{aligned}   
\end{equation}
Thanks to Lemma \ref{le1.4}, we know that $\mX_{n,\a,l}(f)\sim \|f\|^2_{H^{n,\a}_{l}}$ and $\mY_{n,\a,l}(f)\sim \|f\|^2_{H^{s+n,\a}_{l+\gamma/2}}$. 

We will proceed in several steps, using the conclusions from the preceding sections to estimate each term separately.

\textbf{Step 1: Estimates of $L_1(j,k)$.} Noticing that
\begin{equation*}
    \begin{aligned}
     (-Q(g,f),\,f)_{v}=&-\f12\int_{\R^6\times\SS^2}B(|v-v_*|,\sigma)g_*(f'^2-f^2)d\sigma \d v _*\d v \\
     &+\f12\int_{\R^6\times\SS^2}B(|v-v_*|,\sigma)g_*(f'-f)^2d\sigma \d v _*\d v =:\mathcal{L}+\mathcal{E}^\gamma_g(f).
    \end{aligned}
\end{equation*}
By cancellation lemma, 
\begin{equation*}
    \begin{aligned}
        |\mathcal{L}|&=|\SS^1|\Big|\int_{\R^6}\int_0^{\f\pi 2}\sin\theta\Big(\f1{\cos^2\f\theta 2}B(\f{|v-v_*|}{\cos\f\theta 2},\cos\theta)-B(|v-v_*|,\cos\theta)\Big)g_*f^2 d\theta \d v _*\d v \Big|\\
        &\ls \int_{\R^6}|v-v_*|^\gamma g_*f^2\d v _*\d v =\int_{\R^6}\Big((1-\psi)|\cdot|^\gamma\Big)(v-v_*)g_*f^2\d v _*\d v  \\
        &+\int_{\R^6}\Phi^{\gamma}_{-1}(|v-v_*|)g_*f^2\d v _*\d v 
        \ls \|g\|_{L^1}\|f\|^2_{L^2}+\mathcal{R}(g,f,f),
    \end{aligned}
\end{equation*}
where $\Phi^{\gamma}_{-1}$ is defined in \eqref{DefPhi} and we denote the last term by $\mathcal{R}(g,f,f)$.

Thus we have 
\begin{equation*}
    \begin{aligned}
        L_1(j,k)\leq -\mathcal{E}^\gamma_f(\F_j\cP_kf)+\mathcal{R}(f,\F_j\cP_kf,\F_j\cP_k f)+\|f\|_{L^1}\|\F_j\cP_kf\|^2_{L^2}.
    \end{aligned}
\end{equation*}
On one hand, from Proposition \ref{prop:coercivity} (choose $\mathbf{A}=0$), there exists constants $\kappa, C>0$ such that
\begin{equation}
\begin{aligned}
     -\mathcal{E}^\gamma_f(\F_j\cP_kf)&\ls-\kappa\|\cF_j\cP_kf\|_{H^s_{\gamma/2}}^2+C\|\cF_j\cP_kf\|_{L^2_{\gamma/2}}^2.
\end{aligned}    
\end{equation}
On the other hand, by Lemma \ref{mathcalR}, the fact that $\zeta\leq \gamma+2s+\f32$ and $l\geq-\gamma$, we have 
\begin{equation*}
    \mathcal{R}(f,\F_j\cP_kf,\F_j\cP_k f)\ls M^{-\a}\|f\|_{H^{-\zeta,\a}_{l}}\|\F_j\cP_k f\|_{H^s_{\gamma/2}}^2+C_{M,\a}\|f\|_{H^{-\zeta,\a}_{l}}\|\F_j\cP_k f\|^2_{L^2_{\gamma/2}},\quad\forall M,\a>0.
\end{equation*}
It leads to
\begin{equation*}
    \begin{aligned}
        L_1(j,k)&\leq -(\kappa-M^{-\a}\|f\|_{H^{-\zeta,\a}_{l}})\|\cF_j\cP_kf\|_{H^s_{\gamma/2}}^2
+C_{M,\a}(\|f\|_{H^{-\zeta,\a}_{l}}+1)\|\F_j \cP_kf\|^2_{L^2_{\gamma/2}},\quad\forall M,\a>0.
    \end{aligned}
\end{equation*}
On one hand, in view of Lemma \ref{le1.4}, it holds that
\begin{equation*}
    \sum_{j,k=-1}^\infty 2^{2l k}2^{2n j}j^{2\a}\|\F_j\cP_k f\|^2_{H^s_{\gamma/2}}\sim \|f\|^2_{H^{s+n,\a}_{l+\gamma/2}}\sim  \mY_{n,\a,l}(f).
\end{equation*}
On the other hand, using interpolation and Young inequalities, we have
\begin{equation*}
    \begin{aligned}
    2^{2nj}\|\F_j \cP_kf\|^2_{L^2_{\gamma/2}}\leq&2^{-2\theta_1\zeta j}2^{2(1-\theta_1)(n+s)j}\|\F_j \cP_kf\|^2_{L^2_{\gamma/2}}
    \leq \varepsilon2^{2(n+s)j}\|\F_j \cP_kf\|^2_{L^2_{\gamma/2}}+C_{\varepsilon}2^{-2\zeta j}\|\F_j \cP_kf\|^2_{L^2_{\gamma/2}},
\end{aligned}
\end{equation*}
where $\theta_1=\frac{s}{n+s+\zeta}$.
Therefore, we can deduce that 
\begin{equation}\label{L1jk}
    \begin{aligned}
        \sum_{j,k=-1}^\infty 2^{2l k}2^{2n j}j^{2\a}L_1(j,k)\leq -\big(\kappa-(M^{-\a}+\varepsilon)\mX_{-\zeta,\a,l}(f)^{\f12}-\varepsilon\big)\mY_{n,\a,l}(f)\\+C_{M,\a,n,\varepsilon}(\mX_{-\zeta,\a,l}(f)+\mX_{-\zeta,\a,l}(f)^{\f32}),\quad\forall M,\a,\varepsilon>0,n\geq-\zeta.
    \end{aligned}
\end{equation}

\textbf{Step 2: Estimates of $L_2(j,k)$.} Recall that
\begin{equation*}
    L_2(j,k)=(\cF_jQ(f,\cP_kf)-Q(f,\cF_j\cP_kf),\cF_j\cP_kf)_v.
\end{equation*}
Thanks to Corollary \ref{cor1}, we have
\begin{equation*}
    \begin{aligned}
        &\sum_{j,k=-1}^\infty 2^{2l k}2^{2n j}j^{2\a}L_2(j,k)
    \ls\sum_{j,k=-1}^\infty2^{(2l+\gamma+2s-1)k}2^{2(s+n-\frac{1}{2})j}j^{2\a}\|f\|_{L_2^1}\|\mF_j\mP_kf\|_{L^2}^2\\
    &+\sum_{j,k=-1}^\infty2^{(2l+\gamma+2s-2N)k}2^{(2s+2n-2N)j}j^{2\a}\|f\|_{L^1_2}\|f\|_{H^{-N}_{-N}}^2
    +\sum_{j,k=-1}^\infty\sum_{i=1}^32^{2l k}2^{2n j}j^{2\a}\mB_i(f),
\end{aligned} 
\end{equation*}
where we denote $\mB_i(f,f)$ by $\mB_i(f)$ for $i=1,2,3$.

For the first term on the right-hand side, by \eqref{L1l}, Lemma \ref{le1.4},  interpolation and Young inequalities, we have
\begin{equation}\label{third}
    \begin{aligned}
         &\sum_{j,k=-1}^{\infty}2^{(2l+\gamma+2s-1)k}2^{2(s+n-\frac{1}{2})j}j^{2\a}\|f\|_{L_2^1}\|\mF_j\mP_kf\|_{L^2}^2
    \\\leq&\sum_{j,k=-1}^{\infty}2^{2\theta_2(l+\frac{\gamma}{2}+(2s-1)(n+s+\zeta)) k}2^{2(1-\theta_2)(l+\f\gamma 2) k}2^{-2\theta_2\zeta j}2^{2(1-\theta_2)(s+n)j}j^{2\a}\|\mF_j\mP_kf\|_{L^2}^2\\
    \leq &\sum_{j,k=-1}^\infty\big( \varepsilon 2^{(2l+\gamma) k}2^{2(s+n)j}j^{2\a}+C_{\varepsilon}2^{2(l+\f\gamma 2+(s-\f12)/\theta_2) k}2^{-2\zeta j}j^{2\a}\big)\|\mF_j\mP_kf\|_{L^2}^2
    \\\leq& \varepsilon\mY_{n,\a,l}(f)+C_{\varepsilon,n}\mX_{-\zeta,\a,l+\f\gamma 2+(2s-1)(n+s+\zeta)}(f),\quad\forall \varepsilon>0,
    \end{aligned}
\end{equation}
where {$\theta_2=\frac{1}{2(n+s+\zeta)}$ and $(s-\f12)/\theta_2=(2s-1)(n+s+\zeta)$.}

For the second term, it is also easy to get that
\begin{equation}\label{forth}
 \begin{aligned}
    \sum_{j,k=-1}^{\infty}2^{(2l+\gamma+2s-2N)k}2^{(2s+2n-2N)j}j^{2\a}\|f\|_{L^1_2}\|f\|_{H^{-N}_{-N}}^2\ls C_N\|f\|_{H^{-N}_{-N}}^2\ls C_N\mX_{-\zeta,\a,l}(f),
\end{aligned}   
\end{equation}
if we choose suitably large $N\in\N$.

It remains to handle the terms containing $\mB_i(f),i=1,2,3$. The mechanism for all of these sums is the same. After redistributing the exponential weights, each summand is written as a product of energy factors ($\mX$-type) and dissipation factors ($\mY$-type) times a summable tail. Splitting the frequency summation at a large threshold $M$, the high-frequency region carries the polynomial smallness $M^{-\a}$ (coming from the weight $j^{\a}$, or exponential smallness from a strict exponential gap) and is absorbed by the dissipation, while the finitely many low frequencies produce the harmless cubic term $C_{M,\a}\mX^{\f32}$. This is precisely the point where the logarithmic weight $j^{\a}$ acts as a substitute for the missing $\varepsilon$ of regularity in the critical space. Specifically,

{\bf\noindent $\bullet$ Estimate of the term containing $\mB_1(f)$.} Recall from Corollary \ref{cor1} and \eqref{bound1} that
\begin{equation*}
\begin{aligned}
& \mB_1(f)= \sum_{p>j+3N_0}\Big( 2^{-(\gamma+4)p}2^{\frac{5}{2}j}\|\mP_k\mF_pf\|_{L^2}\|\mF_p
        \mP_kf\|_{L^2}\|\mF_j\mP_kf\|_{L^2}
        +\sum_{k\leq 2N_0}2^{-(\gamma+4)p}2^{\frac{5}{2}j}\|\mU_{N_0}\mF_pf\|_{L^2}\\
        &\times\|\mF_p\mU_{N_0}f\|_{L^2}\|\mF_j\mU_{N_0}f\|_{L^2}
        +C_N2^{-\zeta p}p^\a 2^{(2s-1)j}2^{-k} (\|\mF_p\mP_k f\|_{L^2}+2^{-pN-kN}\|f\|_{H^{-N}_{-N}})
      (\|\mF_p\mP_kf\|_{L^2}\\
      &+2^{-pN-kN}\|f\|_{H^{-N}_{-N}})(\|\mF_j\mP_k f\|_{L^2}+2^{-jN-kN}\|f\|_{H^{-N}_{-N}})
        + \sum_{k\leq 2N_0}C_N 2^{-\zeta p}p^\a 2^{(2s-1)j}(\|\mF_p \mU_{N_0}f\|_{L^2}\\
        &+2^{-pN}\|f\|_{H^{-N}_{-N}})
         (\|\mF_p\mU_{N_0}f\|_{L^2}+2^{-pN}\|f\|_{H^{-N}_{-N}})(\|\mF_j\mU_{N_0}f\|_{L^2}+2^{-jN}\|f\|_{H^{-N}_{-N}})\Big).
\end{aligned}
\end{equation*}
Here we only estimate the first and third term, the rest terms can be handled similarly. For the first term, since $l\geq-\gamma$ which implies $2l\leq 3l+\gamma$, then 
\begin{equation*}
  \begin{aligned}
    &\sum_{j,k=-1}^{\infty}\sum_{p>j+3N_0}2^{-(\gamma+4)p}2^{2l k}2^{2n j}j^{2\a}2^{\frac{5}{2}j}\|\mP_k\mF_pf\|_{L^2}\|\mF_p
    \mP_kf\|_{L^2}\|\mF_j\mP_kf\|_{L^2}\\
    \leq&\sum_{j,k=-1}^{\infty}\sum_{p>j+3N_0}(2^{(l+\f\gamma 2 )k}2^{(s+n)p}p^{\a}\|\mP_k\mF_pf\|_{L^2})(2^{(l+\f\gamma 2) k}2^{(s+n)p}p^{\a}\|\mF_p
    \mP_kf\|_{L^2})\\
    &\times(2^{l k}2^{-\zeta j}j^{\a}\|\mF_j\mP_kf\|_{L^2})2^{(2n+\frac{5}{2}+\zeta)(j-p)}p^{-2\a}j^{\a}.
\end{aligned}  
\end{equation*}
Noticing that $(-\gamma-4)p+(\frac{5}{2}+2n)j=2(n+s)p-\zeta j+[-(\gamma+2s+\frac{3}{2})p+\zeta j]+(2n+\frac{5}{2})(j-p)\leq2(n+s)p-\zeta j+(2n+\frac{5}{2}+\zeta)(j-p)$ and $p^{-2\a}j^\a\leq j^{-\a}$ since $\zeta\leq \gamma+2s+\f32$ and $p>j$, we split the above sum into three parts: \(j\ge M\); \(j<M\) and \(p\ge 2M\); and \(p<2M\). For the first and second case, we have $j^{-\a}\leq M^{-a}$ and $2^{(\f52+2n+\zeta)(j-p)}\leq 2^{-(\f52-\zeta)M}$ respectively since $n\geq -\zeta$. Then, by Cauchy inequality and Lemma \ref{le1.4}, it has the bound
\begin{equation*}
\begin{aligned}
      &(M^{-\a}+2^{-(\f52-\zeta)M})\big(\sum_{p,k=-1}^\infty2^{2(l+\f\gamma 2 )k}2^{2(s+n)p}p^{2\a}\|\mP_k\mF_pf\|^2_{L^2}\big)\\
      &\times\big(\sum_{j,k=-1}^\infty2^{2lk}2^{-2\zeta j}j^{2\a}\|\mP_k\mF_jf\|^2_{L^2}\big)^{\f12}
    \ls (M^{-\a}+2^{-(\f52-\zeta)M})\mX^{\f12}_{-\zeta,\a,l}(f)\mY_{n,\a,l}(f).
\end{aligned}
\end{equation*}
While for the third case, we have $j<p<2M$ and it has the bound $ C_{M,\a}\mX^{\f32}_{-\zeta,l}(f)$. Therefore, we obtain the bound
\begin{equation*}
    \begin{aligned}
         (M^{-\a}+2^{-(\f52-\zeta)M})\mX^{\f12}_{-\zeta,\a,l}(f)\mY_{n,\a,l}(f)+C_{M,\a}\mX^{\f32}_{-\zeta,\a,l}(f).
    \end{aligned}
\end{equation*}

For the third term, we have 
\begin{equation*}
   \begin{aligned}
    &\sum_{j,k=-1}^{\infty}\sum_{p>j+3N_0}2^{2lk}2^{2n j}j^{2\a}2^{-\zeta p}p^\a 2^{(2s-1)j}2^{-k}\|\mF_p
    \mP_kf\|_{L^2}\|\mF_p
    \mP_kf\|_{L^2}\|\mF_j\mP_kf\|_{L^2}\\
    \leq&\sum_{j,k=-1}^{\infty}\sum_{p>j+3N_0}(2^{(l+\f\gamma2) k}2^{(s+n)p}p^{\a}\|\mF_p\mP_kf\|_{L^2})(2^{(l+\f\gamma2) k}2^{(s+n)p}p^{\a}\|\mF_p
    \mP_kf\|_{L^2})\\
    &\times(2^{l k}2^{-\zeta j}j^{\a}\|\mF_j\mP_kf\|_{L^2})2^{(2n+\zeta+2s)(j-p)}2^{-j}p^{-\a}j^{\a}2^{-k}.
\end{aligned} 
\end{equation*}
Noticing that $\zeta\leq1<2s,2n+\zeta+2s\geq2s-\zeta>0$, then we can use the same method as before,  consider three cases: \(j\ge M\); \(j<M\) and \(p\ge 2M\); and \(p<2M\) to get the bound
\begin{equation*}
    (2^{-M}M^\a+2^{(\zeta-2s)M})\mX_{-\zeta,\a,l}(f)^{\f12}\mY_{n,\a,l}(f)+C_{M,\a}\mX_{-\zeta,\a,l}(f)^{\f32}.
\end{equation*}

The rest terms can be handled by the same manner. Since $2^{-(\f52-\zeta)M}+2^{-M}M^\a+2^{(\zeta-2s)M}\ls M^{-\a}$, so we conclude that
\begin{equation}\label{sumB1}
    \sum_{j,k=-1}^\infty2^{2l k}2^{2n j}j^{2\a}\mB_1(f)\ls M^{-\a}\mX^{\f12}_{-\zeta,\a,l}(f)\mY_{n,\a,l}(f)+C_{M,\a}\mX^{\f32}_{-\zeta,\a,l}(f).
\end{equation}

{\bf\noindent $\bullet$ Estimate of the term containing $\mB_2(f)$.} We can obtain \(\mB_2(f)\) from Corollary \ref{cor1} and \eqref{bound2}, and we only treat the first term since the others are similar. 
We have
\begin{equation*}
   \begin{aligned}
    &\sum_{j,k=-1}^{\infty}\sum_{p<j-3N_0}2^{2lk}2^{2nj}j^{2\a}2^{-(\gamma+\f32)j}\|\mP_k \mF_jf\|_{L^2}\|\mF_p\mP_k f\|_{L^2}\|\mF_j \mP_k f\|_{L^2}\\
    =&\sum_{j,k=-1}^{\infty}\sum_{p<j-3N_0}(2^{(l+\f\gamma2) k}2^{(s+n)j}j^{\a}\|\mP_k\mF_jf\|_{L^2})(2^{l k}2^{-\zeta p}p^{\a}\|\mF_p
    \mP_kf\|_{L^2})\\
    &\times(2^{(l+\f\gamma 2)k}2^{(s+n) j}j^{\a}\|\mF_j\mP_kf\|_{L^2})2^{(\zeta-\gamma-2s-\frac{3}{2})j}2^{\zeta(p-j)}p^{-\a}.
\end{aligned} 
\end{equation*}
Noticing that $0<\zeta\leq \gamma+2s+\f32$, we split the above sum into three parts: \(p\ge M\); \(p<M\) and \(j\ge 2M\); and \(j<2M\). Then we can get the bound
\begin{equation*}
    \begin{aligned}
         (M^{-\a}+2^{-\zeta M})\mX^{\f12}_{-\zeta,\a,l}(f)\mY_{n,\a,l}(f)+C_{M,\a}\mX^{\f32}_{-\zeta,\a,l}(f).
    \end{aligned}
\end{equation*}
 After estimating term by term, we summarize to obtain
\begin{equation}\label{sumB2}
    \sum_{j,k=-1}^\infty2^{2l k}2^{2n j}j^{2\a}\mB_2(f)\ls M^{-\a}\mX^{\f12}_{-\zeta,\a,l}(f)\mY_{n,\a,l}(f)+C_{M,\a}\mX^{\f32}_{-\zeta,\a,l}(f).
\end{equation}

{\bf\noindent $\bullet$ Estimate of the term containing $\mB_3(f)$.} We can obtain \(\mB_3(f)\) from Corollary \ref{cor1} and \eqref{bound3}, and we also treat the first term. We have
\begin{equation*}
    \begin{aligned}
        &\sum_{j,k=-1}^\infty\sum_{q<j+7N_0}2^{2l k}2^{2n j}j^{2\a}2^{-(\zeta-1)q}2^{(2s-1)j}\|\mP_k\mF_q f\|_{L^2}\|\mF_j\mP_kf\|^2_{L^2}\\
        =&\sum_{j,k=-1}^\infty(2^{(2l+\gamma) k}2^{2(s+n)j}j^{2\a}\|\mF_j\mP_kf\|^2_{L^2})\sum_{q<j+7N_0}(2^{l k}2^{-\zeta q}q^\a\|\mP_k\mF_q f\|_{L^2})q^{-\a}2^{q-j}.
    \end{aligned}
\end{equation*}
We can consider three cases as before, that is \(q\ge M\); \(q<M\) and \(j\ge 2M\); and \(j<2M\) to get the following bound
\begin{equation*}
    (M^{-\a}+2^{- M})\mX_{-\zeta,\a,l}(f)^{\f12}\mY_{n,\a,l}(f)+C_{M,\a}\mX_{-\zeta,\a,l}(f)^{\f32}.
\end{equation*}
And we can conclude that
\begin{equation}\label{sumB3}
    \sum_{j,k=-1}^\infty2^{2l k}2^{2n j}j^{2\a}\mB_3(f)\ls M^{-\a}\mX_{-\zeta,\a,l}(f)^{\f12}\mY_{n,\a,l}(f)+C_{M,\a}\mX_{-\zeta,\a,l}(f)^{\f32}.
\end{equation}

Combining \eqref{sumB1}, \eqref{sumB2} and \eqref{sumB3}, we obtain
\begin{equation}\label{mBif}
    \sum_{j,k=-1}^\infty \sum_{i=1}^32^{2l k}2^{2n j}j^{2\a}\mB_i(f)\ls M^{-\a}\mX_{-\zeta,\a,l}(f)^{\f12}\mY_{n,\a,l}(f)+C_{M,\a}\mX_{-\zeta,\a,l}(f)^{\f32}.
\end{equation}

Summarizing \eqref{third}, \eqref{forth} and \eqref{mBif}, we have 
\begin{equation}\label{L2jk}
    \begin{aligned}
        \sum_{j,k=-1}^\infty 2^{2l k}2^{2n j}j^{2\a}L_2(j,k)\leq \big(M^{-\a}\mX_{-\zeta,\a,l}(f)^{\f12}+\varepsilon\big)\mY_{n,\a,l}(f)+C_{M,\a,\varepsilon,n}(\mX_{-\zeta,\a,l}(f)+\mX_{-\zeta,\a,l}(f)^{\f32})\\+C_{\varepsilon,n}\mX_{-\zeta,\a,l+\frac{\gamma}{2}+(2s-1)(n+s+\zeta)}(f),\quad \forall \varepsilon,M,\a>0,n\geq-\zeta.
    \end{aligned}
\end{equation}

\textbf{Step 3: Estimates of $L_3(j,k)$.} Recall that
\begin{equation*}
    L_3(j,k)=(\cP_kQ(f,f)-Q(f,\cP_k f),\cF_j^2\cP_kf)_v.
\end{equation*}
Thanks to \eqref{PkQf}, we have
\begin{equation*}
    \begin{aligned}
        \sum_{j,k=-1}^\infty 2^{2l k}2^{2n j}j^{2\a}L_3(j,k)
    \leq\sum_{j,k=-1}^\infty\sum_{i=1}^42^{2l k}2^{2n j}j^{2\a}\mC_i(f),
\end{aligned} 
\end{equation*}
where we denote $\mC_i(f,f)$ by $\mC_i(f), i=1,2,3,4.$ The estimation method is essentially the same as $\mB_i$.

\medskip

{\bf \noindent $\bullet$ Estimate of the term containing $\mC_1(f)$.} Recall from \eqref{bound1ofD}(choose $\Delta_{1,1}^{1,(1)}$) that
 \begin{equation*}
        \begin{aligned}
&\mC_1(f)\leq2^{(\gamma+s)k}2^{sj}\|\mS_j\mU_{k}f\|_{L^1}\|\mF_j\mP_kf\|_{L^2}\|\mF_j\mP_kf\|_{L^2}+C_N\sum_{b>j+2N_0}2^{-bN-kN}\|\mF_b\mU_{k}f\|_{L^1}\\
&\times\|\mF_b\mP_kf\|_{L^2}\|\mF_j\mP_kf\|_{L^2}
            +C_N\sum_{b<j-2N_0}2^{-jN-kN}\|\mF_j\mU_{k}f\|_{L^1}\|\mF_b\mP_kh\|_{L^2}\|\mF_j\mP_kf\|_{L^2}.
        \end{aligned}
    \end{equation*}
We take the first term as a typical one and we have
\begin{equation*}
    \begin{aligned}
       & \sum_{j,k=-1}2^{2lk}2^{2n j}j^{2\a}2^{(\gamma+s)k}2^{sj}\|\mS_j\mU_{k}f\|_{L^1}\|\mF_j\mP_kf\|_{L^2}\|\mF_j\mP_kf\|_{L^2}\\
       \ls &\sum_{j,k=-1}\|\mS_j\mU_{k}f\|_{L^1}(2^{2\theta_3(l+\frac{\gamma}{2}+n+s+\zeta)k}2^{-2\theta_3\zeta j}j^\a\|\mF_j\mP_kf\|_{L^2})(2^{2(1-\theta_3)(l+\f\gamma2) k}2^{2(1-\theta_3)(s+n)j}j^{\a}\|\mF_j\mP_kf\|_{L^2})\\
       \ls &~\|f\|_{L^1}\mX_{-\zeta,\a,l+\frac{\gamma}{2}+n+s+\zeta}(f)^{\theta_3}\mY_{n,\a,l}(f)^{1-\theta_3}\leq\varepsilon \mY_{n,\a,l}(f)+C_{\varepsilon,n} \mX_{-\zeta,\a,l+\frac{\gamma}{2}+n+s+\zeta}(f),\quad\forall \varepsilon>0,
    \end{aligned}
\end{equation*}
where $\theta_3=\frac{s}{2(n+s+\zeta)}$ and we use the fact  $\|\mS_j\mU_k f\|_{L^1}\ls \|f\|_{L^1}$ for any $k,j\geq -1$. The rest terms is obvious the lower order terms and we can derive that
\begin{equation}\label{00mC1}
   \sum_{j,k=-1}^\infty2^{2l k}2^{2n j}j^{2\a}\mC_1(f)\leq \varepsilon \mY_{n,\a,l}(f)+C_{\varepsilon,n} \mX_{-\zeta,\a,l+\frac{\gamma}{2}+n+s+\zeta}(f),\quad\forall \varepsilon>0,n\geq -\zeta.
\end{equation}

{\bf\noindent $\bullet$ Estimate of the term containing $\mC_2(f)$.} We can obtain $\mC_2(f)$  from \eqref{bound2ofD}(choose $\Delta_{2,1}^{1,(1)}$) that
  \begin{equation*}
        \begin{aligned}
            &\mC_2(f)\leq 2^{-sk}2^{sj}\|\mS_j\mP_{k}f\|_{L^1}\|\mF_j\mP_kf\|_{L^2}\|\mF_j\mP_kf\|_{L^2}+C_N\sum_{b>j+2N_0}2^{-bN}\|\mF_b\mP_{k}f\|_{L^1}\|\mF_b\mP_kf\|_{L^2}\\
            &\times\|\mF_j\mP_kf\|_{L^2}+C_N\sum_{b<j-2N_0}2^{-jN}\|\mF_j\mP_{k}f\|_{L^1}\|\mF_b\mP_kf\|_{L^2}\|\mF_j\mP_kf\|_{L^2}+2^{-k}2^{(2s-1)j}\\
            &\times \|\mS_j\mP_{k}f\|_{L^2}\|\mF_j\mP_kf\|_{L^2}\|\mF_j\mP_kf\|_{L^2}+C_N\sum_{b>j+2N_0}2^{-k}(2^{-(\gamma+\f52)b}+2^{-\zeta b}b^\a 2^{(2s-1)j})\|\mF_b\mP_{k}f\|_{L^2}\\
            &\times(\|\mF_b\mP_kf\|_{L^2}+2^{-bN-kN}\|f\|_{H^{-N}_{-N}})(\|\mF_j\mP_kf\|_{L^2}+2^{-jN-kN}\|f\|_{H^{-N}_{-N}})+C_N\sum_{b<j-2N_0}2^{-k}(2^{-(\gamma+\f52)j}\\
            &+2^{\f12 b}b^\a 2^{-(\gamma+3)j})\|\mF_j\mP_{k}f\|_{L^2}(\|\mF_b\mP_kf\|_{L^2}+2^{-bN-kN}\|f\|_{H^{-N}_{-N}})(\|\mF_j\mP_kf\|_{L^2}+2^{-jN-kN}\|f\|_{H^{-N}_{-N}}).
        \end{aligned}
    \end{equation*}
    We take one of the fifth term as a typical one. By the same method before, we can derive that
    \begin{equation*}
        \begin{aligned}
            &\sum_{j,k=-1}^\infty\sum_{b>j+2N_0}2^{2l k}2^{2n j}j^{2\a}2^{-k}2^{-\zeta b}b^\a 2^{(2s-1)j}\|\mF_b\mP_kf\|_{L^2}^2\|\mF_j\mP_kf\|_{L^2}\\
            = &\sum_{j,k=-1}^\infty\sum_{b>j+2N_0}(2^{2(l+\f\gamma2) k}2^{2(s+n)b}b^{2\a}\|\mF_b\mP_kf\|^2_{L^2})(2^{l k}2^{-\zeta j}j^\a\|\mF_j\mP_k f\|_{L^2})2^{(2n+\zeta+2s)(j-b)}2^{-j}j^\a b^{-\a}\\
            \ls& ~M^{-\a}\mX_{-\zeta,\a,l}(f)^{\f12}\mY_{n,\a,l}(f)+C_{M,\a}\mX_{-\zeta,\a,l}(f)^{\f32},
        \end{aligned}
    \end{equation*}
    where we also use the fact $\zeta<2s$.

    Other terms can be handled similarly and we can get 
\begin{equation}\label{00mC2}
   \sum_{j,k=-1}^\infty2^{2l k}2^{2n j}j^{2\a}\mC_2(f)\leq M^{-\a}\mX_{-\zeta,\a,l}(f)^{\f12}\mY_{n,\a,l}(f)+C_{M,\a}\mX_{-\zeta,\a,l}(f)^{\f32}\quad\forall M>0.
\end{equation}

{\bf\noindent $\bullet$ Estimate of the term containing $\mC_3(f)$.}  Recall \eqref{bound3ofD}(choose $\Delta_{3,1}^{1,(1)}$) that
\begin{equation*}
        \begin{aligned}
            &\mC_3(f)\leq \sum_{m\geq k-2N_0}2^{(\gamma+2s)m-sk}2^{sj}\|\mS_j\mP_{m}f\|_{L^1}\|\mF_j\mU_mf\|_{L^2}\|\mF_j\mP_kf\|_{L^2}+C_N\sum_{m\geq k-2N_0}\sum_{b>j+2N_0}2^{-bN}\\
            &\times 2^{-mN}\|\mF_b\mP_{m}f\|_{L^1}\|\mF_b\mU_mf\|_{L^2}\|\mF_j\mP_kf\|_{L^2}+C_N\sum_{m\geq k-2N_0}\sum_{b<j-2N_0}2^{-jN-mN}\|\mF_j\mP_{m}f\|_{L^1}\|\mF_b\mU_mf\|_{L^2}\\
            &\times\|\mF_j\mP_kf\|_{L^2}+\sum_{k\leq 2N_0}2^{-k}2^{(2s-1)j}\|\mS_j\mU_{N_0}f\|_{L^2}\|\mF_j\mU_{N_0}f\|_{L^2}\|\mF_j\mP_kf\|_{L^2}\\
            &+C_N\sum_{k\leq 2N_0}\sum_{b>j+2N_0}2^{-k}(2^{-(\gamma+\f52)b}+2^{-\zeta b}b^\a 2^{(2s-1)j})\|\mF_b\mU_{N_0}f\|_{L^2}(\|\mF_b\mU_{N_0}f\|_{L^2}+2^{-bN-kN}\|f\|_{H^{-N}_{-N}})\\
            &\times(\|\mF_j\mP_kf\|_{L^2}+2^{-jN-kN}\|f\|_{H^{-N}_{-N}})+C_N\sum_{k\leq 2N_0}\sum_{b<j-2N_0}2^{-k}(2^{-(\gamma+\f52)j}+2^{\f12b}b^\a 2^{-(\gamma+3)j})\|\mF_j\mU_{N_0}f\|_{L^2}\\
            &\times(\|\mF_b\mU_{N_0}f\|_{L^2}+2^{-bN-kN}\|f\|_{H^{-N}_{-N}})(\|\mF_j\mP_kf\|_{L^2}+2^{-jN-kN}\|f\|_{H^{-N}_{-N}}).
        \end{aligned}
    \end{equation*}
We take the first term as an example, which is different from the previous estimate. We have
\begin{equation*}
    \begin{aligned}
        &\sum_{j,k=-1}^\infty\sum_{m\geq k-2N_0}2^{2l k}2^{2n j}j^{2\a}2^{(\gamma+2s)m-sk}2^{sj}\|\mS_j\mP_{m}f\|_{L^1}\|\mF_j\mU_mf\|_{L^2}\|\mF_j\mP_kf\|_{L^2}\\
        \leq &\sum_{j,k=-1}^\infty\sum_{m\geq k-2N_0}(2^{-\frac{\gamma}{2}(1-\theta_4)m}2^{(l-s) k}2^{(\gamma+2s)m}\|\mS_j\mP_{m}f\|_{L^1})(2^{(s+n) j}j^\a\|\mF_j\mU_mf\|_{L^2})\\\times&(2^{\theta_4lk}2^{-\theta_4\zeta j}2^{(1-\theta_4)(l+\gamma/2)k}2^{(1-\theta_4)(n+s) j}j^\a\|\mF_j\mP_kf\|_{L^2})\\
        \ls&~\|f\|_{L^1_{l+\gamma+s-\frac{\gamma}{2}(1-\theta_4)}}\mX_{-\zeta,\a,l}(f)^{\frac{\theta_4}{2}}\mY_{n,\a,l}(f)^{\frac{1+1-\theta_4}{2}}\\\leq& \varepsilon \|f\|_{L^1_{l+\gamma+s-\frac{\gamma}{2}(1-\theta_4)}}\mY_{n,\a,l}(f)+C_{\varepsilon,n}~\|f\|_{L^1_{l+\gamma+s-\frac{\gamma}{2}(1-\theta_4)}} \mX_{-\zeta,\a,l}(f),
    \end{aligned}
\end{equation*}
where $\theta_4=\frac{s}{n+\zeta+s}$ and we use the fact $-\f\gamma 2(1-\theta_4)m>-\f\gamma 2(1-\theta_4) k$, $2^{(s+n)j}j^\a\|\mF_j\mU_mf\|_{L^2}\ls \mY_{n,\a,l}(f)^{\f12}$  and $\sum_{m\geq k-2N_0}2^{(l+\gamma+s-\frac{\gamma}{2}(1-\theta_4))m}\|\mS_j\mP_mf\|_{L^1}\ls \|f\|_{L^1_{l+\gamma+s-\frac{\gamma}{2}(1-\theta_4)}},\forall j,k\geq -1$.

Other terms can be handled similarly and we can get that
\begin{equation}\label{00mC3}
   \begin{aligned}
       \sum_{j,k=-1}^\infty2^{2l k}2^{2n j}j^{2\a}\mC_3(f)\leq\big(M^{-\a}\mX_{-\zeta,\a,l}(f)^{\f12}+\varepsilon\|f\|_{L^1_{l+\gamma+s-\frac{\gamma}{2}(1-\theta_4)}}\big)\mY_{n,\a,l}(f)\\+C_{M,\a,\varepsilon,n}(\mX_{-\zeta,\a,l}(f)+\mX_{-\zeta,\a,l}(f)^{\f32})+C_{\varepsilon,n}~\|f\|_{L^1_{l+\gamma+s-\frac{\gamma}{2}(1-\theta_4)}} \mX_{-\zeta,\a,l}(f).
   \end{aligned}
\end{equation}

{\bf\noindent $\bullet$ Estimate of the term containing $\mC_4(f)$.} From \eqref{mC4}, since $\zeta<1<2s$ and $n\geq -\zeta$, it is easy to get that
\begin{equation}\label{00mC4}
   \sum_{j,k=-1}^\infty2^{2lk}2^{2n j}j^{2\a}\mC_4(f)\leq C_N M^{-\a}\mX_{-\zeta,\a,l}(f)^{\f12}\mY_{n,\a,l}(f)+C_{M,\a,\varepsilon,l}(\mX_{-\zeta,\a,l}(f)+\mX_{-\zeta,\a,l}(f)^{\f32}).
\end{equation}

Combining \eqref{00mC1}, \eqref{00mC2}, \eqref{00mC3} and \eqref{00mC4}, we can conclude 
\begin{equation}\label{L3jk}
    \begin{aligned}
        &\sum_{j,k=-1}^\infty 2^{2l k}2^{2n j}j^{2\a}L_3(j,k)\leq C_N\big(M^{-\a}\mX_{-\zeta,\a,l}(f)^{\f12}+\varepsilon+\varepsilon\|f\|_{L^1_{l+\gamma+s-\frac{\gamma}{2}(1-\theta_4)}}\big)\mY_{n,\a,l}(f)\\
        &+C_{M,\a,\varepsilon,n}(\mX_{-\zeta,\a,l}(f)+\mX_{-\zeta,\a,l}(f)^{\f32})+C_{\varepsilon,n}(\|f\|_{L^1_{l+\gamma+s-\frac{\gamma}{2}(1-\theta_4)}} \mX_{-\zeta,\a,l}(f)+ \mX_{-\zeta,\a,l+\frac{\gamma}{2}+n+s+\zeta}(f)),
    \end{aligned}
\end{equation}
for any  $M,\a,\varepsilon>0,n\geq-\zeta$ and $\theta_4=\f s{n+\zeta+s}$.

\medskip

Patching together the above estimates \eqref{L1jk}, \eqref{L2jk} and \eqref{L3jk},  we can get from \eqref{energyL123} that (note that $l+\frac{\gamma}{2}+(2s-1)(n+s+\zeta)<l+\frac{\gamma}{2}+n+s+\zeta$)
\begin{equation}\label{energy}
    \begin{aligned}
        &\f12\f{\d}{\d t }\mX_{n,\a,l}(f)(t)+\big(\kappa-\varepsilon-M^{-\a}\mX_{-\zeta,\a,l}(f)(t)^{\f12}-\varepsilon\|f\|_{L^1_{l+\gamma+s-\frac{\gamma}{2}(1-\theta_4)}}\big)\mY_{n,\a,l}(f)(t)\\&\leq C_{M,\a,\varepsilon,n}(1+\mX_{-\zeta,\a,l}(f)(t)^{\f32})+C_{\varepsilon,n} \mX_{-\zeta,\a,l+\frac{\gamma}{2}+n+s+\zeta}(f)(t)+C_{\varepsilon,n}~\|f\|_{L^1_{l+\gamma+s-\frac{\gamma}{2}(1-\theta_4)}} \mX_{-\zeta,\a,l}(f)(t).
    \end{aligned}
\end{equation}
Let $n=-\zeta$, then $\theta_4=1$, using $\frac{\gamma}{2}+s<0$ and $\sup_{t}\|f(t)\|_{L^1_{l+\gamma+s}}<C$, we have
\begin{equation*}
    \f12\f{\d}{\d t }\mX_{-\zeta,\a,l}(f)(t)+\big(\kappa-\varepsilon-\varepsilon C-M^{-\a}\mX_{-\zeta,\a,l}(f)(t)^{\f12}\big)\mY_{-\zeta,\a,l}(f)(t)\leq C_{M,\a,\varepsilon,l}(1+\mX_{-\zeta,\a,l}(f)(t)^{\f32}).
\end{equation*}
Thus for any $f_0$ verifying $0<\mX_{-\zeta,\a,l}(f_0)<C^2_0$, we can choose $\varepsilon<\f\kappa {8(1+C)}$ and $M>(4C_0/\kappa)^{1/\a}$ such that for some $T>0$ depending on $C_0,\kappa,l,\a$ such that
\begin{equation}\label{XYbound}
  \begin{aligned}
    \underset{t\in [0,T]}{\sup}\mX_{-\zeta,\a,l}(f)(t)+\f\kappa 2\int_0^T\mY_{-\zeta,\a,l}(f)(t)\d t< 2C_0^2.
\end{aligned}  
\end{equation}
That is, there exists a solution $f$ of \eqref{Boltzmann} such that $f\in L^{\infty}\big([0,T],H^{-\zeta,\a}_{l}\big)\ccap  L^2\big([0,T],H^{s-\zeta,\a}_{l+\frac{\gamma}{2}}\big)$.

We complete the proof of the existence part of this theorem.
\end{proof}

Using the estimates established above, we now prove the regularization statements in Theorems \ref{maintheorem1} and \ref{maintheorem2}.
\begin{proof}[Proof of the regularization estimates in Theorems \ref{maintheorem1} and \ref{maintheorem2}]
For $n\geq0$, set
\[
\t l:=l+\frac{\gamma(n+\zeta)}{2s}
\quad\mbox{and recall that}\quad 
\theta_4=\frac{s}{n+s+\zeta},
\]
so that the assumption
\[
l\geq-\gamma-\frac{\gamma(n+\zeta)}{2s}
\]
implies $\t l\geq-\gamma$. Following the estimates \eqref{energy}, we have
\begin{equation}
\begin{aligned}
&\frac12\frac{\d}{\d t}\mX_{n,\a,\t l}(f)
+
\Big(
\kappa-\eps
-M^{-\a}\mX_{-\zeta,\a,\t l}(f)^{1/2}
-\eps\|f\|_{L^1_{\t l+\gamma+s-\frac{\gamma}{2}(1-\theta_4)}}
\Big)
\mY_{n,\a,\t l}(f)
\\
\leq&~
C_{M,\a,\eps,n}
\big(1+\mX_{-\zeta,\a,\t l}(f)^{3/2}\big)
+
C_{\eps,n}
\mX_{-\zeta,\a,\t l+\frac{\gamma}{2}+n+s+\zeta}(f)
+
C_{\eps,n}
\|f\|_{L^1_{\t l+\gamma+s-\frac{\gamma}{2}(1-\theta_4)}}
\mX_{-\zeta,\a,\t l}(f).
\label{eq:main2-energy}
\end{aligned}    
\end{equation}
Since
\begin{equation}
    \begin{aligned}
     \t l+\frac{\gamma}{2}+n+s+\zeta-l
=
\frac{\gamma+2s}{2s}(n+s+\zeta)<0,\quad \mbox{and}\quad    \t l+\gamma+s-\frac{\gamma}{2}(1-\theta_4)
<
l+\gamma+s,
    \end{aligned}
\end{equation}
we have
\begin{equation}
    \begin{aligned}
\mX_{-\zeta,\a,\t l+\frac{\gamma}{2}+n+s+\zeta}(f)
\lesssim
\mX_{-\zeta,\a,l}(f),\quad\mbox{and}\quad \|f\|_{L^1_{\t l+\gamma+s-\frac{\gamma}{2}(1-\theta_4)}}
\lesssim
\|f\|_{L^1_{l+\gamma+s}}.
    \end{aligned}
\end{equation}
Also, since $\t l\leq l$, it holds that
\[
\mX_{-\zeta,\a,\t l}(f)
\lesssim
\mX_{-\zeta,\a,l}(f).
\]

Let $T>0$ be defined as before and set
\[
K_T
:=
1+
\sup_{t\in[0,T]}
\mX_{-\zeta,\a,l}(f)(t)
+
\sup_{t\in[0,T]}
\|f(t)\|_{L^1_{l+\gamma+s}}.
\]
By \eqref{L1l} and \eqref{XYbound},  we know that $K_T<\infty$. Choosing $M$ sufficiently large and  $\eps>0$ sufficiently small depending on $K_T$, we obtain from
\eqref{eq:main2-energy} that
\[
\frac{\d}{\d t}\mX_{n,\a,\t l}(f)
+
\f\kappa 2\mY_{n,\a,\t l}(f)
\leq
C_{K_T,n,\a}.
\]
Noticing that
\[
n
=
-\theta_4\zeta+(1-\theta_4)(n+s),\quad \mbox{and}\quad\t l
=
\theta_4 l
+
(1-\theta_4)\left(\t l+\frac{\gamma}{2}\right).
\]
Hence interpolation yields
\[
\|f\|_{H^{n,\a}_{\t l}}
\lesssim
\|f\|_{H^{-\zeta,\a}_l}^{\theta_4}
\|f\|_{H^{n+s,\a}_{\t l+\frac{\gamma}{2}}}^{1-\theta_4}.
\]
In terms of $\mX$ and $\mY$,
\[
\mX_{n,\a,\t l}(f)^{\frac1{1-\theta_4}}
\lesssim
\mX_{-\zeta,\a,l}(f)^{\frac{\theta_4}{1-\theta_4}}
\mY_{n,\a,\t l}(f),
\]
so that
\[
\mY_{n,\a,\t l}(f)
\gtrsim
K_T^{-\frac{s}{n+\zeta}}
\mX_{n,\a,\t l}(f)^{1+\frac{s}{n+\zeta}}.
\]
Therefore
\[
\frac{\d}{\d t}\mX_{n,\a,\t l}(f)
+
C_{\kappa,K_T}
\mX_{n,\a,\t l}(f)^{1+\frac{s}{n+\zeta}}
\leq
C_{T,n,\a},\quad t\in(0,T].
\]
The standard comparison argument gives
\[
\mX_{n,\a,\t l}(f)(t)
\lesssim
t^{-\frac{n+\zeta}{s}},\quad
t\in(0,T]\quad
\mbox{and hence}
\quad
\|f(t)\|_{H^{n,\a}_{\,l+\frac{\gamma(n+\zeta)}{2s}}}
\lesssim
t^{-\frac{n+\zeta}{2s}},
\quad
t\in(0,T].
\]
Since $\zeta=1$ in the moderately soft case,
\[
\|f(t)\|_{H^{n,\a}_{\,l+\frac{\gamma(n+1)}{2s}}}
\lesssim
t^{-\frac{n+1}{2s}},\qquad
t\in(0,T].
\]
This proves the regularization estimates in Theorems \ref{maintheorem1} and \ref{maintheorem2}.
\end{proof}

\subsection{Proof of uniqueness}\label{proveunique}
Having established existence, we now turn to the proof of the uniqueness part of Theorems \ref{maintheorem1} and \ref{maintheorem2}.

\textbf{The case of $\gamma+2s\in (-1,-\f12]$}. We first use the energy method to prove the uniqueness for the case $\gamma+2s\in (-1,-\f12]$. Suppose $f$ and $g$ are two solutions to the equation \eqref{Boltzmann} with the initial data $f_0$ and $g_0$. We recall that the assumptions are $f_0,g_0\in L^1_{2}(\R^3)\ccap  L^1_{l+\gamma+s}(\R^3)\ccap  L\log L\ccap  H^{-\zeta,\a}_l$ with $\a>\f12$ and $l\geq-\gamma+2s$. It leads to 
\begin{equation}\label{integrable}
 f,g\in L^\infty\big([0,T],L^1_{l+\gamma +s}\big)\ccap  L^{\infty}\big([0,T],H^{-\zeta,\a}_{l}\big)\ccap  L^2\big([0,T],H^{s-\zeta,\a}_{l+\frac{\gamma}{2}}\big).
\end{equation}
Indeed, by \eqref{XYbound}, its bound depends on the bounds of $f_0$ and $g_0$.

Let $h=f-g$, then $h(0)=h_0=f_0-g_0$ and it satisfies
\begin{equation*}
\p_t h=Q(f,h)+Q(h,g), 
\end{equation*}
Similar to \eqref{energy1}, multiplying both sides by $\cP_k\F_j^2\cP_k h$ and integrating with respect to $v\in\R^3$ gives
\begin{equation*}
  \begin{aligned}
\frac{1}{2}\frac{\d}{\d t}\|\cF_j\cP_k h\|_{L^2}^2=\&(\p_t\cF_j\cP_k h,\cF_j\cP_k h)_v=(Q(f,h),\cP_k\cF_j^2\cP_kh)_v+(Q(h,g),\cP_k\cF_j^2\cP_kh)_v\\
 =\&(Q(f,\cF_j\cP_kh),\cF_j\cP_kh)_v+\big((Q(f,h),\cP_k\cF_j^2\cP_kh)_v-(Q(f,\cF_j\cP_kh),\cF_j\cP_kh)_v\big)\\
\&+(Q(h,\cP_k^{\f12}g),\cP_k^{\f12}\cF^2_j\cP_kh)_v+\big((Q(h,g),\cP_k\cF_j^2\cP_kh)_v-(Q(h,\cP_k^{\f12}g),\cP_k^{\f12}\cF_j^2\cP_kh)_v\big)\\
=:\& \cD_1(j,k)+\cD_2(j,k)+\cD_3(j,k)+\cD_4(j,k).
\end{aligned}  
\end{equation*}
Here and below, $\cP_k^{\f12}$ denotes the multiplication operator by $\vphi^{\f12}(2^{-k}\cdot)$ for $k\geq0$ (by $\psi^{\f12}$ for $k=-1$), so that $\cP_k=(\cP_k^{\f12})^2$.
Multiply $2^{-2\gamma k}2^{-2\zeta j}j^{2\a}$ with $\a>\f12$ on both sides and sum over $j,k\geq -1$, we obtain 
\begin{equation}\label{6.19}
    \f12\f{\d}{\d t }\sum_{j,k=-1}^\infty 2^{-2\gamma k}2^{-2\zeta j}j^{2\a}\|\F_j\cP_k h\|^2_{L^2}=\sum_{i=1}^4\sum_{j,k=-1}^\infty 2^{-2\gamma k}2^{-2\zeta j}j^{2\a}\cD_i(j,k).
\end{equation}
In other words, the difference $h$ is measured in the low-regularity norm $H^{-\zeta,\a}_{-\gamma}$.

\textbf{Step 1: Estimates of $\cD_1(j,k)$ and $\cD_2(j,k)$.} For $\cD_1(j,k)$, similar to $L_1(j,k)$(see \eqref{L1jk}), we have
\begin{equation}\label{cD1}
    \begin{aligned}
        \sum_{j,k=-1}^\infty 2^{-2\gamma k}2^{-2\zeta j}j^{2\a}\cD_1(j,k)\leq -\big(\kappa-\varepsilon-M^{-\a}\mX_{-\zeta,\a,-\gamma}(f)^{\f12}\big)\mY_{-\zeta,\a,-\gamma}(h)\\+C_{M,\a,\varepsilon}(\mX_{-\zeta,\a,-\gamma}(h)+\mX_{-\zeta,\a,-\gamma}(h)^{\f32}),\quad \forall \varepsilon, M>0.
    \end{aligned}
\end{equation}
For $\cD_2(j,k)$, noticing that
\begin{equation*}
\begin{aligned}
     \cD_2(j,k)\leq |(\cF_jQ(f,\cP_kh)-Q(f,\cF_j\cP_kh),\cF_j\cP_kh)_v|+ |(\cP_kQ(f,h)-Q(f,\cP_kh),\F^2_j\cP_kf)_v|.
\end{aligned}
\end{equation*}
Then we can use Corollary \ref{cor1} and \eqref{PkQf} and the same method of estimates of $L_2(j,k)$ and $L_3(j,k)$ to get that
\begin{equation}\label{cD2}
    \begin{aligned}
        \sum_{j,k=-1}^\infty 2^{-2\gamma k}2^{-2\zeta j}j^{2\a}\cD_2(j,k)\leq (M^{-\a}\mX_{-\zeta,\a,-\gamma}(f)^{\f12}+\varepsilon)\mY_{-\zeta,\a,-\gamma}(h)\\+C_{M,\a,\varepsilon}(1+\mY_{-\zeta,\a,-\gamma}(f))(\mX_{-\zeta,\a,-\gamma}(h)+\mX_{-\zeta,\a,-\gamma}(h)^{\f32}),\quad \forall \varepsilon,M>0.
    \end{aligned}
\end{equation}

\textbf{Step 2: Estimates of $\cD_3(j,k)$.} For $\cD_3(j,k)$, first by \eqref{ubdecom}, we have
\begin{equation*}
\begin{aligned}
     \cD_3(j,k)=&\sum_{\substack{m\ge N_0-1\\|m-k|\leq 2N_0}}( Q_m(\mathcal{U}_{m-N_0} h, \tilde{\mathcal{P}}_m\cP_k^{\f12}g), \tilde{\mathcal{P}}_m\cP_k^{\f12}\F_j^2\cP_kh )_v +
\sum_{\substack{l\ge m+N_0\\|l-k|\leq 2N_0}}( Q_m(\mathcal{P}_{l} h, \tilde{\mathcal{P}}_l\cP_k^{\f12}g), \tilde{\mathcal{P}}_l \cP_k^{\f12}\F_j^2\cP_kh )_v\\
&+\sum_{\substack{|l-m|\le N_0\\k\leq m+2N_0}}( Q_m( \mathcal{P}_{l} h, \mathcal{U}_{m+N_0}\cP_k^{\f12}g), \mathcal{U}_{m+N_0}\cP_k^{\f12}\F_j^2\cP_kh  )_v=:\cD^1_3(j,k)+\cD^2_3(j,k)+\cD^3_3(j,k).
\end{aligned}  
\end{equation*}
Furthermore, we can split each term into $\M^1$ to $\M^4$. For instance, we have 
\begin{equation*}
    \begin{aligned}
        &\cD^1_3(j,k)=\sum_{\substack{m\ge N_0-1\\|m-k|\leq 2N_0}}\Big( \sum_{l\leq p-N_0}\M_{m,p,l}^1(\mathcal{U}_{m-N_0} h, \tilde{\mathcal{P}}_m\cP_k^{\f12}g, \tilde{\mathcal{P}}_m\cP_k^{\f12}\F_j^2\cP_kh )\\
        &+\sum_{l\geq p+N_0}\M_{m,p,l}^2(\mathcal{U}_{m-N_0} h, \tilde{\mathcal{P}}_m\cP_k^{\f12}g, \tilde{\mathcal{P}}_m\cP_k^{\f12}\F_j^2\cP_kh )+\sum_{p\geq -1}\M_{m,p}^3(\mathcal{U}_{m-N_0} h, \tilde{\mathcal{P}}_m\cP_k^{\f12}g, \tilde{\mathcal{P}}_m\cP_k^{\f12}\F_j^2\cP_kh )\\
        &+\sum_{a< p-N_0}\M_{m,p,a}^4(\mathcal{U}_{m-N_0} h, \tilde{\mathcal{P}}_m\cP_k^{\f12}g, \tilde{\mathcal{P}}_m\cP_k^{\f12}\F_j^2\cP_kh )\Big).
    \end{aligned}
\end{equation*}
Then we can use Lemma \ref{M14} to get that 
\begin{equation*}
    \begin{aligned}
        &\sum_{\substack{m\ge N_0-1\\|m-k|\leq 2N_0}}\sum_{l\leq p-N_0}\M_{m,p,l}^1(\mathcal{U}_{m-N_0} h, \tilde{\mathcal{P}}_m\cP_k^{\f12}g, \tilde{\mathcal{P}}_m\cP_k^{\f12}\F_j^2\cP_kh )\\
        \ls&\sum_{\substack{m\ge N_0-1\\|m-k|\leq 2N_0}} \sum_{l\leq p-N_0}C_N2^{-Nm}2^{-Np}\|\F_p \mathcal{U}_{m-N_0} h\|_{L^2}\|\F_l \tilde{\mathcal{P}}_m\cP_k^{\f12}g\|_{L^2}\|\t\F_p\tilde{\mathcal{P}}_m\cP_k\F_j^2\cP_k^{\f12}h\|_{L^2};\\
        &\sum_{\substack{m\ge N_0-1\\|m-k|\leq 2N_0}}\sum_{a\leq p-N_0}\M_{m,p,a}^4(\mathcal{U}_{m-N_0} h, \tilde{\mathcal{P}}_m\cP_k^{\f12}g, \tilde{\mathcal{P}}_m\cP_k^{\f12}\F_j^2\cP_kh )\\
        \ls&\sum_{\substack{m\ge N_0-1\\|m-k|\leq 2N_0}} \sum_{a\leq p-N_0}C_N2^{-Nm}2^{-Np}\|\F_p \mathcal{U}_{m-N_0} h\|_{L^2}\|\F_p \tilde{\mathcal{P}}_m\cP_k^{\f12}g\|_{L^2}\|\t\F_a\tilde{\mathcal{P}}_m\cP_k^{\f12}\F_j^2\cP_kh\|_{L^2}.
    \end{aligned}
\end{equation*}
And using Lemma \ref{M23} to get that
\begin{equation*}
    \begin{aligned}
        &\sum_{\substack{m\ge N_0-1\\|m-k|\leq 2N_0}}\sum_{p\geq -1}\M_{m,p}^3(\mathcal{U}_{m-N_0} h, \tilde{\mathcal{P}}_m\cP_k^{\f12}g, \tilde{\mathcal{P}}_m\cP_k^{\f12}\F_j^2\cP_kh )\\
        \ls&\sum_{\substack{m\ge N_0-1\\|m-k|\leq 2N_0}} \sum_{p\geq-1}2^{(\gamma+\f32)m}\|\F_p \mathcal{U}_{m-N_0} h\|_{L^2}\|\tF_p \tilde{\mathcal{P}}_m\cP_k^{\f12}g\|_{L^2}\|\t\F_p\tilde{\mathcal{P}}_m\cP_k^{\f12}\F_j^2\cP_kh\|_{L^2},\quad m\geq0;\\
        &\sum_{\substack{m\ge N_0-1\\|m-k|\leq 2N_0}}\sum_{p\leq l-N_0}\M_{m,p,l}^2(\mathcal{U}_{m-N_0} h, \tilde{\mathcal{P}}_m\cP_k^{\f12}g, \tilde{\mathcal{P}}_m\cP_k^{\f12}\F_j^2\cP_kh )\\
        \ls&\sum_{\substack{m\ge N_0-1\\|m-k|\leq 2N_0}} \sum_{p\leq l-N_0}2^{(\gamma+\f32)m}2^{2s(l-p)}\|\F_p \mathcal{U}_{m-N_0} h\|_{L^2}\|\tF_l \tilde{\mathcal{P}}_m\cP_k^{\f12}g\|_{L^2}\|\t\F_l\tilde{\mathcal{P}}_m\cP_k^{\f12}\F_j^2\cP_kh\|_{L^2},\quad m\geq0.
    \end{aligned}
\end{equation*}

 To estimate \(\sum_{j,k=-1}^\infty 2^{-2\gamma k}2^{-2\zeta j}j^{2\a}\cD^1_3(j,k)\), we take \(\M^2_{m,p,l}\) as an example. Noticing that
\begin{equation*}
    \begin{aligned}
      &\sum_{j,k=-1}^\infty 2^{-2\gamma k}2^{-2\zeta j}j^{2\a}\sum_{\substack{m\ge N_0-1\\|m-k|\leq 2N_0}} \sum_{p\leq l-N_0}2^{(\gamma+\f32)m}2^{2s(l-p)}\|\F_p \mathcal{U}_{m-N_0} h\|_{L^2}\|\tF_l \tilde{\mathcal{P}}_m\cP_k^{\f12}g\|_{L^2}\|\t\F_l\tilde{\mathcal{P}}_m\cP_k^{\f12}\F_j^2\cP_kh\|_{L^2}\\
      &\ls \sum_{j,k=-1}^\infty\sum_{\substack{ p\leq l-N_0}}2^{-2\gamma k}2^{-2\zeta j}j^{2\a}2^{(\gamma+\f32)k}2^{2s(l-p)}\|\F_p \mU_{k} h\|_{L^2}\|\tF_l \mP_kg\|_{L^2}\|\t\F_l\mP_k\F_j^2\cP_kh\|_{L^2},
    \end{aligned}
\end{equation*}
where we use the fact that $|m-k|\leq 2N_0$.  We now consider two cases: $|l-j|\leq N_0$ and $|l-j|>N_0$. For the latter case, applying Lemma \ref{PFcommutator}, we can bounded it by 
\begin{equation*}
\begin{aligned}
       &C_N\sum_{j,k=-1}^\infty\sum_{p\leq l-N_0}2^{-jN-kN-lN}\|\F_p \mU_{k} h\|_{L^2}\|\tF_l \mP_kg\|_{L^2}\|\t\F_l\mP_k\F_j^2\cP_kh\|_{L^2}\\
       &\leq C_N \|g\|_{H^{-N}_{-N}}\|h\|^2_{H^{-N}_{-N}}\leq C_N \mY_{-\zeta,\a,-\gamma+2s}(g)^{\f12}\mX_{-\zeta,\a,-\gamma}(h),~\mbox{for example, choose}~N=10.
\end{aligned}
\end{equation*}
For the former case, since $\gamma+2s<-\f12$ implies $\zeta<\f32<2s$, we have the bound for any $\eps>0$
\begin{equation*}
\begin{aligned}
    &\sum_{j,k=-1}^\infty\sum_{p\leq l-N_0}(2^{-\zeta p}\|\F_p \mU_{k} h\|_{L^2})(2^{(-\f\gamma 2+2s)k}2^{(s-\zeta)j}j^\a\|\mF_j \mP_kg\|_{L^2})(2^{-\f\gamma 2 k}2^{(s-\zeta)j}j^\a\|\mF_j\mP_k\F_j^2\cP_kh\|_{L^2})2^{(\zeta-2s)p}\\
    &\ls \mX_{-\zeta,\a,-\gamma}(h)^{\f12}\mY_{-\zeta,\a,-\gamma+2s}(g)^{\f12}\mY_{-\zeta,\a,-\gamma}(h)^{\f12}\leq \varepsilon \mY_{-\zeta,\a,-\gamma}(h)+C_\varepsilon \mY_{-\zeta,\a,-\gamma+2s}(g)\mX_{-\zeta,\a,-\gamma}(h). 
\end{aligned}
\end{equation*}
Other terms can be handled by the same manner and we can obtain that
\begin{equation}\label{D31jk}
    \begin{aligned}
        \sum_{j,k=-1}^\infty 2^{-2\gamma k}2^{-2\zeta j}j^{2\a}\cD^1_3(j,k)
        \ls \varepsilon \mY_{-\zeta,\a,-\gamma}(h)+C_{\varepsilon} (1+\mY_{-\zeta,\a,-\gamma+2s}(g))\mX_{-\zeta,\a,-\gamma}(h),\quad\forall \varepsilon>0. 
    \end{aligned}
\end{equation}

Similarly, we can handle the term $\cD^2_3(j,k)$ and $\cD^3_3(j,k)$. Indeed, we have the split 
\begin{equation*}
    \begin{aligned}
        \cD^2_3(j,k)=&\sum_{l\ge m+N_0,|l-k|\leq 2N_0}\Big( \sum_{a\leq p-N_0}\M_{m,p,a}^1(\cP_{l} h, \tilde{\mathcal{P}}_l\cP_k^{\f12}g, \tilde{\mathcal{P}}_l\cP_k^{\f12}\F_j^2\cP_kh )\\
        &+\sum_{a\geq p+N_0}\M_{m,p,a}^2(\cP_{l} h, \tilde{\mathcal{P}}_l\cP_k^{\f12}g, \tilde{\mathcal{P}}_l\cP_k^{\f12}\F_j^2\cP_kh )+\sum_{p\geq -1}\M_{m,p}^3(\cP_{l} h, \tilde{\mathcal{P}}_l\cP_k^{\f12}g, \tilde{\mathcal{P}}_l\cP_k^{\f12}\F_j^2\cP_kh )\\
        &+\sum_{a< p-N_0}\M_{m,p,a}^4(\cP_{l} h, \tilde{\mathcal{P}}_l\cP_k^{\f12}g, \tilde{\mathcal{P}}_l\cP_k^{\f12}\F_j^2\cP_kh )\Big),
    \end{aligned}
\end{equation*}
and
\begin{equation*}
    \begin{aligned}
        \cD^3_3(j,k)=&\sum_{|l-m|\le N_0,k\leq m+2N_0}\Big( \sum_{a\leq p-N_0}\M_{m,p,a}^1(\cP_{l} h, \cU_{m+N_0}\cP_k^{\f12}g, \cU_{m+N_0}\cP_k^{\f12}\F_j^2\cP_kh )\\
        &+\sum_{a\geq p+N_0}\M_{m,p,a}^2(\cP_{l} h, \cU_{m+N_0}\cP_k^{\f12}g, \cU_{m+N_0}\cP_k^{\f12}\F_j^2\cP_kh )\\&+\sum_{p\geq -1}\M_{m,p}^3(\cP_{l} h, \cU_{m+N_0}\cP_k^{\f12}g, \cU_{m+N_0}\cP_k^{\f12}\F_j^2\cP_kh )\\
        &+\sum_{a< p-N_0}\M_{m,p,a}^4(\cP_{l} h, \cU_{m+N_0}\cP_k^{\f12}g, \cU_{m+N_0}\cP_k^{\f12}\F_j^2\cP_kh )\Big).
    \end{aligned}
\end{equation*}

By the same method as above, we can get the same estimate for $m\geq 0$. For $m=-1$, we use Lemmas \ref{M14} and \ref{M23} and follow the same steps in the proof of existence to obtain that $\forall \varepsilon>0$,
\begin{equation}\label{D323jk}
    \begin{aligned}
        \sum_{j,k=-1}^\infty 2^{-2\gamma k}2^{-2\zeta j}j^{2\a}(\cD^2_3(j,k)+\cD^3_3(j,k))
        \ls\varepsilon \mY_{-\zeta,\a,-\gamma}(h)+C_{\varepsilon} \big(1+\mY_{-\zeta,\a,-\gamma+2s}(g)\big)\big(1+\mX_{-\zeta,\a,-\gamma}(h)^{\f32}\big). 
    \end{aligned}
\end{equation}
\textbf{Step 3: Estimates of $\cD_4(j,k)$.} Here we use Proposition \ref{QPk}, then we have
\begin{equation*}
    \begin{aligned}
        \cD_4(j,k)=&(\cP_k^{\f12}Q(h,g)-Q(h,\cP_k^{\f12}g),\cP_k^{\f12}\cF_j^2\cP_kh)_v\ls\sum_{i=1}^4\mC_i(h,g,h).
    \end{aligned}
\end{equation*}
Note that the term $\cP_k^{\f12}\cF_j^2\cP_kh$ on the third position does not influence the estimate. Indeed, we do the phase decomposition first in \eqref{commutator2} and we do not need $\Delta_{i,2},i=1,2,3$ here. 

We take $\mC_3(h,g,h)$ as a typical one and recall from \eqref{bound3ofD}(choose $\Delta_{3,1}^{1,(2)}$). For the first term, i.e., 
\beno
\sum_{m\geq k-2N_0}\sum_{a\leq j+5N_0}2^{(\gamma+\f32)m}2^{\tau(m-k)}2^{(2s-\tau)(j-a)}\|\mF_a\mP_mh\|_{L^2}\|\mF _j\mU_mg\|_{L^2}\|\mF _j\mP_kh\|_{L^2},
\eeno
with $\tau\in[0,1]$. Since $\gamma<-\f12-2s<-\f32$, we have
\begin{equation}\label{100a}
    \begin{aligned}
        &\sum_{m\geq k-2N_0}\sum_{a\leq j+5N_0}2^{-2\gamma k}2^{-2\zeta j}j^{2\a}2^{(\gamma+\f32)m}2^{\tau(m-k)}2^{(2s-\tau)(j-a)}\|\mF_a\mP_mh\|_{L^2}\|\mF _j\mU_mg\|_{L^2}\|\mF _j\mP_kh\|_{L^2}\\
        \ls&\sum_{m\geq k-2N_0}\sum_{a\leq j+5N_0}2^{(\gamma+\tau)(m-k)}2^{(\gamma+\f32)m}2^{(\zeta-2s+\tau)a}a^{-\a}(2^{-\gamma m}2^{-\zeta a}a^{\a}\|\mF_a\mP_mh\|_{L^2})\\
        &\times(2^{-\gamma k}2^{-\zeta j}j^\a\|\mF _j\mP_kh\|_{L^2})(2^{(2s-\tau-\zeta)j}j^\a\|\mF _j\mU_kg\|_{L^2}),
    \end{aligned}
\end{equation}
where we take $\tau=\min\{1,2s-\zeta\}>0$. Since $\gamma+\tau,\gamma+\f32<0$, $\zeta-2s+\tau\leq0,\a>\f12$ and it is easy to verify that $-\gamma-\frac{\gamma(n+\zeta)}{2s}\leq -\gamma-\frac{\gamma(2s-\tau)}{2s}<-\gamma+2s\leq l$ when $n=2s-\tau-\zeta$, which means that we can use Theorem \ref{maintheorem1} to obtain
\begin{equation}\label{D_4a}
    \begin{aligned}
\sum_{j,k=-1}^\infty\eqref{100a}
        \ls~\mX_{-\zeta,\a,-\gamma}(h)\mY_{(2s-\tau-\zeta),\a,-\gamma}(g)^{\f12}
        \ls t^{-\frac{2s-\tau}{2s}}\mX_{-\zeta,\a,-\gamma}(h).
    \end{aligned}
\end{equation}
The second term is similar to the first one and we can get the same bound. As for the rest terms $\mC_1(h,g,h)$, $\mC_2(h,g,h)$ and $\mC_4(h,g,h)$, we can use the same method in the estimate of $\cD_3(j,k)$ so that we can get
\begin{equation}\label{D4jk}
    \begin{aligned}
        &\sum_{j,k=-1}^\infty 2^{-2\gamma k}2^{-2\zeta j}j^{2\a}\cD_4(j,k)
        \ls\varepsilon \mY_{-\zeta,\a,-\gamma}(h)\\
        &+C_{\varepsilon} (1+\mY_{-\zeta,\a,-\gamma+2s}(g))(1+\mX_{-\zeta,\a,-\gamma}(h)^{\f32})+t^{-\frac{2s-\tau}{2s}}\mX_{-\zeta,\a,-\gamma}(h),\quad\forall \varepsilon>0. 
    \end{aligned}
\end{equation}
Plug \eqref{cD1}, \eqref{cD2}, \eqref{D31jk}, \eqref{D323jk} and \eqref{D4jk} into \eqref{6.19}, we obtain 
\begin{equation*}
    \begin{aligned}
        &\f{\d}{\d t }\mX_{-\zeta,\a,-\gamma}(h)(t)+(\kappa-\varepsilon-M^{-\a}\mX_{-\zeta,\a,-\gamma}(f)^{\f12})\mY_{-\zeta,\a,-\gamma}(h)(t)\\&\leq C_{M,\a,\varepsilon}(1+\mY_{-\zeta,\a,-\gamma}(f)+\mY_{-\zeta,\a,-\gamma+2s}(g)+t^{-\frac{2s-\tau}{2s}})(\mX_{-\zeta,\a,-\gamma}(h)+\mX_{-\zeta,\a,-\gamma}(h)^{\f32}).
    \end{aligned}
\end{equation*}
Since $\mX_{-\zeta,\a,-\gamma}(h)\in L^\infty[0,T]$ and $\mY_{-\zeta,\a,-\gamma}(f),\mY_{-\zeta,\a,-\gamma+2s}(g)\in L^1[0,T]$(see \eqref{integrable}), choose sufficiently small $\varepsilon$  and large $M>0$, we obtain the stability result:
\begin{equation}\label{energyuniqueness}
    \begin{aligned}
        \mX_{-\zeta,\a,-\gamma}(h)(t)&\ls C_{\a,f_0,g_0}e^{\int_0^tC(\omega) d\omega} \mX_{-\zeta,\a,-\gamma}(h_0),
    \end{aligned}
\end{equation}
where $C(\omega)=1+\mY_{-\zeta,\a,-\gamma}(f)(\omega)+\mY_{-\zeta,\a,-\gamma+2s}(g)(\omega)+\omega^{-\frac{2s-\tau}{2s}}\in L^1[0,T]$.

The uniqueness for the case $\gamma+2s\in (-1,-\f12]$ is an immediate corollary.

\bigskip

\textbf{The case of $\gamma+2s\in (-\f12,0)$}.  Noticing that $\gamma+2s>-\f12$ implies 
$\f 3{3+\gamma}<\f 6{5-4s}$. We claim that 
\begin{equation}\label{claim}
   f\in L^1_{\rm{loc}}([0,T],L^p(\R^3)),\quad \mbox{for}\quad p\in  \big(\f 3{3+\gamma},\f 6{5-4s}\big),
\end{equation}
if $f_0\in H^{-1}_{l}$ with $l\geq -2\gamma$. Thanks to Theorem \ref{conditionaluniqueness}, the solution is unique.

Now we only need to prove the claim, which can be derive from the integrability of $\|f(t)\|_{H_{-\gamma}^{\frac{3}{2}-\frac{3}{p}}}$ because of the Sobolev embedding. Here we use Theorem \ref{maintheorem2}, taking $n=\f32-\f3p$. Note that when $l\geq-2\gamma$, we have $-\gamma-\frac{\gamma(n+1)}{2s}<-2\gamma=l$, which satisfy the conditions of Theorem \ref{maintheorem2}, so that
\begin{equation*}
    \begin{aligned}
    \|f(t)\|_{H_{-\gamma}^{\frac{3}{2}-\frac{3}{p}}}\ls t^{-\frac{\frac{5}{2}-\frac{3}{p}}{2s}}+1,\quad \forall t\in(0,T].
\end{aligned}
\end{equation*}
Note that $-\frac{\frac{5}{2}-\frac{3}{p}}{2s}=\frac{3}{2sp}-\frac{5}{4s}>-1$, thus $\|f(t)\|_{L^p}\ls \|f(t)\|_{H^{\f32-\f3 p}_{-\gamma}}$ is locally integrable with respect to $t\in[0,T]$. So we complete the proof of uniqueness for the case $\gamma+2s\in (-\f12,0)$.

\section{Proof of global existence}\label{sec:global}

We will prove the global statements in Theorems~\ref{maintheorem1} and~\ref{maintheorem2}. The argument controls the critical negative Sobolev norm by a polynomial moment and the Fisher information $I(f):=\int_{\R^3}\f{|\nabla f|^2}{f}\,\d v$.

Let $\zeta\in(\f12,1]$ and $\beta>0$. In dimension three we have $L^{p_*}_l(\R^3)\hookrightarrow H^{-\zeta}_l(\R^3)$ with $\f1{p_*}=\f12+\f\zeta3$, i.e.\ $p_*=\f{6}{3+2\zeta}$, and for $p_\delta>p_*$ sufficiently close to $p_*$ the logarithmic weight is absorbed into the small gain of regularity, so that $L^{p_\delta}_l(\R^3)\hookrightarrow H^{-\zeta,\beta}_l(\R^3)$. Writing $\f1{p_\delta}=\theta_\delta+\f{1-\theta_\delta}{3}$, i.e.\ $\theta_\delta=\f12\big(\f3{p_\delta}-1\big)$, and $q_\delta=\f{l}{\theta_\delta}$, the weighted H\"older interpolation between $L^1_{q_\delta}$ and $L^3$ gives
\begin{equation}\label{eq:global-interpolation}
\|f\|_{H^{-\zeta,\beta}_l}\ls\|f\|_{L^{p_\delta}_l}\ls\|f\|_{L^1_{q_\delta}}^{\theta_\delta}\|f\|_{L^3}^{1-\theta_\delta}.
\end{equation}
At $\delta=0$($p_0=p_*$) one has $\theta_0=\f{1+2\zeta}{4}$ and $q_0=\f{4l}{1+2\zeta}$, so choosing $\delta>0$ small enough yields $q_\delta\leq q_0+1$. On the other hand, Sobolev's inequality applied to $\sqrt f$ yields $\|f\|_{L^3}=\|\sqrt f\|_{L^6}^2\ls\|\nabla\sqrt f\|_{L^2}^2=\f14 I(f)$. Consequently,
\begin{equation}\label{eq:global-fisher}
\|f(t)\|_{H^{-\zeta,\beta}_l}\ls\|f(t)\|_{L^1_{q_0+1}}^{\theta_\delta}\,I(f(t))^{1-\theta_\delta}.
\end{equation}

\medskip\noindent\textbf{The very soft case.} In Theorem~\ref{maintheorem1}, we have $\zeta=\f32+\gamma+2s=2+\f\gamma2$, hence $1+2\zeta=5+\gamma$, $q_0=\f{4l}{5+\gamma}$, and \eqref{eq:global-fisher} reads
\begin{equation}\label{eq:global-very-soft}
\|f(t)\|_{H^{-\zeta,\a}_l}\ls\|f(t)\|_{L^1_{\f{4l}{5+\gamma}+1}}^{\theta_\delta}\,I(f(t))^{1-\theta_\delta}.
\end{equation}
Let $l\geq-\f{9\gamma}{1-\gamma}$. For any fixed $\tau>0$, the regularization estimate \eqref{regularisation-estimate} with $n=2$ gives $\|f(\tau)\|_{H^2_{\,l+\f{\gamma(8+\gamma)}{1-\gamma}}}<\infty$, and
\[
l+\f{\gamma(8+\gamma)}{1-\gamma}\geq-\f{9\gamma}{1-\gamma}+\f{\gamma(8+\gamma)}{1-\gamma}=-\gamma\geq2=\f32+\f12,
\]
since $\gamma\in(-3,-2]$. Hence $f(\tau)\in H^2_{\f32+\eps}$ with $\eps=\f12$;
since moreover $f(\tau)\geq0$, this yields $I(f(\tau))<\infty$. By the monotonicity of the Fisher information \cite{imbert2026monotonicity}, $I(f(t))\leq I(f(\tau))$ for $t\geq\tau$. Together with the propagation of the moment $L^1_{4l/(5+\gamma)+1}$, the bound \eqref{eq:global-very-soft} gives $\sup_{\tau\leq t\leq T}\|f(t)\|_{H^{-\zeta,\a}_l}<\infty$ for every $0<\tau<T<\infty$, while the local estimate controls the interval $[0,\tau]$. Moreover $\f{4l}{5+\gamma}+1\geq l+\f{3\gamma+1}{4}=l+\gamma+s$, so the moment required in \eqref{L1} is also propagated. Hence the hypotheses of the local theory remain bounded on every finite time interval, and the standard continuation argument yields $T_{\max}=\infty$. Finally, $l\geq-\f{9\gamma}{1-\gamma}$ implies $l\geq-\f32\gamma+\f12=-\gamma+2s$ and $l\geq-\gamma$, so for $\a>\f12$ the uniqueness part of Theorem~\ref{maintheorem1} applies. This proves the global well-posedness in Theorem~\ref{maintheorem1}.

\medskip\noindent\textbf{The moderately soft case.} In Theorem~\ref{maintheorem2} we have $\zeta=1$, so $p_*=\f65$, $\theta_0=\f34$, $q_0=\f{4l}3$, and \eqref{eq:global-fisher} becomes
\begin{equation}\label{eq:global-moderate}
\|f(t)\|_{H^{-1,\a}_l}\ls\|f(t)\|_{L^1_{\f{4l}3+1}}^{\theta_\delta}\,I(f(t))^{1-\theta_\delta}.
\end{equation}
Assume $l\geq-\gamma-\f{3\gamma}{2s}$ and $l+\f{3\gamma}{2s}>\f32$; since $-\gamma<2$ here, both conditions hold as soon as $l\geq 2-\f{3\gamma}{2s}=2-\f{6\gamma}{1-\gamma}$. Then, for every $\tau>0$, the regularization estimate \eqref{regularisation-estimate} with $n=2$ gives $\|f(\tau)\|_{H^2_{\,l+3\gamma/(2s)}}<\infty$, hence $f(\tau)\in H^2_{\f32+\eps}$ for some $\eps>0$; since $f(\tau)\geq0$, it follows that $I(f(\tau))<\infty$, and the monotonicity of the Fisher information \cite{imbert2026monotonicity} gives $I(f(t))\leq I(f(\tau))$ for $t\geq\tau$. Using \eqref{eq:global-moderate} and the propagation of $L^1_{4l/3+1}$, we obtain $\sup_{\tau\leq t\leq T}\|f(t)\|_{H^{-1,\a}_l}<\infty$ for every $0<\tau<T<\infty$, while the local estimate controls the interval $[0,\tau]$. Since $\f{4l}3+1\geq l+\gamma+s$, the moment required in \eqref{L1moderate} is also controlled, so the local solution can be continued beyond every finite time. If in addition $\a>\f12$ and $l\geq-2\gamma$, the uniqueness part of Theorem~\ref{maintheorem2} applies on every finite interval. This proves the global well-posedness in the moderately soft potential case.

\section{Appendix}\label{appendix}
In the Appendix, we collect some useful lemmas on pseudo-differential operators and basic commutators.
\begin{lemma}
    Let $s,r\in \R$, and $a(v),b(v)\in C^{\infty}$ satisfy for any $\alpha\in\Z_{+}^{3}$,
    \begin{equation}
        |D_v^{\alpha}a(v)|\leq C_{1,\alpha}\<v\>^{r-|\alpha|},|D_{\xi}^{\alpha}b(\xi)|\leq C_{2,\alpha}\<\xi\>^{s-|\alpha|}
    \end{equation}
    for constant $C_{1,\alpha},C_{2,\alpha}$. Then there exists a constant $C$ depending only on $s,r$ and finite numbers of $C_{1,\alpha},C_{2,\alpha}$ such that for any $f\in \mathscr{S}(\R^3)$,
    \begin{equation}
        \|a(v)b(D)f\|_{L^2}\leq C\|\<D\>^s\<v\>^rf\|_{L^2},\|b(D)a(v)f\|_{L^2}\leq C\|\<v\>^r\<D\>^sf\|_{L^2}.
    \end{equation}
    As a direct consequence, we get that $\|\<D\>^m\<v\>^lf\|_{L^2}\sim\|\<v\>^l\<D\>^mf\|_{L^2}\sim\|f\|_{H^m_l}$.
\end{lemma}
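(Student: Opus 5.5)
This is the classical Calderón--Vaillancourt-type weighted boundedness, so the plan is to reduce it to $L^2$-boundedness of pseudodifferential operators in a symbol class. For the first inequality, set $g=\<D\>^s\<v\>^r f$, so that $f=\<v\>^{-r}\<D\>^{-s}g$. The claim then becomes $L^2$-boundedness of
\[
T:=a(v)\,b(D)\,\<v\>^{-r}\<D\>^{-s}.
\]
We rewrite this as
\[
T=\big(a(v)\<v\>^{-r}\big)\,\big(\<v\>^{r}b(D)\<v\>^{-r}\big)\,\big(\<D\>^{-s}\big)
\]
and, more precisely, aim to show that $\<v\>^{r}b(D)\<v\>^{-r}$ is a pseudodifferential operator with symbol in $S^{s}_{1,0}$. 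Given this, composing on the right with $\<D\>^{-s}\in S^{-s}_{1,0}$ gives an operator in $S^0_{1,0}$, and the left factor $a\<v\>^{-r}$ is a bounded multiplier (its symbol lies in $S^0_{0,0}$, and in fact in $S^0$ in $v$). Calderón--Vaillancourt then yields $L^2$-boundedness, with a constant depending only on finitely many seminorms, hence on finitely many $C_{1,\alpha},C_{2,\alpha}$.

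The key step is the conjugation $\<v\>^{r}b(D)\<v\>^{-r}$. Its symbol admits the asymptotic expansion
\[
\sum_\alpha \frac{1}{\alpha!}\,\<v\>^{r}\,\partial_v^\alpha\big(\<v\>^{-r}\big)\, D_\xi^\alpha b(\xi)\cdot(\text{constants}),
\]
and each term is bounded by $\<v\>^{-|\alpha|}\<\xi\>^{s-|\alpha|}$. The danger is that the weight $\<v\>^{\pm r}$ is not a symbol in $\xi$, so the remainder must be controlled in a class $S(\<v\>^{m}\<\xi\>^{s},\,dv^2/\<v\>^2+d\xi^2/\<\xi\>^2)$. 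This is the Weyl--Hörmander calculus with the admissible metric $g=dv^2/\<v\>^2+d\xi^2/\<\xi\>^2$, in which $\<v\>^r$ and $\<\xi\>^s$ are $g$-weights. In that calculus the composition $a\# b\# \<v\>^{-r}\#\<\xi\>^{-s}$ has symbol in $S(1,g)$, and $L^2$-boundedness follows from the boundedness theorem for $S(1,g)$. This is the step I expect to be the main obstacle: verifying that $g$ is slowly varying, temperate and satisfies the uncertainty principle $\<v\>\<\xi\>\ge1$, which holds, and tracking that only finitely many seminorms enter. If this is cited, as in \cite{he2018sharp} or Lerner's book, the proof is short.

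The second inequality follows in the same way, with $T=b(D)\,a(v)\,\<D\>^{-s}\<v\>^{-r}$, again a symbol in $S(1,g)$. For the consequence, apply the first inequality with $a=\<v\>^l$, $b=\<\xi\>^m$ to get $\|\<v\>^l\<D\>^m f\|\lesssim\|\<D\>^m\<v\>^l f\|$. Apply the second inequality with the same $a,b$ to get the reverse bound $\|\<D\>^m\<v\>^l f\|\lesssim\|\<v\>^l\<D\>^m f\|$. The last norm is by definition $\|f\|_{H^m_l}$.
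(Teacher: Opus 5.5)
Your proposal is correct. The paper records this lemma in the appendix without proof, as a standard fact of pseudo-differential calculus, and the standard proof is exactly your Weyl--H\"ormander argument. One uses the admissible metric $g=\<v\>^{-2}|dv|^2+\<\xi\>^{-2}|d\xi|^2$, with $a\in S(\<v\>^{r},g)$ and $b\in S(\<\xi\>^{s},g)$. The composed symbols of $a(v)b(D)\<v\>^{-r}\<D\>^{-s}$ and $b(D)a(v)\<D\>^{-s}\<v\>^{-r}$ then lie in $S(1,g)$, and $L^2$-boundedness follows with constants controlled by finitely many seminorms.

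The only slip is cosmetic. The paper defines $\|f\|_{H^m_l}=\|\<D\>^m\<v\>^l f\|_{L^2}$, which is the left-hand norm in your final inequality, not the right-hand one. Since you prove both directions, this changes nothing.
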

\begin{definition}\label{symbol}
    A smooth function $a(v,\xi)$ is said to be a symbol of type $S_{1,0}^m$ if $a(v,\xi)$ verifies that for any multi-indices $\alpha$ and $\beta$,
    \begin{equation*}
        |(\partial_{\xi}^{\alpha}\partial_{v}^{\beta}a)(v,\xi)|\leq C_{\alpha,\beta}\<\xi\>^{m-|\alpha|},
    \end{equation*}
    where $C_{\alpha,\beta}$ is a constant depending only on $\alpha$ and $\beta$.
\end{definition}

\begin{lemma}\label{pseudo operator}
    Let $l,s,r\in \R,M(\xi)\in S^{r}_{1,0}$ and $\Phi(v)\in S^{l}_{1,0}$. Then there exists a constant $C$ such that $\|[M(D_v)\Phi]f\|_{H^s}\leq C\|f\|_{H^{r+s-1}_{l-1}}$. Moreover, for any $N\in\N$,
    \begin{equation}\label{7.3}
        M(D_v)\Phi=\Phi M(D_v)+\sum_{1\leq|\alpha|\leq N}\frac{1}{\alpha!}\Phi_{\alpha}M^{\alpha}(D_v)+r_N(v,D_v),
    \end{equation}
    where $\Phi_{\alpha}(v)=\partial_v^{\alpha}\Phi,M^{\alpha}(\xi)=\partial^{\alpha}_{\xi}M(\xi)$ and $\<v\>^{N-l}r_N(v,\xi)\in S^{r-N}_{1,0}$. Moreover, for any $\beta,\beta'\in\Z_+^3$, we have
    \begin{equation}
        |\partial^{\beta}_{v}\partial^{\beta'}_{\xi}r_N(v,\xi)|\leq C_{\beta,\beta'}\<\xi\>^{r-N-|\beta|}\<v\>^{l-N-|\beta'|},\|r_{2N+1}(v,D_v)\<D\>^N\<v\>^Nf\|_{L^2}\leq C\|f\|_{L^2}
    \end{equation}
    Furthermore, using \eqref{7.3} repeatedly, we can obtain that
    \begin{equation}\label{7.5}
        M(D_v)\Phi=\Phi M(D_v)+\sum_{1\leq |\alpha|\leq N}C_{\alpha}M^{\alpha}(D_v)\Phi_{\alpha}+C_Nr_N(v,D_v).
    \end{equation}
\end{lemma}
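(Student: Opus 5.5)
The plan is to work directly with the kernel representation \eqref{def pseudo} and derive \eqref{7.3} from a Taylor expansion of $\Phi$ inside the oscillatory integral, then estimate the remainder symbol by hand. We read $\Phi\in S^l_{1,0}$ as the bound $|\partial^\beta_v\Phi(v)|\ls\<v\>^{l-|\beta|}$, which is how the lemma is used in the paper.

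\textbf{Expansion.} Write
\[
(M(D_v)\Phi f)(v)=\iint e^{2\pi i(v-u)\cdot\xi}M(\xi)\Phi(u)f(u)\d u\d\xi .
\]
Expand $\Phi(u)$ about $v$ to order $N$:
\[
\Phi(u)=\sum_{|\alpha|\le N}\frac{(u-v)^\alpha}{\alpha!}\Phi_\alpha(v)+\sum_{|\alpha|=N+1}\frac{N+1}{\alpha!}(u-v)^\alpha\int_0^1(1-t)^N\Phi_\alpha\bigl(v+t(u-v)\bigr)\d t .
\]
In each term, replace $(u-v)^\alpha e^{2\pi i(v-u)\cdot\xi}$ by $c_\alpha\partial_\xi^\alpha e^{2\pi i(v-u)\cdot\xi}$ and integrate by parts in $\xi$. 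This moves the derivatives onto $M$, giving $M^\alpha(\xi)$. The normalization constants $(\pm 2\pi i)^{-|\alpha|}$ are absorbed into the convention for $\Phi_\alpha$ (or into $C_\alpha$ in \eqref{7.5}). The $\alpha=0$ term is $\Phi M(D_v)$, and the terms $1\le|\alpha|\le N$ give the finite sum in \eqref{7.3}.

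\textbf{Remainder symbol.} The remainder $r_N$ has the symbol
\[
r_N(v,\xi)=\sum_{|\alpha|=N+1}C_\alpha\int_0^1(1-t)^N\iint e^{-2\pi i y\cdot\eta}M^\alpha(\xi+\eta)\,\Phi_\alpha(v+ty)\d y\d\eta\,\d t ,
\]
understood as an oscillatory integral.

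\textbf{Remainder bounds (main obstacle).} The goal is the bound
\[
|\partial^\beta_v\partial^{\beta'}_\xi r_N(v,\xi)|\ls\<\xi\>^{r-N-|\beta'|}\<v\>^{l-N-|\beta|},
\]
with an index shift depending on whether we expand to order $N$ or $N-1$; this is what the stated membership $\<v\>^{N-l}r_N\in S^{r-N}_{1,0}$ requires. The difficulty is that the joint decay in $v$ and $\xi$ is not visible from a naive estimate of the integral. I would proceed as follows.
\begin{itemize}
\item[(i)] Insert the factors $\<y\>^{-2K}(1-\Delta_\eta)^K$ and $\<\eta\>^{-2K}(1-\Delta_y)^K$, both of which leave $e^{-2\pi iy\cdot\eta}$ invariant, and integrate by parts. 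This gains arbitrary decay in $y$ and $\eta$.
\item[(ii)] Use Peetre's inequality $\<\xi+\eta\>^{m}\ls\<\xi\>^{m}\<\eta\>^{|m|}$, and the analogous $\<v+ty\>^{m}\ls\<v\>^{m}\<y\>^{|m|}$ uniformly in $t\in[0,1]$, to pull out $\<\xi\>^{r-N-1}\<v\>^{l-N-1}$. The polynomial losses in $\<\eta\>$ and $\<y\>$ are absorbed by choosing $K$ large.
\item[(iii)] Derivatives in $v$ or $\xi$ fall either on $M^\alpha$ or on $\Phi_\alpha$, and each such derivative produces one more order of decay in the corresponding variable.
\end{itemize}
The $L^2$ statement $\|r_{2N+1}(v,D_v)\<D\>^N\<v\>^Nf\|_{L^2}\ls\|f\|_{L^2}$ then follows. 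The composite operator $r_{2N+1}(v,D_v)\<D\>^N\<v\>^N$ has a symbol in $S^0_{1,0}$ with bounded $v$-weight: compose once more using the same expansion for $\<D\>^N\<v\>^N$, and then apply the Calder\'on--Vaillancourt theorem.

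\textbf{The commutator bound.} The first claim $\|[M(D_v),\Phi]f\|_{H^s}\ls\|f\|_{H^{r+s-1}_{l-1}}$ is the case $N=1$ of \eqref{7.3}. Each term $\Phi_\alpha M^\alpha(D_v)$ with $|\alpha|=1$ has weight order $l-1$ and symbol order $r-1$. It is bounded from $H^{r+s-1}_{l-1}$ to $H^s$ by the first (unnumbered) lemma of the Appendix, after commuting $\<D\>^s$ past $\Phi_\alpha$ (which produces only lower-order terms). The remainder $r_1$ is even better by the bounds above.

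\textbf{Proof of \eqref{7.5}.} This is a finite induction on $N$. In each term $\Phi_\alpha M^\alpha(D_v)$ of \eqref{7.3}, apply \eqref{7.3} again to $M^\alpha(D_v)\Phi_\alpha$ to move $\Phi_\alpha$ to the right. The resulting new terms carry at least $|\alpha|+1$ derivatives in total, so after $N$ rounds every term has either the desired form $C_\alpha M^\alpha(D_v)\Phi_\alpha$ or order at most $-N$. All terms of the second kind are collected into $C_Nr_N$, which satisfies the same estimates.
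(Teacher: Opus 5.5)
Your argument is correct. There is no proof in the paper to compare it with: the lemma is recorded in the appendix as a standard fact of pseudo-differential calculus. The usual justification is the general composition theorem for symbols with joint weights $\langle v\rangle^{l}\langle\xi\rangle^{r}$, i.e.\ the calculus attached to the metric $\langle v\rangle^{-2}|dv|^2+\langle\xi\rangle^{-2}|d\xi|^2$. That theorem delivers the expansion, the remainder class, closedness under composition and $L^2$-boundedness in one package.

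You instead exploit the product structure: $M$ depends only on $\xi$ and $\Phi$ only on $v$. Taylor-expanding the amplitude $M(\xi)\Phi(u)$ at $u=v$ and integrating by parts in $\xi$ gives the left-quantized terms $\Phi_\alpha M^\alpha(D_v)$ directly, and your remark about the factors $(2\pi i)^{-|\alpha|}$ is right. The remainder is then an explicit oscillatory integral, and its joint decay follows from Peetre's inequality (uniform in $t\in[0,1]$) together with the two integrations by parts. This route is elementary and self-contained.

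The cost is that every composition the general calculus gives for free has to be redone by hand. The $L^2$ bound for $r_{2N+1}(v,D_v)\langle D\rangle^N\langle v\rangle^N$ and the remainder part of the $H^s$ commutator bound both require converting amplitudes such as $q(v,\xi)\langle u\rangle^{N}$ or $q(v,\xi)\langle u\rangle^{1-l}$ into symbols. Your phrases ``compose once more'' and ``even better by the bounds above'' hide exactly this step. It is the same computation as your remainder estimate, so state it once: if $|\partial_v^{\beta}\partial_u^{\beta''}\partial_\xi^{\beta'}a|\lesssim\langle\xi\rangle^{m-|\beta'|}\langle v\rangle^{w_1-|\beta|}\langle u\rangle^{w_2-|\beta''|}$, then $\iint e^{-2\pi i y\cdot\eta}a(v,v+y,\xi+\eta)\,dy\,d\eta$ satisfies the joint bounds with weight $\langle\xi\rangle^{m}\langle v\rangle^{w_1+w_2}$.

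The left factor $\langle D\rangle^s$ in the commutator estimate needs no separate composition rule. Write $\langle D\rangle^s[M(D_v),\Phi]=[\langle D\rangle^sM(D_v),\Phi]-[\langle D\rangle^s,\Phi]M(D_v)$ and apply \eqref{7.3} to $\langle\xi\rangle^sM$ and to $\langle\xi\rangle^s$. Calder\'on--Vaillancourt then finishes.

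Some side remarks:
\begin{itemize}
\item Your form of the remainder bound, $\langle\xi\rangle^{r-N-|\beta'|}\langle v\rangle^{l-N-|\beta|}$ with $\beta$ acting on $v$ and $\beta'$ on $\xi$, is the correct one. The printed version has the two indices interchanged and cannot hold in general, since $\xi$-derivatives of the leading part $\partial_v^\alpha\Phi(v)\,\partial_\xi^\alpha M(\xi)$ give no decay in $v$.
\item Your reading of $\Phi\in S^l_{1,0}$ as $|\partial^\beta\Phi|\lesssim\langle v\rangle^{l-|\beta|}$ is likewise the intended one.
\item The $L^2$ bound for $r_{2N+1}\langle D\rangle^N\langle v\rangle^N$ needs $N\ge\max\{r,l\}-1$. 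The lemma leaves this implicit, and you should state it.
\item \eqref{7.5} needs no induction. Expanding $\Phi(v)$ about $u$ in the amplitude $M(\xi)\big(\Phi(u)-\Phi(v)\big)$ gives it in one step, with $C_\alpha=(-1)^{|\alpha|+1}/\big(\alpha!\,(2\pi i)^{|\alpha|}\big)$ and a remainder of the same type.
\end{itemize}
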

\begin{lemma}{(Bernstein inequality)}\label{Bern}
    There exists a constant $C$ independent of $j$ and function $f$ such that
        
         $\bullet$ For any $s\in \R$ and $j\geq -1$,
        \begin{equation}
            C^{-1}2^{js}\|\cF_j f\|_{L^2(\R^3)}\leq \|\cF_j f\|_{H^s(\R^3)}\leq C 2^{js}\|\cF_j f\|_{L^2(\R^3)}.
        \end{equation}
        
         $\bullet$ For integers $k\geq 0,j\geq-1$ and $p,q\in[1,\infty],q\geq p$,
        \begin{equation}
        \begin{aligned}
             &\underset{|\alpha|=k}{\sup}\|\partial^{\alpha}\cF_jf\|_{L^q(\R^3)}\ls 2^{jk}2^{3j(\frac{1}{p}-\frac{1}{q})}\|\cF_jf\|_{L^p(\R^3)},\\
             &~~2^{jk}\|\cF_jf\|_{L^p(\R^3)}\ls\underset{|\alpha|=k}{\sup}\|\partial^{\alpha}\cF_jf\|_{L^p(\R^3)}\ls 2^{jk}\|\cF_jf\|_{L^p(\R^3)}.
        \end{aligned}
        \end{equation}
\end{lemma}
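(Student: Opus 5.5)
The plan is the standard dilation-and-convolution argument, reading off everything from the fact that $\widehat{\cF_jf}$ is supported in $\{|\xi|\sim 2^j\}$ for $j\ge0$ (and in $B_{4/3}$ for $j=-1$). Fix a radial $\tilde\vphi\in C_0^\infty$ with $\tilde\vphi\equiv1$ on $C=\operatorname{supp}\vphi$ and support in a slightly larger annulus $\{\frac12\le|\xi|\le3\}$. Fix also $\tilde\psi\equiv1$ on $B_{4/3}$. Then $\cF_jf=\tilde\vphi(2^{-j}D)\cF_jf$ for $j\ge0$, and $\cF_{-1}f=\tilde\psi(D)\cF_{-1}f$. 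This reproducing identity is the only structural input.

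\textbf{First bullet.} Apply Plancherel. On $\operatorname{supp}\vphi(2^{-j}\cdot)$ we have $\<\xi\>\sim 2^j$ uniformly in $j\ge0$, and on $B_{4/3}$ we have $\<\xi\>\sim1\sim2^{-s}$. Hence $\<\xi\>^{2s}|\widehat{\cF_jf}|^2\sim 2^{2js}|\widehat{\cF_jf}|^2$ pointwise, with constants depending only on $s$. Integrating gives both inequalities.

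\textbf{Upper bound in the second bullet.} Write
\[
\partial^\alpha\cF_jf=(\partial^\alpha K_j)*\cF_jf,\qquad K_j(v)=2^{3j}\widecheck{\tilde\vphi}(2^jv)
\]
(and $K_{-1}=\widecheck{\tilde\psi}$). By scaling,
\[
\|\partial^\alpha K_j\|_{L^r}=2^{j|\alpha|}2^{3j(1-\frac1r)}\|\partial^\alpha\widecheck{\tilde\vphi}\|_{L^r},
\]
and the last factor is finite since $\widecheck{\tilde\vphi}$ is Schwartz. Young's inequality with $1+\frac1q=\frac1r+\frac1p$, which is admissible because $q\ge p$, gives $1-\frac1r=\frac1p-\frac1q$. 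This yields $\sup_{|\alpha|=k}\|\partial^\alpha\cF_jf\|_{L^q}\ls 2^{jk}2^{3j(\frac1p-\frac1q)}\|\cF_jf\|_{L^p}$. The case $q=p$ is exactly the right-hand inequality of the second line.

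\textbf{Lower bound $2^{jk}\|\cF_jf\|_{L^p}\ls\sup_{|\alpha|=k}\|\partial^\alpha\cF_jf\|_{L^p}$.} This is the only step needing care, and it requires the annulus. It holds for $j\ge0$; for $j=-1$ and $k\ge1$ it fails, since $\widehat{\cF_{-1}f}$ may concentrate near $0$, so for $j=-1$ one only has it up to a constant for $k=0$. For $j\ge0$, use $|\xi|^{2k}=\sum_{|\alpha|=k}\frac{k!}{\alpha!}\xi^{2\alpha}$ to write
\[
\tilde\vphi(\xi)=\sum_{|\alpha|=k}\frac{k!}{\alpha!}\,\frac{\xi^{\alpha}\tilde\vphi(\xi)}{|\xi|^{2k}}\,\xi^\alpha .
\]
Each $m_\alpha(\xi):=\frac{\xi^{\alpha}\tilde\vphi(\xi)}{|\xi|^{2k}}$ is smooth and compactly supported away from the origin, so $\widecheck{m_\alpha}\in L^1$. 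Rescaling to $2^{-j}\xi$ and using the reproducing identity gives
\[
\cF_jf=2^{-jk}\sum_{|\alpha|=k}c_\alpha\,\big(2^{3j}\widecheck{m_\alpha}(2^j\cdot)\big)*(-i\partial)^\alpha\cF_jf .
\]
Young's inequality with an $L^1$ kernel, whose norm is independent of $j$, then yields $\|\cF_jf\|_{L^p}\ls2^{-jk}\sup_{|\alpha|=k}\|\partial^\alpha\cF_jf\|_{L^p}$. All constants depend only on $\vphi,\psi,k,p,q,s$ and never on $j$ or $f$. I expect no real obstacle beyond keeping track of the $j=-1$ caveat for the reverse inequality.
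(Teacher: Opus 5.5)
Your proof is correct. It is the standard dilation-plus-Young argument, and the paper gives no proof of this lemma (it is listed in the appendix as a known tool), so there is nothing different to compare against. Your caveat is also right: the lower bound $2^{jk}\|\F_jf\|_{L^p}\ls\sup_{|\alpha|=k}\|\partial^\alpha\F_jf\|_{L^p}$ is false for $j=-1$, $k\geq1$ (take $\widehat f$ supported in a tiny ball around the origin, where $\psi\equiv1$), but the paper only ever uses the upper bounds, e.g.\ $\|\F_pg\|_{L^2}\ls 2^{\frac32 p}\|\F_pg\|_{L^1}$ and $2^{-p}\|\nabla\tF_pf\|_{L^2}\ls\|\tF_pf\|_{L^2}$ in the proof of Lemma~\ref{M14}, so nothing downstream is affected.
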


\begin{lemma}\label{mathcalR}
Let \(\gamma\in(-3,0),s\in(0,1)\), \(\Phi^{\gamma}_{-1}\) be defined by \eqref{DefPhi}. For any $|\alpha|\leq 2$, we have 
\begin{equation}\label{parPhi-1}
    |\partial^\alpha_\xi \widehat{\Phi^\gamma_{-1}}(\xi)|\ls \<\xi\>^{-(\gamma+3+|\alpha|)}.
\end{equation}
Let \(j\ge -1\) and for smooth functions \(g,h\) and \(f\), define
\[
\mathcal{R}(g,h,f):=\int_{\mathbb{R}^6}\Phi_{-1}^\gamma(|v-v_*|)\,g(v_*)\,h(v)\,f(v)\,\d v _*\,\d v .
\]
Then it holds that
    \begin{equation}\label{Rghf2}
        \begin{aligned}
            |\mathcal{R}(g,\F_jh,\F_jf)|
        \ls~& (M^{-\a}+2^{-2Ms})\|g\|_{H^{-(\gamma+2s+\f32),\a}_{-\gamma}}\|\F_jh\|_{H^s_{\gamma/2}}\|\F_jf\|_{H^s_{\gamma/2}}\\
            &+C_{M,\a}\|g\|_{H^{-(\gamma+2s+\f32),\a}_{-\gamma}}\|\F_jh\|_{L^2_{\gamma/2}}\|\F_jf\|_{L^2_{\gamma/2}}\quad \forall M,\a>0.
        \end{aligned}
    \end{equation}
\end{lemma}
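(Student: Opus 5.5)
The decay estimate \eqref{parPhi-1} will be treated separately. Writing $\Phi^\gamma_{-1}(v)=|v|^\gamma\psi(|v|)$, I split $\psi=1-(1-\psi)$. The piece $|v|^\gamma$ is homogeneous and has Fourier transform $c_\gamma|\xi|^{-\gamma-3}$, valid since $-3<\gamma<0$. The piece $(1-\psi)|v|^\gamma$ is smooth and decays like $|v|^{\gamma}$, so its Fourier transform is a smooth function away from the origin that decays faster than any power, plus a local singularity at $\xi=0$. Equivalently, $\Phi^\gamma_{-1}$ is compactly supported and homogeneous of degree $\gamma$ near the origin, so $\widehat{\Phi^\gamma_{-1}}$ is smooth and bounded. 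For $|\xi|\ge1$ it behaves like a symbol of order $-(\gamma+3)$: each $\xi$-derivative corresponds to multiplying by $v$, which raises the homogeneity to $\gamma+|\alpha|$. This gives $|\partial^\alpha\widehat{\Phi^\gamma_{-1}}(\xi)|\ls\langle\xi\rangle^{-(\gamma+3+|\alpha|)}$ for $|\alpha|\le 2$. Integrability near $0$ needs $\gamma+|\alpha|>-3$, which is fine.

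For \eqref{Rghf2}, I will write $\mathcal R(g,H,F)=\int (\Phi^\gamma_{-1}*g)\,HF\,\d v$ with $H=\F_jh$ and $F=\F_jf$, and decompose $g$ in phase as $g=\sum_m\cP_m g$. Since $\Phi^\gamma_{-1}$ is supported in $|v-v_*|\le 4/3$, the factor $HF$ only sees $|v|\sim 2^m$ when $g$ is restricted to $|v_*|\sim 2^m$. Replacing $HF$ by $\tilde\cP_m(HF)$, the weights work out as $2^{\gamma m}\cdot 2^{-\gamma m}$: the $\langle v\rangle^{\gamma}$ is assigned to $HF$ through $L^2_{\gamma/2}\times L^2_{\gamma/2}$, and $\langle v\rangle^{-\gamma}$ is assigned to $g$. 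Summing over $m$ with Cauchy--Schwarz and Lemma~\ref{le1.4} then reduces the problem to the unweighted estimate
\[
\Big|\int(\Phi^\gamma_{-1}*g)HF\Big|\ls (M^{-\a}+2^{-2Ms})\|g\|_{H^{-\zeta_0,\a}}\|H\|_{H^s}\|F\|_{H^s}+C_{M,\a}\|g\|_{H^{-\zeta_0,\a}}\|H\|_{L^2}\|F\|_{L^2},
\]
with $\zeta_0=\gamma+2s+\tfrac32$.

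To prove this I will work in Fourier variables. The integral equals $\int\widehat{\Phi^\gamma_{-1}}(\eta)\hat g(\eta)\,\widehat{HF}(-\eta)\,\d\eta$, and I decompose $g=\sum_p\F_pg$. For each $p$, \eqref{parPhi-1} gives
\[
|\cdot|\ls 2^{-(\gamma+3)p}\|\F_pg\|_{L^2}\,\|\tilde\F_p(HF)\|_{L^2}.
\]
I then bound $\|\tilde\F_p(HF)\|_{L^2}$ by a Bernstein/Sobolev product estimate,
\[
\|\tilde\F_p(HF)\|_{L^2}\ls 2^{(\frac32-2s)p}\|H\|_{H^s}\|F\|_{H^s},
\]
which uses $H^s\cdot H^s\subset H^{2s-3/2}$ and requires $s<3/2$. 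The exponents then combine as $2^{-(\gamma+2s+\frac32)p}$, which is exactly $2^{-\zeta_0 p}$, leaving $2^{-\zeta_0p}\|\F_pg\|_{L^2}\|H\|_{H^s}\|F\|_{H^s}$. Writing this as $p^{-\a}\cdot(p^{\a}2^{-\zeta_0p}\|\F_pg\|)$ and summing over $p\ge M$ produces the factor $M^{-\a}$, but only through $\ell^1$ summation. To handle the summability I will keep the $j$-localization: since $H,F$ are frequency localized at $2^j$, $\tilde\F_p(HF)$ vanishes unless $p\le j+O(1)$.

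I distinguish two regimes for the localization index $j$.
\begin{itemize}
\item If $j\le M$, then $H$ and $F$ have frequency at most $2^M$, so every $H^s$ norm is bounded by $2^{sM}$ times the $L^2$ norm, and those terms go into $C_{M,\a}$.
\item If $j>M$, then $\|H\|_{H^s}\|F\|_{H^s}\sim 2^{2sj}\|H\|\|F\|$. For $p\le j$, I use instead the estimate $\|\tilde\F_p(HF)\|_{L^2}\ls 2^{3p/2}\|H\|\|F\|$. This gives a sum of terms $2^{-(\gamma+\frac32)p}\|\F_pg\|=2^{2sp}\cdot 2^{-\zeta_0p}\|\F_pg\|$, to be compared against $2^{2sj}$. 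The ratio $2^{2s(p-j)}$ is geometrically summable; the contributions with $p\le M$ give the factor $2^{-2s(j-M)}\le 2^{-2Ms}$-type smallness after adjustment, and those with $p>M$ give $M^{-\a}$ through the log weight.
\end{itemize}

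I expect the main obstacle to be the bookkeeping of the weight transfer through the nonlocal convolution. The physical localization of $g$ does not commute exactly with the frequency localization $\F_j$ of $H$ and $F$, so this will need the commutator estimates of Lemma~\ref{PFcommutator}; the commutator tails are of lower order, and I will absorb them into the $C_{M,\a}$ term.
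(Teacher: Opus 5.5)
Your final route is essentially the paper's proof. You decompose $g$ on the Fourier side into pieces at frequency $2^p$ with $p\le j+O(1)$, and bound each piece by $2^{-(\gamma+\frac32)p}\|\F_pg\|_{L^2}\|\F_jh\|_{L^2}\|\F_jf\|_{L^2}=2^{2s(p-j)}\,2^{-(\gamma+2s+\frac32)p}\|\F_pg\|_{L^2}\,2^{sj}\|\F_jh\|_{L^2}\,2^{sj}\|\F_jf\|_{L^2}$. You then split into regimes. Your phase-localization treatment of the weights only makes rigorous the paper's one-line remark that $\langle v\rangle\sim\langle v_*\rangle$ on the support of $\Phi^\gamma_{-1}$, and the paper quotes \eqref{parPhi-1} from the literature rather than proving it.

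The one correction, which your ``after adjustment'' anticipates, is that the split must be at $j\le 2M$ versus $j>2M$, with $p<M$ or $p\ge M$ inside the second case. For $M<j\le 2M$ and $p\le M$ the factor $2^{2s(p-j)}$ is not small. For $j\le 2M$ you instead use $2^{sj}\le 2^{2sM}$ and put everything into the $C_{M,\a}$ term.
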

\begin{proof}
The proof of \eqref{parPhi-1} can be found in \cite{CHJ} Lemma 4.16.

We now give the proof of \eqref{Rghf2}. By Fourier transform, we have 
    \begin{equation*}
        \begin{aligned}
            \mathcal{R}(g,\F_jh,\F_jf)=\int_{\R^6}\widehat{\Phi^\gamma_{-1}}(\eta)\hat{g}(\eta)\widehat{\F_jh}(\xi-\eta)\widehat{\F_jf}(\xi)d\eta d\xi.
        \end{aligned}
    \end{equation*}
Since $|\xi|\sim |\xi-\eta|\sim 2^{j}$, then $|\eta|\ls 2^j$, i.e., 
    \begin{equation*}
        \begin{aligned}
            \mathcal{R}(g,\F_jh,\F_jf)=\sum_{l\leq j}\mathcal{R}(\mF_lg,\F_j h,\F_j f).
        \end{aligned}
    \end{equation*}
For fixed $l$, note that $|\widehat{\Phi^\gamma_{-1}}(\eta)|\ls \<\eta\>^{-(\gamma+3)}$, by Cauchy-Schwarz inequality, we have
   \begin{equation*}
        \begin{aligned}
            &\sum_{l\leq j}\mathcal{R}(\mF_lg,\F_j h,\F_j f)\ls \sum_{l\leq j}2^{-(\gamma+3)l}\Big(\int_{\R^6}|\widehat{\mF_l g}(\eta)|^2|\widehat{\F_j h}(\xi-\eta)|^2d\eta d\xi\Big)^{\f12}\Big(\int_{|\eta|\sim 2^l}|\widehat{\F_j f}(\xi)|^2d\eta d\xi\Big)^{\f12}\\
            &\ls\sum_{l\leq j}2^{-(\gamma+\f32)l}\|\mF_l g\|_{L^2}\|\F_jh\|_{L^2}\|\F_jf\|_{L^2}\ls \sum_{l\leq j}l^{-\a}(2^{-(\gamma+2s+\f32)l}l^\a\|\mF_l g\|_{L^2})2^{2s(l-j)} 2^{sj}\|\F_jh\|_{L^2}2^{sj}\|\F_jf\|_{L^2}\\
            &\ls (M^{-\a}+2^{-2Ms})\|g\|_{H^{-(\gamma+2s+\f32),\a}}\|\F_jh\|_{H^s}\|\F_jf\|_{H^s}+C_{M,\a}\|g\|_{H^{-(\gamma+2s+\f32),\a}}\|\F_jh\|_{L^2}\|\F_jf\|_{L^2},
        \end{aligned}
    \end{equation*}
where we consider three cases: \(l\ge M\); \(l<M\) and \(j>2M\); and \(j\le 2M\).

Finally, noting that \(|v-v_*|\lesssim 1\) implies \(\langle v\rangle\sim\langle v_*\rangle\), the weight indices
\(\omega_1,\omega_2,\omega_3\) on \(g,h,f\) need only satisfy \(\omega_1+\omega_2+\omega_3=0\). Here we take \(\omega_1=-\gamma,\omega_2=\omega_3=\gamma/2\). We complete the proof of this lemma.
 
\end{proof}

\bibliographystyle{alpha}
\bibliography{ref}

\newcommand{\etalchar}[1]{$^{#1}$}
\begin{thebibliography}{AMU{\etalchar{+}}12}

\bibitem[AMU{\etalchar{+}}12]{AMU1}
Radjesvarane Alexandre, Yoshinori Morimoto, Seiji Ukai, Chao-Jiang Xu, and Tong Yang.
\newblock The {B}oltzmann equation without angular cutoff in the whole space: {I}, global existence for soft potential.
\newblock {\em Journal of Functional Analysis}, 262(3):915--1010, 2012.

\bibitem[CCL09]{Lu2009}
Eric~A. Carlen, Maria~C. Carvalho, and Xuguang Lu.
\newblock On strong convergence to equilibrium for the {B}oltzmann equation with soft potentials.
\newblock {\em Journal of Statistical Physics}, 135(4):681--736, 2009.

\bibitem[CH11]{CH1}
Yemin Chen and Ling-Bing He.
\newblock Smoothing estimates for {B}oltzmann equation with full-range interactions: spatially homogeneous case.
\newblock {\em Archive for Rational Mechanics and Analysis}, 201(2):501--548, 2011.

\bibitem[CH12]{CH2}
Yemin Chen and Ling-Bing He.
\newblock Smoothing estimates for {B}oltzmann equation with full-range interactions: spatially inhomogeneous case.
\newblock {\em Archive for Rational Mechanics and Analysis}, 203(2):343--377, 2012.

\bibitem[CHJ24]{CHJ}
Chuqi Cao, Ling-Bing He, and Jie Ji.
\newblock Propagation of moments and sharp convergence rate for inhomogeneous noncutoff {B}oltzmann equation with soft potentials.
\newblock {\em SIAM Journal on Mathematical Analysis}, 56(1):1321--1426, 2024.

\bibitem[CS23]{jamil2023entropy}
Jamil Chaker and Luis Silvestre.
\newblock Entropy dissipation estimates for the {B}oltzmann equation without cut-off.
\newblock {\em Kinetic and Related Models}, 16(5):748--763, 2023.

\bibitem[DM05]{DesvillettesMouhot}
Laurent Desvillettes and Cl{\'e}ment Mouhot.
\newblock About {$L^p$} estimates for the spatially homogeneous {B}oltzmann equation.
\newblock {\em Annales de l'Institut Henri Poincar{\'e}, Analyse Non Lin{\'e}aire}, 22(2):127--142, 2005.

\bibitem[DM09]{desvillettes2009stability}
Laurent Desvillettes and Cl{\'e}ment Mouhot.
\newblock Stability and uniqueness for the spatially homogeneous {B}oltzmann equation with long-range interactions.
\newblock {\em Archive for Rational Mechanics and Analysis}, 193(2):227--253, 2009.

\bibitem[DW04]{DW}
Laurent Desvillettes and Bernt Wennberg.
\newblock Smoothness of the solution of the spatially homogeneous {B}oltzmann equation without cutoff.
\newblock {\em Communications in Partial Differential Equations}, 29(1-2):133--155, 2004.

\bibitem[FG08]{fournier2008uniqueness}
Nicolas Fournier and H{\'e}l{\`e}ne Gu{\'e}rin.
\newblock On the uniqueness for the spatially homogeneous {B}oltzmann equation with a strong angular singularity.
\newblock {\em Journal of Statistical Physics}, 131(4):749--781, 2008.

\bibitem[Fou10]{FournierLandau}
Nicolas Fournier.
\newblock Uniqueness of bounded solutions for the homogeneous {L}andau equation with a {C}oulomb potential.
\newblock {\em Communications in Mathematical Physics}, 299(3):765--782, 2010.

\bibitem[GGL25]{golding2025global}
William Golding, Maria Gualdani, and Am{\'e}lie Loher.
\newblock Global smooth solutions to the {L}andau--{C}oulomb equation in {$L^{3/2}$}.
\newblock {\em Archive for Rational Mechanics and Analysis}, 249(3):34, 2025.

\bibitem[GL24]{golding2024local}
William Golding and Am{\'e}lie Loher.
\newblock Local-in-time strong solutions of the homogeneous {L}andau--{C}oulomb equation with {$L^p$} initial datum.
\newblock {\em La Matematica}, 3(1):337--369, 2024.

\bibitem[GS25a]{GSUN}
Maria Gualdani and Weiran Sun.
\newblock Uniqueness for the homogeneous {L}andau--{C}oulomb equation in {$L^{3/2}$}.
\newblock {\em arXiv preprint arXiv:2512.20899}, 2025.

\bibitem[GS25b]{guillen2025landau}
Nestor Guillen and Luis Silvestre.
\newblock The {L}andau equation does not blow up.
\newblock {\em Acta Mathematica}, 234(2):315--375, 2025.

\bibitem[He18]{he2018sharp}
Ling-Bing He.
\newblock Sharp bounds for {B}oltzmann and {L}andau collision operators.
\newblock {\em Annales Scientifiques de l'{\'E}cole Normale Sup{\'e}rieure (4)}, 51(5):1253--1341, 2018.

\bibitem[HJL24]{HeJiLuo2024}
Ling-Bing He, Jie Ji, and Yue Luo.
\newblock Existence, uniqueness and smoothing estimates for spatially homogeneous {L}andau--{C}oulomb equation in {$H^{-1/2}$} space with polynomial tail.
\newblock {\em arXiv preprint arXiv:2412.07287}, 2024.

\bibitem[HJZ20]{HJZ}
Ling-Bing He, Jin-Cheng Jiang, and Yu-Long Zhou.
\newblock On the cutoff approximation for the {B}oltzmann equation with long-range interaction.
\newblock {\em Journal of Statistical Physics}, 181(5):1817--1905, 2020.

\bibitem[HST20]{HST2}
Christopher Henderson, Stanley Snelson, and Andrei Tarfulea.
\newblock Local well-posedness of the {B}oltzmann equation with polynomially decaying initial data.
\newblock {\em Kinetic and Related Models}, 13(4):837--867, 2020.

\bibitem[HST25]{henderson2025classical}
Christopher Henderson, Stanley Snelson, and Andrei Tarfulea.
\newblock Classical solutions of the {B}oltzmann equation with irregular initial data.
\newblock {\em Annales Scientifiques de l'{\'E}cole Normale Sup{\'e}rieure}, 2025.

\bibitem[HW22]{HW}
Christopher Henderson and Weinan Wang.
\newblock Local well-posedness for the {B}oltzmann equation with very soft potential and polynomially decaying initial data.
\newblock {\em SIAM Journal on Mathematical Analysis}, 54(3):2845--2875, 2022.

\bibitem[IS20a]{IS4}
Cyril Imbert and Luis Silvestre.
\newblock Regularity for the {B}oltzmann equation conditional to macroscopic bounds.
\newblock {\em EMS Surveys in Mathematical Sciences}, 7(1):117--172, 2020.

\bibitem[IS20b]{IS2}
Cyril Imbert and Luis Silvestre.
\newblock The weak {H}arnack inequality for the {B}oltzmann equation without cut-off.
\newblock {\em Journal of the European Mathematical Society}, 22(2):507--592, 2020.

\bibitem[IS22]{IS1}
Cyril Imbert and Luis Silvestre.
\newblock Global regularity estimates for the {B}oltzmann equation without cut-off.
\newblock {\em Journal of the American Mathematical Society}, 35(3):625--703, 2022.

\bibitem[ISV26]{imbert2026monotonicity}
Cyril Imbert, Luis Silvestre, and C{\'e}dric Villani.
\newblock On the monotonicity of the {F}isher information for the {B}oltzmann equation.
\newblock {\em Inventiones Mathematicae}, 243(1):127--179, 2026.

\bibitem[Vil98]{villani1998new}
C{\'e}dric Villani.
\newblock On a new class of weak solutions to the spatially homogeneous {B}oltzmann and {L}andau equations.
\newblock {\em Archive for Rational Mechanics and Analysis}, 143(3):273--307, 1998.

\end{thebibliography}

\end{document}